\renewcommand\emptyset{\varnothing}
\def\mathcenterto#1#2{\mathclap{\phantom{#1}\mathclap{#2}}\phantom{#1}}
\let\old@widetilde\widetilde
\def\widetildeto#1#2{\mathcenterto{#2}{\old@widetilde{\mathcenterto{#1}{#2\,}}}}
\let\old@widehat\widehat
\def\widehatto#1#2{\mathcenterto{#2}{\old@widehat{\mathcenterto{#1}{#2\,}}}}
\def\mywidetilde#1{\widetildeto{G}{#1}}
\newcommand{\be}{\beta}
\newcommand{\ga}{\gamma}
\newcommand{\de}{\delta}
\newcommand{\emezo}{\mathfrak{e}}
\newcommand{\fa}{\mathfrak{a}}
\newcommand{\fm}{\mathfrak{m}}
\newcommand{\la}{\lambda}
\newcommand{\om}{\omega}
\newcommand{\sinfty}{{\scaleto{\infty}{1.5pt}}}
\newcommand{\sone}{{\scaleto{1}{3pt}}}
\def\id{\mathrm{id}}
\def\Dom{\mathrm{Dom}}
\def\lift{{\scaleto{\mathrm{lift}}{4pt}}}
\def\kone{\mathbf{k}}
\DeclareMathOperator{\uDelta}{\underline{\Delta}}
\def\Cd{\mathtt{C}}
\newcommand{\Err}{\mathtt{Err}}
\newcommand{\ee}{\mathbbm{e}}
\newcommand{\bM}{\mathbf{M}}
\def\CCV{\mathbb{V}}
\def\CCG{\mathbb{G}}
\def\var{\mathtt{var}}
\def\nil{\mathtt{nil}}
\def\L{\mathtt{L}}
\newcommand{\CGG}{\mathfrak{c}_{\ga}} 					
\newcommand{\SH}{\mathscr{H}}
\newcommand{\SD}{\mathscr{D}}
\newcommand{\SZ}{\mathscr{Z}}
\newcommand{\SL}{\mathscr{L}}
\newcommand{\SF}{\mathscr{F}}
\renewcommand{\C}{\mathcal{C}}
\newcommand{\bC}{\mathbf{C}}
\renewcommand{\CL}{\mathcal{L}}
\renewcommand{\CK}{\mathcal{K}}
\renewcommand{\CO}{\mathcal{O}}
\renewcommand{\CB}{\mathcal{B}}
\renewcommand{\CS}{\mathcal{S}}
\renewcommand{\CM}{\mathcal{M}}
\renewcommand{\CD}{\mathcal{D}}
\renewcommand{\CR}{\mathcal{R}}
\renewcommand{\CC}{\C}
\renewcommand{\CE}{\symbol{\mathcal{E}}}
\renewcommand{\CX}{\mathcal{X}}
\renewcommand{\CI}{\symbol{\mathcal{I}}}
\renewcommand{\CU}{\mathcal{U}}
\renewcommand{\CA}{\mathcal{A}}
\renewcommand{\CT}{\mathcal{T}}
\renewcommand{\CG}{\mathcal{G}}
\renewcommand{\CV}{\mathcal{V}}
\renewcommand{\CW}{\mathcal{W}}
\renewcommand{\CQ}{\mathcal{Q}}
\renewcommand{\CP}{\mathcal{P}}
\renewcommand{\E}{\mathbb{E}}
\renewcommand{\P}{\mathbb{P}}
\newcommand{\T}{\mathbb{T}}
\renewcommand{\d}{\mathrm{d}}
\newcommand{\poly}{\mathrm{poly}}
\newcommand{\ex}{\mathrm{ex}}
\newcommand{\SK}{\mathscr{K}}
\newcommand{\SR}{\mathscr{R}}
\newcommand{\ST}{\mathscr{T}}
\newcommand{\s}{\mathfrak{s}}
\renewcommand{\eps}{\varepsilon}
\def\d{\mathrm{d}}
\newcommand{\Kg}{K_\ga}							
\newcommand{\KK}{\mathfrak{K}}
\newcommand{\LN}{\T^3_N} 
\newcommand{\Le}{\T_{\eps}^3}	
\newcommand{\Lattice}{\Lambda_{\eps}}	
\newcommand{\lbg}{\lambda_{\ga}} 							
\renewcommand{\ae}{*_\eps} 							
\newcommand{\fc}{\mathfrak{c}} 
\newcommand{\ff}{\mathfrak{f}}
\renewcommand{\fC}{\mathfrak{C}}
\newcommand\PPi{\boldsymbol{\Pi}}
\def\un#1{\underline{#1}}
\def\cadlag{c\`{a}dl\`{a}g }
\def\M{\mathfrak{M}}
\def\fN{\mathfrak{N}}
\def\${\vert\hspace{-1.5pt}\vert\hspace{-1.5pt}\vert}
\colorlet{symbols}{blue}
\colorlet{testcolor}{green!60!black}
\def\symbol#1{{\color{symbols}{#1}}}
\def\1{\mathbf{{1}}}
\def\X{\symbol{X}}
\newcommand{\blue}[1]{\color{blue}#1\color{black}}
\def\blueOne{\symbol{\1}}
\def\blueXi{\blue\Xi}
\newcommand*\bigcdot{\mathpalette\bigcdot@{.5}}
\newcommand*\bigcdot@[2]{\mathbin{\vcenter{\hbox{\scalebox{#2}{$\m@th#1\bullet$}}}}}
\colorlet{testcolor}{green!60!black}
\def\drawx{\draw[-,solid] (-3pt,-3pt) -- (3pt,3pt);\draw[-,solid] (-3pt,3pt) -- (3pt,-3pt);}
\tikzset{
	root/.style={circle,fill=testcolor,inner sep=0pt, minimum size=2mm},
	dot/.style={circle,fill=black,inner sep=0pt, minimum size=1mm},
	empty_dot/.style={circle, draw=black,fill=white,inner sep=0pt, minimum size=1mm},
	small_dot/.style={circle,fill=black,inner sep=0pt, minimum size=0.9mm},
	var/.style={circle,fill=white,draw=black,inner sep=0pt, minimum size=2mm},
	var_blue/.style={circle,fill=blue!20,draw=blue,inner sep=0pt, minimum size=2mm},
	var_very_blue/.style={circle,fill=blue,draw=blue,inner sep=0pt, minimum size=2mm},
	var_very_pink/.style={circle,fill=red!20, draw=red,inner sep=0pt, minimum size=2mm},
	var_red_square/.style={regular polygon,regular polygon sides=4, draw, fill=red!20, draw=red, inner sep=0pt, minimum size=2.5mm, shape border rotate=45},
	var_red_triangle/.style={regular polygon,regular polygon sides=3, draw, fill=red!20, draw=red, inner sep=0pt, minimum size=3mm, shape border rotate=180},
	var_red/.style={circle,fill=red!20,draw=red,inner sep=0pt, minimum size=2mm},
	circ/.style={circle,fill=white,draw=black,inner sep=0pt, minimum size=1.2mm},
	keps/.style= {semithick,shorten >=1pt,shorten <=1pt,->},
	keps-random/.style= {semithick,shorten >=1pt,shorten <=1pt,->, decorate, decoration = {zigzag, segment length=1mm, amplitude=1mm}},
	kepsdot/.style= {semithick,densely dashed,shorten >=1pt,shorten <=1pt,->},
	dotred/.style={circle,fill=black!50,inner sep=0pt, minimum size=2mm},
	generic/.style={semithick,shorten >=1pt,shorten <=1pt},
	gepsilon/.style={semithick,shorten >=1pt,shorten <=1pt,densely dashed},
	rootlab/.style={font=\scriptsize, transform canvas={yshift=-0.23cm}},
	dist/.style={ultra thick,draw=testcolor,shorten >=1pt,shorten <=1pt},
	testfcn/.style={ultra thick,testcolor,shorten >=1pt,shorten <=1pt,<-},
	testfcnx/.style={ultra thick,testcolor,shorten >=1pt,shorten <=1pt,<-,
		postaction={decorate,decoration={markings,mark=at position 0.6 with {\drawx}}}},
	kprime/.style={semithick,shorten >=1pt,shorten <=1pt,dotted,->},
	kprimex/.style={semithick,shorten >=1pt,shorten <=1pt,densely dashed,->,
		postaction={decorate,decoration={markings,mark=at position 0.4 with {\drawx}}}},
	kernel/.style={semithick,shorten >=1pt,shorten <=1pt,->},
	multx/.style={shorten >=1pt,shorten <=1pt,
		postaction={decorate,decoration={markings,mark=at position 0.5 with {\drawx}}}},
	kernelx/.style={semithick,shorten >=1pt,shorten <=1pt,->,
		postaction={decorate,decoration={markings,mark=at position 0.4 with {\drawx}}}},
	kepsilon/.style={semithick,shorten >=1pt,shorten <=1pt,densely dashed,->},
	kernel1/.style={->,semithick,shorten >=1pt,shorten <=1pt,postaction={decorate,decoration={markings,mark=at position 0.45 with {\draw[-] (0,-0.1) -- (0,0.1);}}}},
	kernel2/.style={->,semithick,shorten >=1pt,shorten <=1pt,postaction={decorate,decoration={markings,mark=at position 0.45 with {\draw[-] (0.05,-0.1) -- (0.05,0.1);\draw[-] (-0.05,-0.1) -- (-0.05,0.1);}}}},
	kernelBig/.style={semithick,shorten >=1pt,shorten <=1pt,decorate, decoration={zigzag,amplitude=1.5pt,segment length = 3pt,pre length=2pt,post length=2pt}},
	rho/.style={dotted,semithick,shorten >=1pt,shorten <=1pt},
	renorm/.style={shape=circle,fill=white,inner sep=1pt},
	labl/.style={shape=rectangle,fill=white,inner sep=1pt},
	xi/.style={circle,fill=symbols!10,draw=symbols,inner sep=0pt,minimum size=1.2mm},
	xix/.style={crosscircle,fill=symbols!10,draw=symbols,inner sep=0pt,minimum size=1.2mm},
	xib/.style={circle,fill=symbols!10,draw=symbols,inner sep=0pt,minimum size=1.6mm},
	xibx/.style={crosscircle,fill=symbols!10,draw=symbols,inner sep=0pt,minimum size=1.6mm},
	not/.style={circle,fill=symbols,draw=symbols,inner sep=0pt,minimum size=0.5mm},
	>=stealth,
	}
\def\DeclareSymbol#1#2#3{\expandafter\gdef\csname MH@symb@#1\endcsname{\tikz[baseline=#2,scale=0.15,draw=symbols]{#3}}\expandafter\gdef\csname MH@symb@#1s\endcsname{\scalebox{0.7}{\tikz[baseline=#2,scale=0.15,draw=symbols]{#3}}}}
\def\<#1>{\csname MH@symb@#1\endcsname}
\def\drawx{\draw[-,solid] (-3pt,-3pt) -- (3pt,3pt);\draw[-,solid] (-3pt,3pt) -- (3pt,-3pt);}
\tikzset{
	root/.style={circle,fill=testcolor,inner sep=0pt, minimum size=2mm},
	dot/.style={circle,fill=black,inner sep=0pt, minimum size=1mm},
	var/.style={circle,fill=black!10,draw=black,inner sep=0pt, minimum size=2mm},
	circ/.style={circle,fill=white,draw=black,inner sep=0pt, minimum size=1.2mm},
	dotred/.style={circle,fill=black!50,inner sep=0pt, minimum size=2mm},
	generic/.style={semithick,shorten >=1pt,shorten <=1pt},
	gepsilon/.style={semithick,shorten >=1pt,shorten <=1pt,densely dashed},
	dist/.style={ultra thick,draw=testcolor,shorten >=1pt,shorten <=1pt},
	testfcn/.style={ultra thick,testcolor,shorten >=1pt,shorten <=1pt,<-},
	testfcnx/.style={ultra thick,testcolor,shorten >=1pt,shorten <=1pt,<-,
		postaction={decorate,decoration={markings,mark=at position 0.6 with {\drawx}}}},
	kprime/.style={semithick,shorten >=1pt,shorten <=1pt,dotted,->},
	kprimex/.style={semithick,shorten >=1pt,shorten <=1pt,densely dashed,->,
		postaction={decorate,decoration={markings,mark=at position 0.4 with {\drawx}}}},
	kernel/.style={semithick,shorten >=1pt,shorten <=1pt,->},
	multx/.style={shorten >=1pt,shorten <=1pt,
		postaction={decorate,decoration={markings,mark=at position 0.5 with {\drawx}}}},
	kernelx/.style={semithick,shorten >=1pt,shorten <=1pt,->,
		postaction={decorate,decoration={markings,mark=at position 0.4 with {\drawx}}}},
	kepsilon/.style={semithick,shorten >=1pt,shorten <=1pt,densely dashed,->},
	kernel1/.style={->,semithick,shorten >=1pt,shorten <=1pt,postaction={decorate,decoration={markings,mark=at position 0.45 with {\draw[-] (0,-0.1) -- (0,0.1);}}}},
	kernel2/.style={->,semithick,shorten >=1pt,shorten <=1pt,postaction={decorate,decoration={markings,mark=at position 0.45 with {\draw[-] (0.05,-0.1) -- (0.05,0.1);\draw[-] (-0.05,-0.1) -- (-0.05,0.1);}}}},
	kernelBig/.style={semithick,shorten >=1pt,shorten <=1pt,decorate, decoration={zigzag,amplitude=1.5pt,segment length = 3pt,pre length=2pt,post length=2pt}},
	rho/.style={dotted,semithick,shorten >=1pt,shorten <=1pt},
	renorm/.style={shape=circle,fill=white,inner sep=1pt},
	labl/.style={shape=rectangle,fill=white,inner sep=1pt},
	xi/.style={circle,fill=symbols!10,draw=symbols,inner sep=0pt,minimum size=1.2mm},
	xix/.style={crosscircle,fill=symbols!10,draw=symbols,inner sep=0pt,minimum size=1.2mm},
	xib/.style={circle,fill=symbols!10,draw=symbols,inner sep=0pt,minimum size=1.6mm},
	xibx/.style={crosscircle,fill=symbols!10,draw=symbols,inner sep=0pt,minimum size=1.6mm},
	not/.style={circle,fill=symbols,draw=symbols,inner sep=0pt,minimum size=0.5mm},
	>=stealth,
	}
\def\DeclareSymbol#1#2#3{\expandafter\gdef\csname MH@symb@#1\endcsname{\tikz[baseline=#2,scale=0.15,draw=symbols]{#3}}\expandafter\gdef\csname MH@symb@#1s\endcsname{\scalebox{0.7}{\tikz[baseline=#2,scale=0.15,draw=symbols]{#3}}}}
\def\<#1>{\csname MH@symb@#1\endcsname}
\begin{document}

\date{\today}
\title{The dynamical Ising-Kac model in $3D$ converges to $\Phi^4_3$}
\author{P.~Grazieschi$^1$, K.~Matetski$^2$ and H.~Weber$^3$}
\institute{University of Bath, \email{p.grazieschi@bath.ac.uk} \and Michigan State University, \email{matetski@msu.edu} \and University of M\"{u}nster, \email{hendrik.weber@uni-muenster.de}}
\date{\today}
\titleindent=0.65cm

\maketitle

\begin{abstract}
We consider the Glauber dynamics of a ferromagnetic Ising-Kac model on a three-dimensional periodic lattice of size $(2 N + 1)^3$, in which the flipping rate of each spin depends on an average field in a large neighborhood of radius $\ga^{-1} <\!\!< N$. We study the random fluctuations of a suitably rescaled coarse-grained spin field as $N \to \infty$ and $\ga \to 0$; we show that near the mean-field value of the critical temperature, the process converges in distribution to the solution of the dynamical $\Phi^4_3$ model on a torus. Our result settles a conjectured from Giacomin, Lebowitz and Presutti  \cite{PresuttiGiacomin}.

The dynamical $\Phi^4_3$  model is given by a non-linear stochastic partial differential equation (SPDE) which is driven by an additive space-time white noise and which requires renormalisation of the non-linearity. A rigorous notion of solution for this SPDE and  its renormalisation is provided by the framework of regularity structures \cite{Regularity}. 
As in the two-dimensional case \cite{IsingKac}, the renormalisation corresponds to a small shift of the inverse temperature of the discrete system away from its mean-field value. 
\end{abstract}

\setcounter{tocdepth}{1}
\tableofcontents

\section{Introduction} 

We consider the Glauber dynamics of the three-dimensional Ising-Kac model on the discrete torus $\Z^3 \slash (2N + 1) \Z^3$. The spins take values $+1$ and $-1$ and flip randomly, where the flipping rate at a site $k$ depends on an average field in a large neighbourood of radius $\ga^{-1} <\!\!< N$ around $k$. We study the random fluctuations of a suitably rescaled coarse-grained spin field $X_\ga$ as $N \to \infty$ and $\ga \to 0$. We prove that there is a choice of the inverse temperature such that if the initial states converge in a suitable topology, then $X_\ga$ converges in distribution to the solution of the dynamical $\Phi^4_3$ model, which is formally given by the SPDE
\begin{equation}\label{eq:equation-intro}
(\partial_t - \Delta) X = - \frac{1}{3} X^3 + A X + \sqrt 2\, \xi, \qquad \qquad  x \in \T^3,
\end{equation}
where $\xi$ denotes a Gaussian space-time white noise.

The Ising-Kac model was introduced in the 60s to recover rigorously the van der Waals theory of phase transition \cite{Kac}.  Various scaling regimes for the Glauber dynamics were studied in the ninetees \cite{MR1275526,MR1374000,MR1373999,MR2460018} and in particular, it was conjectured, that in $1$, $2$ and $3$ dimensions and in a very specific scaling, non-linear fluctuations described by \eqref{eq:equation-intro} can be observed  \cite{PresuttiGiacomin}. For $d=4$ \eqref{eq:equation-intro} is not expected to have a non-trivial meaning \cite{MR4276286} and this is reflected in the dimension-dependent scaling relation  \eqref{constant:isingkac_variable_dimension} below which can be satisfied in dimensions $d=1,2,3$ but not for $d=4$.
The one-dimensional convergence result was  proved three decades ago in \cite{MR1317994, MR1358083}.  The two dimensional case settled much more recently \cite{IsingKac}. In this article we treat the three-dimensional case, thereby completely settling the conjecture from  \cite{PresuttiGiacomin}.

The main difference between the one-dimensional case $d=1$ and the cases $d=2$ and $d=3$ lies in the increased irregularity of solutions to \eqref{eq:equation-intro} in higher dimensions. In fact, for $d=1$ solutions are continuous functions and a solution theory is classical (see e.g.  \cite{MR3236753}).   For $d= 2,3$ solutions are Schwartz-distributions  and   \eqref{constant:isingkac_variable_dimension} has to be renormalised by adding an infinite counter-term. Formally, the equation becomes
\begin{equation*}
(\partial_t - \Delta) X = - \frac{1}{3} \bigl(X^3 - 3\, \infty \times X\bigr) + A X + \sqrt 2\, \xi.
\end{equation*} 
For $d=2$ this renormalisation procedure was implemented rigorously in the influential paper by  Da Prato-Debussche \cite{MR2016604} (see also \cite{MR3693966} for a solution theory on the full space $\R_t \times \R^2_x$).
Consequently, the convergence proof for Ising-Kac for $d=2$ consists of adapting their solution method to a discrete approximation (already found in \cite{PresuttiGiacomin}). 
A key technical step was to show that, up to well-controlled error terms, the renormalisation of products of martingales is similar to the Wick renormalisation of Gaussian processes. Moreover, the renormalisation of the non-linearity in the discrete equation corresponds to a small shift (of order $\ga^2 \log \ga$ in the notation of that work) of the inverse temperature from the critical value of the mean-field mode (in fact this shift had already been suggested in \cite{MR1467623}).

The solution theory for \eqref{eq:equation-intro} for $d=3$ is yet much more involved than the $d=2$ case and was understood only much more recently. Short-time solution theories were contained in the groundbreaking theories of regularity structures \cite{Regularity} and paracontrolled distributions \cite{MR3406823,MR3846835} and a solution theory is by now completely developed \cite{MR3846835,from-infinity,MR3951704,MR4164267}, see Section~\ref{sec:phi4Section} for a brief review. In particular, it is known that the renormalisation procedure is more complex --- beyond the leading order ``Wick" renormalisation an additional logarithmic divergence (the ``sunset diagram'') appears.  

In this article we develop an analysis for the  discrete approximation to \eqref{eq:equation-intro} provided in  \cite{PresuttiGiacomin,IsingKac} based on the theory of regularity structures. More specifically, we rely on the discretisation framework for regularity structures developed in \cite{erhard2017discretisation,HairerMatetski}, which of course has to be adapted to the situation at hand. A key part of this analysis is the construction and derivation of bounds for a suitable discrete \emph{model}. Following \cite{IsingKac} the discrete analogue of Hairer's \emph{model} is defined, based on a linearised version of the discrete equation. The elements of this model can be represented as iterated stochastic integrals with respect to a jump martingale. Our companion article \cite{Martingales} develops a systematic theory of these integrals which provides the necessary bounds. We encounter the same ``divergencies" as in the continuum, and as in the two-dimensional case, these correspond to small shifts (of oder $\gamma^3$ and of order $\gamma^6 \log \gamma^{-1}$) to the temperature. Additionally, we encounter an order $1$ shift (corresponding to a shift of order $\gamma^6$ of the temperature), in the analysis of the approximate Wick constant. This term, comes from the analysis of the predictable quadratic variation of the discrete martingales and does not have a counterpart in the continuous theory. 

\subsection{Structure of the article} 

In Section~\ref{sec:IsingKac} we define the dynamical Ising-Kac model and state in Theorem~\ref{thm:main} our main convergence result. We recall the solution theory of the dynamical $\Phi^4_3$ model \eqref{eq:equation-intro} in Section~\ref{sec:phi4Section}. In Section~\ref{sec:discreteRegStruct} we construct a regularity structure for the discrete equation describing the Ising-Kac model. Furthermore, we make the definitions of discrete models and modelled distributions on this regularity structure, which are required to solve the equation. A particular discrete renormalised model is constructed in Section~\ref{sec:lift}. Section~\ref{sec:martingales} contains some properties of the driving martingales and bounds on auxiliary processes, which allow to prove moment bounds for the discrete models in Section~\ref{sec:convergence-of-models}. In Section~\ref{sec:discrete-solution} we write and solve the discrete equation on the regularity structure. Theorem~\ref{thm:main} is proved in Section~\ref{sec:ConvFinal}. Appendix~\ref{sec:kernels} contains some properties of the discrete kernels used throughout the paper. 

\subsection{Notation}
\label{sec:notation}

We use $\N$ for the set of natural numbers $1, 2, \ldots$, and we set $\N_0 := \N \cup \{0\}$. The set of positive real numbers is denoted by $\R_+ := [0, \infty)$. We typically use the Euclidean distance $|x|$ for points $x \in \R^d$, but sometimes we need the distances $|x|_{\sone} = |x_1| + \cdots + |x_d|$ and $|x|_{\sinfty} = \max \{|x_1|, \ldots, |x_d|\}$. We denote by $B(x, r)$ the open ball in $\R^3$ containing the point $y$ such that $|y - x| < r$.

For integer $n \geq 0$, we denote by $\CC^n_0$ the set of compactly supported $\CC^n$ functions $\varphi : \R^3 \to \R$. The set $\CB^n$ contains all functions $\varphi \in \CC^n_0$, which are supported on $B(0, 1)$, and which satisfy $\| \varphi \|_{\CC^n} \leq 1$. For a function $\varphi \in \CB^n$, for $x \in \R^3$ and for $\lambda \in (0,1]$, we define its rescaled and recentered version
\begin{equation}\label{eq:rescaled-function}
\varphi_x^\lambda(y) := \frac{1}{\lambda^{3}} \varphi \Bigl(\frac{y-x}{\lambda}\Bigr).
\end{equation}

We define the three-dimensional torus $\T^3$ identified with $[-1, 1]^3$, and the space $\SD'(\T^3)$ of distributions on $\T^3$. Respectively, we denote by $\SD'(\R^d)$ the space of distributions on $\R^d$. When working with distribution-valued stochastic processes, we use the Skorokhod space $\CD \bigl(\R_+, \SD'(\T^3)\bigr)$ of \cadlag functions \cite{Billingsley}. 

For $\eta < 0$ we define the Besov space $\CC^\eta(\T^3)$ as a completion of smooth functions $f: \T^3 \to \R$, under the seminorm
\begin{equation}\label{eq:Besov}
\| f \|_{\CC^\eta} := \sup_{\varphi \in \CB^r} \sup_{x \in \R^3} \sup_{\lambda \in (0,1]} \lambda^{- \eta} |f (\varphi_x^\lambda)| < \infty,
\end{equation}
for $r$ being the smallest integer such that $r > -\eta$, where we extended $f$ periodically to $\R^3$, and where we write $f (\varphi_x^\lambda) = \langle f, \varphi_x^\lambda \rangle$ for the duality pairing. Then the Dirac delta $\delta$ is an element of the space $\CC^{-3}(\T^3)$. It is important to define these spaces as completions of smooth functions, because this makes the spaces separable and allows to use various probabilistic results.

For $\eps > 0$ we define the grid $\Lattice := \eps \Z^3$ of the mesh size $\eps$.\label{lab:Lambda} Then it is convenient to map a function $f : \Lattice \to \R$ to a distribution as 
\begin{equation}\label{eq:iota}
(\iota_\eps f)(\varphi) := \eps^{3} \sum_{x \in \Lattice} f(x) \varphi(x), 
\end{equation}
for any continuous and compactly supported function $\varphi$.

When working on the time-space domain $\R^4$, we use the \emph{parabolic scaling} $\s := (2, 1, 1, 1)$, where the first coordinate corresponds to the time variable and the other three correspond to the space variables. Then for any point $(t, x_1, x_2, x_3) \in \R^4$, we introduce the parabolic distance from the origin $\Vert (t, x) \Vert_\s := | t |^{\frac12} + |x_1| + |x_2| + |x_3|$. For a multiindex $k = (k_0, k_1, k_2, k_3) \in \N_0^4$ we define $|k|_\s := 2 k_0 + k_1 + k_2 + k_3$. \label{lab:norms-s}

We frequently use the notation $a \lesssim b$, which means that $a \leq C b$ for a constant $C \geq 0$ independent of the relevant quantities (such quantities are always clear from the context). In the case $a \lesssim b$ and $b \lesssim a$ we simply write $a \approx b$. For a vanishing sequence of values $\emezo$, the notation $a_\emezo \sim \emezo^{-1}$ means that $\lim_{\emezo \to 0} \emezo a_\emezo$ exists and is finite.

We write $\CL(V, W)$ for the space of linear bounded operators from $V$ to $W$.

\subsection*{Acknowledgements}
PG was supported by a scholarship from the EPSRC Centre for Doctoral Training in Statistical Applied Mathematics at Bath (SAMBa), under the project EP/L015684/1.

KM was partially supported by NSF grant DMS-2321493. HW was supported by the Royal Society through the University Research Fellowship UF140187, by the Leverhulme Trust through a Philip Leverhulme Prize and by the European Union (ERC, GE4SPDE, 101045082). HW acknowledges funding by the Deutsche Forschungsgemeinschaft under Germany’s Excellence Strategy EXC 2044 390685587, Mathematics M\"{u}nster: Dynamics -- Geometry -- Structure.

PG and HW thank the Isaac Newton Institute for Mathematical Sciences for hospitality during the programme \textit{Scaling limits, rough paths, quantum field theory}, which was supported by EPSRC Grant No. EP/R014604/1.

\section{The dynamical Ising-Kac model} 
\label{sec:IsingKac}

The Ising-Kac model is a mean-field model with long range potential, which was introduced to recover rigorously the van der Waals theory of phase transition \cite{Kac}. We are interested in the three-dimensional model on a periodic domain. To define the model, let us take $N \in \N$ and let $\LN := \Z^3 \slash (2N + 1) \Z^3$ be the three-dimensional discrete torus, i.e. a discrete periodic grid with $2N+1$ points per side. It will be convenient to identify $\LN$ with the set $\{-N, -N + 1, \ldots, 0, \ldots, N\}^3$ and allow points to be multiplied by real numbers in such a way that $r \cdot x = rx \, (\text{mod} \, (2N + 1))$, for any $x \in \LN$ and $r \in \R$, where the $\text{mod}$ operator is taken on each component of $x$. Each site of the grid $k \in \LN$ has an assigned spin value $\sigma(k) \in \{-1, +1\}$. The set of all spin configurations is $\Sigma_N := \{ -1, +1 \}^{\LN}$ and we write $\sigma = \bigl(\sigma(k): k \in \LN\bigr)$ for an element of $\Sigma_N$. 

Let us fix a constant $r_\star > 0$. The range of the interaction is represented by a real number $\ga \in (0, \ga_\star)$, for some $\ga_\star < r^{-1/3}_\star$, and by a smooth, compactly supported, rotation invariant function $\KK: \R^3 \to [0,1]$, supported in the ball $B(0, r_\star)$. (A high regularity of this function is required in the proof of Lemma~\ref{lem:Kg}.) We impose that $\KK(0) = 0$ and
\begin{equation}\label{eq:K-moments}
\int_{\R^3} \KK(x)\, \d x = 1, \qquad\qquad \int_{\R^3} \KK(x) |x|^2\, \d x = 6, 
\end{equation}
where $|x|$ is the Euclidean norm. Then we define the function $\KK_\ga: \LN \to [0, \infty)$ as 
\begin{equation}\label{eq:K-gamma}
\KK_\ga(k) = \varkappa_{\ga, 1} \ga^3 \KK(\ga k)
\end{equation}
for $k \in \LN$. The constant $\varkappa_{\ga, 1}$ is given by $\varkappa_{\ga, 1}^{-1}:= \sum_{k \in \LN} \ga^3 \KK(\ga k)$, and it guarantees that $\sum_{k \in \LN} \KK_\ga(k) = 1$. Our assumption $\ga < \ga_\star$ makes sure that the radius of interaction $r_\star \ga^{-1}$ does not exceed the size of the domain $N \approx \ga^{-4}$ (the precise definition of $N$ is given in \eqref{eq:scalings}). In the rest of this paper, we always consider $\ga < \ga_\star$.

The \emph{locally averaged (coarse-grained) field} $h_\ga: \Sigma_N \times \LN \to \R$ is defined as
\[ h_\ga(\sigma, k) := \sum_{j \in \LN} \KK_\ga (k - j) \sigma(j). \] 
Here and in what follows we consider the difference $k - j$ on the torus. The \emph{Hamiltonian} of the system is the function $\SH_\ga: \Sigma_N \to \R$ given by 
\begin{equation}\label{eq:Hamiltonian}
\SH_\ga(\sigma) := - \frac{1}{2} \sum_{j, k \in \LN} \KK_\ga (k - j) \sigma(j) \sigma(k) = - \frac{1}{2} \sum_{k \in \LN} \sigma(k) h_\ga (\sigma, k).
\end{equation}
In other words, two spins $\sigma(j)$ and $\sigma(k)$ interact if they are located at a distance bounded by $r_\star \ga^{-1}$, where $r_\star$ is the radius of the support of $\KK$.

For a fixed \emph{inverse temperature} $\beta > 0$, the \emph{Gibbs measure} $\lbg$ is the probability measure on $\Sigma_N$ 
\[ 
\lbg (\sigma) = \frac{1}{\SZ_\ga} \exp \big( - \beta \SH_\ga (\sigma) \big) \qquad \text{for} \quad \sigma \in \Sigma_N, 
\]
with normalization constant $\SZ_\ga := \sum_{\sigma \in \Sigma_N} \exp \big( - \beta \SH_\ga (\sigma) \big)$. Since we consider the Ising-Kac model in a finite volume, the sum is finite and $\SZ_\ga$ is always well-defined.

We are interested in the \emph{Glauber dynamics} of the Ising-Kac model, in which the spins evolve in time as a Markov process on a filtered probability space $\bigl(\Omega, \P, \SF, (\SF_t)_{t \geq 0}\bigr)$ with the infinitesimal generator 
\begin{equation}\label{eq:generator}
	\SL_\ga f (\sigma) := \sum_{j \in \LN} c_{\ga} (\sigma, j) \big( f(\sigma^j) - f(\sigma) \big), 
\end{equation}
acting on functions $f: \Sigma_N \to \R$. The configuration $\sigma^j$ is obtained from $\sigma$ by flipping the spin at the site $j$, i.e. for any $k \in \LN$
\[
	\sigma^j(k) := \left\{ \begin{aligned} & \sigma(k) &&\text{if }~ k \neq j, \\ &-\sigma(k) &&\text{if }~ k = j. \end{aligned} \right.
\]
The flipping rates $c_\ga$ are chosen such that the Gibbs measure $\lbg$ is reversible for the dynamics. For any $\sigma \in \Sigma_N$ and for any $j \in \LN$, we set 
\begin{equation}\label{eq:rates}
	c_\ga(\sigma, j) := \frac{ \lbg (\sigma^j) }{ \lbg (\sigma) + \lbg (\sigma^j) } = \frac{1}{2} \Bigl( 1 - \sigma(j) \tanh \big( \be h_\ga(\sigma, j) \big) \Bigr).
\end{equation}
One can readily check that the \emph{detailed balance condition} is satisfied (see Proposition~5.3 in \cite{Liggett} and the discussion above it)
\begin{equation*}
c_\ga(\sigma^j, j) \lbg (\sigma^j) = c_\ga(\sigma, j) \lbg (\sigma),
\end{equation*}
for each $j \in \LN$, which implies that indeed the Gibbs measure $\lbg$ is reversible. Given a time variable $t \geq 0$, we denote by $\sigma(t) = \bigl(\sigma(t, k): k \in \LN\bigr)$ the pure jump Markov process with jump rates $c_\ga$. 

We can use properties of the infinitesimal generator (see \cite[App.~1.1.5]{KipnisLandim}) to write 
\begin{equation}\label{eq:equation-for-sigma}
\sigma(t, k) = \sigma(0, k) + \int_0^t \SL_\ga \sigma(s, k)\, \d s + \fm_\ga (t, k), 
\end{equation}
where $\sigma(0) \in \Sigma_N$ is a fixed initial configuration of spins at time $0$, the generator is applied to the function $f (\sigma) = \sigma(k)$ and $t \mapsto \fm_\ga(t, k)$ is a family of \cadlag martingales with jumps of size $2$ (because each spin changes values from $+1$ to $-1$ or vice versa). Moreover, the predictable quadratic covariations of these martingales are given by the \emph{carr\'{e} du champ} operator \cite[App.~B]{Mourrat} and may be written as
\begin{equation}\label{eq:m-bracket}
\big\langle \fm_\ga(\bigcdot, k), \fm_\ga(\bigcdot, k') \big\rangle_t = 4 \delta_{k,k'} \int_0^t c_\ga ( \sigma(s), k )\, \d s,
\end{equation}
for all $k, k' \in \LN$, where $\delta_{k,k'}$ is the Kronecker delta, i.e. $\delta_{k,k'} = 1$ if $k = k'$ and $\delta_{k,k'} = 0$ otherwise. We recall that the predictable quadratic covariation in \eqref{eq:m-bracket} is the unique increasing process, vanishing at $t = 0$ and such that $t \; \mapsto \; \fm_\ga(t, k) \fm_\ga(t, k') - \big\langle \fm_\ga(\bigcdot, k), \fm_\ga(\bigcdot, k') \big\rangle_t$ is a martingale. The definitions and properties of the bracket processes for \cadlag martingales can be found in \cite{JS03}.

The dynamical version of the averaged field we denote by 
\begin{equation*}
h_\ga(t,k) := h_\ga(\sigma(t), k).
\end{equation*}

\begin{remark}
	As we stated above, we always consider $N >\!\!> \ga^{-1}$, which together with the property $\KK_\ga(0) = 0$ means that there is no self-interaction of spins. In contrast to the setting of \cite{IsingKac}, we have to avoid self-interaction by postulating $\fK(0) = 0$. The reason for this assumption can be seen in the proof of Lemma~\ref{lem:Kg}, where the function $K_\ga$ is required to be differentiable. The weaker bounds in \cite[Lem.~8.2]{IsingKac} in the two-dimensional setting allow this function to have a discontinuity at the origin.
\end{remark}

\subsection{Convergence of a rescaled model}
\label{sec:convergence-statement}

Our main interest lies in understanding the bahaviour of a rescaled version of the dynamical Ising-Kac model. For $\eps = 2 / (2 N + 1)$ we introduce the rescaled lattice \[ \Le := \big\{ \eps k : k \in \LN \big\}. \] In particular, $\Le$ is a subset of the three-dimensional torus $\T^3$. In what follows, we use the convolution on the lattice, defined for two functions $f, g : \Le \to \R$ as
\begin{equation}\label{eq:discrete-convolution}
\bigl(f \ae g\bigr) (x) := \eps^3 \sum_{y \in \Le} f(x - y) g(y).
\end{equation}
For any function $g: \Le \to \R$, we use the standard definition for the discrete Fourier transform
\begin{equation}\label{eq:Fourier}
	\widehat{g}(\om) := \eps^3 \sum_{x \in \Le} g(x) e^{-\pi i \om \cdot x} \qquad \text{for} \quad \om \in \{ -N, \ldots, N \}^3.
\end{equation}

We fix two positive real constants $\delta > 0$ and $\alpha > 0$ and define the family of rescaled martingales
\begin{equation}\label{eq:martingales}
	\M_\ga ( t, x ) := \frac{1}{\delta} \fm_\ga \Bigl( \frac{t}{\alpha}, \frac{x}{\eps} \Bigr) \qquad \text{for}~~ x \in \Le,~ t \geq 0.
\end{equation}
and also
\begin{equation}\label{eq:kernel-K}
	K_\ga(x) := \frac{1}{\eps^{3}} \KK_\ga \left( \frac{x}{\eps} \right).
\end{equation}
Then from \eqref{eq:equation-for-sigma} we can conclude that the rescaled process
\begin{equation}\label{eq:X-gamma}
X_\ga(t, x) := \frac{1}{\delta} h_\ga \Bigl( \frac{t}{\alpha}, \frac{x}{\eps} \Bigr) \qquad \text{for}~~ x \in \Le,~ t \geq 0,
\end{equation}
solves the following equation (see \cite{IsingKac} for the derivation of an analogous equation in the two-dimensional case)
\begin{align}
	X_\ga(t, x) &= X_\ga^0 (x) + \big( K_\ga \ae \M_\ga \big) (t, x) \label{eq:equation-for-X}\\
	&\qquad + \int_0^{t} \biggl( \frac{{\eps}^2}{\ga^2 \alpha} \widetilde{\Delta}_\ga X_\ga + \frac{\be-1}{\alpha} K_\ga \ae X_\ga - \frac{\be^3 \delta^2}{3 \alpha} K_\ga \ae X_\ga^3 + E_\ga \biggr) (s, x)\, \d s, \nonumber
\end{align}
where $X_\ga^0 (x) = X_\ga(0, x)$ is a rescaled initial configuration. The linear part of this equation is given by the discrete operator 
\begin{equation}\label{eq:Laplacian-gamma}
	\widetilde{\Delta}_\ga f (x) := \frac{\ga^2}{{\eps}^2} \big( K_\ga \ae f - f \big)(x),
\end{equation}
and the ``error term'' $E_\ga$ is given by
\begin{equation}\label{eq:expr_error_term} 
	E_\ga (t, x) := \frac{1}{\delta \alpha} \biggl( \tanh \bigl( \be \delta X_\ga \bigr) - \be \delta X_\ga + \frac{1}{3} \bigl(\be \delta X_\ga\bigr)^3 \biggr)(t,x).
\end{equation}
As we commented after \eqref{eq:K-gamma}, for all $\ga$ sufficiently small the function $K_\ga(x)$ is supported on $(-1, 1)^3$, and its convolutions with periodic processes in \eqref{eq:equation-for-X} make sense.

We are going to take the limit such that all the scaling parameters in \eqref{eq:X-gamma} tend to zero. In order to prevent explosion of the multiplier $(\be-1) / \alpha$ in \eqref{eq:equation-for-X}, we need to consider the inverse temperature of the form 
\begin{equation}\label{eq:beta}
\be = 1 + \alpha \big( \fC_\ga + A \big),
\end{equation}
where $A$ is a fixed constant (its value does not play any significant role and produces a linear term in the limiting equation \eqref{eq:Phi43}) and where $\fC_\ga$ is a suitably chosen renormalisation constant, which diverges as $\ga \to 0$ such that $\ga^{-1} <\!\!< \alpha^{-1}$. In other words, we consider the model near the critical mean-field value of the inverse temperature $\beta_c = 1$, and as we will see later, $\fC_\ga$ plays a role of the renormalisation constant, which is required to have a non-trivial limit of the non-linearity $X_\ga^3$ in \eqref{eq:equation-for-X}. The shift of the critical inverse temperature was observed in \cite{MR1467623}, and in the three-dimensional case it has a significantly more complicated structure than in two dimensions \cite{IsingKac} (see Theorem~\ref{thm:main}). 

From \eqref{eq:rates} and \eqref{eq:m-bracket} we conclude that the predictable quadratic covariations of the martingales \eqref{eq:martingales} are
\begin{equation}\label{eq:M-bracket}
\big\langle \M_\ga(\bigcdot, x), \M_\ga(\bigcdot, x') \big\rangle_t = \frac{2 \eps^3}{\delta^2 \alpha} \delta^{(\eps)}_{x,x'} \int_0^{t} \Bigl( 1 - \sigma \Bigl(\frac{s}{\alpha}, \frac{x}{\eps}\Bigr) \tanh \bigl( \be \delta X_\ga(s, x) \bigr) \Bigr) \d s,
\end{equation}
for any $x, x' \in \Le$, where $\delta^{(\eps)}_{x,x'} := \eps^{-3} \delta_{x,x'}$ is an approximation of the Dirac's delta.

We would like to have convergence of the operators $\widetilde{\Delta}_\ga$ to the Laplacian, and of the quadratic covariations for the martingales to those of a cylindrical Wiener process. We also want to have a non-trivial nonlinearity in the limit (given by the cubic term), which translates into the relations between the scaling parameters \[ 1 \approx \frac{{\eps}^2}{\ga^2 \alpha} \approx \frac{\delta^2}{\alpha} \approx \frac{{\eps}^3}{\delta^2 \alpha}. \]
In the rest of this article, we therefore fix them to be $\ga$-dependent as
\begin{equation}\label{eq:scalings} 
	N = \lfloor \ga^{-4} \rfloor, \qquad \eps = \frac{2}{2N + 1}, \qquad \alpha = \ga^6, \qquad \delta = \ga^3.
\end{equation}
This implies $\eps \approx \ga^4$, and such choice of $\eps$ (rather than $\eps = \ga^4$) makes the use of the discrete Fourier transform \eqref{eq:Fourier} more convenient. Moreover, we define:
\begin{equation}\label{constant:isingkac_one}
	\varkappa_{\ga, 2} := \frac{\eps^3}{\delta^2 \alpha} \approx 1,
\end{equation}
which we will use in the rest of the paper, remembering that it converges to $1$.

The scaling \eqref{eq:scalings} makes the radius of interaction for the rescaled process to be $\emezo := \eps / \gamma \approx \ga^3$. As such, the model has two scales: $\eps \approx \ga^4$ is the distance between points on the lattice, and $\emezo \approx \ga^3$ is the distance up to which the interaction between two spins is felt.

\begin{remark}
	The dynamical Ising-Kac model can be defined for any spatial dimension $d \geq 1$, where the previous conditions on the quantities $\eps$, $\delta$ and $\alpha$ become
\begin{equation}\label{constant:isingkac_variable_dimension}
		\eps \approx \ga^{\frac{4}{4-d}}, \qquad \alpha \approx \ga^{\frac{2d}{4-d}}, \qquad \delta \approx \ga^{\frac{d}{4-d}}.
		\end{equation}
	Observe that, in order to make these quantities vanish when $\ga \to 0$, we need to impose $d < 4$. This condition coincides with the \emph{local sub-criticality} condition in the solution theory of the dynamical $\Phi^4_d$ model \cite{Regularity}.
\end{remark}

Our goal is to prove convergence of the rescaled processes \eqref{eq:X-gamma} to the solution of the $\Phi^4_3$ equation (\emph{the dynamical $\Phi^4_3$ model})
\begin{equation}\label{eq:Phi43}
(\partial_t - \Delta) X = - \frac{1}{3} X^3 + A X + \sqrt 2\, \xi, \qquad X(0, \bigcdot) = X^0(\bigcdot),
\end{equation}
on $\R_+ \times \T^3$, where $\xi$ is space-time white noise, and $A$ is the same as in \eqref{eq:beta}. The notion of solution for the singular stochastic PDE \eqref{eq:Phi43} was first provided in \cite{Regularity} using the \emph{theory of regularity structures}, and later in \cite{MR3846835} using \emph{paracontrolled distributions}. 

We need to introduce the topology in which convergence of the initial states holds. Namely, for a function $f_\ga : \Le \to \R$, for $\eta < 0$ and for the smallest integer $r$ such that $r > - \eta$, we define the semi-norm
\begin{equation}\label{eq:eps-norm-1}
\| f_\ga \|^{(\emezo)}_{\CC^\eta} := \sup_{\varphi \in \CB^{r}} \sup_{x \in \Lattice} \sup_{\lambda \in [\emezo, 1]} \lambda^{-\eta} | (\iota_\eps f_\ga)(\varphi^\lambda_x) | + \sup_{\varphi \in \CB^{r}} \sup_{x \in \Lattice} \sup_{\lambda \in [\eps, \emezo)} \emezo^{-\eta} | (\iota_\eps f_\ga)(\varphi^\lambda_x) |.
\end{equation}
where we extended the function $f_\ga$ periodically to $\Lattice$, the set of test functions $\CB^{r}$ defined in Section~\ref{sec:notation} and the map $\iota_\eps$ is defined in \eqref{eq:iota}. This definition is similar to \eqref{eq:Besov}, where we  ``measure'' regularity only above the scale $\emezo$. On the smaller scale, we expect the function to be uniformly bounded by a constant multiple of $\emezo^{\eta}$. One can see that this semi-norm is finite for any function $f_\ga$, but we will be always interested in the situation when it is bounded uniformly in $\ga > 0$. If $\lambda < \emezo$, then the support of $\varphi^\lambda_x$ contains only the point $x \in \Lattice$, and we readily get 
\begin{equation}\label{eq:uniform-bound}
\sup_{x \in \Lattice} |f_\ga(x)| \leq \emezo^{\eta} \| f_\ga \|^{(\emezo)}_{\CC^\eta}.
\end{equation}
To compare this function with a distribution $f \in \CC^{\eta}(\T^3)$, we also define 
\begin{align}\label{eq:eps-norm-2}
\| f_\ga; f \|^{(\emezo)}_{\CC^\eta} &:= \sup_{\varphi \in \CB^{r}} \sup_{x \in \Lattice} \sup_{\lambda \in [\emezo, 1]} \lambda^{-\eta} | (\iota_\eps f_\ga - f)(\varphi^\lambda_x) | \\
&\qquad +\sup_{\varphi \in \CB^{r}} \sup_{x \in \Lattice} \sup_{\lambda \in [\eps, \emezo)} \emezo^{-\eta} | (\iota_\eps f_\ga)(\varphi^\lambda_x) | + \sup_{\varphi \in \CB^{r}} \sup_{x \in \R^3} \sup_{\lambda \in (0, \emezo)} \lambda^{-\eta} | f(\varphi^\lambda_x) |, \nonumber
\end{align}
where we extended $f_\ga$ and $f$ periodically to $\Lattice$ and $\R^3$ respectively. In other words, we compare the two functions on the scale above $\emezo$, and use the simple control on the smaller scale. 

The following is the main result of this article, which is proved in Section~\ref{sec:ConvFinal}. We refer to Section~\ref{sec:notation} for the definitions of the involved spaces.

\begin{theorem}\label{thm:main}
Let there exist values $-\frac{4}{7} < \eta < \bar \eta < -\frac{1}{2}$ and $\ga_\star > 0$, and a distribution $X^0 \in \CC^{\bar \eta}(\T^3)$
such that the rescaled initial state of the dynamical Ising-Kac model satisfies
\begin{equation}\label{eq:initial-convergence}
	\sup_{\ga \in (0, \ga_\star)} \| X^{0}_\ga\|^{(\emezo)}_{\CC^{\bar \eta}} < \infty, \qquad \lim_{\ga \to 0} \| X^{0}_\ga; X^0 \|^{(\emezo)}_{\CC^{\eta}} = 0.
\end{equation}
Then there is a choice of the constant $\fC_\ga$ in \eqref{eq:beta}, such that the processes $t \mapsto \iota_\eps X_\ga(t)$ converge in law as $\ga \to 0$ to $t \mapsto X(t)$ with respect to the topology of the Skorokhod space $\CD \bigl(\R_+, \SD'(\T^3)\bigr)$, where $X$ is the solution of the $\Phi^4_3$ equation \eqref{eq:Phi43} with the initial state $X^0$ and with the constant $A$ from \eqref{eq:beta}.

Furthermore, let $\widehat{K}_\ga$ be the discrete Fourier transform of the function $K_\ga$ (since $K_\ga$ is symmetric, $\widehat{K}_\ga$ is real-valued). Then for all $\ga > 0$ small enough one has the expansion
\begin{equation}\label{eq:C-expansion}
\fC_\ga = \fc_\ga^{(2)} + \fc_\ga^{(1)} + \fc_\ga^{(0)},
\end{equation}
where the constants $\fc_\ga^{(2)} \sim \emezo^{-1}$ and $\fc_\ga^{(1)} \sim \log \emezo$ are given by
\begin{align}
\fc_\ga^{(2)} &= \frac{\ga^6}{8} \sum_{0 < |\om|_{\sinfty} \leq N} \frac{|\widehat{K}_\ga(\om)|^2}{1 - \widehat{K}_\ga(\om)}, \label{eq:renorm-constants-main}\\
\fc_\ga^{(1)} &= \frac{\ga^{18}}{16} \sum_{0 < |\om_1|_{\sinfty}, |\om_2|_{\sinfty} \leq N} \frac{|\widehat{K}_\ga(\om_1)|^2 |\widehat{K}_\ga(\om_2)|^2}{(1 - \widehat{K}_\ga(\om_1)) (1 - \widehat{K}_\ga(\om_2))} \frac{\widehat{K}_\ga(\om_1 + \om_2)}{1 - \widehat{K}_\ga(\om_1) - \widehat{K}_\ga(\om_2) + \widehat{K}_\ga(\om_1 + \om_2)}, \nonumber
\end{align}
and the constant $\fc_\ga^{(0)}$ has a finite limit as $\ga \to 0$. All sums in \eqref{eq:renorm-constants-main} run over $\{-N, \ldots, N\}^3$ with the imposed restrictions, and the denominators of the terms in these sums are non-vanishing. 
\end{theorem}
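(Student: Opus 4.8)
The plan is to establish Theorem~\ref{thm:main} by translating the discrete equation \eqref{eq:equation-for-X} into the language of (discrete) regularity structures and invoking a stability/continuity result for the fixed point map together with the convergence of the discrete renormalised model. First I would rescale \eqref{eq:equation-for-X} so that, after the choice of parameters \eqref{eq:scalings} and \eqref{eq:beta}, the leading linear operator $\tfrac{\eps^2}{\ga^2\alpha}\widetilde{\Delta}_\ga$ becomes a consistent discretisation of the Laplacian (Appendix~\ref{sec:kernels} should supply the needed kernel estimates) and the noise term $K_\ga \ae \M_\ga$ converges, after renormalisation, to $\sqrt 2\,\xi$; the factor $\sqrt 2$ comes from the predictable quadratic covariation \eqref{eq:M-bracket} using $\varkappa_{\ga,2}\to 1$ and the fact that $\tanh(\be\delta X_\ga)\to 0$ uniformly at the relevant scale. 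The error term $E_\ga$ in \eqref{eq:expr_error_term} is of the form $\delta^2 \times (\text{polynomial in } X_\ga)$ because the Taylor expansion of $\tanh$ produces a remainder of order $(\be\delta X_\ga)^5$, so $E_\ga$ carries a prefactor $\delta^4/(\delta\alpha)=\ga^{6}$ which vanishes; thus it does not contribute to the limit but must be controlled on the regularity-structure side as a lower-order perturbation.

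Next I would set up the abstract fixed point problem on the discrete regularity structure built in Section~\ref{sec:discreteRegStruct}, lift the equation as in Section~\ref{sec:discrete-solution}, and use the discrete renormalised model $\hat\Pi^\ga$ of Section~\ref{sec:lift} whose moment bounds and convergence (to the $\Phi^4_3$ BPHZ model) are provided by Sections~\ref{sec:martingales}--\ref{sec:convergence-of-models}. The renormalisation of the model amounts, after unwinding the algebra, precisely to subtracting the constants appearing in \eqref{eq:C-expansion}: the ``Wick'' part $\fc^{(2)}_\ga$ is the diagonal value of the discrete stationary covariance of the linearised solution $\<1>$, which by the resolvent identity for $\widetilde{\Delta}_\ga$ equals $\tfrac{\ga^6}{8}\sum_{0<|\om|_\sinfty\le N}\tfrac{|\widehat K_\ga(\om)|^2}{1-\widehat K_\ga(\om)}$ and is of order $\emezo^{-1}$; the ``sunset'' part $\fc^{(1)}_\ga$ is the value at coinciding points of the second-order diagram $\<2>\!\!\to$ convolved once more against the heat kernel, whose Fourier representation gives the double sum in \eqref{eq:renorm-constants-main} with the stated order $\log\emezo$; and $\fc^{(0)}_\ga$ collects the bounded remainder coming from the discrepancy between the martingale bracket renormalisation and the naive Gaussian (Wick) renormalisation — this is the extra order-$1$ shift flagged in the introduction, and its finiteness as $\ga\to0$ follows from the martingale estimates in the companion article \cite{Martingales}. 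The non-vanishing of all denominators is a consequence of $\widehat K_\ga$ being real with $\widehat K_\ga(0)=1$ and $|\widehat K_\ga(\om)|<1$ for $\om\ne 0$ (a strict spectral gap for the averaging operator on the finite torus), together with the elementary inequality $1-\widehat K_\ga(\om_1)-\widehat K_\ga(\om_2)+\widehat K_\ga(\om_1+\om_2)>0$ which one checks from the positive-definiteness structure; I would record this as a short lemma.

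Then the convergence statement follows in three steps: (i) the discrete solution exists up to a (possibly random) time via the discrete fixed point; (ii) a priori bounds — uniform in $\ga$ — on the reconstructed solution in the scaled Hölder/Besov norms \eqref{eq:eps-norm-1}, using the hypothesis \eqref{eq:initial-convergence} on the initial data (the range $-\tfrac47<\eta<\bar\eta<-\tfrac12$ is dictated by the regularity of $\xi$ in $3D$ and the requirement that $X^0$ be admissible for the $\Phi^4_3$ fixed point, cf. Section~\ref{sec:phi4Section}); and (iii) passage to the limit: by the continuity of the solution map with respect to the model and the initial condition, $\iota_\eps X_\ga(t)$ converges in law, in $\CD(\R_+,\SD'(\T^3))$, to the solution $X$ of \eqref{eq:Phi43}. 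Tightness in the Skorokhod space is obtained from the moment bounds plus an equicontinuity estimate for the time increments of $\iota_\eps X_\ga$, and to upgrade from local to global convergence one uses the coming-down-from-infinity / global well-posedness results for $\Phi^4_3$ quoted in Section~\ref{sec:phi4Section} together with a uniform control that prevents blow-up of the discrete solutions before the (deterministic) times at which the limit is controlled.

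I expect the main obstacle to be the identification and bookkeeping of the renormalisation constants, specifically showing that the algebraically natural BPHZ-type counterterms for the discrete model collapse to exactly the three explicit quantities in \eqref{eq:C-expansion} and nothing more — in particular isolating the anomalous order-$1$ term $\fc^{(0)}_\ga$ coming from the martingale bracket (which has no continuum analogue) and proving it converges. This requires the delicate martingale-integral analysis of \cite{Martingales}: one must show that products of the discrete jump-martingale integrals behave, up to controllable errors, like Wick products of Gaussians, and that the correction is a convergent constant rather than a divergent one. A secondary difficulty is proving the uniform-in-$\ga$ a priori bounds in the two-scale norms \eqref{eq:eps-norm-1}--\eqref{eq:eps-norm-2}, where one must carefully handle the transition between the lattice scale $\eps\approx\ga^4$ and the interaction scale $\emezo\approx\ga^3$; below $\emezo$ only the crude pointwise bound \eqref{eq:uniform-bound} is available, and one must ensure this suffices to close the fixed point.
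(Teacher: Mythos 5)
Your high-level route (discrete regularity structure, renormalised lift of the martingales, moment bounds, fixed point, identification of the constants) is indeed the paper's, but there is a genuine gap at the heart of it: you treat the martingales as an external noise whose renormalised model can be constructed and shown to converge once and for all, and then fed into a continuous solution map. In this model the driving martingales are \emph{not} external: their predictable brackets \eqref{eq:M-bracket} involve $\sigma$ and $X_\ga$ themselves, so the moment bounds for the lift — and already the very definition of the bracket counterterm $\fc_\ga'$ in \eqref{eq:renorm-constant3}, which is where your order-one constant $\fc_\ga^{(0)}$ partly comes from — require a priori control of $X_\ga$ and of the renormalised product $\un{X}_\ga X_\ga$, which is exactly what the solution theory is supposed to deliver. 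Your proposal never breaks this circularity. The paper does so by introducing the stopping times \eqref{eq:tau-1} and \eqref{eq:tau-2}, continuing the spin dynamics after $\tau_{\ga,\fa}$ by the linear (voter-model type) system $\sigma'_{\ga,\fa}$ of \eqref{eq:sigma-stopped}, constructing the model and solving the abstract equation for the stopped/extended process $X_{\ga,\fa}$ only, and removing the cutoff at the very end by letting $\fa\to\infty$ and showing $\P(\tau_{\ga,\fa}<T)\to 0$ (for $\tau^{(2)}$ this uses Lemma~\ref{lem:unX-X-bound}). Nothing in your plan plays this role, so both the bound on the second Wick power (Section~\ref{sec:second-symbol}) and the final removal of the a priori bound are missing.

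The passage to the limit is also not the one you sketch. Your step (iii) invokes tightness plus continuity of the solution map ``with respect to the model'', which tacitly presupposes convergence in law of the discrete model to the continuum BPHZ model; the paper never proves such a statement. Instead it proves uniform-in-$\ga$ moment bounds for $Z^{\ga,\fa}_\lift$ together with closeness of order $\de^\theta$, uniformly in $\ga$, to the model built from mollified martingales (Proposition~\ref{prop:models-converge}), and transfers convergence in law at the mollified level: for fixed $\de$ the discrete equation \eqref{eq:Discrete-equation-regular-noise} is solved classically and its solution map is continuous in $L^\infty$ (Lemma~\ref{lem:convergence2}), the mollified noises converge in law by the martingale functional CLT of \cite{Martingales}, and the diagram is closed by the $\de$-stability \eqref{eq:solution-continuity} on the continuum side and \eqref{eq:SolutionMap-bound-delta} on the discrete side, before taking $\fa\to\infty$. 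If you wish to keep a tightness-based argument you must actually prove joint convergence in law of the discrete model, which is a substantial missing step. Two smaller points: the restriction $\eta>-\tfrac47$ is not dictated by admissibility for the continuum $\Phi^4_3$ fixed point but by the need to Taylor-expand $\tanh$ to fifth order with a vanishing error under the a priori bound \eqref{eq:X-apriori} (Remark~\ref{rem:regularity-explanation}); and your claimed positivity $1-\widehat K_\ga(\om_1)-\widehat K_\ga(\om_2)+\widehat K_\ga(\om_1+\om_2)>0$ ``by positive definiteness'' is unjustified and unnecessary — the paper only needs the denominators to be non-vanishing, which follows from the kernel estimates of Lemma~\ref{lem:Kg}.
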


\begin{remark}
One should note that the renormalisation constant $\fC_\ga$ depends non-trivially on the covariations \eqref{eq:M-bracket} of the driving martingales. It can be seen from the proof of Theorem~\ref{thm:main} (more precisely, from the renormalisation of the lift \eqref{eq:cherry} and from the definition of the renormalisation constant \eqref{eq:C-exact} in the discrete equation). The constant $\fc_\ga'$ in \eqref{eq:cherry} comes from renormalisation of the product in \eqref{eq:M-bracket}, and it is not observed in the two-dimensional case \cite{IsingKac}.
\end{remark}

\begin{remark}
The precise value of the constant $\fc_\ga^{(0)}$ may be obtained from \eqref{eq:C-exact}, which does not play a significant role and we omit it here.
\end{remark}

\begin{remark}\label{rem:regularity-explanation}
It is natural to consider the initial states of regularity strictly smaller than $-\frac{1}{2}$, because this is the spatial regularity of the solution to \eqref{eq:Phi43} (see \cite{Regularity}). We make the assumption $\eta > -\frac{4}{7}$ on the regularity of the initial state. It follows from the definition of the model that $X_\ga$ lives on the scale $\emezo \approx \ga^3$. This implies that, for any $\kappa > 0$, we expect the following a priori bound 
\begin{equation*}
    \| X_\ga(t) \|_{L^\infty(\Le)} \lesssim \emezo^{-\frac{1}{2} - \kappa}
\end{equation*}
uniformly in $\ga \in (0,\ga_\star)$. Hence, for $\kappa < \frac{1}{14}$ we can use the Taylor expansion of order $5$ for the function $\tanh$ in \eqref{eq:expr_error_term}, with the error term bounded by a positive power of $\ga$. This is the reason for our restriction $\eta = -\frac{1}{2} - \kappa > -\frac{4}{7}$. Proving Theorem~\ref{thm:main} for lower regularity of the initial state requires some technicalities. More precisely, for $\eta < -\frac{4}{7}$ we need to have a bigger regularity structure, than the one defined in Section~\ref{sec:discreteRegStruct}, we need to control blow-ups of $X_{\ga}$ at time $t = 0$, similarly to how it was done in \cite{MR3179667, Regularity}, and we may need to work in more complicated spaces (see \cite{Labbe} for continuous equations with irregular initial states).
\end{remark}

\subsection{A mild form of the equation}

In order to define the Green's function for the linear operator in \eqref{eq:equation-for-X}, it is convenient to use the discrete Fourier transform \eqref{eq:Fourier}. We start with recalling some of its basic properties. Every time when a sum runs over $\om \in \{ -N, \ldots, N \}^3$, we will simply write $|\om|_{\sinfty} \leq N$. For the function as in \eqref{eq:Fourier}, the Fourier series is
\begin{equation}\label{eq:Fourier-series}
	g(x) = \frac{1}{8}\sum_{|\om|_{\sinfty} \leq N} \widehat{g}(\om) e^{\pi i \om \cdot x}.
\end{equation}
Then, for two functions $f, g : \Le \to \R$, Parseval's theorem reads 
\begin{equation}\label{eq:Parseval}
	\eps^3 \sum_{x \in \Le} f(x) g(x) = \frac{1}{8} \sum_{|\om|_{\sinfty} \leq N} \widehat{f}(\om)\, \overline{\widehat{g}(\om)},
\end{equation}
where $\overline{\widehat{g}(\om)}$ is the complex conjugation of $\widehat{g}(\om)$. Moreover, one has the identities
\begin{equation}\label{eq:Fourier-of-product}
	\widehat{f g}(\om) = \frac{1}{8} \sum_{|\om'|_{\sinfty} \leq N} \widehat{f}(\om - \om')\, \widehat{g}(\om'), \qquad\qquad \widehat{f \ae g}(\om) = \widehat{f}(\om)\, \widehat{g}(\om),
\end{equation}
where $\ae$ is the convolution on $\Le$, defined in \eqref{eq:discrete-convolution}, and the subtraction $\om - \om'$ is performed on the torus $\{ -N, \ldots, N \}^3$. To have a lighter notation in the following formulas we will write $\SF_{\!\!\eps} f(\om)$ for the discrete Fourier transform $\widehat{f}(\om)$. One can readily see that $\SF_{\!\!\eps} f$ converges in a suitable sense as $\ga \to 0$ to the continuous Fourier transform $\SF f$ given by
\begin{equation*}
\SF f (\om) = \int_{\R^3} f(x) e^{-\pi i \om \cdot x} \qquad \text{for} \quad \om \in \R^3.
\end{equation*}

It will be convenient to include the factor $\eps^2 / (\ga^2 \alpha)$ in \eqref{eq:equation-for-X} into the definition of the linear operator. For this, we write 
\begin{equation}\label{eq:c-gamma-2}
\eps = \ga^4 \varkappa_{\ga, 3} \qquad \text{with} \quad |\varkappa_{\ga, 3} - 1| < \ga^4, 
\end{equation}
and we define a new operator 
\begin{equation}\label{eq:Laplacian-discrete}
\Delta_\ga := \varkappa_{\ga, 3}^2 \widetilde{\Delta}_\ga = \frac{{\eps}^2}{\ga^2 \alpha} \widetilde{\Delta}_\ga.
\end{equation}
One can see that $\Delta_\ga$ approximates the continuous Laplace operator $\Delta$ as $\ga \to 0$, when it is applied to a sufficiently regular function, and we can define the respective approximate heat kernel. More precisely, we define the function $P^\ga_t : \Le \to \R$ solving for $t > 0$ the ODEs
\begin{equation}\label{eq:kernels-P}
\frac{\d}{\d t} P^\ga_t = \Delta_\ga P^\ga_t,
\end{equation} 
with the initial condition $P^\ga_0(x) = \delta^{(\eps)}_{x,0}$ (the latter is defined below \eqref{eq:M-bracket}). $P^\ga$ is the Green's functions of the linear operator which appear in equation \eqref{eq:equation-for-X}. This function can alternatively be defined by its discrete Fourier transform
\begin{equation}\label{eq:tildeP}
	\SF_{\!\!\eps} P^\ga_t(\om) = \exp \Bigl( \varkappa_{\ga, 3}^2 \bigl( \widehat{K}_\ga(\om) - 1 \bigr) \frac{t}{\alpha} \Bigr), 
\end{equation}
for all $\om \in \{ -N, \ldots, N \}^3$. With a little ambiguity, we denote by $P^\ga_t$ the operator acting on functions $f : \Le \to \R$ by the convolution 
\begin{equation}\label{eq:P-gamma}
\bigl(P^\ga_t f\bigr)(x) = \eps^3 \sum_{y \in \Le} P^\ga_t(x - y) f(y).
\end{equation}
It will be also convenient to define the kernel 
\begin{equation}\label{eq:P-gamma-tilde}
\widetilde{P}^{\ga}_{t}(x) := P^\ga_{t} \ae K_\ga(x), 
\end{equation}
and the respective integral operator is defined by analogy with \eqref{eq:P-gamma}. We can then rewrite the discrete equation \eqref{eq:equation-for-X} in the mild form
\begin{align}\label{eq:IsingKacEqn}
	X_\ga(t, x) &= P^\ga_t X^0_\ga(x) + \sqrt 2\,  Y_\ga(t, x)\\
	&\qquad + \int_0^t \widetilde{P}^{\ga}_{t-s} \Bigl(-\frac{\be^3}{3} X^3_\ga + \bigl(\fC_\ga + A\bigr) X_\ga + E_\ga \Bigr)(s, x)\, \d s, \nonumber
\end{align}
where we have used the inverse temperature \eqref{eq:beta} and where
\begin{equation}\label{eq:Y-def}
	Y_\ga (t, x) := \frac{1}{\sqrt 2} \eps^3 \sum_{y \in \Le} \int_0^t \widetilde{P}^{\ga}_{t-s}(x-y) \,\d \M_\ga(s, y).
\end{equation}
Here and in the following, we always write stochastic integrals with respect to the time variable (which is $s$ in this integral). 

\subsection{A priori bounds}
\label{sec:a-priori}

In the proof of Theorem~\ref{thm:main}, we are going to show convergence of $X_\ga(t)$ in a stronger topology than $\SD'(\T^3)$. For this we need to control this process using the semi-norm \eqref{eq:eps-norm-1}. More precisely, for a fixed constant $\fa \geq 1$ and the value $\eta$ as in the statement of Theorem~\ref{thm:main} we define the stopping time 
\begin{equation}\label{eq:tau-1}
\tau^{(1)}_{\ga, \fa} := \inf \Bigl\{t \geq 0 :  \| X_\ga(t) \|^{(\emezo)}_{\CC^{\eta}} \geq \fa \Bigr\}.
\end{equation}
On the random time interval $[0, \tau^{(1)}_{\ga, \fa})$ we have the a priori bound $\| X_\ga(t) \|^{(\emezo)}_{\CC^{\eta}} \leq \fa$, while on the closed interval $[0, \tau^{(1)}_{\ga, \fa}]$ the bound is $ \| X_\ga(t) \|^{(\emezo)}_{\CC^{\eta}} \leq \fa + 2 \varkappa_{\ga, 1}$ almost surely. The two bounds are different because there may be a jump of the process at time $\tau^{(1)}_{\ga, \fa}$, and as one can see from \eqref{eq:equation-for-X} the jump size of $X_\ga(t, x)$ is bounded by the jump size of $\big( K_\ga \ae \M_\ga \big) (t, x)$, and the latter is almost surely bounded by $\frac{2 \eps^3}{\delta} \sup_{x \in \Lambda_\eps} |K_\ga(x)| \leq 2 \varkappa_{\ga, 1}$. Here, we used the properties that the jump size of the martingale $\M_\ga(t,x)$ is $\frac{2}{\delta}$ and a jump at time $t$ may almost surely happen only at one $x$. As follows from the definition of $\varkappa_{\ga, 1}$ in \eqref{eq:K-gamma}, it converges to $1$ as $\ga \to 0$. Since we always consider $\ga$ sufficiently small, we can assume that $\varkappa_{\ga, 1} \leq 2$, and hence $\| X_\ga(t) \|^{(\emezo)}_{\CC^{\eta}} \leq \fa + 4 \leq 5 \fa$ almost surely on $[0, \tau^{(1)}_{\ga, \fa}]$. Using \eqref{eq:uniform-bound} we also have a uniform bound on this process
\begin{equation}\label{eq:X-apriori}
|X_\ga(t, x)| \leq 5 \fa \emezo^{\eta}
\end{equation}
almost surely. Staying on the time interval $[0, \tau^{(1)}_{\ga, \fa}]$ is also sufficient to control the bracket process \eqref{eq:M-bracket}. More precisely, for $t < \tau^{(1)}_{\ga, \fa}$ we have \eqref{eq:X-apriori} and the random part of \eqref{eq:M-bracket} is bounded by
\begin{equation*}
\Bigl| \sigma \Bigl(\frac{t}{\alpha}, \frac{x}{\eps}\Bigr) \tanh \big( \be \delta X_\ga(t, x) \big) \Bigr| \lesssim \delta \fa \emezo^{\eta} \lesssim \fa \ga^{3 (1 + \eta)},
\end{equation*}
where we used the estimate $|\tanh(x)| \leq |x|$ for any $x \in \R$, where we estimated $\beta$ by a constant and where we used the scaling \eqref{eq:scalings}. Since $\eta > -1$, the preceding expression vanishes as $\ga \to 0$ and the bracket process \eqref{eq:M-bracket} converges to the covariance of a cylindrical Wiener process. 

To control the discrete model, constructed in Section~\ref{sec:lift}, we need to introduce another stopping time. For this we define the rescaled spin field
\begin{equation}\label{eq:S-def}
S_\ga(t,x) := \frac{1}{\delta} \sigma \Bigl( \frac{t}{\alpha}, \frac{x}{\eps} \Bigr) \qquad \text{for}~~ x \in \Le,~ t \geq 0.
\end{equation}
In Section~\ref{sec:second-symbol} we will need to control the product $S_{\ga}(t, x) X_\ga(t, x)$, appearing in the random part of the bracket process \eqref{eq:M-bracket}, in a suitable space of distributions. For this, we will show in Lemma~\ref{lem:Y-approximation} that the spin field $S_\ga$ can be replaced, up to an error, by its local average. More precisely, we take any smooth, rotation invariant function $\un{\fK} : \R^3 \to \R$, supported in the ball of radius $2$ and centered at the origin, whose continuous Fourier transform satisfies $\SF \un{\fK} (\om) = 1$, for all $\om \in \R^3$ such that $|\om|_{\sinfty} \leq 1$. Then for a fixed constant $\un{\kappa} \in (0, \frac{1}{10})$ we define
\begin{equation}\label{eq:under-K-def}
\un{\fK}_\ga(k) := \un{c}_{\ga, 1} \ga^{3(1-\un{\kappa})} \un{\fK} \bigl(\ga^{1-\un{\kappa}} k\bigr) \qquad \text{with} \quad \un{c}_{\ga, 1}^{-1} := \sum_{k \in \LN} \ga^{3(1-\un{\kappa})} \un{\fK} \bigl(\ga^{1-\un{\kappa}} k\bigr),
\end{equation}
and 
\begin{equation}\label{eq:X-under}
\un{X}_{\ga} (t, x) := \bigl(\un{K}_{\ga} \ae S_\ga\bigr) (t,x), \qquad\qquad \un{K}_{\ga}(x) := \frac{1}{\eps^{3}} \un{\fK}_\ga \Bigl( \frac{x}{\eps} \Bigr).
\end{equation}
In contrast to \eqref{eq:X-gamma}, where the local average of the rescaled spin field $S_\ga$ is computed in a ball of radius of order $\ga^{3}$, the process $\un{X}_{\ga}(t)$ is defined as a local average of spins in a ball of a smaller radius of order $\ga^{3 + \un{\kappa}}$. A precise value of $\un \kappa$ will not play any significant role, as soon as it is small enough. In particular, taking $\un{\kappa} < \frac{1}{10}$ will later allow us to use Lemma~\ref{lem:unX-X-bound}.

Then for $\eta$ as in the statement of Theorem~\ref{thm:main} and for the constant
\begin{equation}\label{eq:c-under}
\un{\fC}_\ga := 2 \varkappa_{\ga, 2} \int_{0}^\infty \eps^3 \sum_{x \in \Le} \bigl(P^\ga_t \ae \un{K}_\ga\bigr)(x) \widetilde{P}^\ga_t(x) \,\d t, 
\end{equation}
where we use \eqref{constant:isingkac_one}, we define the stopping time
\begin{equation}\label{eq:tau-2}
\tau^{(2)}_{\ga, \fa} := \inf \Bigl\{t \geq 0 : \| \un{X}_\ga (t) X_\ga(t) - \un{\fC}_\ga \|^{(\un{\emezo})}_{\CC^{-1-\un\kappa}} \geq \fa \emezo^{\un\kappa / 2 - 1} \Bigr\},
\end{equation}
where $\un{\emezo} := \emezo \ga^{\un\kappa}$. Since both of the involved processes $\un{X}_\ga$ and $X_\ga$ are expected to converge to distributions as $\ga \to 0$, the product $\un{X}_\ga X_\ga$ needs to be renormalised by subtracting the divergent constant $\un{\fC}_\ga$. From Lemma~\ref{lem:renorm-constant-underline} we have $|\un{\fC}_\ga| \lesssim \emezo^{-1}$, and we expect that $\| \un{X}_\ga (t) X_\ga(t) \|^{(\un{\emezo})}_{\CC^{-1-\un\kappa}}$ blows up with the speed $\emezo^{-1}$. The speed of blow-up in \eqref{eq:tau-2}, after renormalising the product, is slower. It can be significantly improved, but the presented speed is enough for our estimates in Section~\ref{sec:second-symbol}.

To combine the two stopping times \eqref{eq:tau-1} and \eqref{eq:tau-2}, we set 
\begin{equation}\label{eq:tau}
\tau_{\ga, \fa} := \tau^{(1)}_{\ga, \fa} \wedge \tau^{(2)}_{\ga, \fa},
\end{equation}
and we restrict the time variable to the interval $[0, \tau_{\ga, \fa}]$. For this it will be convenient to consider a stopped process $\sigma(t)$, extended beyond the random time $\tau_{\ga, \fa}$. To define such extension, we introduce a new spin system $\sigma'_{\ga, \fa}$ which starts from the configuration $\sigma'_{\ga, \fa}\bigl(\frac{\tau_{\ga, \fa}}{\alpha}\bigr) = \sigma\bigl(\frac{\tau_{\ga, \fa}}{\alpha}-\bigr)$ and which for the times $t > \frac{\tau_{\ga, \fa}}{\alpha}$ is given by the infinitesimal generator $\SL_\ga'$ given by \eqref{eq:generator} with the flip rates\footnote{The process $\sigma'$ defined by the generator $\SL_\ga'$ is called a ``voter model'' \cite{Liggett}. The scaling limit of the one-dimensional Ising-Kac model near the critical temperature was proved in \cite{MR1358083} by using a coupling of these two models.}
\begin{equation*}
c'_{\ga}(\sigma, j) = \frac{1}{2} \Bigl( 1 - \sigma(j) h_\ga(\sigma, j) \Bigr).
\end{equation*}
Then we set 
\begin{equation}\label{eq:sigma-stopped}
\sigma_{\ga, \fa}(t) := 
\begin{cases}
\sigma(t) &\text{for}~~ t < \frac{\tau_{\ga, \fa}}{\alpha}, \\
\sigma'_{\ga, \fa}(t) &\text{for}~~ t \geq \frac{\tau_{\ga, \fa}}{\alpha},
\end{cases}
\end{equation}
where $\alpha$ is from \eqref{eq:scalings}. The reason to make this particular choice for the extension is in a good control of the rescaled spin field $X'_{\ga, \fa}$, defined as in \eqref{eq:X-gamma} for the process $\sigma'_{\ga, \fa}$. More precisely, we show in Lemma~\ref{lem:X-prime-bound} that $X'_{\ga, \fa}$ solves a linear equation which allows to bound it globally in time.

 We define the martingales $\M_{\ga, \fa}$ via the process $\sigma_{\ga, \fa}$ in the same way as we defined $\M_\ga$ in \eqref{eq:martingales} via the process $\sigma$. For $t < \tau_{\ga, \fa}$ the martingale $\M_{\ga, \fa}(t)$ coincides with $\M_{\ga}(t)$, while for $t \geq \tau_{\ga, \fa}$ we denote $\M_{\ga, \fa}(t) = \M'_{\ga, \fa}$, where the latter has the predictable quadratic covariations 
 \begin{equation}\label{eq:M-prime-bracket}
\big\langle \M'_{\ga, \fa}(\bigcdot, x), \M'_{\ga, \fa}(\bigcdot, x') \big\rangle_t = 2 \varkappa_{\ga, 2} \delta^{(\eps)}_{x,x'} \int_{\tau_{\ga, \fa}}^{t} \Bigl( 1 - \delta \sigma'_{\ga, \fa} \Bigl(\frac{s}{\alpha}, \frac{x}{\eps}\Bigr) X'_{\ga, \fa}(s, x) \Bigr) \d s,
\end{equation}
with $\delta^{(\eps)}_{x,x'}$ defined below \eqref{eq:M-bracket} and $\varkappa_{\ga, 2}$ is defined in \eqref{constant:isingkac_one}. Then for $t \geq \tau_{\ga, \fa}$ we have 
\begin{equation}\label{eq:M-a-variation}
\big\langle \M_{\ga, \fa}(\bigcdot, x), \M_{\ga, \fa}(\bigcdot, x') \big\rangle_t = \big\langle \M_\ga(\bigcdot, x), \M_\ga(\bigcdot, x') \big\rangle_{\tau_{\ga, \fa}} + \big\langle \M'_{\ga, \fa}(\bigcdot, x), \M'_{\ga, \fa}(\bigcdot, x') \big\rangle_t.
\end{equation}
We define $X_{\ga, \fa}$ as the solution of an analogue of equation \eqref{eq:IsingKacEqn}, driven by these new martingales
\begin{align}\label{eq:IsingKacEqn-periodic}
	X_{\ga, \fa}(t, x) &= P^\ga_t X^0_\ga(x) + \sqrt 2\,  Y_{\ga, \fa}(t, x) \\
	&\qquad + \int_0^t \widetilde{P}^{\ga}_{t-s} \Bigl( -\frac{\be^3}{3} X^3_{\ga, \fa} + \bigl(\fC_\ga + A\bigr) X_{\ga, \fa} + E_{\ga, \fa} \Bigr)(s, x)\, \d s, \nonumber
\end{align}
where 
\begin{equation*}
	Y_{\ga, \fa} (t, x) := \frac{1}{\sqrt 2} \eps^3 \sum_{y \in \Le} \int_0^t \widetilde{P}^{\ga}_{t-s}(x-y)\, \d \M_{\ga, \fa}(s, y),
\end{equation*}
and the error term $E_{\ga, \fa}$ is defined in the same way as $E_{\ga}$ in \eqref{eq:expr_error_term}, but via the process $X_{\ga, \fa}$. For $t \leq \tau_{\ga, \fa}$ we have $X_{\ga, \fa}(t) = X_{\ga}(t)$, and for $t > \tau_{\ga, \fa}$ we have $X_{\ga, \fa}(t) = X'_{\ga, \fa}(t)$.

Working with the process $X_{\ga, \fa}$ is advantageous, because we can use the a priori bounds provided by the stopping times \eqref{eq:tau-1} and \eqref{eq:tau-2}, which guarantees convergence of the martingales and their lift to a discrete model (see Proposition~\ref{prop:models-converge}). To prove Theorem~\ref{thm:main}, we will first prove the respective convergence result for $X_{\ga, \fa}$ and then we will take the limit $\fa \to \infty$. In order to show that $\tau_{\ga, \fa}$ almost surely diverges in these limits, we will prove that this stopping time is close to a stopping time of the limiting process $X$, and the latter is almost surely infinite. 

\subsection{Periodic extensions}

We are going to write equation \eqref{eq:IsingKacEqn-periodic} in the framework of regularity structures. For this, we need to write this equation on the whole domain $\Lattice$ rather than on the torus $\Le$. To do this, we denote by $G^\ga_t : \Lattice \to \R$ the discrete heat kernel, which solves equation \eqref{eq:kernels-P} on $\Lattice$ (one can see that for $\ga$ small enough, the discrete operator $\Delta_\ga$ is naturally extended to functions on $\Lattice$). Then we have the identity
\begin{equation}\label{eq:From-P-to-G}
 \eps^3 \sum_{x \in \Le} P^\ga_t(x) f(x) = \eps^3 \sum_{x \in \Lattice} G^\ga_t(x) f(x),
\end{equation}
for any $f : \Le \to \R$, where on the right-hand side we extended $f$ periodically to $\Lattice$. We define respectively
\begin{equation}\label{eq:From-P-to-G-tilde}
\widetilde{G}^{\ga}_{t}(x) := \eps^3 \sum_{y \in \Lattice} G^{\ga}_{t}(x - y)K_\ga(y).
\end{equation}
Then equation \eqref{eq:IsingKacEqn-periodic} may be written as
\begin{align}\label{eq:IsingKacEqn-new}
	X_{\ga, \fa}(t, x) &= G^\ga_t X^0_\ga(x) + \sqrt 2\, Y_{\ga, \fa}(t, x) \\
&\qquad + \int_0^t \widetilde{G}^{\ga}_{t-s} \Bigl( -\frac{\be^3}{3} X^3_{\ga, \fa} + \bigl(\fC_{\ga} + A\bigr) X_{\ga, \fa} + E_{\ga, \fa} \Bigr)(s, x)\, \d s, \nonumber
\end{align}
where we extended all the involved processes periodically to $\Lattice$.

\section{The dynamical $\Phi^4_3$ model} 
\label{sec:phi4Section}

In this section we recall the notion of solution to the $\Phi^4$ equation \eqref{eq:Phi43} on the three-dimensional torus. Following \cite{Regularity}, we describe the solution in the framework of regularity structures. Throughout the section we are going to use singular modelled distribution and their basic properties, which can be found in \cite{Regularity}. However, we prefer to duplicate some of the definitions here to have a better motivation for the setting of Sections~\ref{sec:discreteRegStruct} and \ref{sec:discrete-solution}.

\subsection{A model space}
\label{sec:model-space-cont}

In this section we introduce an infinite set $\fT$ and a finite-dimensional regularity structure $\ST = (\CA, \CT, \CG)$ such that $\CT \subset \fT$ and that is required to describe equation \eqref{eq:Phi43}. 

To define the space $\fT$, it is convenient to use some ``abstract symbols'' as its basis elements. Namely, $\blueXi$ will represent the driving noise in \eqref{eq:Phi43}, the integration map $\CI$ will represent the space-time convolution with the heat kernel, i.e. the Green's function of the parabolic operator $\partial_t - \Delta$ on $\R^3$. The symbols $\X_i$, $i = 0, \ldots ,3$, will represent the time and space variables, and for $\ell = (\ell_0, \ldots, \ell_3) \in \N_0^4$ we will use the shorthand $\X^\ell = \X_0^{\ell_0} \X_1^{\ell_1} \X_2^{\ell_2} \X_3^{\ell_3}$, with the special unit symbol $\blueOne := \X^0$. We define $\CW_{\poly} := \{\X^\ell : \ell \in \N_0^4\}$ to be the set of all monomials.

Then we define the minimal sets $\CV$ and $\CU$ of formal expressions such that $\blueXi \in \CV$, $\CW_{\poly} \subset \CV \cap \CU$ and the following implications hold:
\begin{subequations}\label{eqs:rules}
\begin{align}
    \tau \in \CV \quad &\Rightarrow \quad \CI(\tau) \in \CU, \label{eq:rule1}\\
    \tau_1, \tau_2, \tau_3 \in \CU \quad &\Rightarrow \quad \tau_1 \tau_2 \tau_3 \in \CV, \label{eq:rule2}
\end{align}
\end{subequations}
where the product of symbols is commutative with the convention $\blueOne \tau = \tau$. We postulate $\CI(\X^\ell) = 0$ and do not include such zero elements into $\CU$ and $\CV$. The set $\CU$ contains the elements needed to describe the solution of \eqref{eq:Phi43}, while $\CV$ contains the elements to describe the expression on the right-hand side of this equation. Namely, the relation \eqref{eq:rule1} means that the elements of $\CU$ are obtained by integrating the elements on the right-hand side of the equation. The rule \eqref{eq:rule2} means that the right-hand side of \eqref{eq:IsingKacEqn-new} contains the third power of the solution (we note that since $\blueOne \in \CU$, the set $\CV$ also contains the symbols $\tau_1$ and $\tau_1 \tau_2$ for all $\tau_1, \tau_2 \in \CU$). 
\medskip

We set $\fW := \CU \cup \CV$, and for a fixed $\kappa \in (0, \frac{1}{14})$ we define the homogeneity $|\bigcdot| : \fW \to \R$ of each element of $\fW$ by the recurrent relations 
\begin{subequations}\label{eqs:hom}
\begin{align}
    |\X^\ell| &= |\ell|_\s, \label{eq:hom1}\\
    |\blueXi| &= -\frac{5}{2} - \kappa,\label{eq:hom2} \\
    |\tau_1 \tau_2| &= |\tau_2| + |\tau_2|, \label{eq:hom3}\\
    |\CI(\tau)| &= |\tau| + 2, \quad \tau \notin \CW_{\poly}. \label{eq:hom4}
\end{align}
\end{subequations}
The definition \eqref{eq:hom1} takes into account the parabolic scaling of space-time; \eqref{eq:hom2} is the regularity of the space-time white noise; \eqref{eq:hom4} is motivated by the Schauder estimate, i.e. a convolution with the heat kernel increases regularity by $2$. One can readily see that for any $\kappa < \frac{1}{14}$ and for any $\zeta \in \R$ the set $\{\tau \in \fW : |\tau| < \zeta\}$ is finite. The restriction $\kappa < \frac{1}{14}$ will be useful later in Section~\ref{sec:discrete-solution} and it is explained in Remark~\ref{rem:regularity-explanation}. 

We define $\fT$ to contain all finite linear combinations of the elements in $\fW$, and we view $\CI$ as a linear map $\tau \mapsto \CI(\tau)$, defined on the subspace generated by $\{ \blueXi, \CI(\blueXi)^2, \CI(\blueXi)^3 \}$. Our definition of this map implies that it can be considered as ``an abstract integration map'' from \cite{Regularity}. The set $\fA$ contains the homogeneities $|\tau|$ for all $\tau \in \fW$.

In order to solve equation \eqref{eq:Phi43}, it is enough to consider the elements in $\fW$ with negative homogeneities to describe the right-hand side, while the solution of this equation is described by the elements of homogeneities not exceeding $1 + 3\kappa$. Hence, we define
\begin{equation}\label{eq:basis}
     \CW := \{\tau \in \CV : |\tau| \leq 0,\, \tau \neq \blueXi\} \cup \{\tau \in \CU : |\tau| \leq 1 + 3 \kappa\}.
\end{equation}
This is the minimal set of the basis elements of a regularity structure, which will allow us to solve the equation \eqref{eq:abstract_equation}, an abstract version of \eqref{eq:Phi43}. We will see in Section~\ref{sec:lift}, that the element $\blueXi$ plays a special role; namely, $\blueXi$ corresponds to a distribution (a time derivative of a martingale), while the other elements correspond to functions. That is why it will be convenient to remove $\blueXi$ from the regularity structure. 

As we will see, the set $\CV$ contains the elements describing the right-hand side of \eqref{eq:abstract_equation}, except the noise element $\blueXi$ which we prefer to exclude. In order to get the right-hand side of \eqref{eq:Phi43} after reconstruction of the right-hand side of \eqref{eq:AbstractDiscreteEquation}, it is enough to use the elements of $\CV$ with positive homogeneities. This explains why we use only the elements $\{\tau \in \CV : |\tau| \leq 0\}$ in \eqref{eq:basis}. As we explained above, we use the elements $\{\tau \in \CU : |\tau| \leq 1 + 3 \kappa\}$, because we are going to solve equation \eqref{eq:abstract_equation} in a space of modelled distributions of regularity $1 + 3 \kappa$. 

We define $\CT$ to be the linear span of the elements in $\CW$, and the set $\CA$ contains the homogeneities $|\tau|$ for all elements $\tau \in \CW$.

It is convenient to represent the elements of $\CW$ as trees. Namely, we denote $\blueXi$ by a node \<oneNode>\,. When a map $\CI$ is applied to a symbol $\tau$, we draw an edge from the root of the tree representing this symbol $\tau$. For example, the symbol $\CI(\blueXi)$ is represented by the diagram $\<1b>$. The product of symbols $\tau_1, \ldots, \tau_n$ is represented by the tree, obtained from the trees of these symbols by drawing them from the same root. For example, $\<2b>$ and $\<3b>$ are the diagrams for $\CI(\blueXi)^2$ and $\CI(\blueXi)^3$ respectively. We use the symbols for the polynomials as before. In Table~\ref{tab:symbols-cont} we provide the elements of $ \CW$ and their homogeneities.

\begin{table}[h]
\centering
\begingroup
\setlength{\tabcolsep}{10pt} 
\renewcommand{\arraystretch}{1.2}
	\subfloat{
    	\begin{tabular}{cc}
		\hline
		\textbf{Element} & \textbf{Homogeneity} \\
		\hline
		$\blueOne$ & $0$ \\		
		$\X_i$, $i = 1, 2, 3$    & $1$ \\
		$\<1b>$              & $-\frac{1}{2}-\kappa$ \\
		$\<2b>$                   & $-1-2\kappa$ \\
		$\<2b> \X_i$, $i = 1, 2, 3$     & $-2\kappa$ \\
		$\<3b>$                   & $-\frac{3}{2}-3\kappa$
	\end{tabular}  \hspace{0.3cm}}
	\subfloat{\hspace{0.3cm}
	\begin{tabular}{cc}
		\hline
		\textbf{Element} & \textbf{Homogeneity} \\
		\hline
		$\<3b> \X_i$, $i = 1, 2, 3$     & $-\frac{1}{2}-3\kappa$ \\
		$\<20b>$ & $\ 1-2\kappa$ \\
		$\<30b>$ & $\frac{1}{2}-3\kappa$ \\
		 $\<22b>$  & $ -4\kappa$ \\
		 $\<31b>$ & $ -4\kappa$ \\		
		$\<32b>$                  & $-\frac{1}{2}-5\kappa$ \\
	\end{tabular}}
\endgroup
	\caption{The elements of $\CW$ and their homogeneities. \label{tab:symbols-cont}}
\end{table}

 Every element $f \in \fT$ can be uniquely written as $f = \sum_{\tau \in \fW} f_\tau \tau$ for $f_\tau \in \R$, and we define 
\begin{equation}\label{eq:RS-norm}
    |f|_\alpha := \sum_{\tau \in \fW : |\tau| = \alpha} |f_\tau|,
\end{equation}
postulating $|f|_\alpha=0$ if the sum runs over the empty set. We also introduce the projections 
\begin{equation}\label{eq:RS-projections}
    \CQ_{< \alpha} f := \sum_{\tau \in \fW : |\tau| < \alpha} f_\tau \tau, \qquad\qquad \CQ_{\leq \alpha} f := \sum_{\tau \in \fW : |\tau| \leq \alpha} f_\tau \tau.
\end{equation}
Let the model space $\fT_{< \alpha}$ contain all the elements $f \in \fT$ satisfying $f = \CQ_{< \alpha} f$. All these definitions can be immediately projected to $\CT$.

\subsection{A structure group}
\label{sec:structure-group-cont}

In order to use the results of \cite{Regularity}, we need to define a structure group $\CG$. For this, we need to introduce another set of basis elements $\CW_+$, containing $\blueOne$, $\X_i$ for $i = 1, 2, 3$, and the elements of $\CW$ of the form $\CI(\tau)$ for $\tau \neq \blueXi$. Then we define $\CT_+$ to be the free commutative algebra generated by the elements of $\CW_+$.

We define a linear map $\Delta : \CT \to \CT \otimes \CT_+$ by the identities 
\begin{subequations}\label{eqs:coproduct}
\begin{equation}
\Delta \blueOne = \blueOne \otimes \blueOne, \qquad \Delta \X_i = \X_i \otimes \blueOne + \blueOne \otimes \X_i,
\end{equation} 
and then recursively by (we denote by $I$ the identity operator on $\CT_+$) 
\begin{align}
\Delta \tau_1 \tau_2 &= (\Delta \tau_1) (\Delta \tau_2), \\
\Delta \CI(\blueXi) &= \CI(\blueXi) \otimes \blueOne, \label{eq:Delta3} \\
\Delta \CI(\tau) &= (\CI \otimes I) \Delta \tau + \blueOne \otimes \CI (\tau), \quad \tau \neq \blueXi, \label{eq:Delta4}
\end{align}
\end{subequations} 
for respective elements $\tau_i, \tau, \bar \tau \in \CW$. In Table~\ref{tab:Delta-cont} we write $\Delta \tau$ for all $\tau \in \CW$.

\begin{table}[h]
\centering
\begingroup
\setlength{\tabcolsep}{10pt} 
\renewcommand{\arraystretch}{1.2}
	\subfloat{
    	\begin{tabular}{c} 
		$\Delta \blueOne = \blueOne \otimes \blueOne$ \\
		$\Delta \X_i = \X_i \otimes \blueOne + \blueOne \otimes \X_i$ \\
		$\Delta \<1b> = \<1b> \otimes \blueOne$ \\
		$\Delta \<2b> = \<2b> \otimes \blueOne$ \\
		$\Delta \<2b> \X_i = \<2b> \X_i \otimes \blueOne + \<2b> \otimes \X_i$ \\
		$\Delta \<3b> = \<3b> \otimes \blueOne$
	\end{tabular} \hspace{0.3cm}}
	\subfloat{\hspace{0.5cm}
	\begin{tabular}{c}
		$\Delta \<3b> \X_i = \<3b> \X_i \otimes \blueOne + \<3b> \otimes \X_i$ \\
		$\Delta \<20b> = \<20b> \otimes \blueOne + \blueOne \otimes \<20b>$ \\
		$\Delta\<30b> = \<30b> \otimes \blueOne + \blueOne \otimes \<30b>$ \\
		 $\Delta\<22b> = \<22b> \otimes \blueOne + \<2b> \otimes \<20b> $ \\
		 $\Delta\<31b> = \<31b> \otimes \blueOne + \<1b> \otimes \<30b>$ \\
		$\Delta\<32b> = \<32b> \otimes \blueOne + \<2b> \otimes \<30b>$
	\end{tabular}}
\endgroup
	\caption{The image of the operator $\Delta$. \label{tab:Delta-cont}}
\end{table}

\begin{remark}
Since we restricted the set of basis elements \eqref{eq:basis}, our definition of the map $\Delta$ looks much easier than in \cite[Eq.~8.8b]{Regularity}. More precise, the general definition of $\Delta \CI(\tau)$ should be 
\begin{equation*}
\Delta \CI(\tau) = (\CI \otimes I) \Delta \tau + \sum_{\substack{k, \ell \in \N_0^4 \\ |k + \ell|_s < |\tau| + 2}} \frac{\X^k}{k!} \otimes \frac{\X^\ell}{\ell!} \CI_{k + \ell} (\tau),
\end{equation*}
where $\CI_{k + \ell}$ are new auxiliary symbols. Our definition \eqref{eq:basis} implies that there is at most one term in this sum, which yields \eqref{eq:Delta3} and \eqref{eq:Delta4}. 
\end{remark}

For any linear functional $f : \CT_+ \to \R$ we define the map $\Gamma_f : \CT \to \CT$ as 
\begin{equation}\label{eq:Gamma-general}
\Gamma_{\!f} \tau := (I \otimes f) \Delta \tau.
\end{equation}
Then the structure group $\CG$ is defined as $\CG := \{\Gamma_{\!f} : f \in \CG_+\}$, where $\CG_+$ contains all linear functionals $f : \CT_+ \to \R$ satisfying $f(\blueOne) = 1$. In general $f$ are assumed to be multiplicative \cite{Regularity}, i.e. $f(\tau \bar \tau) = f(\tau) f(\bar \tau)$ for $\tau, \bar \tau \in \CT_+$, but our set $\CT_+$ may involve only the products of the form $\blueOne \tau$ which simplifies this identity. 

Since the model space $\CT$ is generated by a small number of elements listed in Table~\ref{tab:symbols-cont}, we can describe the structure group $\CG$ explicitly. More precisely, $\CG$ contains all the transformations listed in Table~\ref{tab:linear_transformations} for any real constants $a_i$, for $i=0, \ldots, 3$, $b$ and $c$. 
\begin{table}[h]
\centering
\begingroup
\setlength{\tabcolsep}{10pt} 
\renewcommand{\arraystretch}{1.2}
	\subfloat{
    	\begin{tabular}{cc}
		\hline
		\textbf{Element} & \textbf{Image} \\
		\hline
		$\blueOne$ & $\blueOne$ \\		
		$\X_i$, $i = 1, 2, 3$ & $\X_i + a_i \blueOne$ \\
		$\<1b>$              & $\<1b>$ \\
		$\<2b>$                   & $\<2b>$ \\
		$\<2b> \X_i$, $i = 1, 2, 3$     & $\<2b> \X_i + a_i \<2b>$ \\
		$\<3b>$                   & $\<3b>$
	\end{tabular} \hspace{0.3cm}}
	\subfloat{\hspace{0.5cm}
	\begin{tabular}{cc}
		\hline
		\textbf{Element} & \textbf{Image} \\
		\hline
		$\<3b> \X_i$, $i = 1, 2, 3$     &  $\<3b> \X_i + a_i \<3b>$ \\
		$\<20b>$ & $\<20b> + b \blueOne$ \\
		$\<30b>$ & $\<30b> + c \blueOne$ \\
		 $\<22b>$  & $\<22b> + b \<2b>$ \\
		 $\<31b>$ & $\<31b> + c \<1b>$ \\		
		$\<32b>$                  & $\<32b> + c \<2b>$
	\end{tabular}}
\endgroup
	\caption{Linear transformations in $\CG$ of the elements in $\CW$. \label{tab:linear_transformations}}
\end{table}
The bijection between these constants and the functionals $f \in \CG_+$ is given by
\begin{equation*}
a_i = f(\X_i), \qquad b = f\bigl(\,\<20b>\,\bigr), \qquad c = f\bigl(\,\<30b>\,\bigr).
\end{equation*}

In the rest of this section we use the framework of \cite{Regularity} to work with the regularity structure $\ST = (\CA, \CT, \CG)$ just introduced. 

\subsection{A solution map}

Let $G$ be the heat kernel, i.e. the Green's function of the parabolic operator $\partial_t - \Delta$ on $\R^3$. As in \cite[Sec.~5]{Regularity}, we write it as $G = \SK + \SR$, where $\SR$ is smooth and $\SK$ is singular, compactly supported. Let furthermore, $Z = (\Pi, \Gamma)$ be the model on the regularity structure $\ST$ for the equation \eqref{eq:Phi43}, defined in \cite[Sec.~10.5]{Regularity} with respect to the kernel $\SK$. Using the value $\kappa$ from \eqref{eq:hom2}, we define the abstract integration operator 
\begin{equation}\label{eq:abstract-integral}
\CP := \CK_{\kappa} + R_{1 + 3 \kappa} \CR,
\end{equation}
where the operator $\CK_{\kappa}$ is defined in \cite[Eq.~5.15]{Regularity} via the kernel $\SK$ for the values $\beta = 2$ and $\gamma = \kappa$, the operator $R_{1 + 3 \kappa}$ is defined in \cite[Eq.~7.7]{Regularity} as a Taylor's expansion of the function $\SR$ up to the order $1 + 3 \kappa$, and $\CR$ is the reconstruction map for the model $Z$ defined in \cite[Thm.~3.10]{Regularity}. The choice of the values $\kappa$ and $1 + 3\kappa$ in \eqref{eq:abstract-integral} is motivated as follows. We are going to solve an abstract version of equation \eqref{eq:Phi43} for a modelled distribution $U \in \CD^{\zeta, \eta}$ with $\zeta = 1 + 3 \kappa$ being the minimal regularity such that the theory can be applied. Then the non-linearity $U^3$ of the equation is an element of the space $U \in \CD^{\zeta + 2 |\CI(\blueXi)|, \bar \eta}$, for $|\CI(\blueXi)|$ being the regularity of the sector in which $U$ takes values. Since $\zeta + 2 |\CI(\blueXi)| = \kappa$ (see Table~\ref{tab:symbols-cont}), the map $\CP$ should act on elements of $\CD^{\kappa, \bar \eta}$.

Using this integral operator, we define the modelled distribution 
\begin{equation}\label{eq:W-def}
    W(z) := \CP \1_+ (\blueXi)(z),
\end{equation}
where $\1_+$ is the projection of modelled distributions to $\R_+$ in the time variable. Using the polynomial lift of the convolution $G X^0$, defined in \cite[Lem.~7.5]{Regularity}, we consider the abstract equation
\begin{equation}\label{eq:abstract_equation}
U = \CQ_{< \zeta} \Bigl(G X^0 + \CP \1_+ F(U) + \sqrt 2\, W\Bigr),
\end{equation}
where $U \in \CD^{\zeta, \eta} (Z)$ is a modelled distribution, for $\zeta = 1 + 3 \kappa$ and $\eta \in \R$, and where the non-linearity $F$ is given by
\begin{equation*}
F(U) := \CQ_{\leq 0} \Bigl(- \frac{1}{3} U^3 + A U\Bigr).
\end{equation*}
We note that the product $U^3$ is in general an element of $\fT$ and may contain terms which are not included into the model space $\CT$. The aim of applying the projection $\CQ_{\leq 0}$ is to remove such terms. Respectively, the right-hand side of \eqref{eq:abstract_equation} may contain elements with homogeneities higher than $1$, but we consider only the projection to the homogeneities not exceeding $\zeta$.

Let us now consider a mollified noise $\xi_{\de} = \varrho_\de \star \xi$, where the mollifier $\varrho_\de$ is defined in \eqref{eq:rho} for $\de > 0$. Let us define $X_{\de}^{0} := \psi_\de * X^0$, where the mollifier $\psi_\de(x) := \frac{1}{\de^{3}} \psi (\frac{x}{\de})$ is defined for a smooth compactly supported function $\psi : \R^3 \to \R$, satisfying $\int_{\R^3} \psi(x) \d x = 1$. Let furthermore $U^{(\de)}$ be the solution of equation \eqref{eq:abstract_equation}, defined with respect to the initial condition $X_{\de}^{0}$ and  the model $Z^{(\de)} = (\Pi^{(\de)}, \Gamma^{(\de)})$, defined in \cite[Sec.~10.5]{Regularity} via the mollified noise $\xi_{\de}$. Then from \cite[Sec.~9.4]{Regularity} we conclude that the process $X_{\de} = \CR^{(\de)} U^{(\de)}$, where $\CR^{(\de)}$ is the reconstruction map for the model $Z^{(\de)}$ from \cite[Thm.~3.10]{Regularity}, is the classical solution of the SPDE
\begin{equation}\label{eq:Phi43-delta}
\bigl(\partial_t - \Delta\bigr) X_{\de} = - \frac{1}{3} X_{\de}^3 + \bigl(\fC_\de + A\bigr) X_{\de} + \sqrt 2\, \xi_{\de}, 
\end{equation}
with the initial condition $X_{\de}^{0}$ at time $0$. The renormalisation constant $\fC^{(\de)} \sim \de^{-1}$ is defined in \cite{Regularity} and is such that the solution of \eqref{eq:Phi43-delta} converges as $\de \to 0$ in a suitable space of distributions.

\begin{theorem}\label{thm:Phi-solution}
For $\zeta = 1 + 3 \kappa$ and for $\eta$ as in Theorem~\ref{thm:main}, equation \eqref{eq:abstract_equation} has a unique local in time solution $U \in \CD^{\zeta, \eta}(Z)$, and the solution map $U = \CS(X^0, Z)$ is locally Lipschitz continuous with respect to the initial state $X^0 \in \CC^\eta(\T)$ and the model $Z$.

Then the solution of \eqref{eq:Phi43} is defined as $X = \CR U$, where $\CR$ is the reconstruction map associated to the model $Z$ by \cite[Thm.~3.10]{Regularity}. Moreover, for any $T > 0$ and $p \geq 1$ one has 
\begin{equation*}
\E \biggl[\sup_{t \in [0, T]} \| X(t) \|^p_{\CC^\eta}\biggr] < \infty,
\end{equation*}
and the same bound holds for $\| (X - \sqrt 2\, \CR W)(t) \|_{\CC^{3 / 2 + 3 \eta}}$, where $W$ is defined in \eqref{eq:W-def}.

Finally, let $X_{\de}$ be the solution of \eqref{eq:Phi43-delta}. Then there exists $\theta > 0$ such that for any $T > 0$, $p \geq 1$ and for some $C > 0$, depending on $T$ and $p$, one has
\begin{equation}\label{eq:solution-continuity}
\E \biggl[\sup_{t \in [0, T]} \| (X - X_{\de})(t) \|^p_{\CC^\eta}\biggr] \leq C \de^{\theta p}
\end{equation}
uniformly over $\de \in (0,1]$.
\end{theorem}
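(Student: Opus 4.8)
The proof is an adaptation, to the regularity structure $\ST$ introduced above, of the standard solution theory of the dynamical $\Phi^4_3$ equation \cite{Regularity,MR3846835,from-infinity,MR3951704,MR4164267}, and the plan is to assemble those results in three steps. For the first, I would solve \eqref{eq:abstract_equation} by the abstract fixed-point theorem \cite[Thm.~7.8]{Regularity} in $\CD^{\zeta,\eta}(Z)$ with $\zeta = 1 + 3\kappa$, following \cite[Sec.~9.4]{Regularity}. First one checks that the right-hand side of \eqref{eq:abstract_equation} maps $\CD^{\zeta,\eta}(Z)$ into itself: the polynomial lift of $G X^0$ lies in $\CD^{\zeta,\eta}$ and depends linearly on $X^0$ by \cite[Lem.~7.5]{Regularity}; the term $\sqrt 2\, W = \sqrt 2\, \CP \1_+(\blueXi)$ does not depend on $U$ and is well defined by the Schauder estimate \cite[Thm.~5.12]{Regularity} applied to $\blueXi$ (of homogeneity $-\tfrac52-\kappa$); and for the non-linear term the multiplication theorem \cite[Thm.~4.7]{Regularity} puts $U^3$ into $\CD^{\kappa,\cdot}$ with a time-weight exponent at $t=0$ no smaller than $3\eta$, since $\zeta + 2\,|\<1b>| = 1 + 3\kappa + 2\bigl(-\tfrac12-\kappa\bigr) = \kappa$. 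Applying $\CQ_{\leq 0}$ keeps the result inside $\CT$, and a second use of \cite[Thm.~5.12]{Regularity} shows that $\CP\1_+ F(U)$ lies in a space $\CD^{\kappa+2,\cdot}$ whose exponents project into $\CD^{\zeta,\eta}$, because $\kappa + 2 > \zeta$ and the time-weight, after the Schauder gain of $2$, exceeds $\eta$ for $\kappa < \tfrac1{14}$ and $\eta > -\tfrac47$ (the latter comfortably above the threshold $-\tfrac23$ for the standard $\Phi^4_3$ fixed point). The contraction estimate on a short time interval and the local Lipschitz dependence of $U = \CS(X^0, Z)$ on $(X^0, Z)$ are then precisely the conclusions of \cite[Thm.~7.8]{Regularity}.

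\textbf{Step 2: reconstruction and moment bounds.} Given $U$, I would set $X := \CR U$ and read off $X(t) \in \CC^\eta(\T^3)$ from the (weighted) reconstruction theorem \cite[Thm.~3.10]{Regularity}, the lowest homogeneity carried by $U$ being $|\<1b>| = -\tfrac12-\kappa$. For the remainder one writes $X - \sqrt 2\, \CR W = \CR(U - \sqrt 2\, W)$ with $U - \sqrt 2\, W = \CQ_{<\zeta}\bigl(G X^0 + \CP\1_+ F(U)\bigr)$ taking values in a strictly more regular sector; pushing the weight exponents of Step~1 through the reconstruction gives the Besov regularity $\tfrac32 + 3\eta$. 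The $L^p$ moment bounds then follow by combining the continuity of $\CS$ from Step~1 with the fact that the renormalised model $Z$ for \eqref{eq:Phi43} admits moments of all orders, which is \cite[Sec.~10.5]{Regularity}. Since Step~1 is only local in time, to obtain the moment bounds for an arbitrary fixed $T > 0$ I would invoke the global well-posedness and the a priori ``coming down from infinity'' estimates of \cite{from-infinity,MR3951704,MR4164267}, which let one patch the local solutions and control $\sup_{t \in [0,T]} \|X(t)\|_{\CC^\eta}$.

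\textbf{Step 3: convergence of the mollified solutions, and the main difficulty.} By \cite[Sec.~9.4 and Sec.~10.5]{Regularity}, $X_\de = \CR^{(\de)} U^{(\de)}$ is the classical solution of the renormalised equation \eqref{eq:Phi43-delta}, so it remains to show $X_\de \to X$ in $\CC^\eta$ with a polynomial rate. I would combine the local Lipschitz continuity of $\CS$ from Step~1 with two convergence-with-rate inputs: first, the renormalised models $Z^{(\de)}$ converge to $Z$ in every $L^p$ at some rate $\de^{\theta'}$, $\theta' > 0$, which is the content of the $\Phi^4_3$ model-convergence results of \cite[Sec.~10.5]{Regularity} (see also \cite{MR3406823}); second, $\|X^0 - X_\de^0\|_{\CC^\eta} \lesssim \de^{\bar\eta - \eta}\,\|X^0\|_{\CC^{\bar\eta}}$, a standard mollification estimate in Besov spaces, valid since $\bar\eta > \eta$. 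Feeding these into the Lipschitz bound yields \eqref{eq:solution-continuity} on a short time interval, and a final patching argument built on the a priori estimates of \cite{from-infinity,MR3951704,MR4164267} propagates the rate $\de^\theta$ up to an arbitrary $T$. I expect the only genuinely non-routine ingredient to be exactly this global-in-time input: the analytic core --- the fixed point, the Schauder and multiplication bounds, the reconstruction, and the model moment bounds --- is by now standard, and the remaining care is the weight bookkeeping near $t = 0$ that accommodates the low regularity $\eta > -\tfrac47$ and produces the sharp exponent $\tfrac32 + 3\eta$ for the remainder.
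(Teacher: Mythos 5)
Your proposal is correct and follows essentially the same route as the paper, which simply cites the standard theory: local existence and local Lipschitz continuity of the solution map from \cite[Prop.~9.10]{Regularity}, and the moment bounds on $X$ and $X - \sqrt 2\, \CR W$ (and hence the convergence \eqref{eq:solution-continuity} via model convergence plus continuity of $\CS$) from \cite[Thm.~1.1]{from-infinity} and the related global-in-time results. Your write-up just spells out in detail the fixed-point, Schauder, reconstruction and model-moment ingredients that those citations encapsulate.
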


\begin{proof}
Existence of a local solution and its continuity was proved in \cite[Prop.~9.10]{Regularity}. From \cite[Thm.~1.1]{from-infinity} we obtain the moment bounds on the processes $X$ and $X - \sqrt 2\, \CR W$. 
\end{proof}

One can readily see that the solution $U$ has the following expansion:
 \begin{equation}\label{eq:U-expansion} 
 U(z) = \sqrt 2\, \<1b> + v(z) \blueOne - \frac{2 \sqrt 2}{3}\, \<30b> - 2 v(z) \<20b> + \sum_{i = 1, 2,3} v^i(z) \X_i, 
\end{equation}
 for some functions $v, v^i : \R_+ \times \R^3 \to \R$. Indeed, this identity follows by writing the integration operator in \eqref{eq:abstract_equation} explicitly as
 \begin{equation*} 
 U(z) = \CI \Bigl(- \frac{1}{3} U(z)^3 + A U(z) + \sqrt 2\, \blueXi\Bigr) + v(z) \blueOne + \sum_{i = 1, 2,3} v^i(z) \X_i,
 \end{equation*}
 repeating the iterative approximation of the solution several times and truncating all terms with homogeneities strictly bigger than $1$. The function $v$ may be written as $v = X - \sqrt 2 Y$, where $X = \CR U$ and $Y = \CR W$, with $W$ defined in \eqref{eq:W-def},  and it solves the ``remainder equation''
 \begin{equation}\label{eq:remainder-equation}
\bigl(\partial_t - \Delta\bigr) v = - \frac{1}{3} \bigl(v + \sqrt 2\, Y\bigl)^3 + A \bigl(v + \sqrt 2\, Y\bigr),
 \end{equation}
 with the initial condition $X^0$ at time $t = 0$. Interpretation of the functions $v^i$ is more complicated, and we do not provide it here. Theorem~\ref{thm:Phi-solution} implies that for any $p \geq 1$ and $T > 0$ we have 
\begin{equation*}
\E \biggl[\sup_{t \in [0, T]} \| v(t) \|^p_{\CC^{3 / 2 + 3 \eta}}\biggr] < \infty.
\end{equation*}

\section{A regularity structure for the discrete equation} 
\label{sec:discreteRegStruct}

Proving convergence of the Ising-Kac model requires solving equation \eqref{eq:IsingKacEqn-new} using the theory of regularity structures. For this we are going to use the framework \cite{erhard2017discretisation}, which is suitable for solving approximate stochastic PDEs. A less general framework developed in \cite{HairerMatetski} could also be applied.

We would like to stress very clearly that the regularity structure for equation \eqref{eq:IsingKacEqn-new} is very similar to the one used to solve the $\Phi^4_3$ equation, except for the fact that in our setting we need to describe the additional error term $E_\ga$ defined in \eqref{eq:expr_error_term}. As we shall see, the local description of this error term involves the local description of the fifth power of the solution of our equation; this is the only reason why we need to introduce new trees which would not appear in the classical $\Phi^4_3$ solution theory.

In the following section we are going to define a regularity structure $\ST^\ex = (\CA^\ex, \CT^\ex, \CG^\ex)$ which extends the regularity structure $\ST$, defined in Section~\ref{sec:phi4Section}, by adding several basis elements. Throughout this section we are going to use the notation from Section~\ref{sec:phi4Section}.

\subsection{A model space}
\label{sec:model-space}

In addition to the integration map $\CI$ we introduce a new map $\CE$ which will represent the multiplication operator by $\emezo^2 \approx \gamma^{6}$. Then we define the minimal sets $\CV^\ex$ and $\CU^\ex$ of formal expressions by the implications \eqref{eqs:rules} and 
\begin{align}
    \tau_1, \ldots, \tau_5 \in \CU^\ex \quad &\Rightarrow \quad \CE(\tau_1 \cdots \tau_5) \in \CV^\ex, \label{eq:rule3}
\end{align}
where we postulate $\CE(\X^\ell) = 0$ and do not include such zero elements into $\CV^\ex$. The rule \eqref{eq:rule3} describes the remainder \eqref{eq:expr_error_term}, in the Taylor expansion of which the first non-vanishing element is proportional to $\gamma^6 X_\ga(t,x)^5$: in fact, the trees coming out from the rule \eqref{eq:rule3} are those which will allow a local description of the error tern $E_\ga$ (see also Remark~\ref{rem:regularity-explanation}).

We define the set of elements $\fW^\ex := \CU^\ex \cup \CV^\ex$ with the homogeneity $|\bigcdot| : \fW^\ex \to \R$ defined by \eqref{eqs:hom} and 
\begin{align}
    |\CE(\tau_1 \cdots \tau_5)| &= |\tau_1| + \cdots + |\tau_5| + 2, \quad \tau_1 \cdots \tau_5 \notin \CW_{\poly}. \label{eq:hom5}
\end{align}
The increase of homogeneity by $2$ in \eqref{eq:hom5} comes from the multiplier $\ga^6 \approx \emezo^2$. 

The set $\fT^\ex$ contains all finite linear combinations of the elements in $\fW^\ex$, and we view $\CI$ and $\CE$ as linear maps $\tau \mapsto \CI(\tau)$ and $\bar \tau \mapsto \CE(\bar\tau)$, defined on the subspaces generated by $\{ \blueXi, \CI(\blueXi)^2, \CI(\blueXi)^3 \}$ and $\{ \CI(\blueXi)^4, \CI(\blueXi)^5 \}$ respectively. Our definitions of these maps imply that they have the same properties (but, as just stated, different domains), and both of them can be considered as ``abstract integration maps'' from \cite{Regularity}. The set $\fA^\ex$ contains the homogeneities $|\tau|$ for all $\tau \in \fW^\ex$.

By analogy with \eqref{eq:basis} we define 
\begin{equation}\label{eq:basis-discrete}
     \CW^\ex := \{\tau \in \CV^\ex : |\tau| \leq 0,\, \tau \neq \blueXi\} \cup \{\tau \in \CU^\ex : |\tau| \leq 1 + 3 \kappa\} \cup \{\tau : \CE(\tau) \in \CV^\ex, |\tau| \leq -2\},
\end{equation}
where we also add to $\fW^\ex$ those $\tau$ such that $\CE(\tau) \in \CV^\ex$. This is the minimal set of the basis elements of a regularity structure, which will allow us to solve the equation \eqref{eq:AbstractDiscreteEquation}, an abstract version of \eqref{eq:IsingKacEqn-new}. We need the elements $\{\tau : \CE(\tau) \in \CV^\ex, |\tau| \leq -2\}$ to be able to reconstruct the non-linearity \eqref{eq:E2-ga}.

As before, we define $\CT^\ex$ to be the linear span of the elements in $\CW^\ex$, and the set $\CA^\ex$ contains the homogeneities $|\tau|$ for all elements $\tau \in \CW^\ex$. We obviously have $\CW \subset \CW^\ex$ and $\CT \subset \CT^\ex$ for the sets defined in Section~\ref{sec:model-space-cont}.

 As in Section~\ref{sec:model-space-cont}, we use the graphical representation of the elements of $\CW^\ex$, where application of the map $\CE$ is represented by the double edge $\<eb>$. For example, the diagram $\<40eb>$ represents the symbol $\CE(\CI(\blueXi)^4)$. Table~\ref{tab:symbols} contains those elements of $\CW^\ex$ which are not included in Table~\ref{tab:symbols-cont}. This setting is very similar to the one of the $\Phi^4_3$ solution theory, except that here we have an extra ``integration map''~$\CE$.

\begin{table}[h]
\centering
\begingroup
\setlength{\tabcolsep}{10pt} 
\renewcommand{\arraystretch}{1.2}
	\subfloat{
    	\begin{tabular}{cc}
		\hline
		\textbf{Element} & \textbf{Homogeneity} \\
		\hline
		$\<4b>$                  & $-2-4\kappa$ \\
		$\<5b>$                  & $-\frac{5}{2}-5\kappa$
	\end{tabular}  \hspace{0.3cm}}
	\subfloat{\hspace{0.3cm}
	\begin{tabular}{cc}
		\hline
		\textbf{Element} & \textbf{Homogeneity} \\
		\hline
		$\<40eb>$                 & $-4\kappa$ \\
		 $\<50eb>$                 & $-\frac{1}{2}-5\kappa$
	\end{tabular}}
\endgroup
	\caption{The elements of $\CW^\ex$ and their homogeneities which are not included into Table~\ref{tab:symbols-cont}. \label{tab:symbols}}
\end{table}

We are going the same notations for the norms and projections for the elements in $\fW^\ex$ as in \eqref{eq:RS-norm} and \eqref{eq:RS-projections}.

\subsection{A structure group}
\label{sec:structure-group}

We introduce the set of basis elements $\CW^\ex_+$, containing $\blueOne$, $\X_i$ for $i = 1, 2, 3$, and the elements of $\CW^\ex$ of the form $\CI(\tau)$ and $\CE(\bar \tau)$, for $\tau \neq \blueXi$. Then we define $\CT^\ex_+$ to be the free commutative algebra generated by the elements of $\CW^\ex_+$. The linear map $\Delta : \CT^\ex \to \CT^\ex \otimes \CT^\ex_+$ is define by \eqref{eqs:coproduct} and 
\begin{align}
\Delta \CE(\bar \tau) &= (\CE \otimes I) \Delta \bar \tau, \label{eq:Delta5}
\end{align}
for $\bar \tau \in \{\<4b>, \<5b>\}$. Then the action of $\Delta$ on the elements from $\CW$ is provided in Table~\ref{tab:Delta-cont} and the action on the other elements in $\CW^\ex$ is trivial and is provided in Table~\ref{tab:Delta}.

\begin{table}[h]
\centering
\begingroup
\setlength{\tabcolsep}{10pt} 
\renewcommand{\arraystretch}{1.2}
	\subfloat{
    	\begin{tabular}{c} 
		$\Delta\<4b> = \<4b> \otimes \blueOne$ \\
		$\Delta\<5b> = \<5b> \otimes \blueOne$
	\end{tabular} \hspace{0.3cm}}
	\subfloat{\hspace{0.5cm}
	\begin{tabular}{c}
		$\Delta\<40eb> = \<40eb> \otimes \blueOne$ \\
		 $\Delta\<50eb> = \<50eb> \otimes \blueOne$
	\end{tabular}}
\endgroup
	\caption{The image of the operator $\Delta$ for the elements in $\CW^\ex$ not provided in Table~\ref{tab:Delta-cont}. \label{tab:Delta}}
\end{table}

The structure group $\CG^\ex$ is defined as $\CG^\ex := \{\Gamma_{\!f} : f \in \CG^\ex_+\}$, where $\Gamma_{\!f}$ is given by \eqref{eq:Gamma-general} and $\CG^\ex_+$ contains all linear functionals $f : \CT^\ex_+ \to \R$ satisfying $f(\blueOne) = 1$. One can readily see that the elements of $\CG^\ex$ ant on $\CW$ as described in Table~\ref{tab:linear_transformations}, and they act on the other elements of $\CW^\ex$ as the identity maps.

We will use the framework of \cite{erhard2017discretisation} to work with the regularity structure $\ST^\ex = (\CA^\ex, \CT^\ex, \CG^\ex)$ just introduced on the discrete lattice $\Lattice$. 

\subsection{Discrete models}
\label{sec:DiscreteModels}

Let $\CB^2_\s$ be the set of all \emph{test functions} $\varphi \in \CC^2(\R^4)$, compactly supported in the ball of radius $1$ around the origin (with respect to the parabolic distance $\| \bigcdot \|_\s$ defined in Section~\ref{sec:notation}), and satisfying $\| \varphi \|_{\CC^2} \leq 1$. By analogy with \eqref{eq:rescaled-function}, for $\varphi \in \CB^2_\s$, $\lambda \in (0, 1]$ and $(s, y) \in \R^4$ we define a rescaled and recentered function 
\begin{equation}\label{eq:rescaled-function-general}
\varphi^\lambda_{(s, y)} (t, x) := \frac{1}{\lambda^5} \varphi \Bigl( \frac{t-s}{\lambda^2}, \frac{x-y}{\lambda} \Bigr). 
\end{equation}
In the rest of the paper we use the time-space domain $D_\eps := \R \times \Lattice$, where the spatial grid $\Lattice$ is defined in Section~\ref{sec:notation}.

In order to use the results of \cite{erhard2017discretisation}, we need to define a \emph{discretisation} for the regularity structure $\ST^\ex$ according to \cite[Def.~2.1]{erhard2017discretisation}. 

\begin{definition}\label{def:discretisation}
\begin{enumerate}[leftmargin=0.5cm]
\item We define the space $\CX_\eps := L^\infty (D_\eps)$, and we extend the operator \eqref{eq:iota} to $\iota_\eps: \CX_\eps \hookrightarrow L^\infty \bigl(\R, \SD'(\R^3)\bigr)$ as 
\[
(\iota_\eps f)(t, \bigcdot) := \bigl(\iota_\eps f(t)\bigr)(\bigcdot)
\]
 for $f \in \CX_\eps$. For any smooth compactly supported function $\varphi : \R^4 \to \R$ it will be convenient to write 
\begin{equation}\label{eq:iota-general}
(\iota_\eps f)(\varphi) := \eps^{3} \sum_{x \in \Lattice} \int_{\R} f(t, x) \varphi(t, x)\, \d t.
\end{equation}
\item For any $\zeta \in \R$, $z \in D_\eps$ and a compact set $K_\emezo \subset \R^4$ of diameter at most $2 \emezo$, we define the following seminorm $f \in \CX_\eps$:
\begin{equation}\label{eq:local-norm}
\Vert f \Vert_{\zeta; K_\emezo; z; \emezo} := \emezo^{-\zeta} \sup_{z \in K_\emezo \cap D_\eps} | f(z) |.
\end{equation}
Obviously, this seminorm is local in the sense that if $f, g \in \CX_\eps$ and $(\iota_\eps f)(\varphi) = (\iota_\eps g)(\varphi)$ for every $\varphi \in \CC^2$ supported in $K_\emezo$, then $\Vert f - g \Vert_{\zeta; K_\emezo; z; \emezo} = 0$.
\item Let the function $\varphi^\emezo_{z}$ be defined by \eqref{eq:rescaled-function-general} with $\lambda = \emezo$, and let $[\varphi^\emezo_{z}]$ denote its support. Then from the definition \eqref{eq:local-norm} we readily get the bound
\[
| (\iota_\eps f)(\varphi^\emezo_{z}) | \leq \biggl( \sup_{\bar z \in [\varphi^\emezo_{z}] \cap D_\eps} | f(\bar z) | \biggr) \eps^3 \sum_{x \in \Lattice} \int_{\R} | \varphi^\emezo_{z}(t, x) | \d t \lesssim \emezo^\zeta \Vert f \Vert_{\zeta; [\varphi^\emezo_z]; z; \emezo},
\]
uniformly over $f \in \CX_\eps$, $z \in D_\eps$, $\zeta \in \R$, and $\varphi \in \CB^2_\s$. 
\item\label{it:discretisation-md} For any function $\Gamma : D_\eps \times D_\eps \to \CG^\ex$, any compact set $K \subset \R^4$ and any $\zeta \in \R$, we define the following seminorm on the functions $f: D_\eps \to \CT^\ex_{< \zeta}$: 
\begin{equation}\label{eq:choice_distr_norm}
\$ f \$_{\zeta; K; \emezo} := \sup_{\substack{z, \bar z \in K \cap D_\eps \\ \Vert z - \bar z \Vert_{\s} \leq \emezo}} \sup_{m<\zeta} \emezo^{m -\zeta} | f(z) - \Gamma_{\!z \bar z} f(\bar z) |_{m}.
\end{equation}
For a second function $\bar \Gamma : D_\eps \times D_\eps \to \CG^\ex$ and for $\bar f: D_\eps \to \CT^\ex_{< \zeta}$ we also define 
\begin{equation}\label{eq:distance_mod_distr_eps}
\$ f; \bar{f} \$_{\zeta; K; \emezo} := \sup_{\substack{z, \bar z \in K \cap D_\eps \\ \Vert z - \bar z \Vert_\s \leq \emezo}} \sup_{m<\zeta} \emezo^{m -\zeta} \bigl| f(z) - \Gamma_{\!z \bar z} f(\bar z) - \bar{f}(z) + \bar{\Gamma}_{\!z \bar z} \bar{f}(\bar z) \bigr|_{m}.
\end{equation}
Both seminorms depend only on the values of $f$ and $\bar f$ in a neighbourhood of size $c \emezo$ around $K$, for a fixed constant $c > 0$.
\end{enumerate}
\end{definition}

\begin{remark}
The seminorms \eqref{eq:choice_distr_norm} and \eqref{eq:distance_mod_distr_eps} depends on the functions $\Gamma$ and $\bar \Gamma$. However, we prefer not to indicate it to have a lighter notation. The choice of these functions will be always clear from the context. 
\end{remark}

\begin{remark}\
Our definitions correspond to the ``semidiscrete'' case in \cite[Sec.~2]{erhard2017discretisation}.
\end{remark}

Following \cite[Def.~2.5]{erhard2017discretisation}, we can define a discrete model on the regularity structure~$\ST^\ex$.

\begin{definition}\label{def:model}
A \emph{discrete model} $(\Pi^\ga, \Gamma^\ga)$ on the regularity structure $\ST^\ex$ consists of a collection of maps $D_\eps \ni z \mapsto \Pi^\ga_z \in \CL(\CT^\ex, \CX_\eps)$ and $D_\eps \times D_\eps \ni (z, \bar z) \mapsto \Gamma^\ga_{\!z \bar z} \in \CG^\ex$ with the following properties: 
\begin{enumerate}
\item $\Gamma^\ga_{\!z z} = \id$ (where $\id$ is the identity operator), and $\Gamma^\ga_{\!z \bar z} \Gamma^\ga_{\!\bar z \tilde z} = \Gamma^\ga_{\!z \tilde z}$ for all $z, \bar z, \tilde z \in D_\eps$,
\item $\Pi^\ga_{\bar z} = \Pi^\ga_{z} \Gamma^\ga_{\!z \bar z}$ for all $z, \bar z \in D_\eps$.
\end{enumerate}
Furthermore, for any compact set $K \subset \R^4$ the following bounds hold
\begin{subequations}\label{eqs:model-bounds}
\begin{equation}\label{eq:Pi-bounds}
\sup_{\varphi \in \CB^2_\s} \sup_{z \in K \cap D_\eps} \bigl| \bigl(\iota_\eps \Pi^\ga_{z} \tau\bigr) (\varphi^\lambda_{z})\bigr| \lesssim \lambda^{|\tau|}, \qquad\qquad \sup_{K_\emezo \subset K} \sup_{z \in K \cap D_\eps} \| \Pi^\ga_{z} \tau \|_{|\tau|; K_\emezo; z; \emezo} \lesssim 1,
\end{equation}
uniformly over $\lambda \in [\emezo, 1]$ and $\tau \in \CW^\ex \setminus \{\<5b>\}$, where the supremum in the second bound is over compact sets $K_\emezo \subset K$ with the diameter not exceeding $2\emezo$. For the element $\tau = \<5b>$ we assume 
\begin{equation}\label{eq:Pi-bounds-bad}
\sup_{\varphi \in \CB^2_\s} \sup_{z \in K \cap D_\eps} \bigl| \bigl(\iota_\eps \Pi^\ga_{z} \tau\bigr) (\varphi^\lambda_{z})\bigr| \lesssim \gamma^{-1} \lambda^{|\tau| + \frac{1}{3}}, \qquad\qquad \sup_{K_\emezo \subset K} \sup_{z \in K \cap D_\eps} \| \Pi^\ga_{z} \tau \|_{|\tau| + \frac{1}{3}; K_\emezo; z; \emezo} \lesssim \gamma^{-1},
\end{equation}
uniformly over the same quantities. For the function $f^{\tau, \Gamma^\ga}_{\bar z}(z) := \Gamma^\ga_{\!z \bar z} \tau - \tau$ one has
\begin{equation}\label{eq:Gamma-bounds}
| \Gamma^\ga_{\!z \bar z} \tau |_m \lesssim \| z - \bar z \|_\s^{|\tau| - m}, \qquad\qquad \sup_{\bar z \in D_\eps} \$ f^{\tau, \Gamma^\ga}_{\bar z} \$_{|\tau|; K; \emezo} \lesssim 1,
\end{equation}
\end{subequations}
uniformly over $\tau \in \CW^\ex$, $m < |\tau|$ and $z, \bar z \in K \cap D_\eps$ such that $\| z - \bar z \|_\s \in [\emezo, 1]$. In the second bound in \eqref{eq:Gamma-bounds} we consider the seminorm \eqref{eq:choice_distr_norm} with respect to the map $\Gamma^\ga$.
\end{definition}

\begin{remark}
The first bounds in \eqref{eqs:model-bounds} control the model on the scale above $\emezo$ similarly to continuous models in \cite{Regularity}, and the second bounds in \eqref{eqs:model-bounds} control the model on the scale below $\emezo$. 
\end{remark}

\begin{remark}
We need to assume the much weaker bounds \eqref{eq:Pi-bounds-bad} for the element $\<5b>$, since in our definition in Section~\ref{sec:lift} $\Pi^\ga_{z} \<5b>$ is an approximation of an element of the fifth Wiener chaos. The latter is undefined in three dimensions because its correlation kernel is not integrable, which prevents us from imposing the uniform bounds \eqref{eq:Pi-bounds} (see Section~\ref{sec:fifth-chaos} for more details). This element is multiplied by $\gamma^6$ in the definition of solution in Section~\ref{sec:discrete-solution}, and the multiplier compensates the divergence assumed in \eqref{eq:Pi-bounds-bad}. We do not need to distinguish this element in \eqref{eq:Gamma-bounds} because $\Gamma^\ga_{\!z \bar z}$ acts trivially on it. 
\end{remark}

As we explained in Section~\ref{sec:model-space-cont}, we cannot define a model on the symbol $\blueXi$, because it corresponds to a distribution (a time derivative of the martingale) which is not an element of the space $\CX_\eps$ introduced in Definition~\ref{def:discretisation}.

We denote by $\Vert \Pi^\ga \Vert^{(\emezo)}_{K}$ and $\Vert \Gamma^\ga \Vert^{(\emezo)}_{K}$ the smallest proportionality constants such that the bounds \eqref{eq:Pi-bounds} and \eqref{eq:Gamma-bounds} hold respectively. Then for the model $Z^{\ga} = (\Pi^\ga, \Gamma^\ga)$ we set 
\begin{equation*}
\$ Z^{\ga} \$^{(\emezo)}_{K} := \Vert \Pi^\ga \Vert^{(\emezo)}_{K} + \Vert \Gamma^\ga \Vert^{(\emezo)}_{K}. 
\end{equation*}
For a second model $\bar Z^{\ga} = (\bar\Pi^\ga, \bar \Gamma^\ga)$ we define the ``distance'' 
\begin{equation*}
\$ Z^{\ga}; \bar Z^{\ga} \$^{(\emezo)}_{K} := \Vert \Pi^\ga - \bar \Pi^\ga \Vert^{(\emezo)}_{K} + \Vert \Gamma^\ga; \bar \Gamma^\ga \Vert^{(\emezo)}_{K},
\end{equation*}
where $\Vert \Gamma^\ga; \bar \Gamma^\ga \Vert^{(\emezo)}_{K}$ is the smallest proportionality constant such that the following bounds hold
\begin{equation*}
\| \bigl(\Gamma^\ga_{\!z \bar z} - \bar \Gamma^\ga_{\!z \bar z}\bigr) \tau \|_m \lesssim \| z - \bar z \|_\s^{|\tau| - m}, \qquad \sup_{\bar z \in D_\eps} \$ f^{\tau, \Gamma^\ga}_{\bar z}; f^{\tau, \bar \Gamma^\ga}_{\bar z} \$_{|\tau|; K; \emezo} \lesssim 1,
\end{equation*}
uniformly over the same quantities as in \eqref{eq:Gamma-bounds}, where in the second bound we consider the distance \eqref{eq:distance_mod_distr_eps} with respect to $\Gamma^\ga$ and $\bar \Gamma^\ga$. 

\begin{remark}\label{rem:model-no-K}
We will often work with models on the set $K = [-T, T] \times [-1,1]^3$. In this case we prefer to remove the set $K$ from the notation and write $\Vert \Pi^\ga \Vert^{(\emezo)}_{T}$, $\Vert \Gamma^\ga \Vert^{(\emezo)}_{T}$, etc. 
\end{remark}

\subsection{Modelled distributions}
\label{sec:DiscreteModelledDistributions}

By analogy with \cite[Sec.~6]{Regularity}, we are going to define a weighted norm for $\CT^\ex$-valued functions with a weight at time $0$. For this we define the following quantities for $z, \bar z \in \R^4$:
\begin{equation*}
\| z \|_0 := |t|^{\frac{1}{2}} \wedge 1, \qquad \| z, \bar z \|_0 := \| z \|_0 \wedge \| \bar z \|_0,
\end{equation*}
where $z = (t,x)$ with $t \in \R$. We also set $\| z, \bar z \|_\emezo := \| z, \bar z \|_0 \vee \emezo$.

For $\zeta, \eta \in \R$ and for a compact set $K \subset \R^4$, we define in the context of Definition~\ref{def:discretisation}(\ref{it:discretisation-md}) the following quantities (see \cite[Eqs.~3.21, 3.22]{erhard2017discretisation}):
\begin{equation}\label{eq:md-small-scale}
\$ f \$_{\zeta, \eta; K; \emezo} := \sup_{\substack{z \in K \cap D_\eps \\ \Vert z\Vert_{\s} \leq \emezo}} \sup_{m<\zeta} \frac{| f(z) |_m}{\emezo^{(\eta - m) \wedge 0}} + \sup_{\substack{z, \bar z \in K \cap D_\eps \\ \Vert z - \bar z \Vert_{\s} \leq \emezo}} \sup_{m<\zeta} \frac{| f(z) - \Gamma_{\!z \bar z} f(\bar z) |_{m}}{\emezo^{\zeta - m} \Vert z, \bar z\Vert_{\emezo}^{\eta - \zeta}},
\end{equation}
and 
\begin{align}\label{eq:md-distance-small-scale}
\$ f; \bar f \$_{\zeta, \eta; K; \emezo} &:= \sup_{\substack{z \in K \cap D_\eps \\ \Vert z\Vert_{\s} \leq \emezo}} \sup_{m<\zeta} \frac{| f(z) - \bar f(z) |_m}{\Vert z\Vert_{\s}^{(\eta - m) \wedge 0}} \\
&\qquad + \sup_{\substack{z, \bar z \in K \cap D_\eps \\ \Vert z - \bar z \Vert_{\s} \leq \emezo}} \sup_{m<\zeta} \frac{| f(z) - \Gamma_{\!z \bar z} f(\bar z) - \bar f(z) + \bar \Gamma_{\!z \bar z} \bar f(\bar z) |_{m}}{\emezo^{\zeta - m} \Vert z, \bar z\Vert_{\emezo}^{\eta - \zeta}}. \nonumber
\end{align}

Let us now take a discrete model $Z^{\ga} = (\Pi^\ga, \Gamma^\ga)$. A \emph{discrete modelled distribution} is an element of the space $\CD^{\zeta, \eta}_\emezo(\Gamma^\ga)$, containing the maps $f: D_\eps \to \CT^\ex_{< \zeta}$ such that, for any compact set $K \subseteq \R^4$,
\begin{align}\label{eq:def_dgamma_norm}
	\$ f \$^{(\emezo)}_{\zeta, \eta; K} &:= \sup_{\substack{z \in K \cap D_\eps \\ \Vert z\Vert_{\s} > \emezo}} \sup_{m<\zeta} \frac{| f(z) |_m}{\Vert z\Vert_{\s}^{(\eta - m) \wedge 0}} \\
	&\qquad + \sup_{\substack{z, \bar z \in K \cap D_\eps \\ \Vert z - \bar z \Vert_{\s} > \emezo}} \sup_{m<\zeta} \frac{| f(z) - \Gamma^\ga_{\!z \bar z} f(\bar z) |_{m}}{\| z - \bar z \|_\s^{\zeta - m} \Vert z, \bar z\Vert_{\emezo}^{\eta - \zeta}} + \$ f \$_{\zeta, \eta; K; \emezo} < \infty, \nonumber
\end{align}
where the last term is define by \eqref{eq:md-small-scale} via $\Gamma^\ga$. Sometimes it will be convenient to write $\CD^{\zeta, \eta}_\emezo(Z^{\ga})$ for $\CD^{\zeta, \eta}_\emezo(\Gamma^\ga)$, and when the model is clear from the context we will omit it from the notation and will simply write $\CD^{\zeta, \eta}_\emezo$. Observe that the first two terms in \eqref{eq:def_dgamma_norm} are the same as in the definition of the modelled distributions in \cite[Def.~6.2]{Regularity}, except that we look at the scale above $\emezo$. The last term measures regularity of $f$ on scale below~$\emezo$.

		For another discrete model $\bar Z^{\ga} = (\bar \Pi^\ga, \bar \Gamma^\ga)$ and a modelled distribution $\bar{f} \in \CD^\zeta_\emezo(\bar Z^{\ga})$, we set
\begin{equation*}
\begin{aligned}
	\$ f; \bar{f} \$^{(\emezo)}_{\zeta, \eta; K} &:= \sup_{\substack{z \in K \cap D_\eps \\ \Vert z\Vert_{\s} > \emezo}} \sup_{m<\zeta} \frac{| f(z) - \bar f(z) |_m}{\Vert z\Vert_{\s}^{(\eta - m) \wedge 0}} \\
	&\qquad + \sup_{\substack{z, \bar z \in K \cap D_\eps \\ \Vert z - \bar z \Vert_{\s} > \emezo}} \sup_{m<\zeta} \frac{| f(z) - \Gamma^\ga_{\!z \bar z} f(\bar z) - \bar f(z) + \bar \Gamma^\ga_{\!z \bar z} \bar f(\bar z) |_{m}}{\| z - \bar z \|_\s^{\zeta - m} \Vert z, \bar z\Vert_{\emezo}^{\eta - \zeta}} + \$ f; \bar f \$_{\zeta; K; \emezo},
\end{aligned}
\end{equation*}
where the last term is defined by \eqref{eq:md-distance-small-scale} via $\Gamma^\ga$ and $\bar \Gamma^\ga$.

\begin{remark}\label{rem:T-space}
When we work on the compact set $K = [-T, T] \times [-1,1]^3$, we simply write $\$ f \$^{(\emezo)}_{\zeta, \eta; T}$ and $\$ f; \bar{f} \$^{(\emezo)}_{\zeta, \eta; T}$. The space of modelled distributions, restricted to this set $K$ we denote by $\CD^{\zeta, \eta}_{\emezo, T}$.
\end{remark}

\subsection{The reconstruction theorem}

For a discrete model $(\Pi^\ga, \Gamma^\ga)$ and for a modelled distribution $f \in \CD^{\zeta, \eta}_\emezo$, we would like to define a \emph{reconstruction map} $\CR^\ga: \CD^{\zeta, \eta}_\emezo \to \CX_\eps$, which behaves around each point $z$ as $\Pi^\ga_{z} f(z)$. Following the idea of \cite[Def.~4.5]{HairerMatetski}, we define it as 
\begin{equation}\label{eq:def_rec_op}
	(\CR^\ga f)(z) := \bigl( \Pi^\ga_{z} f(z) \bigr) (z).
\end{equation}
In the case $\eta = \zeta$, i.e. when there is no weights in the definition \eqref{eq:def_dgamma_norm}, we have the following ``\emph{reconstruction theorem},'' where we use the short notation $\CD^{\zeta}_\emezo := \CD^{\zeta, \zeta}_\emezo$.

\begin{proposition}
	For a discrete model $(\Pi^\ga, \Gamma^\ga)$, a modelled distribution $f \in  \CD^{\zeta}_\emezo(\Gamma^\ga)$ with $\zeta > 0$, and compact set $K \subset \R^4$ one has
	\begin{equation}\label{eq:ReconThm1}
		\bigl| \iota_\eps \bigl( \CR^\ga f - \Pi^\ga_z f(z) \bigr) (\varphi^\lambda_z) \bigr| \lesssim (\lambda \vee \emezo)^\zeta \Vert \Pi^\ga \Vert^{(\emezo)}_{\bar{K}} \$ f \$^{(\emezo)}_{\zeta; [\varphi^\lambda_z]},
	\end{equation}
	uniformly over $\varphi \in \CB^2_\s$, $\lambda \in (0, 1]$, $z \in D_\eps$, and $\emezo \in (0, 1]$. Here, $\bar{K}$ is the $1$-fattening of $K$, $[\varphi^\lambda_z]$ is the support of $\varphi^\lambda_z$, and we used the map \eqref{eq:iota-general}.

	Let $(\bar{\Pi}^\ga, \bar{\Gamma}^\ga)$ be another discrete model with the respective reconstruction map $\bar{\CR}^\ga$, defined by \eqref{eq:def_rec_op}. Then for any $\bar f \in \CD^{\zeta}_{\emezo}(\bar \Gamma^\ga)$ one has
	\begin{align}\label{eq:ReconThm2}
		\bigl| \iota_\eps \big( \CR^\ga f - \Pi^\ga_z f(z) &- \bar{\CR}^\ga \bar{f} + \bar{\Pi}^\ga_z \bar{f}(z) \big) (\varphi^\lambda_z) \bigr| \\
		&\qquad \lesssim (\lambda \vee \emezo)^\zeta \Big( \Vert \bar{\Pi}^\ga \Vert^{(\emezo)}_{\bar K} \$ f; \bar{f} \$^{(\emezo)}_{\zeta; [\varphi^\lambda_z]} + \Vert \Pi^\ga - \bar{\Pi}^\ga \Vert^{(\emezo)}_{\bar K} \$ f \$^{(\emezo)}_{\zeta; [\varphi^\lambda_z]} \Big),\nonumber
	\end{align}
	uniformly over the same quantities as in \eqref{eq:ReconThm1}.
\end{proposition}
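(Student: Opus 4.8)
The strategy is the standard proof of the reconstruction theorem (as in \cite[Thm.~3.10]{Regularity} and especially its discrete variant \cite[Thm.~4.7]{HairerMatetski}), adapted to the fact that the model bounds only hold on scales above $\emezo$, while below $\emezo$ we have only the pointwise (``local norm'') control. First I would reduce to a wavelet/multiresolution argument: pick a compactly supported scaling function $\phi$ and the associated father/mother wavelets on $\R^4$ (with parabolic scaling $\s$), and for dyadic scales $2^{-n}$ write the candidate reconstruction $\CR^\ga f$, defined pointwise by \eqref{eq:def_rec_op}, tested against $\varphi^\lambda_z$. The key quantity to control is the ``consistency error'' $\bigl(\Pi^\ga_z f(z) - \Pi^\ga_{\bar z} f(\bar z)\bigr)$ for nearby points $z, \bar z$, which by the algebraic identity $\Pi^\ga_{\bar z} = \Pi^\ga_z \Gamma^\ga_{z\bar z}$ equals $\Pi^\ga_z\bigl(f(z) - \Gamma^\ga_{z\bar z} f(\bar z)\bigr)$; the modelled-distribution bound \eqref{eq:def_dgamma_norm} gives $|f(z) - \Gamma^\ga_{z\bar z} f(\bar z)|_m \lesssim \|z-\bar z\|_\s^{\zeta - m}$ (here $\eta=\zeta$, so no time-weight blow-up), and the model bound \eqref{eq:Pi-bounds} turns each homogeneity-$m$ component into a factor $\lambda^m$ when tested at scale $\lambda \gtrsim \emezo$. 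Summing $\sum_m \|z-\bar z\|_\s^{\zeta-m}\lambda^m$ over the finitely many homogeneities $m<\zeta$ with $\|z-\bar z\|_\s \approx \lambda$ yields the $\lambda^\zeta$ gain, and the telescoping/summation over dyadic scales converges precisely because $\zeta > 0$. This produces \eqref{eq:ReconThm1} for $\lambda \geq \emezo$.

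The genuinely new point, compared to the continuum, is the regime $\lambda < \emezo$. Here I would argue directly rather than through the wavelet decomposition. For $\lambda < \emezo$ the rescaled test function $\varphi^\lambda_z$, when integrated against a lattice function via $\iota_\eps$ (recall $\eps \ll \emezo$), is essentially a smooth bump at scale $\lambda$; but $\CR^\ga f$ and $\Pi^\ga_z f(z)$ are both genuine functions on $D_\eps$, so $\iota_\eps\bigl(\CR^\ga f - \Pi^\ga_z f(z)\bigr)(\varphi^\lambda_z)$ is controlled by $\sup_{\bar z \in [\varphi^\lambda_z]\cap D_\eps}\bigl|(\CR^\ga f)(\bar z) - (\Pi^\ga_z f(z))(\bar z)\bigr|$ times the $L^1$-mass of $\varphi^\lambda_z$, which is $O(1)$. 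By \eqref{eq:def_rec_op}, $(\CR^\ga f)(\bar z) = (\Pi^\ga_{\bar z} f(\bar z))(\bar z)$, so the difference is again $\bigl(\Pi^\ga_z(f(z) - \Gamma^\ga_{z\bar z}f(\bar z))\bigr)(\bar z)$ with $\|z - \bar z\|_\s \lesssim \emezo$. Now I use the \emph{second} bound in \eqref{eq:Pi-bounds} (the local seminorm $\|\Pi^\ga_z \tau\|_{|\tau|;K_\emezo;z;\emezo}\lesssim 1$, i.e.\ $|(\Pi^\ga_z\tau)(\bar z)| \lesssim \emezo^{|\tau|}$ for $\bar z$ within $\emezo$ of $z$) together with the small-scale modelled-distribution bound \eqref{eq:md-small-scale}, which gives $|f(z)-\Gamma^\ga_{z\bar z}f(\bar z)|_m \lesssim \emezo^{\zeta-m}$ for $\|z-\bar z\|_\s \leq \emezo$. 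Multiplying and summing over homogeneities $m<\zeta$ yields $\emezo^\zeta$, which is exactly $(\lambda\vee\emezo)^\zeta$ in this regime. One has to be slightly careful to include the excluded symbol $\<5b>$: but $\<5b>$ has homogeneity $-\tfrac52-5\kappa$, well below $\zeta>0$, so it does not appear in $f\in\CD^\zeta_\emezo$ (whose components lie in $\CT^\ex_{<\zeta}$ but are constrained by the modelled-distribution bounds to the relevant sector), hence \eqref{eq:Pi-bounds-bad} plays no role here; I would note this explicitly to avoid confusion.

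For the continuity statement \eqref{eq:ReconThm2}, the argument is the same bilinear expansion: write the four-term difference $\Pi^\ga_z f(z) - \Gamma$-transport $- \bar\Pi^\ga_z\bar f(z) - \ldots$ and split it into a piece where only the models differ (controlled by $\|\Pi^\ga - \bar\Pi^\ga\|^{(\emezo)}$ times the modelled-distribution norm of $f$) and a piece where only the modelled distributions differ (controlled by $\|\bar\Pi^\ga\|^{(\emezo)}$ times $\$f;\bar f\$^{(\emezo)}$), using in the second piece the distance seminorms \eqref{eq:distance_mod_distr_eps}, \eqref{eq:md-distance-small-scale} in place of \eqref{eq:choice_distr_norm}, \eqref{eq:md-small-scale}. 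The main obstacle, and the only place requiring real care, is the bookkeeping at the crossover scale $\lambda \approx \emezo$: one must check that the dyadic sum from the large-scale argument, which a priori only runs down to scale $\emezo$, matches cleanly onto the crude small-scale bound, and that the ``remainder'' contributions of the wavelet reconstruction at scales below $\emezo$ (which are not covered by the model bounds) are instead absorbed using the $L^\infty$-type control \eqref{eq:uniform-bound}-style estimate and the fact that $\iota_\eps$ of a lattice function tested at a sub-$\emezo$ scale cannot see more than its pointwise values. Everything else is a routine adaptation of \cite{HairerMatetski,erhard2017discretisation}, so I would cite those for the parts that transfer verbatim and spell out only the scale-$\emezo$ crossover in detail.
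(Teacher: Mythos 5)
Your proposal is correct and takes essentially the same route as the paper: the only genuinely new ingredient the paper supplies is precisely your sub-$\emezo$ estimate, namely writing $\bigl(\CR^\ga f-\Pi^\ga_z f(z)\bigr)(\bar z)=\Pi^\ga_{\bar z}\bigl(f(\bar z)-\Gamma^\ga_{\bar z z}f(z)\bigr)(\bar z)$ and combining the local model seminorm in \eqref{eq:Pi-bounds} with the small-scale bound \eqref{eq:md-small-scale} to obtain \eqref{eq:small_scale_condition}, together with its two-model analogue for \eqref{eq:ReconThm2}. The paper does not re-run the wavelet/multiresolution argument for scales above $\emezo$ or the crossover at all; it simply feeds this small-scale consistency bound into the black-box reconstruction theorem of \cite[Thm.~3.5]{erhard2017discretisation}, which matches your stated intention to cite that framework for the parts that transfer verbatim.
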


\begin{proof}
For any compact set $K_\emezo \subset \R^4$ of diameter smaller than $2 \emezo$ and for any $z \in D_\eps$, from the properties of the model and modelled distribution we get
\begin{equation}\label{eq:reconstruction-proof}
\begin{aligned}
	\bigl\Vert \CR^\ga f - \Pi^\ga_z f(z) \bigr\Vert_{\zeta; K_\emezo; z; \emezo} 
	&= \emezo^{-\zeta} \sup_{\bar z \in K_\emezo \cap D_\eps} \bigl| \Pi^\ga_{\bar z} \big( f(\bar z) - \Gamma^\ga_{\bar z z} f(z) \big)(\bar z) \bigr| \\
	&\lesssim \sup_{\be < \zeta}  \sup_{\bar z \in K_\emezo \cap D_\eps} \emezo^{\be - \zeta} \Vert \Pi^\ga \Vert^{(\emezo)}_{\bar K_\emezo} \bigl| f(\bar z) - \Gamma^\ga_{\bar z z}f(z) \big|_\be. 
\end{aligned}
\end{equation}
Using \eqref{eq:md-small-scale}, the latter yields 
\begin{equation}\label{eq:small_scale_condition} 
\bigl\Vert \CR^\ga f - \Pi^\ga_z f(z) \bigr\Vert_{\zeta; K_\emezo; z; \emezo} \lesssim \Vert \Pi^\ga \Vert^{(\emezo)}_{\bar K_\emezo} \$ f \$_{\zeta; K_\emezo; \emezo}.
\end{equation} 
Then \eqref{eq:ReconThm1} follows from \cite[Thm.~3.5]{erhard2017discretisation} and this bound.

The estimate \eqref{eq:ReconThm2} follows again from \cite[Thm.~3.5]{erhard2017discretisation} and from the following bound, which can be proved similarly to \eqref{eq:small_scale_condition},
\begin{align*}
\bigl\| \CR^\ga f - \Pi^\ga_z f(z) &- \bar{\CR}^\ga \bar{f} + \bar{\Pi}^\ga_z \bar{f}(z) \bigr\|_{\zeta; K_\emezo; z; \emezo} \\
&\qquad\lesssim \Vert \bar{\Pi}^\ga \Vert^{(\emezo)}_{\bar{K}_\emezo} \$ f; \bar{f} \$_{\zeta; K_\emezo; \emezo} + \Vert \Pi^\ga - \bar{\Pi}^\ga \Vert^{(\emezo)}_{\bar{K}_\emezo} \$ f \$_{\zeta; K_\emezo; \emezo},
\end{align*}
uniformly over the involved quantities. 
\end{proof}

Respectively, we can show that the reconstruction theorem \cite[Thm.~3.13]{erhard2017discretisation} holds in our case. The required Assumpsions~3.6 and 3.12 in \cite{erhard2017discretisation} follow readily from our definitions and estimates similar to \eqref{eq:reconstruction-proof}. We prefer not to duplicate full statement of this theorem, and we provide only the estimates which we are going to use later. 

\begin{proposition}\label{prop:ReconThm_v2}
In the described context, \cite[Thm.~3.13]{erhard2017discretisation} holds. In particular, let $(\Pi^\ga, \Gamma^\ga)$ be a discrete model and let $f \in \CD^{\zeta, \eta}_\emezo(\Gamma^\ga)$ be a modelled distribution, taking values in a sector of regularity $\alpha \leq 0$ and such that $\zeta > 0$, $\eta \leq \zeta$ and $\alpha \wedge \eta > -2$. Then for any compact set $K \subset \R^4$ one has 
\begin{equation*}
\bigl| \iota_\eps \bigl( \CR^\ga f \bigr) (\varphi^\lambda_z) \bigr| \lesssim (\lambda \vee \emezo)^{\alpha \wedge \eta} \Vert \Pi^\ga \Vert^{(\emezo)}_{\bar{K}} \$ f \$^{(\emezo)}_{\zeta; [\varphi^\lambda_z]},
\end{equation*}
uniformly over the same quantities as in \eqref{eq:ReconThm1}.

For a second discrete model $(\bar{\Pi}^\ga, \bar{\Gamma}^\ga)$ and for $\bar f \in \CD^{\zeta}_{\emezo}(\bar \Gamma^\ga)$ one has
\begin{equation*}
\bigl| \iota_\eps \bigl( \CR^\ga f - \bar \CR^\ga f\bigr) (\varphi^\lambda_z) \bigr| \lesssim (\lambda \vee \emezo)^{\alpha \wedge \eta} \Big( \Vert \bar{\Pi}^\ga \Vert^{(\emezo)}_{\bar K} \$ f; \bar{f} \$^{(\emezo)}_{\zeta; [\varphi^\lambda_z]} + \Vert \Pi^\ga - \bar{\Pi}^\ga \Vert^{(\emezo)}_{\bar K} \$ f \$^{(\emezo)}_{\zeta; [\varphi^\lambda_z]} \Big),
\end{equation*}
uniformly over the same quantities. 
\end{proposition}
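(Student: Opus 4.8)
The plan is to treat Proposition~\ref{prop:ReconThm_v2} as a transcription of \cite[Thm.~3.13]{erhard2017discretisation} to the regularity structure $\ST^\ex$ in the semidiscrete setting of Definition~\ref{def:discretisation}: I would verify that the hypotheses of that abstract theorem are met in our framework, and then read off the two displayed estimates as its specialisations.

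First I would observe that the hypothesis $\alpha \wedge \eta > -2$ automatically excludes the elements $\<4b>$ and $\<5b>$ --- whose homogeneities $-2-4\kappa$ and $-\frac52-5\kappa$ (see Table~\ref{tab:symbols}) lie strictly below $-2$ --- from any sector of regularity $\alpha$ in which $f$ may take values. Hence, in reconstructing such $f$ only the standard model bounds \eqref{eq:Pi-bounds} are ever used, the anomalous bound \eqref{eq:Pi-bounds-bad} is irrelevant, and the discrete model restricted to the relevant sector is a model in the strict sense of \cite[Def.~2.5]{erhard2017discretisation}. Next I would check the structural hypotheses Assumptions~3.6 and~3.12 in \cite{erhard2017discretisation}: the first is immediate from Definition~\ref{def:discretisation} --- the locality of the seminorm \eqref{eq:local-norm}, the pairing estimate recorded in Definition~\ref{def:discretisation}, and the embedding $\iota_\eps$ from \eqref{eq:iota-general} provide exactly the required data --- while the second, which demands compatibility of the model and modelled-distribution seminorms, follows from the bounds in Definition~\ref{def:model} together with the definition \eqref{eq:def_dgamma_norm} of $\CD^{\zeta,\eta}_\emezo$.

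The heart of the argument is then a small-scale estimate: with $\CR^\ga$ defined pointwise by \eqref{eq:def_rec_op}, I would show that for every compact $K_\emezo \subset \R^4$ of diameter at most $2\emezo$ and every $z \in D_\eps$,
\begin{equation*}
\bigl\Vert \CR^\ga f - \Pi^\ga_z f(z) \bigr\Vert_{\zeta; K_\emezo; z; \emezo} \lesssim \bigl(\Vert z\Vert_0 \vee \emezo\bigr)^{\eta - \zeta}\, \Vert \Pi^\ga \Vert^{(\emezo)}_{\bar K_\emezo}\, \$ f \$_{\zeta, \eta; K_\emezo; \emezo},
\end{equation*}
by exactly the computation in \eqref{eq:reconstruction-proof}--\eqref{eq:small_scale_condition}, now additionally carrying the time-weight coming from the two terms of \eqref{eq:md-small-scale}. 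Feeding this into \cite[Thm.~3.13]{erhard2017discretisation} produces the reconstruction bound at all scales; the exponent $\alpha \wedge \eta$ (in place of $\zeta$) appears because, for small $\lambda$ and $f$ taking values in a sector of regularity $\alpha \leq 0$, the quantity $\bigl(\iota_\eps \Pi^\ga_z f(z)\bigr)(\varphi^\lambda_z)$ is controlled only by $(\lambda \vee \emezo)^{\alpha}$, while near $t = 0$ the weight in \eqref{eq:def_dgamma_norm} contributes a further factor of order $(\lambda \vee \emezo)^{\eta - \alpha}$, and one retains the worse of the two contributions. The two-models estimate would be obtained along the same route, replacing the seminorms by their difference versions \eqref{eq:distance_mod_distr_eps} and \eqref{eq:md-distance-small-scale} and performing the triangle-inequality bookkeeping exactly as in the proof of \eqref{eq:ReconThm2}.

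The main obstacle I anticipate is purely technical: keeping track of the time-weight $(\Vert z\Vert_0 \vee \emezo)^{\eta-\zeta}$ consistently when the small-scale bound is glued to the abstract reconstruction statement at the crossover scale $\lambda \approx \emezo$ and in the boundary layer near $t=0$. Beyond that, and beyond confirming that our (semidiscrete) definitions literally satisfy the assumptions in \cite{erhard2017discretisation}, the proof is a direct transcription of that reference.
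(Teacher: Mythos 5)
Your proposal is correct and follows essentially the same route as the paper, which likewise dispenses with details: it notes that Assumptions~3.6 and~3.12 of \cite{erhard2017discretisation} follow from the definitions together with a small-scale estimate analogous to \eqref{eq:reconstruction-proof}--\eqref{eq:small_scale_condition}, and then reads the two displayed bounds off \cite[Thm.~3.13]{erhard2017discretisation}. Your additional observation that $\alpha \wedge \eta > -2$ keeps the anomalous bound \eqref{eq:Pi-bounds-bad} for $\<5b>$ out of play is a sensible (if unstated in the paper) sanity check, and the weighted bookkeeping you outline is exactly what the omitted verification amounts to.
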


\section{A renormalised lift of martingales}
\label{sec:lift}

Now we will construct a discrete model $Z^{\ga, \fa}_{\lift} = (\Pi^{\ga, \fa}, \Gamma^{\ga, \fa})$ which will be used to write equation \eqref{eq:IsingKacEqn-new} on the regularity structure $\ST^\ex$ (as in \cite{Regularity}, we call this model a ``lift'' of the random driving noise $\M_{\ga, \fa}$). For this, we are going to use the martingales from \eqref{eq:IsingKacEqn-new}, such that we have the a priori bounds on the solution provided by the stopping time \eqref{eq:tau}. 

Since we have only few basis elements in the regularity structure, we prefer to define $Z^{\ga, \fa}_{\lift}$ as a renormalised model, as opposed to \cite[Sec.~8.2]{Regularity}, where renormalisation of a canonical lift was done separately.

It will be convenient to use the following short notation:
\begin{equation*}
\int_{D_\eps} \varphi(z)\, \d z := \eps^3 \sum_{x \in \Lattice} \int_{\R} \varphi(t,x)\, \d t.
\end{equation*}
Throughout this section we will use the decomposition $\widetilde{G}^\ga = \mywidetilde{\SK}^\ga + \mywidetilde{\SR}^\ga$ of the discrete kernel \eqref{eq:From-P-to-G-tilde}, defined in Appendix~\ref{sec:decompositions}.

\subsection{Definition of the map $\PPi^{\ga, \fa}$}
\label{sec:PPi}

In order to define the model $(\Pi^{\ga, \fa}, \Gamma^{\ga, \fa})$, we first introduce an auxiliary map $\PPi^{\ga, \fa} \in \SL(\CT^\ex, \CX_\eps)$, and then we will use the results from \cite[Sec.~8.3]{Regularity}. 

It will be convenient to extend the martingales $\M_{\ga, \fa}(t, x)$ to all $t \in \R$. For this, we denote by $\widetilde{X}_{\ga, \fa}(t, x)$ an independent copy of $X_{\ga, \fa}(t, x)$, defined in Section~\ref{sec:a-priori}. Then $\widetilde{X}_{\ga, \fa}$ solves equation \eqref{eq:IsingKacEqn-periodic} driven by a martingale $\widetilde{\M}_{\ga, \fa}(t, x)$. We define the extension of $\M_{\ga, \fa}(t, x)$ to $t < 0$ as 
\begin{equation}\label{eq:martingale-extension}
\M_{\ga, \fa}(t, x) = \widetilde{\M}_{\ga, \fa}(-t, x).
\end{equation}
This extension does not affect equation \eqref{eq:IsingKacEqn-new} in any way, and is a technical trick which simplifies the following formulas. In particular, it allows to define time integrals in \eqref{eq:Pi-Xi} and later on whole $\R$ rather than $\R_+$. In what follows, the martingales $\M_{\ga, \fa}(t, x)$ are extended periodically to $x \in \Lattice$.

Using the map \eqref{eq:iota-general}, we start with making the following definition:
\begin{equation}\label{eq:Pi-Xi}
\iota_\eps \bigl(\PPi^{\ga, \fa} \blueXi\bigr)(\varphi) = \frac{1}{\sqrt 2}\eps^3 \sum_{x \in \Lattice} \int_{\R} \varphi(t,x)\, \d \M_{\ga, \fa}(t, x), 
\end{equation}
for any smooth, compactly supported function $\varphi : \R^4 \to \R$, and for the processes $\M_{\ga, \fa}$ just introduced. The stochastic integral is defined with respect to the martingale $t \mapsto \M_{\ga, \fa}(t, x)$, which is well defined in the Stieltjes sense since the function $\varphi$ is smooth. We need to use the factor $\frac{1}{\sqrt 2}$ in order to have convergence of $\PPi^{\ga, \fa} \blueXi$ to a white noise (as follows from \eqref{eq:M-a-variation}, the martingale $\M_{\ga, \fa}$ converges to a cylindrical Wiener process with diffusion $2$). For monomials we set 
\begin{equation*}
\bigl(\PPi^{\ga, \fa} \blueOne\bigr) (t,x) = 1, \qquad \bigl(\PPi^{\ga, \fa} \X_i\bigr) (t,x) = x_i.
\end{equation*}
Furthermore, we use the kernel $\mywidetilde{\SK}^\ga$, defined in the beginning of this section, and set
\begin{equation*}
	\bigl(\PPi^{\ga, \fa} \<1b>\bigr)(t, x) = \frac{1}{\sqrt 2} \eps^3 \sum_{y \in \Lattice} \int_{\R} \mywidetilde{\SK}^\ga_{t-s}(x - y)\, \d \M_{\ga, \fa}(s, y), 
\end{equation*}
as well as
\begin{equation}\label{eq:cherry}
\begin{aligned}
	\bigl(\PPi^{\ga, \fa} \<2b>\bigr)(t, x) &= \bigl( \PPi^{\ga, \fa} \<1b>\bigr)(t, x)^2 - \fc_\ga - \fc_\ga',\\
	\bigl(\PPi^{\ga, \fa} \<3b>\bigr)(t, x) &= \bigl( \PPi^{\ga, \fa} \<1b> \bigr)(t, x)^3 - 3 \fc_\ga \bigl(\PPi^{\ga, \fa} \<1b>\bigr)(t, x),
\end{aligned}
\end{equation}
where
\begin{equation}\label{eq:renorm-constant1}
	\fc_\ga := \int_{D_\eps} \mywidetilde{\SK}^\ga(z)^2\, \d z
\end{equation}
is a diverging renormalisation constant (we show in Lemma~\ref{lem:renorm-constants} that the divergence speed is $\emezo^{-1}$),
and 
\begin{equation}\label{eq:renorm-constant3}
\fc_{\ga}' := - \be \varkappa_{\ga, 3} \ga^6 \un{\fC}_\ga \fc_\ga
\end{equation}
is a renormalisation constant which is bounded uniformly in $\ga$, as follows from Lemmas~\ref{lem:renorm-constants} and \ref{lem:renorm-constant-underline}. We used in \eqref{eq:renorm-constant3} the constants $\be$, $\varkappa_{\ga, 3}$ and $\un{\fC}_\ga$ defined in \eqref{eq:beta}, \eqref{eq:c-gamma-2} and \eqref{eq:c-under} respectively. 

We prefer to separate the two renormalisation constants in \eqref{eq:cherry}, because they have different origins. More precisely, the constant $\fc_\ga$ would be the same if the driving noise was Gaussian, while $\fc_{\ga}'$ comes from the renormalisation of the non-linearity of the bracket process \eqref{eq:M-bracket}. The necessity of such renormalisation will be clear from Section~\ref{sec:second-symbol}.

Let $H_n : \R \times \R_+ \to \R$ be the $n$-th Hermite polynomial, defined for $n \in \N$ and a real constant $c > 0$ in the following recursive way:
\begin{equation}\label{eq:def_Hermite}
	H_1(u, c) = u, \qquad H_{n+1}(u, c) = u H_n(u, c) - c H'_{n}(u, c) ~\text{ for any }~ n \geq 1,
\end{equation}
with $H'_{n}$ denoting the derivative of the polynomial $H_{n}$ with respect to the variable $u$. In particular, the first several Hermite polynomials are $H_1(u, c) = u$, $H_2(u, c) = u^2 - c$, $H_3(u, c) = u^3 - 3 c u$, $H_4(u, c) = u^4 - 6 c u^2 + 3 c^2$ and $H_5(u, c) = u^5 - 10 c u^3 + 15 c^2 u$.

Observe then that we have the identities $\bigl(\PPi^{\ga, \fa} \<2b>\bigr)(t, x) = H_2 \bigl((\PPi^{\ga, \fa} \<1b>)(t, x), \fc_\ga + \fc_\ga'\bigr)$ and $\bigl(\PPi^{\ga, \fa} \<3b>\bigr)(t, x) = H_3\bigl((\PPi^{\ga, \fa} \<1b>)(t, x), \fc_\ga\bigr)$, which correspond to the Wick renormalisation of models in the case of a Gaussian noise \cite[Sec.~10]{Regularity}. Hence, in the same spirit we can define $\PPi^{\ga, \fa} \<4b>$ and $\PPi^{\ga, \fa} \<5b>$ in terms of the Hermite polynomials:
\begin{equation}\label{eq:model-Hermite}
\begin{aligned}
	\bigl(\PPi^{\ga, \fa} \<4b>\bigr)(t, x) &= H_4\bigl(( \PPi^{\ga, \fa} \<1b>)(t, x), \fc_\ga \bigr) \\
	&= \bigl( \PPi^{\ga, \fa} \<1b>\bigr)(t, x)^4 - 6 \fc_\ga \bigl( \PPi^{\ga, \fa} \<1b>\bigr)(t, x)^2 + 3 \fc_\ga^2, \\
	\bigl(\PPi^{\ga, \fa} \<5b>\bigr)(t, x) &= H_5\bigl(( \PPi^{\ga, \fa} \<1b>)(t, x), \fc_\ga \bigr) \\
	&= \bigl( \PPi^{\ga, \fa} \<1b>\bigr)(t, x)^5 - 10 \fc_\ga \bigl( \PPi^{\ga, \fa} \<1b>\bigr)(t, x)^3 + 15 \fc_\ga^2 \bigl( \PPi^{\ga, \fa} \<1b>\bigr)(t, x).
\end{aligned}
\end{equation}
For the elements of the form $\tau \X_i \in \CW^\ex$ we set 
\begin{equation*}
	\bigl(\PPi^{\ga, \fa} \tau \X_i\bigr) (t,x) = \bigl(\PPi^{\ga, \fa} \tau\bigr) (t,x) \bigl(\PPi^{\ga, \fa} \X_i\bigr) (t,x).
\end{equation*}
For each element $\CE(\tau) \in \CW^\ex$ we define 
 \begin{equation*}
 \bigl(\PPi^{\ga, \fa} \CE(\tau)\bigr) (t,x) = \ga^6 \bigl(\PPi^{\ga, \fa} \tau\bigr) (t,x),
 \end{equation*}
 and for each element $\CI(\tau) \in \CW^\ex$ we set 
 \begin{equation*}
 \bigl(\PPi^{\ga, \fa} \CI(\tau)\bigr) (t,x) = \eps^3 \sum_{y \in \Lattice} \int_{\R}  \mywidetilde{\SK}^\ga_{t-s}(x - y) \bigl(\PPi^{\ga, \fa} \tau\bigr) (s,y)\, \d s.
 \end{equation*}
 
 One can see that this recursive definition of the map $\PPi^{\ga, \fa}$ gives its action on all the elements from Tables~\ref{tab:symbols-cont} and \ref{tab:symbols}, except the three diagrams $\<22b>$, $\<31b>$ and $\<32b>$. So, it is left to define the map $\PPi^{\ga, \fa}$ for these three elements. For the element $\<22b>$ we set 
 \begin{equation*}
  \bigl(\PPi^{\ga, \fa} \<22b>\bigr) (t,x) = \bigl(\PPi^{\ga, \fa} \<20b>\bigr) (t,x) \bigl(\PPi^{\ga, \fa} \<2b>\bigr) (t,x) - \fc_\ga'',
 \end{equation*}
 where
 \begin{equation}\label{eq:renorm-constant2}
 \fc_\ga'' := 2 \int_{D_\eps} \int_{D_\eps} \int_{D_\eps} \mywidetilde{\SK}^\ga(z)\, \mywidetilde{\SK}^\ga(z_1)\, \mywidetilde{\SK}^\ga(z_2)\, \mywidetilde{\SK}^\ga(z_1 - z)\, \mywidetilde{\SK}^\ga(z_2 - z)\, \d z \d z_1 \d z_2
 \end{equation}
 is a new diverging renormalisation constant (we show in Section~\ref{sec:renormalisation} that the divergence order is $\log \emezo$). Finally, we define
  \begin{align*}
  \bigl(\PPi^{\ga, \fa} \<31b>\bigr) (t,x) &= \bigl(\PPi^{\ga, \fa} \<30b>\bigr) (t,x) \bigl(\PPi^{\ga, \fa} \<1b>\bigr) (t,x), \\
  \bigl(\PPi^{\ga, \fa} \<32b>\bigr) (t,x) &= \bigl(\PPi^{\ga, \fa} \<30b>\bigr) (t,x) \bigl(\PPi^{\ga, \fa} \<2b>\bigr) (t,x) - 3 \fc_\ga'' \bigl(\PPi^{\ga, \fa} \<1b>\bigr) (t,x). 
 \end{align*}
 
 \subsection{Definition of the model}
 \label{sec:model-lift}
 
 Having $\PPi^{\ga, \fa}$ defined on the basis elements $\CW^\ex$, we extend it linearly to $\CT^\ex$, which yields the map $\PPi^{\ga, \fa} \in \SL(\CT^\ex, \CX_\eps)$. As we pointed above, we had to exclude the symbol $\blueXi$ from $\CT^\ex$, because our definition \eqref{eq:Pi-Xi} suggests that $\PPi^{\ga, \fa} \blueXi$ does not belong to $\CX_\eps$. A discrete model $Z^{\ga, \fa}_{\lift} = (\Pi^{\ga, \fa}, \Gamma^{\ga, \fa})$ on $\ST^\ex$ is defined via this map $\PPi^{\ga, \fa}$ as in \cite[Sec.~8.3]{Regularity}. More precisely, we define 
 \begin{equation*}
f^{\ga, \fa}_z(\blueOne) = - 1, \qquad f^{\ga, \fa}_z(\X_i) = - x_i, \qquad f^{\ga, \fa}_z(\CI(\tau)) = - \bigl(\PPi^{\ga, \fa} \CI(\tau)\bigr) (z) \quad\text{for}~ \tau \neq \blueXi.
 \end{equation*}
We extend this function linearly to $f^{\ga, \fa}_z : \CT^\ex_+ \to \R$, where $\CT^\ex_+$ is defined in Section~\ref{sec:structure-group}, and we can use \eqref{eq:Gamma-general} to define
 \begin{equation*}
F^{\ga, \fa}_{\!z} := \Gamma_{\!f^{\ga, \fa}_z}.
 \end{equation*}
 Since $F^{\ga, \fa}_{\!z}$ is an element of the group $\CG^\ex$, it has the inverse $(F^{\ga, \fa}_{\!z})^{-1}$. Then the discrete model $(\Pi^{\ga, \fa}, \Gamma^{\ga, \fa})$ is defined as 
 \begin{equation}\label{eq:Pi-Gamma-lift}
 \Pi^{\ga, \fa}_z \tau = \bigl(\PPi^{\ga, \fa} \otimes f^{\ga, \fa}_z\bigr) \Delta \tau, \qquad\qquad \Gamma^{\ga, \fa}_{\! z \bar z} =  (F^{\ga, \fa}_{\!z})^{-1} \circ F^{\ga, \fa}_{\!\bar z},
 \end{equation}
 where the operator $\Delta$ is defined in Section~\ref{sec:structure-group}. All the properties in Definition~\ref{def:model} follow from the definition of the model $Z^{\ga, \fa}_{\lift}$. However, showing that the bounds \eqref{eqs:model-bounds} hold uniformly in $\ga > 0$ is non-trivial and we prove these bounds in Section~\ref{sec:convergence-of-models}.
 
 Since the operator $\Delta$ is simple in our case, we can write the map $\Pi^{\ga, \fa}$ explicitly. Namely, we have $\bigl(\Pi^{\ga, \fa}_z \blueOne\bigr) (\bar z) = 1$ and $\bigl(\Pi^{\ga, \fa}_z \X_i\bigr) (\bar z) = \bar x_i - x_i$, for $ z = ( t,  x)$ and $\bar z = (\bar t, \bar x)$. Using the same space-time points, we furthermore have 
\begin{equation}\label{eq:lift-hermite}
\begin{aligned}
	\bigl(\Pi^{\ga, \fa}_z \<1b>\bigr)(\bar z) &= \frac{1}{\sqrt 2} \eps^3 \sum_{y \in \Lattice} \int_{\R} \mywidetilde{\SK}^\ga_{\bar t-s}(\bar x - y)\, \d\M_{\ga, \fa}(s, y), \\
	\bigl(\Pi^{\ga, \fa}_z \<2b>\bigr)(\bar z) &= H_2\bigl((\Pi^{\ga, \fa}_z \<1b>)(\bar z), \fc_\ga + \fc_\ga' \bigr), \\
	 \bigl(\Pi^{\ga, \fa}_z \<1b>^n\bigr)(\bar z) &= H_n\bigl((\Pi^{\ga, \fa}_z \<1b>)(\bar z), \fc_\ga \bigr) \quad \text{for $n = 3, 4, 5$}.
\end{aligned}
\end{equation}
For $\tau \in \{\<2b>, \<3b>\}$ we have $\bigl(\Pi^{\ga, \fa}_z \tau \X_i\bigr) (\bar z) = \bigl(\Pi^{\ga, \fa}_z \tau\bigr) (\bar z) \bigl(\Pi^{\ga, \fa}_z \X_i\bigr) (\bar z)$, and for $\tau \in \{\<4b>, \<5b>\}$ we have 
\begin{equation}\label{eq:Pi-E}
\bigl(\Pi^{\ga, \fa}_z \CE(\tau)\bigr) (\bar z) = \ga^6 \bigl(\Pi^{\ga, \fa}_z \tau\bigr) (\bar z).
\end{equation}
For the elements $\tau \in \{\<2b>, \<3b>\}$ the following formulas hold:
\begin{equation}\label{eq:Pi-I}
 	\bigl(\Pi^{\ga, \fa}_z \CI(\tau)\bigr) (\bar z) = \int_{D_\eps} \bigl(\mywidetilde{\SK}^\ga(\bar z - \tilde{z}) - \mywidetilde{\SK}^\ga(z - \tilde{z})\bigr) \bigl(\Pi^{\ga, \fa}_z \tau\bigr) (\tilde{z})\, \d \tilde{z}.
\end{equation}
Finally, we have the identities
\begin{align}
\bigl(\Pi^{\ga, \fa}_z \<22b>\bigr) (\bar z) &= \bigl(\Pi^{\ga, \fa}_z \<2b>\bigr) (\bar z)\, \int_{D_\eps} \bigl(\mywidetilde{\SK}^\ga(\bar z - \tilde{z}) - \mywidetilde{\SK}^\ga(z - \tilde{z})\bigr) \bigl(\Pi^{\ga, \fa}_z \<2b>\bigr) (\tilde{z})\, \d \tilde{z} - \fc_\ga'', \nonumber\\
\bigl(\Pi^{\ga, \fa}_z \<31b>\bigr) (\bar z) &= \bigl(\Pi^{\ga, \fa}_z \<1b>\bigr) (\bar z)\, \int_{D_\eps} \bigl(\mywidetilde{\SK}^\ga(\bar z - \tilde{z}) - \mywidetilde{\SK}^\ga(z - \tilde{z})\bigr) \bigl(\Pi^{\ga, \fa}_z \<3b>\bigr) (\tilde{z})\, \d \tilde{z}, \label{eq:Pi-rest} \\
\bigl(\Pi^{\ga, \fa}_z \<32b>\bigr) (\bar z) &= \bigl(\Pi^{\ga, \fa}_z \<2b>\bigr) (\bar z)\, \int_{D_\eps} \bigl(\mywidetilde{\SK}^\ga(\bar z - \tilde{z}) - \mywidetilde{\SK}^\ga(z - \tilde{z})\bigr) \bigl(\Pi^{\ga, \fa}_z \<3b>\bigr) (\tilde{z})\, \d \tilde{z} - 3 \fc_\ga'' \bigl(\Pi^{\ga, \fa}_z \<1b>\bigr) (\bar z).\nonumber
\end{align}

Once the map $\Pi^{\ga, \fa}$ is defined, we can also write the map $\Gamma^{\ga, \fa}$ explicitly. The latter can be easily obtained from the identity $\Pi^{\ga, \fa}_{\bar z} = \Pi^{\ga, \fa}_{z} \Gamma^{\ga, \fa}_{\!z \bar z}$, which is a part of Definition~\ref{def:model}. Namely, for fixed $z, \bar z \in D_\eps$, we have that $\Gamma^{\ga, \fa}_{\!z \bar z}$ is a linear map on $\CT^\ex$, whose action on the elements of $\CW^\ex$ is given in Table~\ref{tab:linear_transformations} with the constants
\begin{equation}\label{eq:Gamma-lift}
	a_i = - \bigl(\Pi^{\ga, \fa}_{z} \X_i\bigr) (\bar z), \qquad b = - \bigl(\Pi^{\ga, \fa}_{z} \<20b>\bigr)(\bar z), \qquad c = - \bigl(\Pi^{\ga, \fa}_{z} \<30b>\bigr)(\bar z).
\end{equation}

\begin{remark}\label{rem:positive-vanish}
From the definition of the discrete model $Z^{\ga, \fa}_{\lift}$ and the definition of he respective reconstruction map $\CR^{\ga, \fa}$ in \eqref{eq:def_rec_op}, we can see that $\CR^{\ga, \fa} \tau \equiv 0$ if $|\tau| > 0$.
\end{remark}

\begin{remark}\label{rem:Eps-reconstruct}
For an element $\CE(\tau)$ we obviously have $\CR^{\ga, \fa} \CE(\tau) = \ga^6 \CR^{\ga, \fa} \tau$.
\end{remark}

\begin{remark}\label{rem:extension-to-Xi}
We note that in \eqref{eq:Pi-Xi} we defined the action of the map $\PPi^{\ga, \fa}$ also on the symbol $\blueXi$. This allows to extend the maps \eqref{eq:Pi-Gamma-lift} on this symbol as 
\begin{equation*}
\Gamma^{\ga, \fa}_{\! z \bar z} \blueXi = \blueXi, \qquad \qquad \iota_\eps \bigl(\Pi^{\ga, \fa}_z \blueXi\bigr)(\varphi) = \frac{1}{\sqrt 2} \eps^3 \sum_{x \in \Lattice} \int_{\R} \varphi(t,x)\, \d \M_{\ga, \fa}(t, x), 
\end{equation*}
for any smooth, compactly supported function $\varphi : \R^4 \to \R$, and for the martingales $\M_{\ga, \fa}$ as in \eqref{eq:Pi-Xi}. We see however that $\Pi^{\ga, \fa}_z \blueXi$ is not a function, which explains why we excluded the symbol $\blueXi$ from the domain of discrete models in Definition~\ref{def:model}. We can also extend the reconstruction map as 
\begin{equation*}
\iota_\eps \bigl(\CR^{\ga, \fa} \blueXi\bigr)(\varphi) = \frac{1}{\sqrt 2} \eps^3 \sum_{x \in \Lattice} \int_{\R} \varphi(t,x)\, \d \M_{\ga, \fa}(t, x).
\end{equation*}
\end{remark}

\subsection{Asymptotics of the renormalisation constants}

We can show precise speeds of divergence of the renormalisation constants.

\begin{lemma}\label{lem:renorm-constants}
Let $\fc_\ga$ and $\fc_\ga''$ be defined in \eqref{eq:renorm-constant1} and \eqref{eq:renorm-constant2} respectively. The constants $\fc_\ga^{(2)}$ and $\fc_\ga^{(1)}$ are well-defined by \eqref{eq:renorm-constants-main} for all $\ga > 0$ small enough. Moreover, $\fc_\ga^{(2)} \sim \emezo^{-1}$ and $\fc_\ga^{(1)} \sim \log \emezo$, and the expressions $\fc_\ga - \frac{1}{2}\fc_\ga^{(2)}$ and $\fc_\ga'' - \frac{1}{4} \fc_\ga^{(1)}$ converge as $\ga \to 0$. This in particular implies the asymptotics of the renormalisation constant $\fC_\ga$ stated in Theorem~\ref{thm:main}.
\end{lemma}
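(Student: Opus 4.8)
The plan is to analyze each renormalisation constant by passing to Fourier space, where the discrete kernel $\mywidetilde{\SK}^\ga$ has an explicit transform built from $\widehat{K}_\ga$ and the heat-kernel factor $\SF_{\!\!\eps} P^\ga_t$. First I would treat $\fc_\ga$. Using the decomposition $\widetilde{G}^\ga = \mywidetilde{\SK}^\ga + \mywidetilde{\SR}^\ga$ with $\mywidetilde{\SR}^\ga$ smooth and the fact that $\mywidetilde{\SK}^\ga$ differs from $\widetilde{G}^\ga$ only by a smooth, uniformly bounded term, I would write $\fc_\ga = \int_{D_\eps} \widetilde{G}^\ga(z)^2\,\d z$ up to a convergent error. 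The time integral of $\widetilde{G}^\ga_t(x)^2$ summed against $\eps^3$ over $\Lattice$ becomes, by Parseval \eqref{eq:Parseval} and the identity $\widehat{f\ae g} = \widehat f\,\widehat g$, a sum over $|\om|_\sinfty \le N$ of $\int_0^\infty |\SF_{\!\!\eps}\widetilde{P}^\ga_t(\om)|^2\,\d t$. Since $\SF_{\!\!\eps}\widetilde{P}^\ga_t(\om) = \widehat{K}_\ga(\om)\exp(\varkappa_{\ga,3}^2(\widehat K_\ga(\om)-1)t/\alpha)$, the time integral evaluates explicitly to $\frac{\alpha}{2\varkappa_{\ga,3}^2}\cdot\frac{|\widehat K_\ga(\om)|^2}{1-\widehat K_\ga(\om)}$ for $\om\ne 0$ (the $\om=0$ mode is handled separately since $\widehat K_\ga(0)=1$; here the term $\KK(0)=0$ assumption and the structure of the zero mode make it innocuous or give a convergent contribution). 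Recalling $\alpha = \ga^6$ and $\varkappa_{\ga,3}\to 1$, one obtains $\fc_\ga = \frac{1}{2}\fc_\ga^{(2)} + o(1)$ with $\fc_\ga^{(2)}$ exactly as in \eqref{eq:renorm-constants-main}. The divergence rate $\fc_\ga^{(2)}\sim\emezo^{-1}$ then follows from a Riemann-sum analysis: for $|\om|_\sinfty$ of order up to $\ga^{-1}$ one has $1 - \widehat K_\ga(\om) \approx c|\om|^2\ga^2 = c|\om \emezo|^2$ (using \eqref{eq:K-moments}, the second-moment normalization giving the Laplacian), so $\ga^6\sum_\om \frac{|\widehat K_\ga(\om)|^2}{1-\widehat K_\ga(\om)} \approx \ga^6\sum_{|\om|\lesssim\ga^{-1}}\frac{1}{|\om\emezo|^2}$, and since $\emezo\approx\ga^3$ while $\eps\approx\ga^4$, after rescaling $\om\mapsto\emezo\om$ this is a Riemann sum for $\emezo^{-1}\int_{\R^3}\frac{\mathbf 1_{|v|\lesssim 1}}{|v|^2}\,\d v$, a convergent integral; hence $\emezo\fc_\ga^{(2)}$ has a finite nonzero limit. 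The claim that the denominators never vanish is immediate: $\widehat K_\ga(\om)$ is real and strictly less than $1$ for $\om\ne 0$ because $K_\ga$ is a probability density whose Fourier coefficient is a strict convex combination of characters (not all equal to $1$).

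Next I would handle $\fc_\ga''$ from \eqref{eq:renorm-constant2}, the triple integral corresponding to the "sunset" diagram. Writing each $\mywidetilde{\SK}^\ga$ factor via Fourier transform and integrating out the three space-time variables $z, z_1, z_2$, the $z$-integration enforces a frequency constraint so the expression collapses (up to convergent errors from replacing $\mywidetilde{\SK}^\ga$ by $\widetilde G^\ga$) to a double sum over $\om_1,\om_2$ with $|\om_i|_\sinfty\le N$. The two "outer" propagators at frequencies $\om_1,\om_2$ each contribute a factor $\frac{\alpha}{2\varkappa_{\ga,3}^2}\cdot\frac{|\widehat K_\ga(\om_i)|^2}{1-\widehat K_\ga(\om_i)}$ as before; the "inner" loop at frequency $\om_1+\om_2$, after the appropriate time integrations of three exponentials, contributes $\frac{\widehat K_\ga(\om_1+\om_2)}{1 - \widehat K_\ga(\om_1) - \widehat K_\ga(\om_2) + \widehat K_\ga(\om_1+\om_2)}$ (the denominator being the sum of the three spectral rates; positivity — hence non-vanishing — holds because $1-\widehat K_\ga$ is the symbol of $-\widetilde\Delta_\ga$, a nonnegative operator, so this quantity is $\ge 1-\widehat K_\ga(\om_1)\ge$ something positive, or more directly it equals the symbol of a sum of three such operators evaluated at a nonzero frequency combination — a careful but routine check). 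Collecting the combinatorial factor $2$ from \eqref{eq:renorm-constant2} and the powers of $\alpha=\ga^6$ yields precisely $\frac14\fc_\ga^{(1)}$ as in \eqref{eq:renorm-constants-main}, with the difference $\fc_\ga'' - \frac14\fc_\ga^{(1)}$ convergent. For the rate $\fc_\ga^{(1)}\sim\log\emezo$: rescaling $\om_i\mapsto\emezo\om_i$ turns the double sum into a Riemann sum approximating $\int\int \frac{1}{|u|^2|v|^2}\cdot\frac{1}{|u|^2+|v|^2+|u+v|^2}\,\d u\,\d v$ over $|u|,|v|\lesssim 1$, and this integral is logarithmically divergent at the origin in six dimensions; tracking the cutoffs $\eps\approx\ga^4$ versus $\emezo\approx\ga^3$ converts the log-divergence of the integral into a factor $\log(\emezo/\eps)\approx\log\ga^{-1}\approx\log\emezo^{-1}$, giving $\fc_\ga^{(1)}\sim\log\emezo$ (i.e. $\lim_{\ga\to 0}(\log\emezo)^{-1}\fc_\ga^{(1)}$ exists and is finite).

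Finally, the statement about $\fC_\ga$ in Theorem~\ref{thm:main}: by the definition \eqref{eq:C-expansion}, $\fC_\ga = \fc_\ga^{(2)} + \fc_\ga^{(1)} + \fc_\ga^{(0)}$, and the exact formula \eqref{eq:C-exact} for $\fC_\ga$ (referenced in the remarks) expresses it in terms of $\fc_\ga$, $\fc_\ga''$, $\fc_\ga'$, $\un{\fC}_\ga$ and bounded combinatorial corrections; substituting $\fc_\ga = \tfrac12\fc_\ga^{(2)} + O(1)$ and $\fc_\ga'' = \tfrac14\fc_\ga^{(1)} + O(1)$, using $|\fc_\ga'|\lesssim 1$ (from \eqref{eq:renorm-constant3} together with $|\un{\fC}_\ga|\lesssim\emezo^{-1}$ from Lemma~\ref{lem:renorm-constant-underline} and $\ga^6\emezo^{-1}\lesssim\ga^6\ga^{-3}\to 0$, actually giving $\fc_\ga'\to$ finite limit), one reads off that $\fC_\ga$ has precisely the claimed expansion with $\fc_\ga^{(0)}$ convergent; the exact coefficients $\tfrac12$ and $\tfrac14$ are exactly what make the two leading pieces match \eqref{eq:renorm-constants-main}.

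\textbf{Main obstacle.} The genuinely delicate step is the Riemann-sum analysis converting the Fourier sums into the stated asymptotics — in particular controlling the error between the discrete sum and the continuum integral uniformly over the full range of frequencies (near the origin, where the summand is singular, and near the cutoff $|\om|_\sinfty\sim N$, where $\widehat K_\ga$ is no longer close to $1 - c|\om\emezo|^2$ and one must use instead that $\widehat K_\ga$ stays bounded away from $1$ and that $K_\ga$ is smooth, which is where the high-regularity assumption on $\fK$ enters). For $\fc_\ga''$ this is compounded by the two-loop structure: one must show the subleading (non-logarithmic) part of the double sum genuinely converges, which requires a careful splitting into the region where both $\om_i$ are small (producing the log) and the complementary regions (producing $O(1)$), and bounding the remainder after extracting the leading $\log$. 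I would organize this with a dominated-convergence argument for the rescaled sums, isolating the singular region by a dyadic decomposition and matching against the continuum integrals whose divergence rates are computed explicitly.
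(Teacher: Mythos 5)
Your overall route is the same as the paper's: replace the truncated kernel $\mywidetilde{\SK}^\ga$ by the heat kernel up to convergent errors, pass to Fourier variables via \eqref{eq:Parseval}, compute the time integrals explicitly, and extract the rates $\emezo^{-1}$ and $\log\emezo$ from the bounds on $\widehat{K}_\ga$ in Lemma~\ref{lem:Kg}. However, two steps do not go through as written. First, the zero mode: if you use the torus Parseval identity and integrate time over all of $\R_+$, the mode $\om=0$ contributes $\int_0^\infty 1\,\d t=\infty$, since $\widehat{K}_\ga(0)=1$; your claim that the assumption $\KK(0)=0$ makes this mode innocuous is incorrect ($\KK(0)=0$ concerns the kernel at the spatial origin and has no bearing on the Fourier zero mode). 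The paper's fix is precisely the step you skip: since $\mywidetilde{\SK}^\ga$ is compactly supported, the time integration is first restricted to $[0,c]$, so the zero mode contributes the harmless constant $c/8$ and the incomplete time integrals for $\om\neq0$ give a uniformly convergent correction. Also, the quadratic behaviour is $1-\widehat{K}_\ga(\om)\approx\pi^2\ga^6|\om|^2=\pi^2|\emezo\,\om|^2$ (not $\ga^2|\om|^2$) and the relevant frequency range is $|\om|\lesssim\ga^{-3}=\emezo^{-1}$ (not $\ga^{-1}$); as stated your Riemann-sum rescaling does not match its own cutoffs, although the conclusion that $\emezo\,\fc^{(2)}_\ga$ has a finite limit is the right one.

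Second, the sunset term. Carrying out your own prescription (two bubbles contributing $1/(2\mu(\om_i))$ with $\mu(\om)=\varkappa_{\ga,3}^2(1-\widehat{K}_\ga(\om))/\alpha$, then a final time integral of $e^{-[\mu(\om_1)+\mu(\om_2)+\mu(\om_1+\om_2)]t}$) produces the denominator $3-\widehat{K}_\ga(\om_1)-\widehat{K}_\ga(\om_2)-\widehat{K}_\ga(\om_1+\om_2)$, i.e.\ the sum of the three spectral rates, exactly as you describe. But the quantity $\fc^{(1)}_\ga$ in \eqref{eq:renorm-constants-main}, which the lemma requires you to match up to a convergent error, carries the denominator $1-\widehat{K}_\ga(\om_1)-\widehat{K}_\ga(\om_2)+\widehat{K}_\ga(\om_1+\om_2)$ (and a different numerical prefactor). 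Your assertion that your computation ``yields precisely $\frac14\fc^{(1)}_\ga$'' is therefore not established: the two candidate leading terms do not differ by an $O(1)$ quantity, so the identification cannot be waved through and must be reconciled, either by redoing the time-integration bookkeeping so as to produce the expression in \eqref{eq:renorm-constants-main}, or by explicitly addressing the discrepancy with that formula. Note also that your positivity argument for the denominator covers only your sum-of-rates version; for the expression in \eqref{eq:renorm-constants-main}, whose quadratic approximation is $-2\pi^2\ga^6\,\om_1\cdot\om_2$, non-vanishing is a different and more delicate claim. Finally, the logarithm arises between the infrared cutoff $|\om|\gtrsim1$ and the ultraviolet cutoff $|\om|\lesssim\emezo^{-1}$ imposed by the decay of $\widehat{K}_\ga$, so the rate is $\log\emezo^{-1}$; the lattice scale $\eps$ plays no role, and the factor $\log(\emezo/\eps)$ you invoke is not the source of the divergence (though it is of the same order in $\log\ga^{-1}$).
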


\begin{proof}
The kernel $\mywidetilde{\SK}^\ga$, involved in the definitions \eqref{eq:renorm-constant1} and \eqref{eq:renorm-constant2}, is supported in a ball of radius $c \geq 1$ (see Appendix~\ref{sec:decompositions}). Let $D_{c, \eps} := [0, c] \times \Le $. Then, without any harm, we can replace the integration domains $D_\eps$ by $D_{c, \eps}$ in these definitions. 

We define new constants 
\begin{subequations}\label{eqs:renorm-constants-new}
\begin{align}
\tilde{\fc}_\ga &= \int_{D_{c, \eps}} \widetilde{P}^\ga(z)^2 \d z, \label{eq:renorm-constant1-new}\\
\tilde{\fc}_\ga'' &= 2 \int_{D_{c, \eps}} \int_{D_{c, \eps}} \int_{D_{c, \eps}} \widetilde{P}^\ga(z) \widetilde{P}^\ga(z_1) \widetilde{P}^\ga(z_2) \widetilde{P}^\ga(z_1 - z) \widetilde{P}^\ga(z_2 - z) \d z \d z_1 \d z_2. \label{eq:renorm-constant2-new}
\end{align}
\end{subequations}
We note that if we replace at least one instance of the singular kernel $\mywidetilde{\SK}^\ga$ by a smooth function in the definitions of the renormalisation constants \eqref{eq:renorm-constant1} and \eqref{eq:renorm-constant2}, then we obtain convergent constants (as $\gamma \to 0$). This follows from the properties of $\mywidetilde{\SK}^\ga$ from Appendix~\ref{sec:decompositions}. Since $\mywidetilde{\SK}^\ga$ is a singular part of the discrete heat kernel, this implies that the limits $\lim_{\ga \to 0} (\tilde\fc_\ga - \fc_\ga)$ and $\lim_{\ga \to 0} (\tilde\fc_\ga'' - \fc_\ga'')$ exist and are finite. Hence, to prove this lemma, we need to show that the required asymptotic behaviours hold if we replace $\fc_\ga$ and $\fc_\ga''$ by $\tilde{\fc}_\ga$ and $\tilde{\fc}_\ga''$ respectively. 

It will be convenient to write the constants \eqref{eqs:renorm-constants-new} in a different form. Applying Parseval's identity \eqref{eq:Parseval} in the spatial variable in \eqref{eq:renorm-constant1-new}, we get 
\begin{equation*}
\tilde{\fc}_\ga = \int_0^c \eps^3 \sum_{x \in \Le} \widetilde{P}^\ga_t(x)^2 \d t = \frac{1}{8} \int_0^c \sum_{|\om|_{\sinfty} \leq N} \exp \Bigl( 2 \varkappa_{\ga, 3}^2 \big( \widehat{K}_\ga(\om) - 1 \big) \frac{t}{\alpha} \Bigr) |\widehat{K}_\ga(\om)|^2 \d t,
\end{equation*}
where we used the Fourier transform \eqref{eq:tildeP}. From the properties of the function \eqref{eq:K-gamma} we have $\eps^3 \sum_{x \in \Le} K_\ga(x) = 1$, which yields $\widehat{K}_\ga(0) = 1$. Furthermore, from Lemma~\ref{lem:Kg} we can conclude that there exists $\ga_0 > 0$ such that $\widehat{K}_\ga(\om) \neq 1$ for $\ga < \ga_0$ and all $\om \in \Z^3$ satisfying $0 < |\om|_{\sinfty} \leq N$. Then we have 
\begin{align*}
\tilde{\fc}_\ga &= \frac{c}{8} + \frac{1}{8} \int_0^c \sum_{0 < |\om|_{\sinfty} \leq N} \exp \Bigl( 2 \varkappa_{\ga, 3}^2 \big( \widehat{K}_\ga(\om) - 1 \big) \frac{t}{\alpha} \Bigr) |\widehat{K}_\ga(\om)|^2 \d t \\
&= \frac{c}{8} + \frac{1}{16} \sum_{0 < |\om|_{\sinfty} \leq N} \frac{\alpha |\widehat{K}_\ga(\om)|^2}{\varkappa_{\ga, 3}^2 \big(1 -  \widehat{K}_\ga(\om) \big)} \biggl(1 - \exp \Bigl( 2 \varkappa_{\ga, 3}^2 \big( \widehat{K}_\ga(\om) - 1 \big) \frac{c}{\alpha} \Bigr)\biggr).
\end{align*}
Let us write $\tilde{\fc}_\ga = \frac{c}{8} + \tilde{\fc}^{(2)}_\ga - \tilde{\fc}^{(1)}_\ga$, where 
\begin{align*}
\tilde{\fc}^{(2)}_\ga &= \frac{1}{16} \sum_{0 < |\om|_{\sinfty} \leq N} \frac{\alpha |\widehat{K}_\ga(\om)|^2}{\varkappa_{\ga, 3}^2 \big(1 -  \widehat{K}_\ga(\om) \big)}, \\
 \tilde{\fc}^{(1)}_\ga &= \frac{1}{16} \sum_{0 < |\om|_{\sinfty} \leq N} \frac{\alpha |\widehat{K}_\ga(\om)|^2}{\varkappa_{\ga, 3}^2 \big(1 -  \widehat{K}_\ga(\om) \big)}\exp \Bigl( 2 \varkappa_{\ga, 3}^2 \big( \widehat{K}_\ga(\om) - 1 \big) \frac{c}{\alpha} \Bigr).
\end{align*}
From Lemma~\ref{lem:Kg} for $0 < \ga < \ga_0$ we have the bounds $1 -\widehat{K}_\ga(\om) \geq C_1 \big( |\ga^3 \om|^2  \wedge 1 \big)$, $|\widehat{K}_\ga(\om)| \leq 1$ for $|\om| \leq \ga^{-3}$, and $|\widehat{K}_\ga(\om)| \leq C_2 |\ga^3 \om|^{-k}$ for $|\om| \geq \ga^{-3}$ and for any $k \in \N$. Using these bounds and \eqref{eq:c-gamma-2}, we conclude that $\tilde{\fc}^{(2)}_\ga$ diverges as $\ga \to 0$ with the rate $\emezo^{-1}$. Moreover, the constant $\varkappa_{\ga, 3}$ in the definition of $\tilde{\fc}^{(2)}_\ga$ can be replaced by $1$, which produces a convergent error, i.e. $\tilde{\fc}^{(2)}_\ga - \frac{1}{2} \fc^{(2)}_\ga$ has a finite limit as $\ga \to 0$, where the constant $\fc_\ga^{(2)}$ is defined in \eqref{eq:renorm-constants-main}. Similarly, we can conclude that $\tilde{\fc}^{(1)}_\ga$ is bounded uniformly in $0 < \ga < \ga_0$, and moreover it converges as $\ga \to 0$. Thus, we have that $\fc_\ga - \frac{1}{2}\fc_\ga^{(2)}$ converges as $\ga \to 0$, which finishes the proof of the asymptotic behaviours from the statement of this lemma which involve $\fc_\ga$ and $\fc_\ga^{(2)}$.

The constant $\fc_\ga''$ is analysed in a similar way. Namely, we can show that 
\begin{equation*}
\fc_\ga'' - \frac{1}{32} \sum_{0 < |\om_1|_\sinfty, |\om_2|_\sinfty \leq N} \frac{\alpha^3 |\widehat{K}_\ga(\om_1)|^2 |\widehat{K}_\ga(\om_2)|^2}{(1 - \widehat{K}_\ga(\om_1)) (1 - \widehat{K}_\ga(\om_2))} \frac{ \widehat{K}_\ga(\om_1 + \om_2)}{1 - \widehat{K}_\ga(\om_1) - \widehat{K}_\ga(\om_2) + \widehat{K}_\ga(\om_1 + \om_2)}
\end{equation*}
converges as $\ga \to 0$. This expression equals $\fc_\ga'' - \frac{1}{4} \fc_\ga^{(1)}$ and the double sum diverges with the rate $\log \emezo$.
\end{proof}

Similarly, we can study the asymptotic behaviour of the renormalisation constant \eqref{eq:c-under}

\begin{lemma}\label{lem:renorm-constant-underline}
The constant \eqref{eq:c-under} satisfies $|\un{\fC}_\ga| \lesssim \emezo^{-1}$.
\end{lemma}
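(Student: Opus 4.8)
The plan is to estimate $\un{\fC}_\ga$ by passing to Fourier space, exactly as in the proof of Lemma~\ref{lem:renorm-constants}, and then to use the bounds on $\widehat{K}_\ga$ and $\widehat{\un{K}}_\ga$ provided by Lemma~\ref{lem:Kg}. First I would recall that, by definition \eqref{eq:c-under},
\[
\un{\fC}_\ga = 2 \varkappa_{\ga, 2} \int_0^\infty \eps^3 \sum_{x \in \Le} \bigl(P^\ga_t \ae \un{K}_\ga\bigr)(x)\, \widetilde{P}^\ga_t(x)\, \d t,
\]
and apply Parseval's identity \eqref{eq:Parseval} in the spatial variable. Using \eqref{eq:tildeP} together with $\widehat{\widetilde{P}^\ga_t}(\om) = \widehat{P^\ga_t}(\om) \widehat{K}_\ga(\om)$ and $\widehat{P^\ga_t \ae \un{K}_\ga}(\om) = \widehat{P^\ga_t}(\om) \widehat{\un{K}}_\ga(\om)$, this turns the sum-integral into
\[
\un{\fC}_\ga = \frac{\varkappa_{\ga, 2}}{4} \int_0^\infty \sum_{|\om|_{\sinfty} \leq N} \exp\Bigl( 2 \varkappa_{\ga, 3}^2 \bigl(\widehat{K}_\ga(\om) - 1\bigr) \frac{t}{\alpha} \Bigr)\, \widehat{\un{K}}_\ga(\om) \widehat{K}_\ga(\om)\, \d t,
\]
and performing the $t$-integral (the $\om = 0$ term, where $\widehat{K}_\ga(0) = \widehat{\un{K}}_\ga(0) = 1$, contributes to the integral $\int_0^\infty 1\, \d t$ which diverges — so one must instead work on $[0,c]$ as in Lemma~\ref{lem:renorm-constants}, replacing the $\widetilde{G}^\ga$-kernels by $\widetilde{P}^\ga$ up to a convergent error, since the singular kernels are compactly supported) yields
\[
\un{\fC}_\ga \approx \frac{\alpha}{16} \sum_{0 < |\om|_{\sinfty} \leq N} \frac{\widehat{\un{K}}_\ga(\om) \widehat{K}_\ga(\om)}{1 - \widehat{K}_\ga(\om)} + O(1).
\]

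Next I would bound this sum using Lemma~\ref{lem:Kg}: namely $1 - \widehat{K}_\ga(\om) \gtrsim |\ga^3 \om|^2 \wedge 1$, $|\widehat{K}_\ga(\om)| \le 1$, and (crucially for $\un{K}_\ga$, whose profile $\un{\fK}$ lives on the larger scale $\ga^{1 - \un\kappa}$, i.e. on frequencies $|\om| \lesssim \ga^{-3 - \un\kappa}$) a rapid-decay bound $|\widehat{\un{K}}_\ga(\om)| \lesssim \bigl(1 + |\ga^{3+\un\kappa} \om|\bigr)^{-k}$ for every $k$, with $|\widehat{\un{K}}_\ga(\om)| \le 1$. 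Splitting the sum over $0 < |\om| \le \ga^{-3}$ and $|\om| > \ga^{-3}$: on the first region $1 - \widehat{K}_\ga(\om) \gtrsim |\ga^3\om|^2$ and $|\widehat{\un{K}}_\ga \widehat{K}_\ga| \le 1$, so the contribution is $\lesssim \alpha \ga^{-6} \sum_{0 < |\om| \le \ga^{-3}} |\om|^{-2} \lesssim \alpha \ga^{-6} \cdot \ga^{-3} = \alpha \ga^{-9} = \ga^{-3} \approx \emezo^{-1}$, using $\alpha = \ga^6$ and $\emezo \approx \ga^3$; on the second region $1 - \widehat{K}_\ga(\om) \gtrsim 1$ and the rapid decay of $\widehat{\un{K}}_\ga$ makes the sum $O(\alpha \cdot \ga^{-3(3+\un\kappa)}) = o(\emezo^{-1})$ (or even just the rapid decay of $\widehat{K}_\ga$ there suffices). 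Combining, $|\un{\fC}_\ga| \lesssim \emezo^{-1}$, which is the claim.

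The main obstacle is the same careful bookkeeping that appears in Lemma~\ref{lem:renorm-constants}: reducing the integral from $[0,\infty)$ to $[0,c]$ and from $\widetilde{G}^\ga$ to $\widetilde{P}^\ga$ at the cost of convergent errors, handling the zero frequency separately, and making sure the scaling $\alpha = \ga^6$, $\emezo = \eps/\ga \approx \ga^3$ is tracked consistently through the factors $\varkappa_{\ga, 2}, \varkappa_{\ga, 3}$ (both $1 + O(\ga^4)$). Since only an upper bound of order $\emezo^{-1}$ is required — not an asymptotic equivalence — one does not need the sharp analysis of which $\om$-regions dominate, so this is genuinely easier than Lemma~\ref{lem:renorm-constants}; the summability estimate $\sum_{0 < |\om| \le \ga^{-3}} |\om|^{-2} \lesssim \ga^{-3}$ in three dimensions is the only quantitative input, and it is elementary.
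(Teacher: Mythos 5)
Your argument follows the paper's proof essentially verbatim: Parseval's identity \eqref{eq:Parseval} together with \eqref{eq:tildeP} converts \eqref{eq:c-under} into the frequency sum \eqref{eq:un-fc-bound}, and the bound is then obtained from Lemma~\ref{lem:Kg} by splitting at $|\om|_{\sinfty} = \ga^{-3}$; your low-frequency estimate $\alpha \ga^{-6} \sum_{0 < |\om|_{\sinfty} \le \ga^{-3}} |\om|^{-2} \lesssim \alpha \ga^{-9} \approx \emezo^{-1}$ is exactly the paper's dominant contribution. Two small corrections. First, your stated bound for the region $|\om|_{\sinfty} > \ga^{-3}$ is wrong as written: counting the $\approx \ga^{-3(3+\un\kappa)}$ frequencies on which $\widehat{\un{K}}_\ga$ is of order one and bounding each summand by $O(\alpha)$ gives $\alpha \ga^{-9 - 3\un\kappa} = \ga^{-3 - 3\un\kappa}$, which is \emph{larger} than $\emezo^{-1}$, not $o(\emezo^{-1})$. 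The argument is saved by the route you mention only parenthetically, which is the one the paper actually uses: the rapid decay $|\widehat{K}_\ga(\om)| \lesssim |\ga^3 \om|^{-k}$ for $|\om| \geq \ga^{-3}$ gives $\alpha \sum_{|\om|_{\sinfty} > \ga^{-3}} |\ga^3 \om|^{-k} \lesssim \alpha \ga^{-9} \approx \emezo^{-1}$, which is enough since only an upper bound is claimed. Second, regarding the $\om = 0$ mode: the paper's proof simply performs the time integral mode by mode and the resulting sum \eqref{eq:un-fc-bound} runs over $0 < |\om|_{\sinfty} \leq N$; your proposed remedy of truncating the time integral to $[0,c]$ and passing between $\widetilde{G}^\ga$ and $\widetilde{P}^\ga$ does not literally apply, because \eqref{eq:c-under} is already formulated with the periodic kernels $P^\ga_t \ae \un{K}_\ga$ and $\widetilde{P}^\ga_t$ on $[0,\infty)$ and there is no compactly supported singular kernel in sight. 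Your observation that the zero mode would contribute a divergent time integral is fair (the paper glosses over it by restricting the sum to nonzero frequencies), but the fix has to be the exclusion of the zero mode rather than a time truncation. Neither point alters the structure of the proof, which coincides with the paper's.
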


\begin{proof}
Applying identities \eqref{eq:Parseval} and \eqref{eq:Fourier-of-product} to \eqref{eq:c-under} we get
\begin{equation*}
\un{\fC}_\ga = \frac{\varkappa_{\ga, 2}}{4} \int_0^\infty \sum_{|\om|_{\sinfty} \leq N} \exp \Bigl( 2 \varkappa_{\ga, 3}^2 \big( \widehat{K}_\ga(\om) - 1 \big) \frac{t}{\alpha} \Bigr) \widehat{\un{K}}_\ga(\om) \widehat{K}_\ga(\om)\, \d t,
\end{equation*}
where we used the Fourier transform \eqref{eq:tildeP}. As in the proof of Lemma~\ref{lem:renorm-constants}, for all $\ga > 0$ small enough we can compute the integral, which yields  
\begin{equation}\label{eq:un-fc-bound}
\un{\fC}_\ga = \frac{\varkappa_{\ga, 2}}{8} \sum_{0 < |\om|_{\sinfty} \leq N} \frac{\alpha \widehat{\un{K}}_\ga(\om) \widehat{K}_\ga(\om)}{\varkappa_{\ga, 3}^2 \big(1 -  \widehat{K}_\ga(\om) \big)}.
\end{equation}
From Lemma~\ref{lem:Kg} for all $\ga > 0$ small enough we have $1 -\widehat{K}_\ga(\om) \geq C_1 \big( |\ga^3 \om|^2  \wedge 1 \big)$, $|\widehat{K}_\ga(\om)| \leq 1$ for $|\om| \leq \ga^{-3}$, and $|\widehat{K}_\ga(\om)| \leq C_2 |\ga^3 \om|^{-k}$ for $|\om| \geq \ga^{-3}$ and for any $k \in \N$. Since the kernel $\un{K}_\ga$ has the same properties as $K_\ga$, except that it is rescaled by $\ga^{3 + \un\kappa}$ rather than $\ga^3$, we have the respective bounds $|\widehat{\un{K}}_\ga(\om)| \leq C_3$ for $|\om| \leq \ga^{-3 - \un \kappa}$, and $|\widehat{\un{K}}_\ga(\om)| \leq C_4 |\ga^{3 + \un\kappa} \om|^{-k}$ for $|\om| \geq \ga^{-3 - \un \kappa}$ and for any $k \in \N$. Then the part of the sum in \eqref{eq:un-fc-bound} running over $0 < |\om|_{\sinfty} \leq \ga^{-3}$ is bounded by a constant multiple of 
\begin{equation*}
\int_{0 < |\om|_{\sinfty} \leq \ga^{-3}} \alpha |\ga^3 \om|^{-2} \d \om \lesssim \ga^{-3},
\end{equation*}
where we made use of \eqref{eq:c-gamma-2}. The part of the sum running over $\ga^{-3} < |\om|_{\sinfty} \leq N$ is bounded by a constant times 
\begin{equation*}
\int_{\ga^{-3} < |\om|_{\sinfty} \leq N} \alpha |\ga^3 \om|^{-k} \d \om \lesssim \ga^{3}.
\end{equation*}
Hence, we have the required bound $|\un{\fC}_\ga| \lesssim \emezo^{-1}$.
\end{proof}

\section{Properties of the martingales and auxiliary results}
\label{sec:martingales}

In this section we collect several result which will be used to prove moment bounds for the discrete models constructed above. We first show that the martingales $\bigl(\M_{\ga, \fa}(\bigcdot, x)\bigr)_{x \in \Le}$ satisfy Assumption~1 in \cite{Martingales}.

\subsection{Properties of the martingales}
\label{sec:martingales-properties}

The required properties of the predictable quadratic covariations, stated in Assumption~1(1) in \cite{Martingales}, follow from \eqref{eq:M-a-variation} and \eqref{eq:M-bracket}: $\big\langle \M_{\ga, \fa}(\bigcdot, x), \M_{\ga, \fa}(\bigcdot, x') \big\rangle_t = 0$ for $x \neq x'$, and in the case $x = x'$ we have 
\begin{equation}\label{eq:M-gamma-bracket}
\big\langle \M_{\ga, \fa}(\bigcdot, x) \big\rangle_t = \eps^{-3} \int_0^t \bC_{\ga, \fa} (s,x)\, \d s,
\end{equation}
with an adapted process $t \mapsto \bC_{\ga, \fa} (t,x)$, given by
\begin{equation}\label{eq:bC}
\bC_{\ga, \fa} (t,x) := 
\begin{cases}
	2 \varkappa_{\ga, 2} \Bigl( 1 - \sigma \bigl(\frac{t}{\alpha}, \frac{x}{\eps}\bigr) \tanh \bigl( \be \delta X_\ga(t, x) \bigr)\Bigr) &\text{for}~~ t < \tau_{\ga, \fa}, \\
	2 \varkappa_{\ga, 2} \Bigl( 1 - \delta \sigma'_{\ga, \fa} \bigl(\frac{t}{\alpha}, \frac{x}{\eps}\bigr) X'_{\ga, \fa}(t, x) \Bigr) &\text{for}~~ t \geq \tau_{\ga, \fa},
\end{cases}
\end{equation}
where $X'_{\ga, \fa}$ is defined as in \eqref{eq:X-gamma} via the process $\sigma'_{\ga, \fa}$ and where the constant $\varkappa_{\ga, 2}$, which is closed to $1$, was introduced in \eqref{constant:isingkac_one}. We observe that the inequality $|\bC_{\ga, \fa} (s,x)| \leq 2$ holds uniformly over $\ga \in (0,1)$ and $x \in \Le$.

Assumption~1(2) in \cite{Martingales} follows readily from the definition of the martingales, because every time a spin of the Ising-Kac model flips, only one of the martingales $\M_{\ga, \fa}(\bigcdot, x)$ changes its value, while the others stay unchanged. 

We see that the martingale $\M_{\ga, \fa}(\bigcdot, x)$, for any fixed $x \in \Le$, has jumps of size $2 \ga^{-3}$, because the martingale $\fm_{\ga}(\bigcdot, x)$ from \eqref{eq:equation-for-sigma} has jumps of size $2$. Therefore, given that $\eps \approx \ga^4$, Assumption~1(3) in \cite{Martingales} holds with any value of the constant $\kone$ bigger than $\frac{3}{4}$.

For a \cadlag process $f$, we denote by $f(t^-)$ its left limit at time $t$ and we define the jump size at time $t$ as 
\begin{equation}\label{eq:jump}
\Delta_t f := f(t) - f(t^-).
\end{equation}

The process $t \mapsto \sigma_t(k)$ is pure jump, and from equation \eqref{eq:equation-for-sigma} we have $\Delta_t \sigma(k) = \Delta_t \fm_\ga (k)$. Moreover, from \eqref{eq:generator} and \eqref{eq:rates} we have $\SL_\ga \sigma(k) = \tanh \big( \be h_\ga(\sigma, k) \big) - \sigma(k)$. Hence, using the definition \eqref{eq:sigma-stopped} and rescaling equation \eqref{eq:equation-for-sigma} we get
\begin{equation} \label{eq:mrt_decomp}
	\M_{\ga, \fa}(t, x) = J_{\ga, \fa}(t, x) + \eps^{\kone - 3} \int_0^t \Cd_{\ga, \fa}(s, x) \d s,
\end{equation}
where $t \mapsto J_{\ga, \fa}(t, x) = \sum_{0 \leq s \leq t} \Delta_s \M_{\ga, \fa}(x)$ is a pure jump process and 
\begin{equation}\label{eq:C-gamma}
\Cd_{\ga, \fa}(t, x) =
\begin{cases}
\sigma \bigl( \frac{t}{\alpha}, \frac{x}{\eps} \bigr) - \tanh \bigl( \be \delta X_{\ga}( t, x) \bigr) &\text{for}~~ t < \frac{\tau_{\ga, \fa}}{\alpha}, \\
\sigma'_{\ga, \fa} \bigl( \frac{t}{\alpha}, \frac{x}{\eps} \bigr) - \delta X'_{\ga, \fa}( t, x) &\text{for}~~ t \geq \frac{\tau_{\ga, \fa}}{\alpha}.
\end{cases}
\end{equation}
The process $t \mapsto \Cd_{\ga, \fa}(t, x)$ is adapted and is bounded uniformly in $x$ and $t$, and Assumption~1(4) in \cite{Martingales} is satisfied.

\subsection{Besov spaces of distributions}
\label{sec:Besov}

In this section we recall the definition of the Besov spaces using the Littlewood-Paley theory. 

According to \cite[Prop.~2.10]{BookChemin} there exist two smooth functions $\widetilde \chi, \chi : \R^3 \to \R$, taking values in $[0,1]$, such that $\widetilde \chi$ is supported on $B(0, \frac{4}{3})$, $\chi$ is supported on $B(0, \frac{8}{3}) \setminus B(0, \frac{3}{4})$, and for every $\om \in \R^3$ they satisfy
\begin{equation*}
\widetilde \chi(\om) + \sum_{k = 0}^\infty \chi(2^{-k} \om) = 1.
\end{equation*}
Then we define $\chi_{-1} (\om) := \widetilde \chi(\om)$ and $\chi_{k} (\om) := \chi(2^{-k} \om)$ for $k \geq 0$, and set $\rho_{k} := \SF^{-1} \chi_{k}$, where $\SF^{-1}$ is the inverse Fourier transform on $\R^3$. Then for $k \geq 0$ we have $\rho_k(\om) = 2^{3k} \rho(2^k \om)$ where $\rho := \rho_0$. The $k$-th \emph{Littlewood-Paley block} of a function or tempered distribution $f$ is defined as 
\begin{equation}\label{eq:LittlewoodPaleyBlock}
\delta_k f := \rho_k * f = \SF^{-1} \bigl(\chi_{k}\, \SF f\bigr).
\end{equation}
Then one can show that $f = \sum_{k \geq -1} \delta_k f$ in the sense of distributions for any tempered distribution $f$.

For $\eta \in \R$ and $p, q \in [1, \infty]$ the Besov space $\CB^\eta_{p, q}(\T^3)$ is defined as a completion of the the space of smooth functions $f : \T^3 \to \R$ under the norm 
\begin{equation*}
\| f \|_{\CB^\eta_{p, q}} := \Bigl\| \Bigl(2^{\eta k} \|\delta_k f\|_{L^p}\Bigr)_{k \geq -1} \Bigr\|_{\ell^q},
\end{equation*}
where we extended $f$ periodically on the right-hand side and where we write $\|(a_k)_{k \geq -1}\|_{\ell^q}$ for the $\ell^q$ norm of the sequence $(a_k)_{k \geq -1}$.

It is not hard to see that $\CB^\eta_{\infty, \infty}(\T^3)$ coincides with the space $\CC^\eta(\T^3)$ defined in Section~\ref{sec:notation}.

\subsection{Controlling the processes $\widehat{X}_{\ga}$ and $S_\ga$}

We need to prove some auxiliary bounds which will be used in the proof of Proposition~\ref{prop:Pi-bounds}. The following result provides bounds on the high frequency Fourier modes of the process $X_{\ga}$.

\begin{lemma}\label{lem:X-Fourier-a-priori}
For any $\bar \kappa > 0$ and $M > 0$, there is a non-random constant $C > 0$, such that 
\begin{equation}\label{eq:X-Fourier-a-priori}
| \widehat{X}_{\ga} (t, \om) | \leq C \ga^M,
\end{equation}
uniformly in $t \in \R_+$, $\ga^{-3 - \bar \kappa} \leq |\om|_\sinfty \leq N$ and $\ga \in (0,1)$.
\end{lemma}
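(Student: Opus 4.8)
## Proof plan

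The plan is to control the high-frequency Fourier modes of $X_\ga$ by exploiting the rapid decay of $\widehat{K}_\ga(\om)$ once $|\om|$ exceeds $\ga^{-3}$, which is exactly the regime covered by the bounds in Lemma~\ref{lem:Kg}. The starting point is the mild equation~\eqref{eq:IsingKacEqn-periodic} (or, equivalently, \eqref{eq:IsingKacEqn-new}). Taking the discrete Fourier transform in the space variable and using \eqref{eq:Fourier-of-product} together with \eqref{eq:P-gamma-tilde} and \eqref{eq:tildeP}, every term on the right-hand side is of the form $\widehat{K}_\ga(\om)$ times something, except for the contribution of the initial datum $P^\ga_t X^0_\ga$ whose Fourier transform is $\SF_{\!\!\eps} P^\ga_t(\om)\, \widehat{X}^0_\ga(\om)$. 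The key observation is that $\widehat{\widetilde P}^\ga_t(\om) = \SF_{\!\!\eps}P^\ga_t(\om)\,\widehat{K}_\ga(\om)$, so all of the ``Duhamel'' terms carry an explicit factor $\widehat{K}_\ga(\om)$. For $|\om|_\sinfty \geq \ga^{-3-\bar\kappa}$ we have $|\ga^3\om| \geq \ga^{-\bar\kappa}$, and the bound $|\widehat{K}_\ga(\om)| \leq C_2 |\ga^3\om|^{-k}$ from Lemma~\ref{lem:Kg}, valid for any $k\in\N$, gives $|\widehat{K}_\ga(\om)| \lesssim \ga^{k\bar\kappa}$; choosing $k$ large this is $\lesssim \ga^M$ for any prescribed $M$.

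First I would treat the deterministic Duhamel terms. For the cubic term, the convolution formula \eqref{eq:Fourier-of-product} expresses $\widehat{X^3_\ga}(\om)$ as a triple sum; using the a priori bound \eqref{eq:X-apriori} one has $\|X_\ga(t)\|_{L^\infty(\Le)} \lesssim \fa\emezo^{\eta}$, hence $\eps^3\sum_x |X^3_\ga(t,x)| \lesssim (\fa\emezo^\eta)^3$ which is at worst polynomially large in $\ga^{-1}$; multiplying by $\widehat{K}_\ga(\om)$ and integrating $\widehat{\widetilde P}^\ga_{t-s}$ against the $L^1$-in-time bound for the heat kernel (Appendix~\ref{sec:kernels}) kills this polynomial loss by the factor $\ga^{k\bar\kappa}$. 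The linear term $(\fC_\ga+A)X_\ga$ and the error term $E_\ga$ are handled identically, noting that $E_\ga$ is also bounded by a power of $\ga^{-1}$ via \eqref{eq:expr_error_term} and the Taylor expansion of $\tanh$. The initial-datum term $P^\ga_t X^0_\ga$ requires only the decay of $\SF_{\!\!\eps}P^\ga_t(\om) = \exp(\varkappa_{\ga,3}^2(\widehat K_\ga(\om)-1)t/\alpha)$, which for the relevant $\om$ is bounded by $1$, combined with the uniform a priori control $\sup_\ga\|X^0_\ga\|^{(\emezo)}_{\CC^{\bar\eta}}<\infty$ from \eqref{eq:initial-convergence}, giving $|\widehat X^0_\ga(\om)| \lesssim \eps^3\sum_x |X^0_\ga(x)| \lesssim \emezo^{\bar\eta}$, again only a polynomial loss. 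Actually $\SF_{\!\!\eps}P^\ga_t$ itself already contains no $\widehat K_\ga(\om)$ factor, so here one uses instead the exponential decay: for $|\om|_\sinfty\geq\ga^{-3-\bar\kappa}$ Lemma~\ref{lem:Kg} gives $1-\widehat K_\ga(\om)\geq C_1$ (since $|\ga^3\om|^2\wedge 1 = 1$), hence $\SF_{\!\!\eps}P^\ga_t(\om)\leq e^{-cC_1 t/\alpha}$, which is not uniformly small for small $t$ — so one cannot avoid also using that $\widehat X^0_\ga(\om)$ is small for large $\om$. The cleanest route is to assume (as is standard, and implicitly allowed by the hypothesis that $\iota_\eps X^0_\ga$ converges in $\CC^{\bar\eta}$) that the initial datum itself satisfies $|\widehat X^0_\ga(\om)|\lesssim\ga^M$ in this frequency range; alternatively, since $X^0_\ga = h_\ga(\sigma(0))/\delta$ with $h_\ga = K_\ga \ae \sigma$, we get $\widehat X^0_\ga(\om) = \delta^{-1}\widehat K_\ga(\om)\widehat\sigma(0,\om)$, which again carries the decaying factor $\widehat K_\ga(\om)$ and $|\widehat\sigma(0,\om)|\lesssim\eps^3\sum_x|\sigma(0,x)|\lesssim 1$ trivially. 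This last representation is the key point: \emph{the initial datum, like every Duhamel term, carries a factor of $\widehat K_\ga$}, so the whole right-hand side of the Fourier-transformed equation does.

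The main obstacle, then, is not any one estimate but rather bookkeeping: one must verify that every term carries the factor $\widehat K_\ga(\om)$ (using $X^0_\ga = K_\ga\ae\sigma(0)/\delta$ for the initial condition and $\widetilde P^\ga = P^\ga\ae K_\ga$ for the rest), and that all the remaining ``prefactors'' (the $L^\infty$ bounds on $X_\ga$, $E_\ga$, the Duhamel kernel's $L^1$-in-time norm, and the divergent constant $\fC_\ga\sim\emezo^{-1}$) grow at most polynomially in $\ga^{-1}$, so that the super-polynomial smallness $\ga^{k\bar\kappa}$ of $\widehat K_\ga(\om)$ for $|\om|_\sinfty \geq \ga^{-3-\bar\kappa}$ wins. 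Taking $k$ large enough (depending on $M$, $\bar\kappa$, and the fixed polynomial losses) yields \eqref{eq:X-Fourier-a-priori} with a non-random constant $C$, uniformly in $t\in\R_+$ and $\ga\in(0,1)$; the uniformity in $t$ follows because the $L^\infty$ a priori bounds and the kernel bounds are all uniform in time.
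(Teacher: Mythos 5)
Your Duhamel-based route has a genuine gap: the stochastic convolution $Y_\ga$ in \eqref{eq:IsingKacEqn} (defined in \eqref{eq:Y-def}) is never estimated. In Fourier variables it does carry the factor $\widehat{K}_\ga(\om)$, but the lemma asks for a \emph{non-random} constant $C$, uniformly in $t\in\R_+$, and a pathwise bound on $\int_0^t\SF_{\!\!\eps}\widetilde{P}^\ga_{t-s}(\om)\,\d \widehat{\M}_\ga(s,\om)$ of this type does not come for free: the total variation of the jump part of $\M_\ga$ is not deterministically bounded, and the crude deterministic bound $|\M_\ga(s,x)|\lesssim\ga^{-3}(1+s/\alpha)$, fed through an integration by parts in time, yields an estimate that grows in $t$. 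Moreover, your treatment of the cubic, linear and error Duhamel terms invokes \eqref{eq:X-apriori}, which holds only on $[0,\tau_{\ga,\fa}]$ (see \eqref{eq:tau-1}) and carries the constant $\fa$, whereas the statement concerns $X_\ga$ for \emph{all} $t\in\R_+$ with a constant independent of any stopping time; the trivial bound $|X_\ga(t,x)|\le\ga^{-3}$ would repair that particular point, but as written the proposal does not deliver the claimed uniformity.

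The irony is that you already wrote down the paper's entire argument when handling the initial datum: $X^0_\ga=\delta^{-1}K_\ga\ae\sigma(0)$, so $\widehat{X}^0_\ga(\om)=\delta^{-1}\widehat{K}_\ga(\om)\,\widehat{\sigma}(0,\om)$. The same identity holds at every time: by \eqref{eq:X-gamma} and \eqref{eq:S-def} one has $X_\ga(t)=K_\ga\ae S_\ga(t)$ identically, hence $\widehat{X}_\ga(t,\om)=\widehat{K}_\ga(\om)\,\widehat{S}_\ga(t,\om)$ with the deterministic bound $|\widehat{S}_\ga(t,\om)|\lesssim\ga^{-3}$ valid for all $t\ge 0$; then \eqref{eq:K3} gives $|\widehat{K}_\ga(\om)|\lesssim|\ga^3\om|^{-m}\lesssim\ga^{m\bar\kappa}$ for $|\om|_\sinfty\ge\ga^{-3-\bar\kappa}$, and choosing $m$ with $m\bar\kappa-3\ge M$ finishes the proof. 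This one-line argument bypasses the mild equation, the martingale term, and all stopping-time considerations, and it is exactly how the paper proves the lemma.
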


\begin{proof}
Using \eqref{eq:X-gamma} and \eqref{eq:S-def}, we may write $X_\ga = K_\ga *_\eps S_\ga$. Then Parseval's identity \eqref{eq:Parseval} then yields $\widehat{X}_{\ga} (t, \om) = \widehat{K}_\ga(\om) \widehat{Z}_{\ga} (t, \om)$. Using the trivial bound $|S_{\ga} (t, x)| \leq \ga^{-3}$ we get $|\widehat{Z}_{\ga} (t, \om)| \lesssim \ga^{-3}$ and the absolute value of $\widehat{X}_{\ga} (t, \om)$ is bounded by $C_1 \ga^{-3} |\widehat{K}_\ga(\om)|$. Furthermore, we use \eqref{eq:K3} to bound it by $C_2 \ga^{-3} |\ga^3\om|^{-m}$ for any integer $m \geq 0$, where the constant $C_2$ depends on $m$. Hence, for any $\bar \kappa > 0$ and $|\om|_\sinfty \geq \ga^{-3 - \bar \kappa}$ we have $| \widehat{X}_{\ga} (t, \om) | \leq C_3 \ga^{\bar \kappa m - 3}$, which is the required bound \eqref{eq:X-Fourier-a-priori} with $M = \bar \kappa m - 3$.
\end{proof}

The following result show that the a priori bound, provided by the stopping time \eqref{eq:tau-1}, yields a bound on the process $S_{\ga}$ defined in \eqref{eq:S-def}. The bound on $S_{\ga}$ is however slightly worse than for the process $X_{\ga}$. Namely, while we consider the average values of $X_\ga$ on the scales above $\emezo$ (see the definition \eqref{eq:eps-norm-1} of the seminorm), we bound $S_\ga$ on strictly larger scales.

\begin{lemma}\label{lem:Z-bound}
Let $\eta$ be as in the statement of Theorem~\ref{thm:main}, let us fix any $\tilde\kappa \in (0,1)$ and let $r$ be the smallest integer satisfying $r > \frac{1 + \eta}{\tilde \kappa} - \eta$ and $r \geq 2$. Then there exist non-random $\ga_0 > 0$ and $C > 0$ such that
\begin{equation*}
\sup_{t \in [0, \tau^{(1)}_{\ga, \fa}]} \sup_{x \in \Lattice} \bigl| \bigl( \iota_\eps S_{\ga} (t)\bigr) (\phi_x^\lambda) \bigr| \leq C \fa \lambda^{\eta},
\end{equation*}
uniformly over $\lambda \in [\emezo^{1 - \tilde\kappa}, 1]$, $\phi \in \CB^{r}$ and $\ga \in (0, \ga_0)$. We recall that the stopping time $\tau^{(1)}_{\ga, \fa}$ is defined in \eqref{eq:tau-1}. 
\end{lemma}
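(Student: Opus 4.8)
The plan is to compare the spin field $S_\ga$ with the coarse-grained field $X_\ga = K_\ga \ae S_\ga$, whose regularity is controlled by the a priori bound on $[0, \tau^{(1)}_{\ga, \fa}]$. Writing $S_\ga = X_\ga + (S_\ga - X_\ga)$ it suffices to control $(\iota_\eps(S_\ga - X_\ga))(\phi^\lambda_x) = (\iota_\eps((\delta^{(\eps)}_{\bigcdot,0} - K_\ga) \ae S_\ga))(\phi^\lambda_x)$. The point is that $\delta^{(\eps)}_{\bigcdot,0} - K_\ga$ has vanishing zeroth moment (since $\eps^3 \sum K_\ga = 1$), and by \eqref{eq:K-moments} also a controlled second moment, so that it acts like a smoothing operator on scale $\emezo \approx \ga^3$; when tested against $\phi^\lambda_x$ with $\lambda \geq \emezo^{1 - \tilde\kappa} \gg \emezo$, one gains a full power of $(\emezo/\lambda)^{r}$ relative to the crude bound $|S_\ga| \leq \ga^{-3} = \emezo^{-1} \ga^{-2}$ at the cost of losing $r$ derivatives of $\phi$ (hence the choice of $r \geq 2$ and the Taylor remainder of the test function). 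Concretely, $(\iota_\eps((\delta^{(\eps)}_{\bigcdot,0}-K_\ga) \ae S_\ga))(\phi^\lambda_x) = \eps^3 \sum_y S_\ga(t,y)\, \eps^3 \sum_z (\delta^{(\eps)}_{z,0}-K_\ga)(z)\, \phi^\lambda_x(y+z)$, and Taylor-expanding $\phi^\lambda_x(y+z)$ in $z$ to order $r$ around $z=0$: the terms of order $0$ and $1$ in $z$ vanish or are negligible because of the moment conditions on $K_\ga$ (the first moment vanishes by rotation invariance, the second is $O(\ga^6)$), and the remainder is bounded by $\|\phi\|_{\CC^r} \lambda^{-3-r} (\ga^3)^r$ times the $L^1$ mass of $|z|^r$ against $|\delta^{(\eps)}_{z,0}-K_\ga|$, which is $O(1)$. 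Multiplying by $\eps^3 \sum_y |S_\ga(t,y)\, \mathbbm{1}_{|y - x| \lesssim \lambda}| \lesssim \ga^{-3} \lambda^3$ gives a bound of order $\ga^{-3} \ga^{3r} \lambda^{-r}$, and the condition $r > \frac{1+\eta}{\tilde\kappa} - \eta$ together with $\lambda \geq \emezo^{1-\tilde\kappa}$ makes $\ga^{3r} \lambda^{-r} \lambda^{-\eta} \lesssim \ga^{3r} \emezo^{-(1-\tilde\kappa)(r+\eta)} = \ga^{3r - 3(1-\tilde\kappa)(r+\eta)}$, and one checks the exponent is positive precisely when $r\tilde\kappa > (1-\tilde\kappa)\eta + \ldots$, i.e.\ $r > \frac{1+\eta}{\tilde\kappa}-\eta$ up to the constant from $\emezo \approx \ga^3$; hence this contribution is $\lesssim \ga^{\theta}\lambda^{\eta}$ for some $\theta > 0$, which is even better than required.

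For the main term $(\iota_\eps X_\ga(t))(\phi^\lambda_x)$: on $[0,\tau^{(1)}_{\ga,\fa}]$ we have $\|X_\ga(t)\|^{(\emezo)}_{\CC^\eta} \leq 5\fa$ (the bound valid on the closed interval, from Section~\ref{sec:a-priori}), and since $\lambda \geq \emezo^{1-\tilde\kappa} \geq \emezo$, the definition \eqref{eq:eps-norm-1} of the seminorm directly gives $|(\iota_\eps X_\ga(t))(\phi^\lambda_x)| \leq 5\fa\, \lambda^\eta$ for all $\phi \in \CB^r$ (note $r > -\eta$ is automatic from $r > \frac{1+\eta}{\tilde\kappa} - \eta \geq -\eta$ since $\eta < -\frac12$ forces $\frac{1+\eta}{\tilde\kappa} > 0$... actually $1+\eta > 0$ as $\eta > -\frac47$, so this is fine). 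Combining the two contributions yields the claim with a constant $C$ depending only on the fixed data, for all $\ga < \ga_0$ with $\ga_0$ small enough that $\ga^\theta \leq 1$ and that the moment bounds on $K_\ga$ hold (Lemma~\ref{lem:Kg}).

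The main obstacle I anticipate is the bookkeeping in the Taylor remainder estimate for $\phi^\lambda_x(y+z)$: one must be careful that the first-order term in $z$ does not simply vanish but contributes at order $\ga^6 \lambda^{-1}$ (from the second moment condition $\int \fK |x|^2 = 6$, the surviving even term), and one needs $\lambda \geq \emezo^{1-\tilde\kappa}$ precisely to absorb this together with the $\ga^{-3}$ from the crude bound on $S_\ga$; getting the exact arithmetic of exponents to line up with the stated hypothesis on $r$ is the delicate point, though the inequality is generous. A secondary technical care is that $K_\ga$ is supported in a ball of radius $\approx \emezo \ll \lambda$, so the convolution localizes correctly and no boundary effects on $\Le$ intervene for $\ga$ small; this follows from the remarks after \eqref{eq:K-gamma} and \eqref{eq:expr_error_term}.
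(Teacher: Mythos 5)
Your treatment of the main term is fine: on $[0,\tau^{(1)}_{\ga,\fa}]$ the a priori bound gives $\|X_\ga(t)\|^{(\emezo)}_{\CC^\eta}\leq 5\fa$, and since $\CB^{r}\subset\CB^{1}$ this controls $(\iota_\eps X_\ga(t))(\phi^\lambda_x)$ by $\fa\lambda^\eta$ for $\lambda\geq\emezo$. The gap is in the error term $(\delta^{(\eps)}_{\bigcdot,0}-K_\ga)\ae S_\ga$. When you Taylor-expand $\phi^\lambda_x(y+z)$ to order $r$, the orders $2,\ldots,r-1$ do \emph{not} disappear: odd orders vanish by symmetry, but each even order $m$ survives with $\eps^3\sum_z |z|^m K_\ga(z)\approx\emezo^{m}$, and under your crude bound $|S_\ga|\leq\ga^{-3}$ it contributes $\sim\ga^{-3}\emezo^{m}\lambda^{-m}$. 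The dominant case $m=2$ (the second-moment term you call ``negligible'') gives $\ga^{3}\lambda^{-2}$, and at the smallest admissible scale $\lambda=\emezo^{1-\tilde\kappa}$ this exceeds $\lambda^{\eta}$ by the factor $\ga^{3-3(1-\tilde\kappa)(2+\eta)}$, which diverges as $\ga\to 0$ whenever $\tilde\kappa<\tfrac{1+\eta}{2+\eta}$ (roughly $\tilde\kappa<0.3$ for $\eta\in(-\tfrac47,-\tfrac12)$). Taking $r$ large does not help: $r$ only improves the order-$r$ Taylor remainder (for which your arithmetic, once the forgotten $\ga^{-3}$ is reinstated, indeed reproduces the condition $r>\frac{1+\eta}{\tilde\kappa}-\eta$), while the problematic contributions are the fixed low-order moments of $K_\ga$, which are insensitive to $r$. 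So as written your argument proves the lemma only for $\tilde\kappa$ bounded below, not for every $\tilde\kappa\in(0,1)$ as claimed; also the remark that the surviving term is ``$\ga^6\lambda^{-1}$ from the first order'' is a miscount — the first moment vanishes exactly and the surviving term scales as $\emezo^{2}\lambda^{-2}$.

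To close the gap you would have to feed the low-order terms back into the a priori bound rather than the crude bound, e.g.\ by iterating your identity into a truncated Neumann series $S_\ga=\sum_{j<J}(\delta^{(\eps)}-K_\ga)^{\ae j}\ae X_\ga+(\delta^{(\eps)}-K_\ga)^{\ae J}\ae S_\ga$, using the seminorm of $X_\ga$ on every term of the sum and the crude bound only on the $J$-fold smoothed remainder. The paper achieves the same effect in Fourier variables: it splits $S_\ga$ with Littlewood--Paley blocks at frequency $\sim\ga^{-3}$, inverts $\widehat{K}_\ga$ exactly on the low-frequency part (so that only the a priori bound on $X_\ga$, tested against a $\lambda$-rescaled function, is used there), and applies the crude bound $|S_\ga|\leq\ga^{-3}$ only to the high-frequency part, where $\phi\in\CB^{r}$ gives the block decay $(\lambda 2^{k})^{-r+\kappa}$ — which is precisely where the hypothesis $r>\frac{1+\eta}{\tilde\kappa}-\eta$ is needed.
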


\begin{proof} 
From the definitions \eqref{eq:kernel-K} and \eqref{eq:K-gamma} we have $\SF_{\!\!\eps}K_\ga(\om)= \varkappa_{\ga, 1} \SF_{\!\!\ga}\fK(\eps \om / \ga)$, where $\SF_{\!\!\eps}$ is the discrete Fourier transform defined in \eqref{eq:Fourier} and $\SF_{\!\!\ga}$ is defined by replacing $\eps$ with $\ga$. The first property in \eqref{eq:K-moments} implies that there are $a, c > 0$ such that $\SF \fK(\om) \geq a$ for $|\om|_\sinfty \leq c$, and for all $\ga > 0$ small enough we have $\SF_{\!\!\ga} \fK(\om) \geq a/2$ for $|\om|_\sinfty \leq c$. Hence, $\SF_{\!\!\eps}K_\ga(\om) \geq a \varkappa_{\ga, 1}/2$ for $|\om|_\sinfty \leq c \ga^{-3}$, and \eqref{eq:c-gamma-2} yields $a \varkappa_{\ga, 1}/2 \geq a / 4$ for all $\ga > 0$ small enough. 
We define the function $\psi_\ga(x)$ by its discrete Fourier transform $\widehat{\psi}_\ga(\om) = 1 / \widehat{K}_\ga(\om)$ for $|\om|_\sinfty \leq c \ga^{-3}$ and $\widehat{\psi}_\ga(\om) = 0$ for $|\om|_\sinfty > c \ga^{-3}$. Then $|\widehat{\psi}_\ga(\om)| \leq 4/a$ for $|\om|_\sinfty \leq c \ga^{-3}$, which implies that $\psi_\ga$ is a rescaled function with the scaling parameter $\gamma$ (in the sense of \eqref{eq:rescaled-function}).

Let $n_0$ be the smallest integer such that $2^{n_0} > c \ga^{-3}$. Then we use the Littlewood-Paley blocks, defined in Appendix~\ref{sec:Besov}, to write $\bigl( \iota_\eps  S_{\ga} (t)\bigr) (\phi_x^\lambda) = \bigl( \iota_\eps  S^{(1)}_{\ga} (t)\bigr) (\phi_x^\lambda) + \bigl( \iota_\eps S^{(2)}_{\ga} (t)\bigr) (\phi_x^\lambda)$,  where
\begin{equation*}
\bigl( \iota_\eps S^{(1)}_{\ga} (t)\bigr) (\phi_x^\lambda) = \sum_{k \geq n_0} \bigl(\delta_k \iota_\eps  S_{\ga} (t)\bigr) (\phi_x^\lambda), \qquad \bigl( \iota_\eps S^{(2)}_{\ga} (t)\bigr) (\phi_x^\lambda) = \sum_{-1 \leq k < n_0} \bigl(\delta_k \iota_\eps  S_{\ga} (t)\bigr) (\phi_x^\lambda).
\end{equation*}

We first bound the process $S^{(1)}_{\ga}$. We note that we can write $\bigl(\delta_k \iota_\eps S_{\ga} (t)\bigr) (\delta_k \phi_x^\lambda)$ in the sum. From the definition \eqref{eq:S-def} we have $|S_\ga(t,x)| \leq \ga^{-3}$. Let us fix any integer $r > \frac{1 + \eta}{\tilde \kappa} - \eta$ such that $r \geq 2$ and $\kappa = r + \eta - \frac{1 + \eta}{\tilde \kappa}$. Then we have $0 < \kappa < r$. Moreover, if we take $\phi \in \CB^{r}$, then $\| \delta_k \phi_x^\lambda \|_{L^1} \lesssim (\lambda 2^{k})^{-r + \kappa}$, because $\CB^{r}$ is embedded into the Besov space $\CB^{r - \kappa}_{\infty, \infty}$. Then we have
\begin{equation*}
\bigl|\bigl( \iota_\eps S^{(1)}_{\ga} (t)\bigr) (\phi_x^\lambda)\bigr| \lesssim \ga^{-3} \sum_{k \geq n_0} (\lambda 2^{k})^{-r + \kappa} \lesssim \ga^{-3} (\lambda 2^{n_0})^{-r + \kappa} \lesssim \lambda^{-r + \kappa} \emezo^{r - 1 - \kappa}.
\end{equation*}
If $\lambda \geq \emezo^{1 - \tilde \kappa}$, then the latter is bounded by $\lambda^\eta$.

Now, we will bound $S^{(2)}_{\ga}$. We note that for $k < n_0$ we can express $S_{\ga} (t)$ in terms of $\psi_\ga *_\eps X_{\ga} (t)$, and using \eqref{eq:LittlewoodPaleyBlock} we may write 
\begin{equation}\label{eq:Z2-expansion}
\bigl( \iota_\eps S^{(2)}_{\ga} (t)\bigr) (\phi_x^\lambda) = \bigl( \iota_\eps X_{\ga} (t)\bigr) \bigl(\Phi_x^{\ga, \lambda}\bigr),
\end{equation}
where 
\begin{equation}\label{eq:Phi-function-def}
\Phi_x^{\ga, \lambda}(y) =  \int_{\R^3} \eps^3 \sum_{z \in \Lattice} \phi_{x}^\lambda(z) \psi_\ga(z - v) \sum_{-1 \leq k < n_0} \rho_k(v - y)\, \d v.
\end{equation}
This is a convolution of three rescaled functions and hence it can be viewed as a function rescaled by $\lambda$. Then the definition \eqref{eq:tau-1} yields $|( \iota_\eps X_{\ga} (t)) (\Phi_x^{\ga, \lambda})| \lesssim \fa 2^{- k \eta}$ for $t \in [0, \tau^{(1)}_{\ga, \fa}]$. We note that the function $\Psi_x^{\ga, \lambda}$ is not compactly supported, as required in the definition of the seminorm \eqref{eq:eps-norm-1}. This however does not play any role since the process $X_{\ga} (t)$ is periodic and the function has a fast decay at infinity (because the function $\phi$ involved in the definition \eqref{eq:Phi-function-def} is compactly supported and the functions $\psi_\ga$ and $\rho_k$ have fast decays at infinity). Then \eqref{eq:Z2-expansion} is absolutely bounded by a constant multiple of $\fa \la^{\eta}$, as required. 
\end{proof}

\subsection{Controlling the bracket process of the martingales}

In Section~\ref{sec:second-symbol} we need to analyse the process
\begin{equation*}
Q_{\ga, \fa}(t, x) := \eps^3 \sum_{y \in \Lattice} \int_{0}^{\tau_{\ga, \fa}} \!\!\!\mywidetilde{\SK}^\ga_{t-s}( x-y)^2 S_\ga (s, y) X_\ga(s, y) \d s,
\end{equation*}
where we used the stopping time \eqref{eq:tau}. In the following lemma we estimate the error after replacing $S_\ga$ by its local average, i.e. we estimate how close $Q_{\ga, \fa}$ is to
	\begin{equation*}
		\un{Q}_{\ga, \fa}(t, x) := \eps^3 \sum_{y \in \Lattice} \int_0^{\tau_{\ga, \fa}} \mywidetilde{\SK}^\ga_{t-s} (x-y)^2 \un{X}_{\ga} (s, y) X_\ga(s, y) \d s,
	\end{equation*}
where the process $\un{X}_{\ga}$ is defined in \eqref{eq:X-under}.

\begin{lemma}\label{lem:Y-approximation}
	For every $T > 0$ there exist deterministic constants $\ga_0 > 0$ and $C > 0$, depending also on the constant $\un{\kappa}$ fixed in \eqref{eq:X-under}, such that
	\begin{equation}\label{eq:Y-approximation}
	\sup_{t \in [0, T]} \sup_{x \in \Lattice} |Q_{\ga, \fa}(t, x) - \un{Q}_{\ga, \fa}(t, x)| \leq C \fa \ga^{3 (\eta-1)},
	\end{equation}
	uniformly in $\ga \in (0, \ga_0)$. The value $\eta$ is as in the statement of Theorem~\ref{thm:main}.
\end{lemma}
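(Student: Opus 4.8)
The plan is to estimate the difference $Q_{\ga,\fa}-\un{Q}_{\ga,\fa}$ by writing
\[
Q_{\ga,\fa}(t,x)-\un{Q}_{\ga,\fa}(t,x) = \eps^3 \sum_{y \in \Lattice} \int_0^{\tau_{\ga,\fa}} \mywidetilde{\SK}^\ga_{t-s}(x-y)^2\, \bigl(S_\ga - \un{X}_\ga\bigr)(s,y)\, X_\ga(s,y)\, \d s,
\]
and then controlling the factor $(S_\ga - \un{X}_\ga)(s,y)$ by observing that $\un{X}_\ga = \un{K}_\ga \ae S_\ga$ where $\un{K}_\ga$ is (a rescaling of) a function whose Fourier transform equals $1$ on a large ball. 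Concretely, $S_\ga - \un{X}_\ga = (\delta^{(\eps)}_{\cdot,0} - \un{K}_\ga) \ae S_\ga$, and since $\SF\un{\fK}(\om)=1$ for $|\om|_{\sinfty}\le 1$ the kernel $\delta^{(\eps)}_{\cdot,0} - \un{K}_\ga$ only sees the high Fourier modes of $S_\ga$, namely those with $|\om|_{\sinfty} \gtrsim \ga^{-(3+\un\kappa)}$. The first step is therefore to Fourier-decompose $S_\ga$ into a low-frequency part (frequencies $\lesssim \ga^{-3}$) and a high-frequency part, exactly as in the proof of Lemma~\ref{lem:Z-bound}: on the low-frequency part $S_\ga$ is controlled in terms of $X_\ga = K_\ga \ae S_\ga$ via the deconvolution $\widehat{\psi}_\ga = 1/\widehat{K}_\ga$, but on the range $\ga^{-3} \lesssim |\om|_\sinfty \le N$ I would use Lemma~\ref{lem:X-Fourier-a-priori}, which says $|\widehat{X}_\ga(t,\om)| \le C\ga^M$ for any $M$, hence $|\widehat{S}_\ga(t,\om)| = |\widehat{X}_\ga(t,\om)|/|\widehat{K}_\ga(\om)|$ is still superpolynomially small after correcting by a bounded number of powers of $\ga$ — so the genuinely dangerous frequencies of $S_\ga$ are negligible.

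The second step is to combine this with the a priori bound $\|X_\ga(t)\|^{(\emezo)}_{\CC^\eta}\le \fa$ valid for $t\le\tau_{\ga,\fa}$, which by \eqref{eq:uniform-bound} gives $\|X_\ga(t)\|_{L^\infty(\Le)} \lesssim \fa\,\emezo^\eta \approx \fa\,\ga^{3\eta}$ uniformly in $s\in[0,\tau_{\ga,\fa}]$. Thus the integrand is bounded by a constant times $\fa\,\ga^{3\eta}$ times $|\mywidetilde{\SK}^\ga_{t-s}(x-y)^2|$ times the pointwise size of $(S_\ga-\un{X}_\ga)(s,y)$. For the last factor, the low-frequency contribution of $S_\ga - \un{X}_\ga$ is killed outright (because $\SF\un{\fK}\equiv 1$ there), so what remains is: a truly-high-frequency remainder of $S_\ga$, which is $O(\ga^M)$ by Lemma~\ref{lem:X-Fourier-a-priori} as above; and the intermediate band $\ga^{-3}\lesssim |\om|_\sinfty \lesssim \ga^{-(3+\un\kappa)}$, where I estimate $|\widehat{S}_\ga|$ through $|\widehat{X}_\ga|$ and the kernel bounds $|\widehat{K}_\ga(\om)| \le C_2|\ga^3\om|^{-k}$ from Lemma~\ref{lem:Kg}/\eqref{eq:K3}, picking $k$ large enough that summing over this band of $O(\ga^{-9-3\un\kappa})$ frequencies still beats $\ga^{3(\eta-1)}$ with room to spare. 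Finally, the time-space integral of $\mywidetilde{\SK}^\ga_{t-s}(x-y)^2$ over $[0,T]\times\Lattice$ is $\lesssim \emezo^{-1} \approx \ga^{-3}$ (this is precisely the $\fc_\ga$-type bound from Lemma~\ref{lem:renorm-constants}, and $\mywidetilde{\SK}^\ga$ being compactly supported in a ball of radius $c$ makes the time integral finite). Putting the three estimates together and using the extra power $\ga^{3\eta}$ from $\|X_\ga\|_{L^\infty}$, the intermediate band contributes $\lesssim \fa\,\ga^{3\eta}\cdot\ga^{-3}\cdot(\text{small power of }\ga) \lesssim \fa\,\ga^{3(\eta-1)}$ after absorbing the ``small power'' into the gain, while the genuinely-high-frequency tail contributes $O(\ga^M)$, which is negligible. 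This yields \eqref{eq:Y-approximation}.

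The main obstacle I anticipate is \emph{bookkeeping the Fourier bands and the powers of $\ga$ so that the gain from replacing $S_\ga$ by $\un{X}_\ga$ genuinely beats the loss $\ga^{-3}$ coming from the singular kernel $\mywidetilde{\SK}^{\ga,2}$}. The point is that the cancellation $\SF\un{\fK}\equiv 1$ only removes modes up to scale $\ga^{-(3+\un\kappa)}$, so one must show that the modes of $S_\ga$ in the window $[\ga^{-3},\ga^{-(3+\un\kappa)}]$ carry a factor at least $\ga^{3-\,\text{(a bit)}}$ smaller than the naive bound $|S_\ga|\le \ga^{-3}$; this is exactly where the decay of $\widehat K_\ga$ (and hence the constraint $\un\kappa < \tfrac1{10}$, chosen so that the relevant power of $\ga$ and the count of lattice frequencies balance favourably) is used. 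Once the relevant exponents are tracked carefully — together with the observation that the local-average operators $\un{K}_\ga$, $K_\ga$ and the deconvolution $\psi_\ga$ are all rescaled functions in the sense of \eqref{eq:rescaled-function}, so their convolutions remain admissible test functions — everything else is the routine combination of the $L^\infty$ a priori bound, the kernel $L^2$-type estimate, and the superpolynomial tail bound of Lemma~\ref{lem:X-Fourier-a-priori}.
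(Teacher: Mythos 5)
There is a genuine gap at the heart of your argument: the claimed smallness of the high Fourier modes of $S_\ga$. You write that for $|\om|_{\sinfty}\gtrsim\ga^{-3}$ one has $|\widehat{S}_\ga(t,\om)|=|\widehat{X}_\ga(t,\om)|/|\widehat{K}_\ga(\om)|$ and that this is superpolynomially small by Lemma~\ref{lem:X-Fourier-a-priori}. This is circular: the proof of Lemma~\ref{lem:X-Fourier-a-priori} is exactly the identity $\widehat{X}_\ga=\widehat{K}_\ga\widehat{S}_\ga$ together with the trivial bound $|\widehat{S}_\ga|\lesssim\ga^{-3}$ and the fast decay of $\widehat{K}_\ga$; dividing $\widehat{X}_\ga$ by $\widehat{K}_\ga$ therefore returns only $|\widehat{S}_\ga|\lesssim\ga^{-3}$, and no a priori information (the stopping time controls $X_\ga=K_\ga\ae S_\ga$, not $S_\ga$) makes the modes of the rescaled $\pm\ga^{-3}$ spin field small beyond frequency $\ga^{-3}$. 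Consequently the factor $(S_\ga-\un{X}_\ga)(s,y)$ is only $O(\ga^{-3})$ pointwise, and the bound you assemble — $\fa\,\emezo^{\eta}$ for $X_\ga$, a pointwise bound for $S_\ga-\un{X}_\ga$, and $\emezo^{-1}$ for the space--time integral of $\mywidetilde{\SK}^{\ga}{}^2$ — gives $\fa\,\ga^{3\eta-6}$, which misses the target $\fa\,\ga^{3(\eta-1)}$ by a factor $\ga^{-3}$. In other words, the gain cannot be extracted from $S_\ga-\un{X}_\ga$ alone; any purely pointwise route collapses.

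The paper's proof places the smallness elsewhere, and this is the idea your proposal is missing. After first replacing $\mywidetilde{\SK}^\ga$ by $\widetilde{G}^\ga$ (this replacement, estimated crudely, is what produces the stated error $\fa\,\ga^{3(\eta-1)}$) and periodising the squared kernel into $\Phi^\ga$, one takes the Fourier transform of the \emph{product} $(S_\ga-\un{X}_\ga)X_\ga$ and splits according to the output frequency $\om$. For low output frequencies ($|\om|_\sinfty\le\ga^{-3-\un\kappa/2}$), the fact that $1-\widehat{\un{K}}_\ga(\om')$ vanishes for $|\om'|_\sinfty\le\ga^{-3-\un\kappa}$ forces the companion factor $\widehat{X}_\ga(s,\om-\om')$ to be evaluated at frequencies of order $\ga^{-3-\un\kappa}$, where Lemma~\ref{lem:X-Fourier-a-priori} makes it $O(\ga^{M})$; the crude bound $|\widehat{S}_\ga|\lesssim\ga^{-3}$ is then harmless. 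For high output frequencies, the smallness comes instead from the fast decay of $\SF_{\!\!\eps}\Phi^\ga(\om)$, i.e.\ of the Fourier transform of the squared kernel, which your proposal does not use (you only invoke its total mass $\lesssim\emezo^{-1}$). So the correct mechanism pairs the high modes of $S_\ga-\un{X}_\ga$ either with high modes of $X_\ga$ (tiny) or with high modes of the kernel (tiny), rather than asserting that the high modes of $S_\ga$ themselves are negligible; without this pairing the estimate fails.
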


\begin{proof}
As we stated in the beginning of Section \ref{sec:lift}, $\widetilde{G}^\ga = \mywidetilde{\SK}^\ga + \widetilde{\SR}^\ga$. Replacing $\mywidetilde{\SK}^\ga$ in the definitions of $Q_{\ga, \fa}$ by $\widetilde{G}^\ga$, and using \eqref{eq:X-apriori} and the trivial bound $|S_\ga(s, y)| \leq \ga^{-3}$, we obtain
\begin{equation*}
Q_{\ga, \fa}(t, x) = \eps^3 \sum_{y \in \Lattice} \int_0^{\tau_{\ga, \fa}} \widetilde{G}^\ga_{t-s}(x-y)^2 S_\gamma(s, y) X_\ga(s, y)\, \d s + \CO\bigl(\ga^{3 (\eta-1)}\bigr),
\end{equation*}
and an analogous formula holds for $\un{Q}_{\ga, \fa}$. Here, we made use of the estimates 
\begin{equation*}
\eps^3 \sum_{y \in \Lattice} \int_0^{\tau_{\ga, \fa}} \mywidetilde{\SK}^\ga_{t-s}(y) \widetilde{\SR}^\ga_{t-s}(y)\, \d s \lesssim 1, \qquad \eps^3 \sum_{y \in \Lattice} \int_0^{\tau_{\ga, \fa}} \widetilde{\SR}^\ga_{t-s}(y)^2\, \d s \lesssim 1
\end{equation*}
which follow from smoothness of the function $\SR^\ga$ and an integrable singularity of $\mywidetilde{\SK}^\ga$ (see Appendix~\ref{sec:kernels}). We note that the value of the stopping time $\tau_{\ga, \fa}$ does not play a role in this bound, because the kernel $\widetilde{G}^\ga_{t-s}$ vanishes for $s \geq t$ and the integration interval is contained in $[0, t]$. That is why the error term $\CO\bigl(\ga^{3 (\eta-1)}\bigr)$ is bounded uniformly in $t \in [0, T]$ and $x \in \Lattice$.

Using the spatial periodicity of the processes we can write furthermore  
\begin{equation*}
Q_{\ga, \fa}(t, x) = \eps^3 \sum_{y \in \Le} \int_0^{\tau_{\ga, \fa}} \Phi^\ga_{t-s}(x-y) S_\gamma(s, y) X_\ga(s, y)\, \d s + \CO\bigl(\ga^{3 (\eta-1)}\bigr),
\end{equation*}
where by analogy with \eqref{eq:From-P-to-G} the function $\Phi^\ga$ is defined by
\begin{equation}\label{eq:Phi-def}
 \eps^3 \sum_{x \in \Le} \Phi^\ga_t(x) f(x) = \eps^3 \sum_{x \in \Lattice} \widetilde{G}^\ga_{t}(x)^2 f(x),
\end{equation}
for any $f : \Le \to \R$, where on the right-hand side we extended $f$ periodically to $\Lattice$. Then we can write $Q_{\ga, \fa}(t, x) = \un{Q}_{\ga, \fa}(t, x) + E_{\ga, \fa}(t, x) + \CO(\ga^{3 (\eta-1)})$ with the error term
\begin{equation*}
E_{\ga, \fa}(t, x) = \eps^3 \sum_{y \in \Le} \int_0^{\tau_{\ga, \fa}} \Phi^\ga_{t-s}(x-y) \bigl(S_\gamma - \un{X}_{\ga}\bigr) (s, y) X_\ga(s, y)\, \d s,
\end{equation*}
and we need to show that this error term is absolutely bounded by the right-hand side of \eqref{eq:Y-approximation}. Applying Parseval's identity \eqref{eq:Parseval} we get 
\begin{equation*}
E_{\ga, \fa}(t, x) = \frac{1}{8} \int_0^{\tau_{\ga, \fa}} \sum_{|\om|_{\sinfty} \leq N} \SF_{\!\!\eps} \Phi^\ga_{t-s} (\om)\, \SF_{\!\!\eps}\bigl((S_\gamma - \un{X}_{\ga}) X_\ga \bigr)(s, \om)\, e^{-\pi i \om \cdot x}\, \d s.
\end{equation*}
We expect that the high frequency Fourier modes of $\Phi^\ga_{t-s}$ decay very fast, which allows to have a good control of the whole expression in the integral. To separate low and high Fourier modes of the function $\Phi^\ga_{t-s}$, we take $\kappa_1 > 0$, whose precise value will be fixed later, and write 
\begin{align*}
E_{\ga, \fa}(t, x) &= \frac{1}{8} \int_0^{\tau_{\ga, \fa}} \sum_{\gamma^{-3 - \kappa_1} < |\om|_{\sinfty} \leq N} \SF_{\!\!\eps} \Phi^\ga_{t-s} (\om)\, \SF_{\!\!\eps}\bigl((S_\gamma - \un{X}_{\ga}) X_\ga \bigr)(s, \om)\, e^{-\pi i \om \cdot x} \,\d s \\
&\qquad + \frac{1}{8} \int_0^{\tau_{\ga, \fa}} \sum_{|\om|_{\sinfty} \leq \gamma^{-3 - \kappa_1}} \SF_{\!\!\eps} \Phi^\ga_{t-s} (\om)\, \SF_{\!\!\eps}\bigl((S_\gamma - \un{X}_{\ga}) X_\ga \bigr)(s, \om)\, e^{-\pi i \om \cdot x} \,\d s.
\end{align*}
We denote these two terms by $E^{(1)}_{\ga, \fa}(t, x)$ and $E^{(2)}_{\ga, \fa}(t, x)$ respectively. 

We start with analysing the term $E^{(1)}_{\ga, \fa}$. The processes $S_\gamma$ and $\un{X}_{\ga}$ can be uniformly bounded by $\ga^{-3}$, while for the process $X_\ga$ we have the bound \eqref{eq:X-apriori}. Then 
\begin{equation*}
|E^{(1)}_{\ga, \fa}(t, x)| \lesssim \fa \gamma^{3 (\eta - 1)} \int_0^{t} \sum_{\gamma^{-3 - \kappa_1} < |\om|_{\sinfty} \leq N} |\SF_{\!\!\eps} \Phi^\ga_{s} (\om)|\, \d s,
\end{equation*}
where we used the property $\Phi^\ga_{s} \equiv 0$ for $s < 0$ to extend the integral to $[0, t]$. The definition \eqref{eq:Phi-def}, the Poisson summation formula and the identity \eqref{eq:tildeP} yield 
\begin{align*}
\SF_{\!\!\eps} \Phi^\ga_{s} (\om) &= \sum_{|\om'|_{\sinfty} \leq N} \SF_{\!\!\eps}{\widetilde{P}^\ga_{s}}(\om - \om') \SF_{\!\!\eps}{\widetilde{P}^\ga_{s}}(\om') \\
& = \sum_{|\om'|_{\sinfty} \leq N} \exp \Bigl( \ga^{-6} \varkappa_{\ga, 3}^2 \bigl( \widehat{K}_\ga(\om - \om') + \widehat{K}_\ga(\om') - 2 \bigr) s \Bigr) \widehat{K}_\ga(\om - \om') \widehat{K}_\ga(\om').
\end{align*}
from which we readily get
\begin{equation}\label{eq:double-heat-sum-1}
\int_0^t |\SF_{\!\!\eps} \Phi^\ga_{s} (\om)|\, \d s \leq t \ga^{6} \varkappa_{\ga, 3}^{-2} \sum_{|\om'|_{\sinfty} \leq N} |\widehat{K}_\ga(\om - \om') \widehat{K}_\ga(\om')|,
\end{equation}
where we made use of \eqref{eq:K2} to bound the exponential by $1$. Estimating $\widehat{K}_\ga$ by \eqref{eq:K1B} and \eqref{eq:K3}, for any $k \geq 4$ we bound the preceding expression by a constant multiple of 
\begin{equation}\label{eq:double-heat-sum}
\ga^{6} \sum_{|\om'|_{\sinfty} \leq N} (| \ga^3 (\om - \om')|\vee 1)^{-2k} (| \ga^3 \om'| \vee 1)^{-2k}.
\end{equation}
Then we have 
\begin{align}
&|E_{\ga, \fa}^{(1)}(t, x)| \lesssim \fa \gamma^{3 (\eta + 1)} \sum_{\gamma^{-3 - \kappa_1} < |\om|_{\sinfty} \leq N} \sum_{|\om'|_{\sinfty} \leq N} (| \ga^3 (\om - \om')|\vee 1)^{-2k} (| \ga^3 \om'| \vee 1)^{-2k} \nonumber \\
&\qquad \lesssim \fa \gamma^{3 (\eta - 5)} \int_{\gamma^{- \kappa_1} < |\om|_{\sinfty} \leq \ga^3 N} \int_{|\om'|_{\sinfty} \leq \ga^3 N} (| \om - \om'| \vee 1)^{-2k} (| \om'| \vee 1)^{-2k} \d \om' \d \om. \label{eq:E1-integral}
\end{align}
In order to bound this integral, we split the domain of integration into two subdomains. 

If $|\om'|_{\sinfty} \leq |\om|_{\sinfty} / 2$, then $| \om - \om'|_{\sinfty} \geq |\om|_{\sinfty} / 2$. We also have $|\om|_{\sinfty} \geq 1$. Then the part of the double integral \eqref{eq:E1-integral}, in which the integration variables satisfy $|\om'|_{\sinfty} \leq |\om|_{\sinfty} / 2$, is bounded by a constant times
\begin{equation*}
\int_{\gamma^{ - \kappa_1} < |\om|_{\sinfty} \leq \ga^3 N} \int_{|\om'|_{\sinfty} \leq |\om|_{\sinfty} / 2} | \om|_{\sinfty}^{-2k} (| \om'|_{\sinfty} \vee 1)^{-2k}\d \om' \d \om \lesssim \int_{\gamma^{- \kappa_1} < |\om|_{\sinfty} \leq \ga^3 N} |\om|_{\sinfty}^{-2k} \d \om,
\end{equation*}
and the latter is of order $\ga^{(2k - 3) \kappa_1}$. Taking $k$ large enough, we can make the power of $\ga$ arbitrarily big.

If $|\om'|_{\sinfty} > |\om|_{\sinfty} / 2$, then we simply bound $| \om - \om'|_{\sinfty} \vee 1 \geq 1$, and the respective part of the double integral \eqref{eq:E1-integral} is bounded by 
\begin{align*}
\int_{\gamma^{ - \kappa_1} < |\om|_{\sinfty} \leq \ga^3 N} \int_{|\om|_{\sinfty} / 2 < |\om'|_{\sinfty} \leq \ga^3 N} | \om'|_{\sinfty}^{-2k} \d \om' \d \om \lesssim \int_{\gamma^{- \kappa_1} < |\om|_{\sinfty} \leq \ga^3 N} | \om|_{\sinfty}^{3 -2k} \d \om,
\end{align*}
which is of order $\ga^{(2k - 6) \kappa_1}$. Combining the preceding bounds, we get $|E_{\ga, \fa}^{(1)}(t, x)| \lesssim \fa$.

Now, we will analyse the term $E^{(2)}_{\ga, \fa}$. We have
\begin{equation}\label{eq:E2-gamma-bound}
E^{(2)}_{\ga, \fa}(t, x) = \int_0^{\tau_{\ga, \fa}} \sum_{|\om|_\sinfty \leq \gamma^{-3 - \kappa_1}} \SF_{\!\!\eps}\Phi^\ga_{t-s} (\om)\, \SF_{\!\!\eps} \bigl((S_\gamma - \un{X}_{\ga}) X_\ga \bigr)(s, \om)\, e^{-\pi i \om \cdot x} \,\d s.
\end{equation}
Furthermore, \eqref{eq:Fourier-of-product} yields 
\begin{align*}
\SF_{\!\!\eps} \bigl((S_\gamma - \un{X}_{\ga}) X_\ga \bigr)(s, \om) &= \sum_{|\om'|_{\sinfty} \leq N} \bigl(\widehat{Z}_\ga - \widehat{\un{X}}_\ga \bigr)(s, \om') \widehat{X}_\ga(s, \om - \om') \\
&= \sum_{|\om'|_{\sinfty} \leq N} \bigl(1 - \widehat{\un{K}}_\ga(\om)\bigr) \widehat{Z}_\ga(s, \om') \widehat{X}_\ga(s, \om - \om').
\end{align*}
We assumed in Section~\ref{sec:a-priori} that $\SF \un{\fK} (\om) = 1$ for all $\om \in \R^3$ such that $|\om|_{\sinfty} \leq 1$, from which we conclude that the terms in the preceding sum may be non-vanishing only for $|\om'|_{\sinfty} > \ga^{-3 - \un \kappa}$. Then the variables in these sums satisfy $|\om - \om'|_{\sinfty} \geq c \gamma^{-3 - \un \kappa}$ for some $c > 0$, if we take $\kappa_1 = \un{\kappa}/2$. From Lemma~\ref{lem:X-Fourier-a-priori} we have $|\widehat{X}_\ga(s, \om - \om')| \lesssim \gamma^M$ for any $M > 0$, where the proportionality constant depends on $\un{\kappa}$ and $M$. Applying the preceding estimate to \eqref{eq:E2-gamma-bound}, we get
\begin{align*}
|E^{(2)}_{\ga, \fa}(t, x)| \lesssim \ga^{M -3} \int_0^{t} \sum_{|\om|_\sinfty \leq \gamma^{-3 - \kappa_1}} \bigl|\SF_{\!\!\eps} \Phi^\ga_{s} (\om) \bigr|\, \d s,
\end{align*}
where as before we used the bound $|\widehat{Z}_\ga(s, \om')| \lesssim \ga^{-3}$ and extended the integral to the interval $[0, t]$. Using \eqref{eq:double-heat-sum-1} and \eqref{eq:double-heat-sum}, this expression is bounded as 
\begin{align*}
|E^{(2)}_{\ga, \fa}(t, x)| &\lesssim \ga^{M + 3} \sum_{|\om|_\sinfty \leq  \gamma^{-3 - \kappa_1}} \sum_{|\om'|_\sinfty \leq N} (| \ga^3 (\om - \om')|_\sinfty\vee 1)^{-2k} (| \ga^3 \om'|_\sinfty \vee 1)^{-2k} \\
&\lesssim \ga^{M -15} \int_{|\om|_\sinfty \leq \ga^3  \gamma^{-3 - \kappa_1}} \int_{|\om'|_\sinfty \leq \ga^3 N} (| \om - \om'|_\sinfty \vee 1)^{-2k} (| \om'|_\sinfty \vee 1)^{-2k} \d \om' \d \om.
\end{align*}
This expression is of order $\ga^{M -15}$ which can be made arbitrarily small by taking $M$ large.
\end{proof}

\subsection{Controlling the process $X'_{\ga, \fa}$}
\label{sec:X-prime}

We recall that $X'_{\ga, \fa}$ is defined below \eqref{eq:sigma-stopped} via the spin field $\sigma'_{\ga, \fa}$, and let us define 
\begin{equation}\label{eq:Z-prime-def}
S'_{\ga, \fa}(t,x) := \frac{1}{\delta} \sigma'_{\ga, \fa} \Bigl( \frac{t}{\alpha}, \frac{x}{\eps} \Bigr) \qquad \text{for}~~ x \in \Le,~ t \geq 0.
\end{equation}
We need to control these two processes.

\begin{lemma}\label{lem:X-prime-bound}
Let $\eta$ be as in Theorem~\ref{thm:main}. There exists $\gamma_0 > 0$ such that for every $p \geq 1$ and $T > 0$ one has 
\begin{equation}\label{eq:X-prime-bound}
\E \biggl[ \sup_{t \in [\tau_{\ga, \fa}, T]} \bigl| \bigl( \iota_\eps X'_{\ga, \fa} (t)\bigr) (\phi_x^\lambda) \bigr|^p\biggr] \leq C \fa^p (\lambda \vee \emezo)^{\eta p},
\end{equation}
uniformly over $\ga \in (0, \ga_0)$, $\phi \in \CB^{1}$, $x \in \Lattice$ and $\lambda \in (0, 1]$. The constant $C$ depends only on $p$, $T$ and $\ga_0$.
\end{lemma}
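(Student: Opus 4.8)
The plan is to exploit the linearity of the voter-model rates $c'_\ga$. Applying the generator $\SL'_\ga$ to $f(\sigma) = \sigma(k)$ and using $\sigma(k)^2 = 1$ gives $\SL'_\ga \sigma(k) = h_\ga(\sigma, k) - \sigma(k)$, so in contrast to \eqref{eq:equation-for-X} no Taylor remainder and no cubic term appear. Rescaling exactly as in the derivation of \eqref{eq:equation-for-X} and convolving with $K_\ga$ (recall $X'_{\ga, \fa} = K_\ga \ae S'_{\ga, \fa}$ and $\Delta_\ga f = \tfrac1\alpha(K_\ga \ae f - f)$), one finds that for $t \geq \tau_{\ga, \fa}$ the process $X'_{\ga, \fa}$ is the mild solution of the \emph{linear} discrete equation $\partial_t X'_{\ga, \fa} = \Delta_\ga X'_{\ga, \fa} + K_\ga \ae \dot{\M}'_{\ga, \fa}$ with initial datum $X_\ga(\tau_{\ga, \fa}-)$ at time $\tau_{\ga, \fa}$, that is, writing $P^\ga_r = e^{r\Delta_\ga}$ and $\widetilde{P}^\ga_r = P^\ga_r \ae K_\ga$,
\begin{equation*}
X'_{\ga, \fa}(t, x) = \bigl(P^\ga_{t - \tau_{\ga, \fa}} X_\ga(\tau_{\ga, \fa}-)\bigr)(x) + \eps^3 \sum_{y \in \Le} \int_{\tau_{\ga, \fa}}^t \widetilde{P}^\ga_{t-s}(x-y)\, \d \M'_{\ga, \fa}(s, y) =: A_{\ga, \fa}(t, x) + B_{\ga, \fa}(t, x).
\end{equation*}
It then suffices to bound $A_{\ga, \fa}$ and $B_{\ga, \fa}$ separately.

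For the deterministic term $A_{\ga, \fa}$: since $\tau_{\ga, \fa} \leq \tau^{(1)}_{\ga, \fa}$, the a priori discussion of Section~\ref{sec:a-priori} (including the $2\varkappa_{\ga, 1}$ jump correction) gives $\| X_\ga(\tau_{\ga, \fa}-) \|^{(\emezo)}_{\CC^\eta} \lesssim \fa$ almost surely. I would then use a discrete Schauder-type estimate, $| (\iota_\eps P^\ga_r g)(\phi^\lambda_{x_0}) | \lesssim (\lambda \vee \emezo)^\eta \| g \|^{(\emezo)}_{\CC^\eta}$ uniformly in $r \geq 0$, $\lambda \in (0, 1]$, $\phi \in \CB^1$: for $\lambda < \emezo$ this follows from $\| P^\ga_r g \|_{L^\infty} \leq \| g \|_{L^\infty} \lesssim \emezo^\eta \| g \|^{(\emezo)}_{\CC^\eta}$ by \eqref{eq:uniform-bound}, using that $P^\ga_r$ is a probability kernel on $\Le$ (nonnegativity and unit total mass follow from $\Delta_\ga = \tfrac1\alpha(K_\ga\ae\cdot - \cdot)$ with $K_\ga \geq 0$, $\eps^3\sum_x K_\ga = 1$), while for $\lambda \in [\emezo, 1]$ the kernel bounds of Appendix~\ref{sec:kernels} reduce testing $P^\ga_r g$ against $\phi^\lambda_{x_0}$ to testing $g$ against a bounded multiple of a test function on a scale $\gtrsim \lambda$ (capped at $1$ by periodicity), whence the bound with $\lambda^\eta \leq (\lambda \vee \emezo)^\eta$ because $\eta < 0$. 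As this estimate is uniform in $r = t - \tau_{\ga, \fa} \geq 0$, it yields $\sup_{t \in [\tau_{\ga, \fa}, T]} | (\iota_\eps A_{\ga, \fa}(t))(\phi^\lambda_{x_0}) | \lesssim \fa (\lambda \vee \emezo)^\eta$ almost surely, hence in $L^p$.

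For the stochastic convolution $B_{\ga, \fa}$: writing $(\iota_\eps B_{\ga, \fa}(t))(\phi^\lambda_{x_0}) = \int_{\tau_{\ga, \fa}}^t \eps^3\sum_y (\widetilde{P}^\ga_{t-s} \phi^\lambda_{x_0})(y)\, \d \M'_{\ga, \fa}(s, y)$ and conditioning on $\SF_{\tau_{\ga, \fa}}$ to deal with the random starting time, I would invoke the moment bounds for stochastic convolutions developed in \cite{Martingales}. These apply because $\bigl(\M'_{\ga, \fa}(\bigcdot, x)\bigr)_{x \in \Le}$ satisfies Assumption~1 of \cite{Martingales} (verified in Section~\ref{sec:martingales-properties}): orthogonal components, bracket $\langle \M'_{\ga, \fa}(\bigcdot, x) \rangle_t = \eps^{-3}\int_{\tau_{\ga, \fa}}^t \bC_{\ga, \fa}\, \d s$ with $|\bC_{\ga, \fa}| \lesssim 1$ \emph{uniformly in $\ga$ and $\fa$}, since $\delta\,\sigma'_{\ga, \fa} X'_{\ga, \fa} = \sigma'_{\ga, \fa}\, h_\ga(\sigma'_{\ga, \fa}) \in [-1, 1]$ always (arguments suppressed; cf. \eqref{eq:M-gamma-bracket}--\eqref{eq:bC}), and jumps of size $2\ga^{-3} \leq \eps^{-\kone}$. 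Because $\widetilde{P}^\ga$ is the discretised heat kernel (with a harmless smooth part), the convolution inherits the regularity of the linear stochastic heat equation in $3D$ --- exactly the bound proven for $\Pi^{\ga, \fa} \<1b>$ --- so $\E\bigl[\sup_{t \in [\tau_{\ga, \fa}, T]} | (\iota_\eps B_{\ga, \fa}(t))(\phi^\lambda_{x_0}) |^p\bigr]^{1/p} \lesssim_{p, T} (\lambda \vee \emezo)^{-\frac12 - \kappa} = (\lambda \vee \emezo)^\eta$, with no $\fa$-dependence, so trivially $\lesssim \fa (\lambda \vee \emezo)^\eta$. Adding the two contributions gives \eqref{eq:X-prime-bound}.

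The main obstacle is the stochastic-convolution estimate of the last step: obtaining the $L^p$ moment bound, uniformly in $\ga$ and with the supremum over $t \in [\tau_{\ga, \fa}, T]$, for the discrete heat-kernel convolution of a non-Gaussian jump martingale. This rests on the factorisation method together with the Burkholder-type inequalities for these martingales established in \cite{Martingales}, and is where the uniformity in $\ga$ genuinely resides. Steps~1 and~2 are comparatively routine, their only non-elementary ingredient being the discrete kernel bounds of Appendix~\ref{sec:kernels}.
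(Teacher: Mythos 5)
Your setup and decomposition are exactly the paper's: derive the linear mild equation for $X'_{\ga, \fa}$ on $[\tau_{\ga, \fa}, T]$, split into the propagated initial datum $A$ and the stochastic convolution $B$, and treat them separately. Your bound on $A$ is fine (the paper avoids the Schauder/duality step by writing $\iota_\eps(G^\ga_{t-\tau_{\ga,\fa}}X_\ga(\tau_{\ga,\fa}))(\phi^\lambda_x)=\eps^3\sum_y G^\ga_{t-\tau_{\ga,\fa}}(y)\,(\iota_\eps X_\ga(\tau_{\ga,\fa}))(\phi^\lambda_{x-y})$, using the seminorm a priori bound at each translated centre and the uniform $\ell^1$ bound on the discrete heat kernel; both routes work, yours just needs the extra care that $P^\ga_r\phi^\lambda$ is only rapidly decaying, not compactly supported).

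The genuine gap is the estimate for $B$, which you yourself flag as the ``main obstacle'' but do not prove. Appealing to the bound established for $\Pi^{\ga,\fa}\<1b>$ is not sufficient as stated: that bound (via \cite[Cor.~5.6]{Martingales}) controls $p$-th moments of pairings with \emph{space-time} rescaled test functions at a fixed base point, whereas here you need an $L^p$ bound on a supremum over $t\in[\tau_{\ga,\fa},T]$ of a convolution against a purely spatial test function, with a random lower integration limit; naming the factorisation method does not close this. The paper's proof closes it with a short direct argument that you should supply: for fixed $t$ view $t'\mapsto B_{\ga,\lambda}(t',t)=\eps^3\sum_y\int_{\tau_{\ga,\fa}}^{t'}(\widetilde{G}^\ga_{t-s}\ae\phi^\lambda_x)(y)\,\d\M'_{\ga,\fa}(s,y)$ as a martingale, bound its jumps by $2\ga^{-3}\eps^3\sup_{y_*}\sum_{k\in\Z^3}|(\widetilde{G}^\ga_{t-s}\ae\phi^\lambda_x)(y_*+k)|\lesssim\ga^{9}$ (a.s.\ only one site jumps at a time, and the kernel is summable), bound its predictable bracket by $\eps^3\sum_y\int_0^t(\widetilde{G}^\ga_s\ae\phi^\lambda_x)(y)^2\,\d s\lesssim(\lambda\vee\emezo)^{-1}$ (the same computation as for the constant $\fc_\ga$), and apply the Burkholder--Davis--Gundy inequality of \cite[Prop.~A.2]{Martingales} to obtain $(\E[\sup_t|B_{\ga,\lambda}(t)|^p])^{1/p}\lesssim(\lambda\vee\emezo)^{-1/2}+\ga^{9}$, which is $\lesssim(\lambda\vee\emezo)^{\eta}$ because $\eta<-\tfrac12$. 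Without this (or an actually executed factorisation argument), the key uniform-in-$\ga$, sup-in-time moment bound is asserted rather than proved.
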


\begin{proof}
By the definition in Section~\ref{sec:a-priori} we have $X'_{\ga, \fa}(\tau_{\ga, \fa}) = X_{\ga}(\tau_{\ga, \fa})$, and in the same way as we derived equation \eqref{eq:IsingKacEqn}, we get for $t \geq \tau_{\ga, \fa}$
\begin{equation}\label{eq:X-prime-equation-P}
	X'_{\ga, \fa}(t, x) = \bigl(P^\ga_{t - \tau_{\ga, \fa}} X_{\ga}\bigr)(\tau_{\ga, \fa}, x) + \eps^3 \sum_{y \in \Le} \int_{\tau_{\ga, \fa}}^t \widetilde{P}^{\ga}_{t - s}(x-y) \,\d \M'_{\ga, \fa}(s, y).
\end{equation}
Extending the processes periodically to $x \in \Lattice$ and using \eqref{eq:From-P-to-G}, we replace $P^{\ga}$, $\widetilde{P}^{\ga}$ and $\Le$ in the preceding equation by $G^\ga$, $\widetilde{G}^{\ga}$ and $\Lattice$ respectively. Then, for a test function $\phi \in \CB^{1}$ we have
\begin{equation}\label{eq:X-prime-equation}
\bigl( \iota_\eps X'_{\ga, \fa} (t)\bigr) (\phi_x^\lambda) = \bigl( \iota_\eps G^\ga_{t - \tau_{\ga, \fa}} X_{\ga}(\tau_{\ga, \fa})\bigr) (\phi_x^\lambda) + \eps^3 \sum_{y \in \Lattice} \int_{\tau_{\ga, \fa}}^t \bigl(\widetilde{G}^{\ga}_{t -s} *_\eps \phi_x^\lambda\bigr)(y) \,\d \M'_{\ga, \fa}(s, y).
\end{equation}
We denote the two terms on the right-hand side by $A_{\ga, \lambda}(t)$ and $B_{\ga, \lambda}(t)$ respectively. Then the first term may be written as 
\begin{equation*}
A_{\ga, \lambda}(t) = \eps^3 \sum_{y \in \Lattice} G^\ga_{t - \tau_{\ga, \fa}}(y) \bigl( \iota_\eps X_{\ga}(\tau_{\ga, \fa})\bigr) (\phi_{x - y}^\lambda).
\end{equation*}
Using the a priori bound on $X_{\ga}$, provided by the stopping time \eqref{eq:tau-1}, we get $|\bigl( \iota_\eps X_{\ga}(\tau_{\ga, \fa})\bigr) (\phi_{x - y}^\lambda)| \lesssim \fa (\lambda \vee \emezo)^\eta$ where we used the definition of the seminorm \eqref{eq:eps-norm-1}. Then since the kernel $G^\ga_t$ integrates to $1$, we get
\begin{equation}\label{eq:A-bound}
\bigl|A_{\ga, \lambda}(t)\bigr| \lesssim \fa (\lambda \vee \emezo)^\eta,
\end{equation}
with a proportionality constant independent of the involved values. Here, we used the fact that the discrete heat kernel $G^\ga_t$ is absolutely summable over $\Lattice$ and the sum is bounded uniformly in $\ga$ and $t$, which follows from Lemma~\ref{lem:Pgt}.

Now, we will bound the last term in \eqref{eq:X-prime-equation}. For this, we define 
\begin{equation*}
B_{\ga, \lambda}(t', t) := \eps^3 \sum_{y \in \Lattice} \int_{\tau_{\ga, \fa}}^{t'} \bigl(\widetilde{G}^{\ga}_{t - s} *_\eps \phi_x^\lambda\bigr)(y) \,\d \M'_{\ga, \fa}(s, y),
\end{equation*}
so that $B_{\ga, \lambda}(t) = B_{\ga, \lambda}(t, t)$ and the process $t' \mapsto B_{\ga, \lambda}(t', t)$ is a martingale on $[\tau_{\ga, \fa}, t]$. In order to apply the Burkholder-Davis-Gundy inequality \cite[Prop.~A.2]{Martingales} to this martingale, we need to bound its jumps and bracket process. The jump times of $B_{\ga, \lambda}$ coincide with those of $\M'_{\ga, \fa}$, and we get
\begin{equation*}
\bigl|\Delta_s B_{\ga, \lambda}(\bigcdot, t)\bigr| \leq \eps^3 \sum_{y \in \Lattice} \bigl|\bigl(\widetilde{G}^{\ga}_{t - s} *_\eps \phi_x^\lambda\bigr)(y) \bigr| |\Delta_s \M'_{\ga, \fa}(\bigcdot, y)|,
\end{equation*}
for $s \in [\tau_{\ga, \fa}, t]$, where we use the jump of the martingale $\Delta_s \M'_{\ga, \fa}$ defined in \eqref{eq:jump}. Moreover, the jump size of $\M'_{\ga, \fa}$ is bounded by $2 \ga^{-3}$ and if $\M'_{\ga, \fa}(s, y)$ has a jump, it happens almost surely at the points $\{y_* + k : k \in \Z^3\}$ for a unique $y_* \in \Le$ (recall Section~\ref{sec:martingales-properties} and periodicity of the martingale). Thus, we get almost surely 
\begin{equation}\label{eq:B-jump-bound}
\bigl|\Delta_s B_{\ga, \lambda}(\bigcdot, t)\bigr| \leq 2 \ga^{-3} \eps^3 \sup_{y_* \in \Le} \sum_{k \in \Z^3} \bigl|\bigl(\widetilde{G}^{\ga}_{t - s} *_\eps \phi_x^\lambda\bigr)(y_* + k) \bigr| \lesssim \ga^{-3} \eps^3 \lesssim \ga^9.
\end{equation}
The sum is bounded, because the discrete heat kernel decays very fast at infinity (see Lemma~\ref{lem:Pgt}).

Recalling \eqref{eq:M-prime-bracket}, the bracket process of $B_{\ga, \lambda}(t', t)$ equals 
\begin{equation*}
\big\langle B_{\ga, \lambda}(\bigcdot, t) \big\rangle_{t'} = 2 \varkappa_{\ga, 2} \sum_{y \in \Lattice} \int_{\tau_{\ga, \fa}}^{t'} \bigl(\widetilde{G}^{\ga}_{t - s} *_\eps \phi_x^\lambda\bigr)(y)^2 \Bigl( 1 - \delta \sigma'_{\ga, \fa} \Bigl(\frac{s}{\alpha}, \frac{x}{\eps}\Bigr) X'_{\ga, \fa}(s, x) \Bigr) \d s.
\end{equation*}
The process in the parentheses is bounded by a constant, and the definition \eqref{eq:scalings} yields 
\begin{equation*}
\bigl|\big\langle B_{\ga, \lambda}(\bigcdot, t) \big\rangle_{t'}\bigr| \lesssim \eps^3 \sum_{y \in \Lattice} \int_{0}^{t} \bigl(\widetilde{G}^{\ga}_{s} *_\eps \phi_x^\lambda\bigr)(y)^2 \d s,
\end{equation*}
where we used $t' \leq t$. Similarly to how we estimated \eqref{eq:renorm-constant1}, we can show that 
\begin{equation}\label{eq:B-bracket-bound}
\bigl|\big\langle B_{\ga, \lambda}(\bigcdot, t) \big\rangle_{t'}\bigr| \lesssim (\lambda \vee \emezo)^{-1}.
\end{equation}

Applying the Burkholder-Davis-Gundy inequality \cite[Prop.~A.2]{Martingales} and using the bounds \eqref{eq:B-jump-bound} and \eqref{eq:B-bracket-bound}, we get
\begin{equation}\label{eq:B-bound}
\Bigl(\E \Bigl[ \sup_{t \in [\tau_{\ga, \fa}, T]} \bigl| B_{\ga, \lambda}(t) \bigr|^p\Bigr]\Bigr)^{\frac{1}{p}} \lesssim (\lambda \vee \emezo)^{-\frac{1}{2}} + \ga^9.
\end{equation}
Using then the Minkowski inequality and the bounds \eqref{eq:A-bound} and \eqref{eq:B-bound}, we obtain from \eqref{eq:X-prime-equation} the required result \eqref{eq:X-prime-bound}.
\end{proof}

Using the preceding result, the following one is proved in exactly the same way as Lemma~\ref{lem:Z-bound}.

\begin{lemma}\label{lem:Z-prime-bound}
For any $\tilde\kappa \in (0,1)$ there exist $\ga_0 > 0$ such that for any $\ga \in (0, \ga_0)$, $T > 0$, $\phi \in \CB^{r}$ and $\lambda \in [\emezo^{1 - \tilde\kappa}, 1]$ one has
\begin{equation*}
\sup_{t \in [\tau_{\ga, \fa}, T]} \sup_{x \in \Lattice} \bigl| \bigl( \iota_\eps S'_{\ga, \fa} (t)\bigr) (\phi_x^\lambda) \bigr| \leq C \fa \lambda^{\eta},
\end{equation*}
where the values $\eta$ and $r$ is the same as in the statement of Lemma~\ref{lem:Z-bound}. The non-random proportionality constant $C$ depends on $T$ and is independent of $\ga$, $\phi$ and $\lambda$.
\end{lemma}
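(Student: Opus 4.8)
The plan is to reproduce the proof of Lemma~\ref{lem:Z-bound} essentially verbatim, replacing the single ingredient that differs: the a priori control of the smoothed spin field, which on $[\tau_{\ga, \fa}, T]$ is no longer the pathwise stopping-time bound but instead the estimate of Lemma~\ref{lem:X-prime-bound}. Fix $\tilde\kappa \in (0,1)$ and let $r$, $\kappa$ be exactly as in the proof of Lemma~\ref{lem:Z-bound}, i.e. $r \geq 2$, $r > \frac{1+\eta}{\tilde\kappa} - \eta$ and $\kappa = r + \eta - \frac{1+\eta}{\tilde\kappa} \in (0, r)$. With $n_0$ the smallest integer such that $2^{n_0} > c\ga^{-3}$ — where $c$ is the frequency cutoff below which $\widehat{K}_\ga$ stays bounded below, as in Lemma~\ref{lem:Z-bound} — split $\bigl(\iota_\eps S'_{\ga, \fa}(t)\bigr)(\phi^\lambda_x)$ via the Littlewood--Paley blocks of Section~\ref{sec:Besov} into a high-frequency part, coming from blocks $k \geq n_0$, and a low-frequency part, coming from blocks $-1 \leq k < n_0$.

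The high-frequency part is treated exactly as in Lemma~\ref{lem:Z-bound}, and is purely deterministic: since $\sigma'_{\ga, \fa}$ takes values in $\{-1, +1\}$ and $\delta = \ga^3$, one has the pointwise bound $|S'_{\ga, \fa}(t, x)| \leq \ga^{-3}$; combined with $\|\delta_k \phi^\lambda_x\|_{L^1} \lesssim (\lambda 2^k)^{-r + \kappa}$ (which follows from $\CB^{r} \hookrightarrow \CB^{r-\kappa}_{\infty, \infty}$) and summation over $k \geq n_0$, this contribution is bounded by a constant times $\ga^{-3}(\lambda 2^{n_0})^{-r+\kappa} \lesssim \lambda^{-r+\kappa}\emezo^{r-1-\kappa}$, which is $\lesssim \lambda^\eta$ as soon as $\lambda \geq \emezo^{1-\tilde\kappa}$, by the choice of $\kappa$.

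For the low-frequency part, one inverts the averaging kernel on low frequencies exactly as in Lemma~\ref{lem:Z-bound}: the first moment condition in \eqref{eq:K-moments} together with \eqref{eq:c-gamma-2} gives $\widehat{K}_\ga(\om) \geq a/4$ for $|\om|_\sinfty \leq c\ga^{-3}$ and all $\ga$ small, so $\widehat{\psi}_\ga(\om) := 1/\widehat{K}_\ga(\om)$ on those frequencies (and $0$ beyond) defines a function $\psi_\ga$ with the same type of scaling as $K_\ga$, and on low frequencies $S'_{\ga, \fa} = \psi_\ga *_\eps X'_{\ga, \fa}$ (using $X'_{\ga, \fa} = K_\ga *_\eps S'_{\ga, \fa}$). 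Hence the low-frequency part equals $\bigl(\iota_\eps X'_{\ga, \fa}(t)\bigr)(\Phi^{\ga, \lambda}_x)$, where $\Phi^{\ga, \lambda}_x$ is the convolution of the three rescaled functions $\phi^\lambda_x$, $\psi_\ga$ and $\sum_{-1 \leq k < n_0}\rho_k$, hence a function rescaled by $\lambda$ up to fast-decaying tails that are harmless because $X'_{\ga, \fa}$ is periodic. Applying Lemma~\ref{lem:X-prime-bound} to the test function $\Phi^{\ga, \lambda}_x \in \CB^1$ then bounds $\sup_{t \in [\tau_{\ga, \fa}, T]}\bigl|\bigl(\iota_\eps X'_{\ga, \fa}(t)\bigr)(\Phi^{\ga, \lambda}_x)\bigr|$ by a constant times $\fa(\lambda \vee \emezo)^\eta = \fa\lambda^\eta$ (in the $L^p$ sense of that lemma, for every $p \geq 1$), since $\lambda \geq \emezo^{1-\tilde\kappa} \geq \emezo$; the supremum over $x \in \Lattice$ reduces by periodicity to a finite supremum over $\Le$ and is absorbed into the constant by taking $p$ large, just as the supremum over $x$ came for free in Lemma~\ref{lem:Z-bound}. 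Adding the high- and low-frequency estimates gives the assertion.

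The only non-mechanical point, and the one I expect to carry all the weight, is the use of Lemma~\ref{lem:X-prime-bound} in place of the stopping-time a priori bound: because $X'_{\ga, \fa}$ is controlled only through its solving the linear ``voter-model'' equation \eqref{eq:X-prime-equation-P} — via a Burkholder--Davis--Gundy estimate on its martingale part — the low-frequency contribution, and hence the full estimate, is inherited in the same $L^p$ form as Lemma~\ref{lem:X-prime-bound}. Beyond this, everything reduces to the routine verification that $\Phi^{\ga, \lambda}_x$ satisfies the hypotheses of Lemma~\ref{lem:X-prime-bound}, namely that it is a periodization of a $\CB^1$ function rescaled by $\lambda$ with sufficiently fast decay, which is identical to the corresponding verification in the proof of Lemma~\ref{lem:Z-bound}.
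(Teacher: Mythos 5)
Your proposal is exactly the paper's intended argument: the paper proves this lemma in one line, stating that it follows ``in exactly the same way as Lemma~\ref{lem:Z-bound}'' using Lemma~\ref{lem:X-prime-bound}, and your write-up carries out precisely that — the same Littlewood--Paley splitting at frequency $2^{n_0}\approx \ga^{-3}$, the same deterministic bound $|S'_{\ga,\fa}|\leq \ga^{-3}$ for the high-frequency part, and the same inversion of $\widehat K_\ga$ on low frequencies, with the stopping-time a priori bound replaced by Lemma~\ref{lem:X-prime-bound}. The one caveat — that Lemma~\ref{lem:X-prime-bound} gives only moment bounds, so the estimate is inherited in the $L^p$ sense rather than pathwise with a non-random constant as the lemma's statement literally reads — is a gloss shared by the paper's own one-line proof, and you flag it explicitly, so it is not a gap introduced by your argument.
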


\subsection{Controlling the process $\un{X}'_{\ga, \fa}$}

Let us define by analogy with \eqref{eq:c-under} the renormalisation term, which is a function of the time variable,
\begin{equation}\label{eq:C-gamma-function}
\un{\fC}_{\ga}(t) := 2 \varkappa_{\ga, 2} \int_{0}^t \eps^3 \sum_{x \in \Le} \un{P}^{\ga}_s(x) \widetilde{P}^\ga_s(x) \,\d s,
\end{equation}
where $\un{P}^{\ga}_{t} := P^{\ga}_{t} *_\eps \un{K}_\ga$ and $\varkappa_{\ga, 2}$ was defined in \eqref{constant:isingkac_one}. The following result will be useful later.

\begin{lemma}\label{lem:renorm-function-underline}
The constant \eqref{eq:c-under} and the function \eqref{eq:C-gamma-function} satisfy $|\un{\fC}_\ga - \un{\fC}_\ga(t)| \lesssim t^{-c/2} \emezo^{c -1}$ for any $c \in [0, 1)$.
\end{lemma}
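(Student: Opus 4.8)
The plan is to compare $\un{\fC}_\ga$ with $\un{\fC}_\ga(t)$ by writing the difference as an integral over the tail $[t, \infty)$ and estimating it in Fourier space, exactly as in the proofs of Lemmas~\ref{lem:renorm-constant-underline} and \ref{lem:renorm-constants}. First I would observe that, by definition,
\begin{equation*}
\un{\fC}_\ga - \un{\fC}_\ga(t) = 2 \varkappa_{\ga, 2} \int_{t}^\infty \eps^3 \sum_{x \in \Le} \un{P}^{\ga}_s(x) \widetilde{P}^\ga_s(x)\, \d s,
\end{equation*}
and then apply Parseval's identity \eqref{eq:Parseval} together with the convolution identity \eqref{eq:Fourier-of-product} and the explicit Fourier representation \eqref{eq:tildeP}. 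Since $\SF_{\!\!\eps}\un{P}^\ga_s(\om) = \SF_{\!\!\eps}P^\ga_s(\om)\widehat{\un{K}}_\ga(\om)$ and $\SF_{\!\!\eps}\widetilde{P}^\ga_s(\om) = \SF_{\!\!\eps}P^\ga_s(\om)\widehat{K}_\ga(\om)$, this gives
\begin{equation*}
\un{\fC}_\ga - \un{\fC}_\ga(t) = \frac{\varkappa_{\ga, 2}}{4} \int_t^\infty \sum_{|\om|_\sinfty \leq N} \exp\Bigl(2\varkappa_{\ga,3}^2(\widehat{K}_\ga(\om)-1)\frac{s}{\alpha}\Bigr)\widehat{\un{K}}_\ga(\om)\widehat{K}_\ga(\om)\, \d s.
\end{equation*}
The $\om = 0$ term vanishes because $\KK(0)=0$ forces no constant mode issue — more precisely, $\widehat{K}_\ga(0)=1$ makes the exponent vanish, but then the integral over $[t,\infty)$ of the constant is infinite; however this is not a problem since as in Lemma~\ref{lem:renorm-constant-underline} one checks the $\om=0$ contribution is actually absent or handled by noting $\un{\fC}_\ga$ itself only sums over $0 < |\om|_\sinfty \le N$ (the integral converges precisely because $\widehat{K}_\ga(\om) < 1$ strictly for $\om \ne 0$ by Lemma~\ref{lem:Kg}).

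For $0 < |\om|_\sinfty \le N$, computing the time integral yields
\begin{equation*}
\un{\fC}_\ga - \un{\fC}_\ga(t) = \frac{\varkappa_{\ga, 2}}{8} \sum_{0 < |\om|_\sinfty \leq N} \frac{\alpha\, \widehat{\un{K}}_\ga(\om)\widehat{K}_\ga(\om)}{\varkappa_{\ga,3}^2(1 - \widehat{K}_\ga(\om))} \exp\Bigl(2\varkappa_{\ga,3}^2(\widehat{K}_\ga(\om)-1)\frac{t}{\alpha}\Bigr).
\end{equation*}
Now I would insert the extra exponential decay factor into the estimates already used in Lemma~\ref{lem:renorm-constant-underline}: using $1 - \widehat{K}_\ga(\om) \geq C_1(|\ga^3\om|^2 \wedge 1)$ and $|\widehat{K}_\ga(\om)|, |\widehat{\un K}_\ga(\om)| \lesssim 1$ on $|\om| \le \ga^{-3}$ (with fast polynomial decay beyond), and the elementary inequality $e^{-a s/\alpha} \le (a/\alpha)^{-c/2} s^{-c/2} \cdot C$ applied with $a = 2\varkappa_{\ga,3}^2(1-\widehat{K}_\ga(\om)) \gtrsim |\ga^3\om|^2 \wedge 1$, the summand is bounded by a constant times $\alpha \cdot (|\ga^3\om|^2 \wedge 1)^{-1} \cdot ((|\ga^3\om|^2\wedge 1)/\alpha)^{-c/2} t^{-c/2}$. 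Converting the sum over $0 < |\om|_\sinfty \le \ga^{-3}$ into an integral $\int \alpha |\ga^3\om|^{-2}(|\ga^3\om|^2/\alpha)^{-c/2}\, \d\om$, rescaling $\om \mapsto \ga^3\om$, and using $\alpha = \ga^6$, $\emezo \approx \ga^3$ gives $\alpha^{1+c/2}\ga^{-9}\int_{\R^3, |u| \le 1} |u|^{-2-c}\, \d u \lesssim \ga^{6+3c-9} = \ga^{3(c-1)} \approx \emezo^{c-1}$, which converges since $c < 1$; the high-frequency part $|\om| > \ga^{-3}$ contributes a higher power of $\ga$ by the polynomial decay of $\widehat K_\ga$ and $\widehat{\un K}_\ga$. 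Collecting the $t^{-c/2}$ factor yields $|\un{\fC}_\ga - \un{\fC}_\ga(t)| \lesssim t^{-c/2}\emezo^{c-1}$.

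The only delicate point is the interchange of the elementary decay bound $e^{-as/\alpha} \le C_c (a s/\alpha)^{-c/2} (s/\alpha)^{c/2} \cdot \dots$ — more carefully, one uses $e^{-x} \le C_c x^{-c/2}$ for $x > 0$ with $x = 2\varkappa_{\ga,3}^2(1-\widehat{K}_\ga(\om)) t/\alpha$, so that the exponential is bounded by $C_c (t/\alpha)^{-c/2}(1-\widehat{K}_\ga(\om))^{-c/2}$, producing the claimed $t^{-c/2}$ together with an extra $\alpha^{c/2}$ and an extra factor $(1-\widehat{K}_\ga(\om))^{-c/2}$ that worsens the singularity in the sum from $|\ga^3\om|^{-2}$ to $|\ga^3\om|^{-2-c}$; this remains integrable in three dimensions precisely because $c < 1$ (so $2 + c < 3$). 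The main obstacle, such as it is, is simply bookkeeping the powers of $\ga$ through the rescaling while keeping track of $\varkappa_{\ga,2}$ and $\varkappa_{\ga,3}$ (both $\to 1$), and confirming that the high-frequency tail is genuinely negligible; no new idea beyond what is already in the proofs of Lemmas~\ref{lem:renorm-constants} and \ref{lem:renorm-constant-underline} is needed.
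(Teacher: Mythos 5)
Your proposal is correct and follows essentially the same route as the paper: derive the Fourier representation of the tail $\un{\fC}_\ga - \un{\fC}_\ga(t)$ exactly as in \eqref{eq:un-fc-bound} but with the extra exponential factor, bound the exponential by $e^{-x} \lesssim x^{-c/2}$, and then run the kernel estimates of Lemma~\ref{lem:renorm-constant-underline} with the singularity worsened to $|\ga^3\om|^{-2-c}$, integrable since $c<1$. Your explicit power counting ($\alpha^{1+c/2}\ga^{-9} \approx \emezo^{c-1}$) correctly fills in what the paper leaves as ``proceeding as in the proof of Lemma~\ref{lem:renorm-constant-underline}''.
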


\begin{proof}
The proof of the bound goes along the lines of the proof of Lemma~\ref{lem:renorm-constant-underline}. More precisely, as in \eqref{eq:un-fc-bound} we get 
\begin{equation*}
\un{\fC}_\ga - \un{\fC}_\ga(t) = \frac{\varkappa_{\ga, 2}}{8} \sum_{0 < |\om|_{\sinfty} \leq N} \frac{\alpha \widehat{\un{K}}_\ga(\om) \widehat{K}_\ga(\om)}{\varkappa_{\ga, 3}^2 \big(1 -  \widehat{K}_\ga(\om) \big)} \exp \Bigl( 2 \varkappa_{\ga, 3}^2 \big( \widehat{K}_\ga(\om) - 1 \big) \frac{t}{\alpha} \Bigr).
\end{equation*}
Since the power of the exponential is negative, we can use the simple bound $e^{-x} \lesssim x^{-c/2}$ for any $x > 0$ and any $c > 0$, to estimate 
\begin{equation*}
|\un{\fC}_\ga - \un{\fC}_\ga(t)| \lesssim \sum_{0 < |\om|_{\sinfty} \leq N} \frac{\alpha |\widehat{\un{K}}_\ga(\om)| |\widehat{K}_\ga(\om)|}{1 -  \widehat{K}_\ga(\om)} \Bigl(\big(1 -  \widehat{K}_\ga(\om) \big) \frac{t}{\alpha} \Bigr)^{-c/2}.
\end{equation*}
Proceeding as in the proof of Lemma~\ref{lem:renorm-constant-underline}, we get the desired bound.
\end{proof}

Let us define the process $\un{X}'_{\ga, \fa}$ as in \eqref{eq:X-under}, but via the spin field  $\sigma'_{\ga, \fa}$. The following result will be used in Section~\ref{sec:second-symbol}.

\begin{lemma}\label{lem:X2-under-prime}
Let $\eta$ be as in Theorem~\ref{thm:main} and let $\un\kappa$ and $\un\emezo$ be as in \eqref{eq:tau-2}. There exists $\gamma_0 > 0$ such that for every $p \geq 1$ and $T > 0$ one has 
\begin{equation}\label{eq:X2-prime-bound}
\E \biggl[ \sup_{t \in [\tau_{\ga, \fa}, T]} (t - \tau_{\ga, \fa})^{-\frac{\eta p}{2}} \bigl\| \un{X}'_{\ga, \fa} (t) X'_{\ga, \fa}(t) - \un{\fC}_\ga (t-\tau_{\ga, \fa}) \bigr\|_{L^\infty}^p\biggr] \leq C \fa^{2 p} \un{\emezo}^{\eta p},
\end{equation}
uniformly over $\ga \in (0, \ga_0)$. The constant $C$ depends only on $p$, $T$, $\ga_0$ and $\un\kappa$.
\end{lemma}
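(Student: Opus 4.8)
The plan is to mimic the proof of Lemma~\ref{lem:X-prime-bound}, but now for the renormalised product $\un{X}'_{\ga, \fa} X'_{\ga, \fa}$ rather than for $X'_{\ga, \fa}$ alone. First I would write the mild equation on $\Lattice$ for both $X'_{\ga, \fa}$ and $\un{X}'_{\ga, \fa}$ starting at the random time $\tau_{\ga, \fa}$: using \eqref{eq:X-prime-equation-P} and the fact that $\un{X}'_{\ga, \fa} = \un K_\ga *_\eps S'_{\ga, \fa}$, one has $X'_{\ga, \fa}(t) = G^\ga_{t-\tau_{\ga, \fa}} X_\ga(\tau_{\ga, \fa}) + \widetilde Y'_{\ga, \fa}(t)$ and $\un{X}'_{\ga, \fa}(t) = \un{G}^\ga_{t-\tau_{\ga, \fa}} X_\ga(\tau_{\ga, \fa}) + \un{Y}'_{\ga, \fa}(t)$, where the $Y$-terms are stochastic convolutions of $\widetilde G^\ga$ (resp.\ $\un G^\ga := G^\ga *_\eps \un K_\ga$) against the martingale $\M'_{\ga, \fa}$. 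Expanding the product gives four terms: the deterministic$\times$deterministic piece, two deterministic$\times$stochastic cross terms, and the stochastic$\times$stochastic piece $\un{Y}'_{\ga, \fa}(t) X'_{\ga, \fa, Y}(t)$ (I slightly abuse notation for the purely stochastic part). The constant $\un{\fC}_\ga(t-\tau_{\ga, \fa})$ is precisely the expectation of the bracket-type contraction of this last term — compare \eqref{eq:C-gamma-function} with \eqref{eq:M-prime-bracket} — so it is the stochastic$\times$stochastic term that needs the Wick-type subtraction, while the three other terms are handled by a priori bounds.

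Next I would estimate the three "easy" terms. The deterministic$\times$deterministic term is bounded, using $\| X_\ga(\tau_{\ga, \fa})\|^{(\emezo)}_{\CC^\eta} \lesssim \fa$ from the stopping time \eqref{eq:tau-1} together with \eqref{eq:uniform-bound} (so $|X_\ga(\tau_{\ga,\fa}, x)| \lesssim \fa\emezo^\eta$) and the fact that $G^\ga_t$ and $\un G^\ga_t$ are absolutely summable uniformly in $\ga, t$ (Lemma~\ref{lem:Pgt}); this gives a bound of order $\fa^2 \emezo^{2\eta}$, which since $2\eta < \eta$ is absorbed into $\fa^{2p}\un\emezo^{\eta p}$ after using $(t-\tau_{\ga,\fa})^{-\eta/2}$ is compensated — actually here one just notes $\emezo^{2\eta} \le \un\emezo^{\eta}$ for $\ga$ small since $\eta<0$ and $\un\emezo = \emezo\ga^{\un\kappa} < \emezo$, while the $(t-\tau_{\ga,\fa})^{-\eta/2}$ weight only helps. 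The two cross terms combine an $L^\infty$ bound of the deterministic heat-convolution ($\lesssim \fa\emezo^\eta$, again via Lemma~\ref{lem:Pgt} and the a priori bound) with a BDG estimate on the stochastic convolution, exactly as in the bound of $B_{\ga,\lambda}$ in the proof of Lemma~\ref{lem:X-prime-bound}: controlling jumps of size $\lesssim \ga^{-3}\eps^3 \lesssim \ga^9$ and bracket $\lesssim (t-\tau_{\ga,\fa})$-dependent quantity of order $(t-\tau_{\ga,\fa} \vee \emezo)^{\eta}$-type coming from the time integral of $(\widetilde G^\ga_s)^2$, one gets the moment bound with the correct power of $\un\emezo$ and the correct time weight; the extra weight $(t-\tau_{\ga,\fa})^{-\eta/2}$ is needed precisely because the stochastic convolution of $\widetilde G^\ga$ against $\M'$ only has regularity $-\tfrac12$ near $t=\tau_{\ga,\fa}$, i.e.\ its $L^\infty$ norm blows up like $(t-\tau_{\ga,\fa})^{-1/2}$, matching $\eta = -\tfrac12-\kappa$.

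For the genuinely singular stochastic$\times$stochastic term I would fix a time $t$, set $\theta := t-\tau_{\ga, \fa}$, and write $\un{Y}'(t,x) Y'(t,x) - \un{\fC}_\ga(\theta)$ as a sum of two Wiener-type chaos pieces relative to the martingale $\M'_{\ga, \fa}$: a "second chaos" term (the off-diagonal part of the double stochastic integral, which is the product minus its predictable bracket contraction) and a "remainder" coming from the difference between the predictable bracket and the deterministic constant $\un{\fC}_\ga(\theta)$. Both are controlled by the machinery of the companion paper \cite{Martingales} — the moment bounds on iterated martingale integrals give, for the second-chaos term, an $L^p$-in-space, then $L^\infty$-in-space via a Kolmogorov/Besov embedding argument (this is why one works through $\CC^{-1-\un\kappa}$-type spaces, cf.\ the definition of $\tau^{(2)}_{\ga, \fa}$ in \eqref{eq:tau-2}), a bound of order $\un\emezo^{\eta}$ times the time weight. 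For the remainder term, the key input is that the predictable bracket of $Y'$ against $\un Y'$ is $\int_{\tau_{\ga,\fa}}^{t} (\mywidetilde{\SK}^\ga_{t-s})^2 *_\eps(\cdot)$ tested against $\bC_{\ga,\fa}$, and replacing $\bC_{\ga,\fa}$ by its leading constant $2\varkappa_{\ga,2}$ produces exactly $\un{\fC}_\ga(\theta)$ up to $\widetilde G^\ga$-vs-$\mywidetilde{\SK}^\ga$ corrections (handled as in Lemma~\ref{lem:renorm-constants}) and up to an error proportional to $\delta \sigma'_{\ga,\fa} X'_{\ga,\fa}$, which is $O(\ga^{3(1+\eta)})$ small by \eqref{eq:X-prime-bound} and $|\sigma'|\le 1$ — this is where the voter-model extension and Lemma~\ref{lem:X-prime-bound} are essential, since they guarantee $X'_{\ga,\fa}$ stays controlled for all $t \ge \tau_{\ga,\fa}$, not just up to the stopping time. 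Assembling the four contributions, taking $\sup_{t\in[\tau_{\ga,\fa},T]}$ inside $\E[\cdot]$ via the maximal inequalities from \cite{Martingales}, and optimising the free small parameters $\kappa, \un\kappa$ yields \eqref{eq:X2-prime-bound}.

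The main obstacle I expect is the diagonal/contraction analysis of the stochastic$\times$stochastic term: one must show that the predictable quadratic bracket of the two stochastic convolutions, which involves the process $\bC_{\ga, \fa}$ of \eqref{eq:bC}, is close — in the strong $L^\infty$-in-space, $L^p$-in-probability sense with the right blow-up rate near $\tau_{\ga,\fa}$ — to the deterministic function $\un{\fC}_\ga(\theta)$, and simultaneously that the off-diagonal second-chaos part genuinely gains regularity. This is the discrete, non-Gaussian analogue of the "renormalised product of the linear solution with a local average" and is exactly the step where the martingale-integral theory of \cite{Martingales} (rather than Gaussian hypercontractivity) does the real work; matching the time-weight $(t-\tau_{\ga,\fa})^{-\eta/2}$ with the regularity budget (so that $\bar\eta$-vs-$\eta$ trade-offs close) will require care, but no conceptually new ingredient beyond what is already set up in Sections~\ref{sec:a-priori}, \ref{sec:lift} and \ref{sec:martingales}.
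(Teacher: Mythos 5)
Your overall architecture is the same as the paper's: write the mild equations for $X'_{\ga,\fa}$ and $\un{X}'_{\ga,\fa}$ started at $\tau_{\ga,\fa}$, expand the product, identify the Wick-type subtraction with the predictable covariation of the two stochastic convolutions (whose constant part, coming from $\bC_{\ga,\fa}\approx 2\varkappa_{\ga,2}$, reproduces exactly $\un{\fC}_\ga(t-\tau_{\ga,\fa})$), bound the compensated covariation martingale by BDG-type estimates, and control the remaining bracket error through the smallness of $\delta\sigma'_{\ga,\fa}X'_{\ga,\fa}$. Your treatment of that bracket error is in fact simpler than the paper's (which replaces $S'_{\ga,\fa}$ by $\un{X}'_{\ga,\fa}$ via Lemma~\ref{lem:Y-approximation} and closes with an absorption argument), and it does give an admissible bound of order $\fa\,\emezo^{\eta}$ once you add a union bound over $y\in\Le$ to upgrade the pointwise moment bound of Lemma~\ref{lem:X-prime-bound} to a supremum in space; likewise the paper bounds the compensated part by hands-on BDG with jump-size and jump-count estimates rather than the full machinery of \cite{Martingales}, but your route is comparable.

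The genuine gap is in your handling of the ``easy'' terms containing the heat flow of the data at time $\tau_{\ga,\fa}$, and in your reading of the time weight. The inequality $\emezo^{2\eta}\le\un{\emezo}^{\eta}$ is false: since $\eta<0$ and $\un{\emezo}=\emezo\,\ga^{\un\kappa}$, one has $\emezo^{2\eta}=\un{\emezo}^{\eta}\,\ga^{(3-\un\kappa)\eta}\gg\un{\emezo}^{\eta}$, so a flat bound $\fa^2\emezo^{2\eta}$ on the deterministic$\times$deterministic term overshoots the target by a divergent factor; the same problem occurs in your cross terms, where $\fa\emezo^{\eta}\cdot\un{\emezo}^{\eta}\gg\fa^2\un{\emezo}^{\eta}$. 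Moreover the weight $(t-\tau_{\ga,\fa})^{-\eta/2}$ is not there to compensate a blow-up of the stochastic convolution (which, started from zero at $\tau_{\ga,\fa}$, stays of order $\emezo^{\eta}$ near that time); it is there to compensate the deterministic part. The correct mechanism, which the paper uses, is the smoothing estimate $\|P^\ga_{t-\tau_{\ga,\fa}}X_\ga(\tau_{\ga,\fa})\|_{L^\infty}\lesssim\fa\,(t-\tau_{\ga,\fa})^{\eta/2}$ (and the analogue for $\un{X}_\ga(\tau_{\ga,\fa})$) from Lemma~\ref{lem:P-convolved-with-f}: this trades one factor $\emezo^{\eta}$ for $(t-\tau_{\ga,\fa})^{\eta/2}$, which is then exactly cancelled by the weight in the statement, leaving bounds of order $\fa^{2}\emezo^{\eta}$ and $\fa\,\un{\emezo}^{\eta}$ for these terms. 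Without this step your estimates for the deterministic contributions do not close; with it, your plan becomes essentially the paper's proof.
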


\begin{proof}
Let $I_{\ga, \fa}(t,x) := \un{X}'_{\ga, \fa} (t, x) X'_{\ga, \fa}(t, x) - \un{\fC}_\ga (t-\tau_{\ga, \fa})$ be the function, which we need to bound. From the proof of Lemma~\ref{lem:X-prime-bound} we know that $X'_{\ga, \fa}$ solves equation \eqref{eq:X-prime-equation-P}. Similarly, we can show that 
\begin{equation}\label{eq:X-under-prime-equation}
\un{X}'_{\ga, \fa} (t, x) = \bigl(P^\ga_{t - \tau_{\ga, \fa}} \un{X}_{\ga}\bigr)(\tau_{\ga, \fa}, x) + \eps^3 \sum_{y \in \Le} \int_{\tau_{\ga, \fa}}^t \un{P}^{\ga}_{t - s}(x-y) \,\d \M'_{\ga, \fa}(s, y).
\end{equation}
Here, we need to take $\ga$ small enough so that the radius of the support of the function $\un{K}_\ga$ gets smaller than one. Let us denote by $Y'_{\ga, \fa}(t, x)$ and $\un{Y}'_{\ga, \fa} (t, x)$ the last terms in \eqref{eq:X-prime-equation-P} and \eqref{eq:X-under-prime-equation} respectively, and let us define 
\begin{equation}\label{eq:Y-definitions}
\begin{aligned}
Y'_{\ga, \fa}(r, t, x) &:= \eps^3 \sum_{y \in \Le} \int_{\tau_{\ga, \fa}}^{r} \widetilde{P}^{\ga}_{t - s}(x-y) \,\d \M'_{\ga, \fa}(s, y), \\
\un{Y}'_{\ga, \fa}(r, t, x) &:= \eps^3 \sum_{y \in \Le} \int_{\tau_{\ga, \fa}}^{r} \un{P}^{\ga}_{t - s}(x-y) \,\d \M'_{\ga, \fa}(s, y).
\end{aligned}
\end{equation}
Then these two processes are \cadlag martingales in $r \in [\tau_{\ga, \fa}, t]$, and $Y'_{\ga, \fa}(t, x) = Y'_{\ga, \fa}(t, t, x)$ and $\un{Y}'_{\ga, \fa}(t, x) = \un{Y}'_{\ga, \fa}(t, t, x)$. Since these martingales have finite total variation, their quadratic covariation may be written as (see \cite{JS03})
\begin{equation}\label{eq:Y-jumps}
\bigl[Y'_{\ga, \fa}(\bigcdot, t, x), \un{Y}'_{\ga, \fa}(\bigcdot, t, x)\bigr]_r = \sum_{\tau_{\ga, \fa} \leq s \leq r} \Delta_s Y'_{\ga, \fa}(\bigcdot, t, x)\, \Delta_s \un{Y}'_{\ga, \fa}(\bigcdot, t, x),
\end{equation}
where $\Delta_s Y'_{\ga, \fa}(\bigcdot, t, y)$ is the jumps size of the martingale at time $s$. Moreover, the process 
\begin{equation}\label{eq:N-prime}
\fN'_{\ga, \fa}(r, t, x) := \bigl[Y'_{\ga, \fa}(\bigcdot, t, x), \un{Y}'_{\ga, \fa}(\bigcdot, t, x)\bigr]_r - \langle Y'_{\ga, \fa}(\bigcdot, t, x), \un{Y}'_{\ga, \fa}(\bigcdot, t, x)\rangle_r
\end{equation}
is a martingale for $r \in [\tau_{\ga, \fa}, t]$, where from \eqref{eq:M-gamma-bracket} we have 
\begin{equation*}
\langle Y'_{\ga, \fa}(\bigcdot, t, x), \un{Y}'_{\ga, \fa}(\bigcdot, t, x)\rangle_r = \eps^3 \sum_{y \in \Le} \int_{\tau_{\ga, \fa}}^r \un{P}^{\ga}_{t - s}(x-y) \widetilde{P}^{\ga}_{t - s}(x-y) \bC_{\ga, \fa} (s,y) \,\d s.
\end{equation*}
We denote $\fN'_{\ga, \fa}(t, x) = \fN'_{\ga, \fa}(t, t, x)$. Then we multiply \eqref{eq:X-prime-equation-P} and \eqref{eq:X-under-prime-equation}, to get 
\begin{align}
I_{\ga, \fa}(t, x) &= \bigl(P^\ga_{t - \tau_{\ga, \fa}} \un{X}_{\ga}\bigr)(\tau_{\ga, \fa}, x)\, X'_{\ga, \fa}(t, x) + \un{Y}'_{\ga, \fa}(t, x)\, \bigl(P^\ga_{t - \tau_{\ga, \fa}} X_{\ga}\bigr)(\tau_{\ga, \fa}, x) + \fN'_{\ga, \fa}(t, x) \nonumber\\
&\qquad + \biggl(\eps^3 \sum_{y \in \Le} \int_{\tau_{\ga, \fa}}^t \un{P}^{\ga}_{t - s}(x-y) \widetilde{P}^{\ga}_{t - s}(x-y) \bC_{\ga, \fa} (s,y) \,\d s - \un{\fC}_\ga (t-\tau_{\ga, \fa})\biggr). \label{eq:X-under-X-prime-expansion}
\end{align}
We denote the four terms on the right-hand side by $I_{\ga, \fa}^{(i)}(t,x)$, for $i = 1, \ldots, 4$, and we will bound them one by one.

Expanding the discrete kernel as in Appendix~\ref{sec:decompositions} and using the a priori bound provided by the stopping time \eqref{eq:tau-1}, we obtain from Lemma~\ref{lem:P-convolved-with-f}
\begin{equation*}
| (P^\ga_{t - \tau_{\ga, \fa}} X_{\ga})(\tau_{\ga, \fa}, x)| \lesssim \fa (t - \tau_{\ga, \fa})^{\eta/2}, \qquad | (P^\ga_{t - \tau_{\ga, \fa}}  \un{X}_{\ga})(\tau_{\ga, \fa}, x)| \lesssim \fa (t - \tau_{\ga, \fa})^{\eta/2}.
\end{equation*}
Then the first term in \eqref{eq:X-under-X-prime-expansion} we bound as 
\begin{equation*}
\E \biggl[ \sup_{t \in [\tau_{\ga, \fa}, T]} (t - \tau_{\ga, \fa})^{-\frac{\eta p}{2}} \bigl| I_{\ga, \fa}^{(1)}(t,x) \bigr|^p\biggr] \leq \fa^p \E \biggl[ \sup_{t \in [\tau_{\ga, \fa}, T]} \bigl| X'_{\ga, \fa}(t, x) \bigr|^{p}\biggr].
\end{equation*}
Applying Lemma~\ref{eq:X-prime-bound}, the preceding expression is bounded by a constant times $\fa^{2 p} \emezo^{\eta p}$. The term $I_{\ga, \fa}^{(2)}(t,x)$ can be bounded similarly. Indeed, $\un{Y}'_{\ga, \fa}$ coincides with $\un{X}'_{\ga, \fa}$, when the initial condition is $0$, and Lemma~\ref{eq:X-prime-bound} holds for $\un{X}'_{\ga, \fa}$ where $\un{\emezo}$ is used in place of $\emezo$. Hence, we have 
\begin{equation*}
\E \biggl[ \sup_{t \in [\tau_{\ga, \fa}, T]} (t - \tau_{\ga, \fa})^{-\frac{\eta p}{2}} \bigl| I_{\ga, \fa}^{(2)}(t,x) \bigr|^p\biggr] \lesssim \fa^{p} \un{\emezo}^{\eta p}.
\end{equation*}

To bound the third term in \eqref{eq:X-under-X-prime-expansion}, we use the Burkholder-Davis-Gundy inequality and get
\begin{equation}\label{eq:I-3-bound}
\E \biggl[ \sup_{t \in [\tau_{\ga, \fa}, T]} \bigl| I_{\ga, \fa}^{(3)}(t,x) \bigr|^p\biggr] \lesssim \biggl(\E \Bigl[ \bigl[ \fN'_{\ga, \fa}(\bigcdot, t, x) \bigr]_t\Bigr] \biggr)^{\frac{p}{2}},
\end{equation}
where the quadratic variation is computed for the martingale \eqref{eq:N-prime}. From the definition of the martingale, we get 
\begin{equation}\label{eq:N-prime-bracket}
\bigl[ \fN'_{\ga, \fa}(\bigcdot, t, x) \bigr]_t = \sum_{\tau_{\ga, \fa} \leq s \leq t} \bigl(\Delta_s \fN'_{\ga, \fa}(\bigcdot, t, x) \bigr)^2.
\end{equation}
Moreover, \eqref{eq:Y-jumps} yields $\Delta_s \fN'_{\ga, \fa}(\bigcdot, t, x) = \Delta_s Y'_{\ga, \fa}(\bigcdot, t, x)\, \Delta_s \un{Y}'_{\ga, \fa}(\bigcdot, t, x)$. Furthermore, the definitions \eqref{eq:Y-definitions} allow to bound the jumps of $Y'_{\ga, \fa}$ and $\un{Y}'_{\ga, \fa}$ in terms of jumps of $\M'_{\ga, \fa}$. Since the jumps size of the latter is bounded by $2 \ga^{-3}$ (as follows from the scaling \eqref{eq:martingales}) and almost surely $\M'_{\ga, \fa}(s, y)$ has a jump at a unique point $y$, we get 
\begin{equation*}
\bigl|\Delta_s Y'_{\ga, \fa}(\bigcdot, t, x)\bigr| \leq 2 \ga^{-3} \eps^3 \bigl\|\widetilde{P}^{\ga}_{t - s}\bigr\|_{L^\infty}, \qquad \bigl|\Delta_s \un{Y}'_{\ga, \fa}(\bigcdot, t, x)\bigr| \leq 2 \ga^{-3} \eps^3 \bigl\|\un{P}^{\ga}_{t - s}\bigr\|_{L^\infty}.
\end{equation*}
From Lemma~\ref{lem:tilde-P-bound} we have $\bigl\|\widetilde{P}^{\ga}_{t - s}\bigr\|_{L^\infty} \lesssim (t-s + \emezo^2)^{-3/2}$ and $\bigl\|\un{P}^{\ga}_{t - s}\bigr\|_{L^\infty} \lesssim (t-s + \un{\emezo}^2)^{-3/2}$. Using these bounds in \eqref{eq:N-prime-bracket} yields 
\begin{equation*}
\bigl[ \fN'_{\ga, \fa}(\bigcdot, t, x) \bigr]_t \lesssim \ga^{18} \sum_{\tau_{\ga, \fa} \leq s \leq t} (t-s + \un{\emezo}^2)^{-3} \mathbbm{1}_{\{ s: \, \M'_{\ga, \fa}(s, x) - \M'_{\ga, \fa}(s-, x) \neq 0 \}},
\end{equation*}
where $\mathbbm{1}$ is the indicator function and so the sum runs over the jump times of the martingales $\M'_{\ga, \fa}$. The moments of the number of jumps of the martingales are of order $\ga^{-6}$, and hence the $p$-th moment of the preceding expression is bounded by a constant times
\begin{equation*}
\ga^{12} \int_{\tau_{\ga, \fa}}^{t} (t-s + \un{\emezo}^2)^{-3} \d s \lesssim \ga^{12} \un{\emezo}^{-4} \lesssim \ga^{-4\un\kappa}.
\end{equation*}
Then the right-hand side of \eqref{eq:I-3-bound} is bounded by a constant multiple of $\ga^{-2 \un \kappa p}$.

It is left to bound the last term in \eqref{eq:X-under-X-prime-expansion}. Using \eqref{eq:bC} and \eqref{eq:C-gamma-function}, we have 
\begin{equation*}
I_{\ga, \fa}^{(4)}(t,x) = - \frac{2 \eps^6}{\alpha} \sum_{y \in \Le} \int_{\tau_{\ga, \fa}}^t \un{P}^{\ga}_{t - s}(x-y) \widetilde{P}^{\ga}_{t - s}(x-y) S'_{\ga, \fa} (s, y) X'_{\ga, \fa}(s, y) \,\d s.
\end{equation*}
Let $I_{\ga, \fa}^{(5)}(t,x)$ be defined by this formula, where we replace $S'_{\ga, \fa}$ by $\un{X}'_{\ga, \fa}$. From Lemma~\ref{lem:X-prime-bound} we have $|X'_{\ga, \fa}(s, y)| \lesssim \emezo^{\eta}$ and we have $|S'_{\ga, \fa}(s, y)| \lesssim \emezo^{-1}$. Then, is we replace the kernels $\un{P}^{\ga}$ and $\widetilde{P}^{\ga}$ in $I_{\ga, \fa}^{(5)}$ by $\un{\SK}^{\ga}$ and $\mywidetilde{\SK}^{\ga}$, we get an error of order $\emezo^{1 + \eta}$. Then Lemma~\ref{lem:Y-approximation} yields $|I_{\ga, \fa}^{(4)}(t,x) - I_{\ga, \fa}^{(5)}(t,x)| \lesssim \ga^{3 \eta}$ uniformly in $x$ and locally uniformly in $t$. To bound $I_{\ga, \fa}^{(5)}$ we write 
\begin{align*}
I_{\ga, \fa}^{(5)}(t,x) &= - \frac{2 \eps^6}{\alpha} \sum_{y \in \Le} \int_{\tau_{\ga, \fa}}^t \un{P}^{\ga}_{t - s}(x-y) \widetilde{P}^{\ga}_{t - s}(x-y) I_{\ga, \fa}(s, y) \,\d s \\
&\qquad + \frac{2 \eps^6}{\alpha} \sum_{y \in \Le} \int_{\tau_{\ga, \fa}}^t \un{P}^{\ga}_{t - s}(x-y) \widetilde{P}^{\ga}_{t - s}(x-y) \un{\fC}_\ga (s-\tau_{\ga, \fa}) \,\d s,
\end{align*}
and we denote these two terms by $I_{\ga, \fa}^{(6)}(t,x)$ and $I_{\ga, \fa}^{(7)}(t,x)$. Since $|\un{\fC}_\ga (s)| \lesssim \emezo^{-1}$ (what follows from Lemma~\ref{lem:renorm-constant-underline}), we get $|I_{\ga, \fa}^{(7)}(t,x)| \lesssim 1$.  Furthermore, we have $|I_{\ga, \fa}^{(6)}(t,x)| \lesssim \emezo \sup_{s \in [\tau_{\ga, \fa}, t]}\| I_{\ga, \fa}(s)\|_{L^\infty}$. 

Combining all the previous bounds, we get
\begin{align*}
\E \biggl[ \sup_{t \in [\tau_{\ga, \fa}, T]} (t - \tau_{\ga, \fa})^{-\frac{\eta p}{2}} \bigl\| I_{\ga, \fa}(t) \bigr\|_{L^\infty}^p\biggr] &\lesssim \fa^{2 p} \emezo^{\eta p} + \fa^{p} \un{\emezo}^{\eta p} + \ga^{-2 \un \kappa p} + \ga^{3 \eta p}\\
&\qquad + \emezo^p \E \biggl[ \sup_{t \in [\tau_{\ga, \fa}, T]} (t - \tau_{\ga, \fa})^{-\frac{\eta p}{2}} \bigl\| I_{\ga, \fa}(t) \bigr\|_{L^\infty}^p\biggr].
\end{align*}
Taking $\emezo$ small enough, we get the required bound \eqref{eq:X2-prime-bound}.
\end{proof}

\section{Moment bounds for the discrete models}
\label{sec:convergence-of-models}

Let $Z^{\ga, \fa}_{\lift}$ be the discrete model defined in Section \ref{sec:discreteRegStruct}. In this section, we prove that this model is bounded uniformly in $\ga$. Moreover, we introduce a new discrete model $Z^{\ga, \de, \fa}_{\lift}$, defined as $Z^{\ga, \fa}_{\lift}$ but via mollified martingales. Then we show that the distance between these two models vanishes as $\de \to 0$, uniformly in $\ga$.

Let $\varrho : \R^4 \to \R$ be a symmetric smooth function, supported on the ball of radius $1$ (with respect to the parabolic distance $\| \bigcdot \|_\s$) and satisfying $\int_{\R^4} \varrho(z) \d z = 1$. For any $\de \in (0,1)$ we define its rescaling
\begin{equation}\label{eq:rho}
\varrho_\de(t, x) := \frac{1}{\de^{5}} \varrho \Bigl( \frac{t}{\de^2}, \frac{x}{\de}\Bigr). 
\end{equation}
We need to modify this function in a way that its integral over $D_\eps$ becomes $1$. For this, we approximate the function by its local averages as
\begin{equation}\label{eq:rho-gamma}
\varrho_{\ga, \de}(t, x) := \eps^{-3} \int_{y \in \R^3: |y - x|_\infty \leq \eps/2} \varrho_\de(t, y) \d y,
\end{equation}
which satisfies $\int_{D_\eps} \varrho_{\ga, \de}(z) \d z = 1$. We regularise the martingales in the following way:
\begin{equation}\label{eq:xi-gamma-delta}
	\xi_{\ga, \de, \fa}(t,x) := \frac{1}{\sqrt 2} \eps^{3} \sum_{y \in \Lattice} \int_{\R} \varrho_{\ga, \de} (t - s,x - y)\, \d \M_{\ga, \fa}(s, y).
\end{equation}
Then the process $\xi_{\ga, \de, \fa}(t,x)$ is defined on $(t,x) \in \R \times \Le$, but it is not a martingale anymore. On the other hand, a convolution with this process can be interpreted as a stochastic integral. For example, a convolution with the kernel $\mywidetilde{\SK}^\ga$ may be written as 
\begin{equation*}
\bigl(\mywidetilde{\SK}^\ga \star_\eps \xi_{\ga, \de, \fa}\bigr)(t,x) = \frac{1}{\sqrt 2} \eps^3 \sum_{y \in \Lattice} \int_{\R} \mywidetilde{\SK}^{\ga, \de}_{t-s}(x - y)\, \d \M_{\ga, \fa}(s, y),
\end{equation*}
where $\star_\eps$ is the convolution on $D_\eps$ and $\mywidetilde{\SK}^{\ga, \de} := \mywidetilde{\SK}^{\ga} \star_\eps \varrho_{\ga, \de}$. Then we can easily compare the two kernels as 
\begin{equation*}
\bigl(\mywidetilde{\SK}^{\ga, \de} - \mywidetilde{\SK}^{\ga}\bigr)(z) = \int_{D_\eps} \mywidetilde{\SK}^{\ga}(z - \bar z)\bigl(\varrho_{\ga, \de}(\bar z) - 1\bigr) \d \bar z,
\end{equation*}
which is the main reason to mollify the noise using the function \eqref{eq:rho}.

Using $\xi_{\ga, \de, \fa}$, we make the following definitions 
\begin{equation*}
\bigl(\PPi^{\ga, \de, \fa} \blueXi\bigr)(z) = \xi_{\ga, \de, \fa}(z), \qquad \bigl(\PPi^{\ga, \de, \fa} \<1b>\bigr)(z) = \bigl(\mywidetilde{\SK}^\ga \star_\eps \xi_{\ga, \de, \fa}\bigr)(z).
\end{equation*}
After that we define the linear map $\PPi^{\ga, \de, \fa}$ on $\CT$ by the same recursive definitions as in Section~\ref{sec:PPi}, but using the following renormalisation constants in place of \eqref{eq:renorm-constant1}, \eqref{eq:renorm-constant3} and \eqref{eq:renorm-constant2} respectively:
\begin{equation}\label{eq:convolved_constants}
\begin{aligned}
\fc_{\ga, \de} &:= \int_{D_\eps} \mywidetilde{\SK}^{\ga, \de}(z)^2\, \d z, \\
\fc_{\ga, \de}' &:= -\be \varkappa_{\ga, 3} \ga^6 \un{\fC}_\ga \fc_{\ga, \de}, \\
\fc_{\ga, \de}'' &:= 2 \int_{D_\eps} \int_{D_\eps} \int_{D_\eps} \mywidetilde{\SK}^\ga(z) \mywidetilde{\SK}^{\ga, \de}(z_1) \mywidetilde{\SK}^{\ga, \de}(z_2) \mywidetilde{\SK}^{\ga, \de}(z_1 - z) \mywidetilde{\SK}^{\ga, \de}(z_2 - z)\, \d z\, \d z_1\, \d z_2.
\end{aligned}
\end{equation}
As we did in Section~\ref{sec:model-lift}, we define a discrete model $Z^{\ga, \de, \fa}_{\lift} = (\Pi^{\ga, \de, \fa}, \Gamma^{\ga, \de, \fa})$ from the map $\PPi^{\ga, \de, \fa}$. In the following proposition we provide moment bounds for this model.

\begin{proposition}\label{prop:models-converge}
Let the constants $\kappa$ and $\un \kappa$, used in \eqref{eqs:hom} and \eqref{eq:tau-2}, satisfy $\kappa \geq \un{\kappa}$. Then for the discrete models $Z^{\ga, \fa}_{\lift}$ and $Z^{\ga, \de, \fa}_{\lift}$, there exist $\ga_0 > 0$ and $\theta > 0$ for which the following holds: for any $p \geq 1$ and $T > 0$ there is $C > 0$ such that
\begin{equation}\label{eq:prop:models-converge}
\sup_{\ga \in (0, \ga_0)} \E \Bigl[\bigl( \$ Z^{\ga, \fa}_{\lift}\$_{T}^{(\emezo)}\bigr)^p\Bigr] \leq C, \qquad \sup_{\ga \in (0, \ga_0)} \E \Bigl[\bigl( \$ Z^{\ga, \fa}_{\lift}; Z^{\ga, \de, \fa}_{\lift}\$_T^{(\emezo)}\bigr)^p\Bigr] \leq C \de^{\theta p},
\end{equation}
for any $\de \in (0,1)$. Here, we use the metrics for the discrete models, defined in Remark~\ref{rem:model-no-K}. 
\end{proposition}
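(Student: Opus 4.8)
\textbf{Proof strategy for Proposition~\ref{prop:models-converge}.} The plan is to reduce the uniform bounds \eqref{eq:prop:models-converge} to moment estimates on the elementary building blocks $\bigl(\Pi^{\ga, \fa}_z \tau\bigr)(\bar z)$ and their increments, diagram by diagram, using the fact that each $\Pi^{\ga, \fa}_z \tau$ is (by \eqref{eq:lift-hermite}, \eqref{eq:Pi-E}, \eqref{eq:Pi-I}, \eqref{eq:Pi-rest}) an iterated stochastic integral against the jump martingale $\M_{\ga, \fa}$, so that the systematic theory of such integrals from the companion article \cite{Martingales} applies once Assumption~1 of \cite{Martingales} has been verified --- which was done in Section~\ref{sec:martingales-properties}. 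Concretely, I would first invoke equivalence of moments (hypercontractivity-type bounds for fixed chaos order, as in \cite{Martingales}) to reduce \eqref{eq:prop:models-converge} to the case $p = 2$, i.e.\ to second-moment bounds of the form $\E\bigl[ |(\iota_\eps \Pi^{\ga,\fa}_z \tau)(\varphi^\lambda_z)|^2 \bigr] \lesssim \lambda^{2|\tau|}$ (with the modification $\lambda^{2|\tau| + 2/3}\ga^{-2}$ for $\tau = \<5b>$ coming from \eqref{eq:Pi-bounds-bad}), the analogous small-scale bounds with $\emezo$ in place of $\lambda$, and the corresponding increment bounds controlling $\Gamma^{\ga,\fa}$. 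The key point that makes the diagrams tractable is that the renormalisation constants $\fc_\ga, \fc'_\ga, \fc''_\ga$ have been chosen (Section~\ref{sec:lift}) so that the relevant contractions are subtracted off; by Lemma~\ref{lem:renorm-constants} and Lemma~\ref{lem:renorm-constant-underline} their divergences are exactly $\frac12\fc^{(2)}_\ga \sim \emezo^{-1}$, $\frac14\fc^{(1)}_\ga\sim\log\emezo$, and $|\fc'_\ga|, |\un\fC_\ga|\lesssim\emezo^{-1}$ (hence $\fc'_\ga = O(1)$), matching the continuum renormalisation up to convergent errors.

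\textbf{Diagram-by-diagram analysis.} I would treat the trees in the order of increasing complexity: the noise-type object $\<1b>$ is a single stochastic integral whose bracket is controlled via \eqref{eq:M-gamma-bracket}--\eqref{eq:bC} and the heat-kernel bounds of Appendix~\ref{sec:kernels}; the Wick powers $\<2b>, \<3b>, \<4b>, \<5b>$ are handled by the Hermite-polynomial structure \eqref{eq:lift-hermite}, which plays the role of chaos decomposition --- here the crucial input from \cite{Martingales} is that products of the jump martingale renormalised by Hermite polynomials obey the same moment bounds as Gaussian Wick products up to lower-order corrections, and for $\<5b>$ the fifth-chaos kernel is non-integrable in $3D$ so only the weaker bound \eqref{eq:Pi-bounds-bad} (with the $\ga^{-1}\lambda^{1/3}$ loss, absorbed later by the $\ga^6$ multiplier) holds; the integrated trees $\<20b>, \<30b>, \<22b>, \<31b>, \<32b>, \<40eb>, \<50eb>$ are then obtained by convolving with $\mywidetilde{\SK}^\ga$ and using the Schauder-type bounds for the discrete kernel, with $\CE(\tau)$ producing a harmless factor $\ga^6 \approx \emezo^2$. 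The small-scale ($\lambda < \emezo$) bounds follow because below scale $\emezo$ the lattice functions are essentially constant, so one only needs the pointwise moment bounds $\E\bigl[ |(\Pi^{\ga,\fa}_z\tau)(\bar z)|^2\bigr] \lesssim \emezo^{2|\tau|}$, again from \cite{Martingales}. The $\Gamma^{\ga,\fa}$ bounds \eqref{eq:Gamma-bounds} reduce, via the explicit form \eqref{eq:Gamma-lift} and the triangular structure of Table~\ref{tab:linear_transformations}, to increment estimates on $\Pi^{\ga,\fa}_z \X_i$, $\Pi^{\ga,\fa}_z \<20b>$, $\Pi^{\ga,\fa}_z \<30b>$, which are of the same type.

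\textbf{The mollification estimate.} For the second bound in \eqref{eq:prop:models-converge}, I would exploit the representation $\mywidetilde{\SK}^{\ga,\de} = \mywidetilde{\SK}^\ga \star_\eps \varrho_{\ga,\de}$ together with the key identity $\bigl(\mywidetilde{\SK}^{\ga,\de} - \mywidetilde{\SK}^\ga\bigr)(z) = \int_{D_\eps} \mywidetilde{\SK}^\ga(z - \bar z)\bigl(\varrho_{\ga,\de}(\bar z) - 1\bigr)\,\d\bar z$, which (because $\varrho_{\ga,\de}$ integrates to one and is supported in a ball of radius $\lesssim\de$) gives a gain of a positive power of $\de$ at the cost of worsening the singularity by a controlled amount; likewise $|\fc_{\ga,\de} - \fc_\ga|$, $|\fc'_{\ga,\de} - \fc'_\ga|$, $|\fc''_{\ga,\de} - \fc''_\ga| \lesssim \de^\theta$ uniformly in $\ga$ by the same argument applied to \eqref{eq:convolved_constants}. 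Feeding these into the diagram bounds via multilinearity of the stochastic integrals (one difference at a time, the remaining factors bounded by the first part) yields the factor $\de^{\theta p}$ after possibly shrinking $\theta$. The main obstacle, and the part requiring genuine work rather than routine bookkeeping, is the fifth-chaos object $\Pi^{\ga,\fa}_z\<5b>$ and more generally the trees $\<40eb>, \<50eb>$ built on it: because the limiting fifth-order kernel is not integrable one cannot mimic the continuum bound, and one must instead carefully track the $\emezo$-dependence through the discrete sum using the hypothesis $\kappa \geq \un\kappa$ and the a priori bounds from the stopping times $\tau^{(1)}_{\ga,\fa}$ and $\tau^{(2)}_{\ga,\fa}$ (Section~\ref{sec:a-priori}) to show that the divergence is no worse than $\ga^{-1}\lambda^{|\tau| + 1/3}$ --- the interplay of the two scales $\eps \approx \ga^4$ and $\emezo \approx \ga^3$ is what makes this delicate, and it is precisely where the machinery of \cite{Martingales} does most of the heavy lifting.
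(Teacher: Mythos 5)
Your overall architecture (diagram-by-diagram bounds via the companion article, $\Gamma$ reduced to $\Pi$ on $\X_i,\<20b>,\<30b>$, the $\CE$-trees absorbed by $\ga^6\approx\emezo^2$, mollification handled through $\mywidetilde{\SK}^{\ga,\de}-\mywidetilde{\SK}^\ga$ and the constants $\fc_{\ga,\de},\fc'_{\ga,\de},\fc''_{\ga,\de}$) matches the paper, but two steps as you set them up would fail. First, the reduction to $p=2$ by ``equivalence of moments / hypercontractivity for fixed chaos order'' is not available here: the driving noise is a pure-jump martingale with jumps of size $2\ga^{-3}$, there is no Wiener chaos and no Nelson-type estimate, and the whole point of \cite{Martingales} is to prove bounds for arbitrarily \emph{high} moments directly (the correction terms $\eps^{9/4-\bar\kappa}\emezo^{-5/2}$, etc., in Corollary~5.6 there are exactly the non-Gaussian price). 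The paper proves \eqref{eq:model_bound}--\eqref{eq:model_bound-delta} for all sufficiently large $p$ and gets small $p$ by H\"older; large $p$ is then indispensable for the Kolmogorov-type step (as in \cite[Lem.~10.2]{Regularity} together with the continuity criterion for the two-point bounds on $\<20b>,\<30b>$) that converts fixed-$(z,\lambda,\varphi)$ moment bounds into moments of the suprema defining $\$Z^{\ga,\fa}_{\lift}\$^{(\emezo)}_T$ --- a step your proposal omits entirely and which cannot be run with second moments only.

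Second, you locate the genuinely non-Gaussian difficulty in the wrong place. The fifth-chaos tree $\<5b>$ is in fact handled routinely: one trades a factor $\gamma\approx\emezo^{1/3}$ against the singularity of one kernel (label $3\to 8/3$) so that the assumptions of \cite[Cor.~5.6]{Martingales} hold, and the loss is later absorbed by the $\ga^6$ in \eqref{eq:Pi-E}; no stopping times are needed there. The hard part is the renormalisation of the predictable-bracket contractions in $\<2b>$, $\<3b>$, $\<22b>$, $\<32b>$: the compensator \eqref{eq:bC} is a \emph{random adapted} process containing $S_\ga X_\ga$, so subtracting $\fc_\ga+\fc'_\ga$ (note $\fc'_\ga$ has no continuum counterpart --- it renormalises precisely this bracket, it is not a Gaussian-type contraction) leaves a random error that must be controlled by replacing $S_\ga$ with its local average $\un{X}_\ga$ (Lemma~\ref{lem:Y-approximation}), by the a priori bound on $\un{X}_\ga X_\ga-\un{\fC}_\ga$ coming from the stopping time $\tau^{(2)}_{\ga,\fa}$ --- this is exactly where the hypothesis $\kappa\geq\un\kappa$ enters, in the estimate leading to $(\lambda\vee\emezo)^{-1-\un\kappa}\emezo^{\un\kappa/2}$ --- and, for times after $\tau_{\ga,\fa}$, by Lemmas~\ref{lem:X-prime-bound}, \ref{lem:Z-prime-bound} and \ref{lem:X2-under-prime} together with the time-dependent constant $\un{\fC}_\ga(t)$; an analogous argument with the kernel $\CG_\ga$ (Lemma~\ref{lem:CG-bounds}) is needed for the $\fc''_\ga$-renormalised trees. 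Without this mechanism your claim that ``the relevant contractions are subtracted off'' does not close the argument, since the subtraction only matches the \emph{deterministic} part of the bracket.
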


We prove this proposition in Section~\ref{sec:convergence-of-models-proof}. For this, we use the framework developed in \cite{Martingales}, which provides moment bounds on multiple stochastic integrals with respect to a quiet general class of martingales. We showed in Section~\ref{sec:martingales-properties} that the martingales $\M_{\ga, \fa}$, introduced in Section~\ref{sec:a-priori}, have the required properties.

\subsection{Bounds on the discrete model} 
\label{sec:Models_bounds}

The basis elements of the regularity structure are listed in Tables~\ref{tab:symbols-cont} and \ref{tab:symbols}, and in this section we are going to prove bounds only on the map $\Pi^{\ga, \fa}$ from the discrete model $Z^{\ga, \fa}_{\lift}$ on the basis elements with negative homogeneities, which do not contain the symbols $\CE$ and $\X_i$. More precisely, we consider the set
\begin{equation*}
\bar{\CW} = \bigl\{\<1b>, \<2b>, \<3b>,  \<22b>, \<31b>, \<32b>, \<4b>, \<5b>\bigr\},
\end{equation*}
and prove the following bounds for its elements. We use in the statement of this proposition and in its proof the notation from Section~\ref{sec:DiscreteModels}.

\begin{proposition}\label{prop:Pi-bounds}
Let the constants $\kappa$ and $\un \kappa$, used in \eqref{eqs:hom} and \eqref{eq:tau-2}, satisfy $\kappa \geq \un{\kappa}$. Then there are constants $\bar \kappa > 0$, $\ga_0 > 0$ and $\theta > 0$, such that for any $\tau \in \bar{\CW}$, $p \geq 1$ and $T > 0$ there is $C > 0$ for which we have the bounds
	\begin{align}
		\Bigl(\E \bigl| \iota_\eps \bigl(\Pi^{\ga, \fa}_z \tau\bigr)(\varphi^\lambda_z)\bigr|^p\Bigr)^{\frac{1}{p}} &\leq C (\lambda \vee \emezo)^{|\tau| + \bar \kappa}, \label{eq:model_bound} \\
		\Bigl(\E \bigl| \iota_\eps \bigl(\Pi^{\ga, \fa}_z \tau - \Pi^{\ga, \de, \fa}_z \tau\bigr) (\varphi^\lambda_z) \bigr|^p\Bigr)^{\frac{1}{p}} &\leq C \de^{\theta} (\lambda \vee \emezo)^{|\tau| + \bar \kappa - \theta},\label{eq:model_bound-delta}
	\end{align}
	uniformly in $z \in D_\eps$, $\lambda \in (0,1]$, $\varphi \in \CB^2_\s$ and $\ga \in (0, \ga_0)$. 
\end{proposition}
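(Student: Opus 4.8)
The plan is to prove the bounds \eqref{eq:model_bound} and \eqref{eq:model_bound-delta} element by element over $\bar{\CW}$, in order of increasing tree complexity, reducing everything to moment estimates on multiple stochastic integrals against the martingales $\M_{\ga,\fa}$ supplied by the companion paper \cite{Martingales}. The first step is to recall from Section~\ref{sec:martingales-properties} that the family $\bigl(\M_{\ga,\fa}(\bigcdot,x)\bigr)_{x\in\Le}$ satisfies Assumption~1 of \cite{Martingales}, with constant $\kone$ any value $>\frac34$, so that the black-box moment bounds of \cite{Martingales} apply to all iterated integrals appearing in the explicit formulas \eqref{eq:lift-hermite}, \eqref{eq:Pi-E}, \eqref{eq:Pi-I}, \eqref{eq:Pi-rest}. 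The strategy throughout is the usual one: expand $\iota_\eps(\Pi^{\ga,\fa}_z\tau)(\varphi^\lambda_z)$ as a sum of multiple stochastic integrals (``chaos components''), and for each component apply the $L^p$-bound of \cite{Martingales} together with power-counting on the kernels $\mywidetilde{\SK}^\ga$ — using the Schauder-type and convolution estimates from Appendix~\ref{sec:kernels} and \cite{erhard2017discretisation}. Because the discrete kernel lives on two scales, the power counting will produce $(\lambda\vee\emezo)^{|\tau|+\bar\kappa}$ rather than $\lambda^{|\tau|+\bar\kappa}$; the extra $\bar\kappa$ comes from the strict inequality $\kappa<\frac1{14}$ and the room in the homogeneities of Table~\ref{tab:symbols}.

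The element $\<1b>$ is a single stochastic integral: $\bigl(\Pi^{\ga,\fa}_z\<1b>\bigr)(\bar z)=\tfrac1{\sqrt2}\eps^3\sum_y\int\mywidetilde{\SK}^\ga_{\bar t-s}(\bar x-y)\,\d\M_{\ga,\fa}(s,y)$, and the bound follows directly from the BDG inequality \cite[Prop.~A.2]{Martingales}, the bound $|\bC_{\ga,\fa}|\le2$ on the bracket density \eqref{eq:M-gamma-bracket}, and the $L^2$-norm estimate on $\mywidetilde{\SK}^\ga$ from Appendix~\ref{sec:kernels}. For $\<2b>,\<3b>,\<4b>,\<5b>$ one uses the Hermite structure \eqref{eq:lift-hermite}: the Wick/Hermite renormalisation means that $\bigl(\Pi^{\ga,\fa}_z\<1b>^n\bigr)(\bar z)=H_n\bigl((\Pi^{\ga,\fa}_z\<1b>)(\bar z),\fc_\ga\bigr)$ is, up to the bounded correction $\fc_\ga'$ for $\<2b>$, exactly an $n$-fold iterated integral of $\mywidetilde{\SK}^\ga$, so the moment bound of \cite{Martingales} for multiple integrals applies and the leading divergence $\fc_\ga^n$ is subtracted. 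The element $\<5b>$ is special and we only prove the weaker bound \eqref{eq:Pi-bounds-bad}: its fifth-chaos kernel is not square-integrable in three dimensions (Section~\ref{sec:fifth-chaos}), so we estimate by brute force using the $L^\infty$-bound $|(\Pi^{\ga,\fa}_z\<1b>)|\lesssim\emezo^{-1/2-\kappa}$ together with $|\fc_\ga|\lesssim\emezo^{-1}$, which yields the stated $\gamma^{-1}\lambda^{|\<5b>|+1/3}$. For the ``diagonal'' trees $\<31b>,\<22b>,\<32b>$ one inserts the formulas \eqref{eq:Pi-rest}, expands the integrated kernel $\int(\mywidetilde{\SK}^\ga(\bar z-\tilde z)-\mywidetilde{\SK}^\ga(z-\tilde z))(\Pi^{\ga,\fa}_z\<3b>)(\tilde z)\,\d\tilde z$ into chaos components, and applies the moment bounds of \cite{Martingales}; the constant $\fc_\ga''$ removes the logarithmic subdivergence identified in Lemma~\ref{lem:renorm-constants}, and its boundedness after renormalisation is exactly what makes \eqref{eq:model_bound} hold with a positive power of $(\lambda\vee\emezo)$.

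The two subtle points are: (i) the bracket process of $\M_{\ga,\fa}$ is \emph{random}, not deterministic, so the ``Gaussian'' Wick structure is only approximate — controlling the error requires comparing $\langle\M_{\ga,\fa}\rangle$ with the deterministic $2\varkappa_{\ga,2}t$ and absorbing the difference; this is where the a priori bounds from the stopping times \eqref{eq:tau-1}, \eqref{eq:tau-2} enter, together with Lemmas~\ref{lem:Z-bound}, \ref{lem:Y-approximation}, \ref{lem:X2-under-prime}, and it is the reason one works with $X_{\ga,\fa}$ rather than $X_\ga$; (ii) the renormalisation constant $\fc_\ga'$ in \eqref{eq:renorm-constant3}, which has no counterpart in the Gaussian theory, must be shown to exactly cancel the contribution of the non-linear part of \eqref{eq:M-bracket} to $\bigl(\Pi^{\ga,\fa}_z\<2b>\bigr)$ — this needs the replacement of $S_\ga$ by its local average $\un X_\ga$ via Lemma~\ref{lem:Y-approximation} and the identification of the resulting constant with $\un\fC_\ga\fc_\ga$. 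For \eqref{eq:model_bound-delta} one repeats the same decompositions for the mollified model $Z^{\ga,\de,\fa}_{\lift}$ and estimates every chaos component of the difference using $\mywidetilde{\SK}^{\ga,\de}-\mywidetilde{\SK}^\ga=\int\mywidetilde{\SK}^\ga(\cdot-\bar z)(\varrho_{\ga,\de}(\bar z)-1)\,\d\bar z$, which is the whole point of mollifying with $\varrho_\de$ so that $\int\varrho_{\ga,\de}=1$: this gives an extra factor $\de^\theta$ at the cost of a loss $(\lambda\vee\emezo)^{-\theta}$ in the scaling exponent, exactly as claimed. The main obstacle is point (i): making the approximate-Wick argument quantitative for the higher trees $\<22b>,\<32b>,\<4b>,\<5b>$, where several stochastic integrals are nested and the random brackets interact, and organising the power counting so that the subtracted constants $\fc_\ga,\fc_\ga',\fc_\ga''$ leave a genuine positive power $\bar\kappa$ of $(\lambda\vee\emezo)$.
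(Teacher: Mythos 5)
Your overall route is the paper's route: expand $\iota_\eps(\Pi^{\ga,\fa}_z\tau)(\varphi^\lambda_z)$ into multiple stochastic integrals against the product measures of $\M_{\ga,\fa}$, bound each component by the graph estimates of \cite{Martingales} (their Cor.~5.6, which already encodes the BDG/jump bookkeeping you invoke by hand for $\<1b>$), treat the contraction terms coming from the random bracket with the stopping-time a priori bounds and Lemmas~\ref{lem:Y-approximation}, \ref{lem:X2-under-prime}, and obtain \eqref{eq:model_bound-delta} by replacing one kernel incident to a noise vertex with $\mywidetilde{\SK}^\ga-\mywidetilde{\SK}^{\ga,\de}$, which obeys the kernel assumption with exponent $3+\theta$ at the price of $\de^\theta$. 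So the skeleton and the list of auxiliary lemmas are correct. But two points keep this from being a proof.

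First, your treatment of $\<5b>$ would fail. A pointwise (in $L^p$) bound $|\Pi^{\ga,\fa}_z\<1b>|\lesssim\emezo^{-1/2-\kappa}$ together with $|\fc_\ga|\lesssim\emezo^{-1}$ only gives $|H_5|\lesssim\emezo^{-5/2-}$, and testing a quantity of that size against $\varphi^\lambda_z$ produces no decay in $\lambda$ at all: for $\lambda\gg\emezo$ this is far weaker than the claimed $\gamma^{-1}(\lambda\vee\emezo)^{|\tau|+1/3}$, which is what Definition~\ref{def:model} and the later fixed-point argument actually require. The paper instead spends the prefactor $\gamma\approx\emezo^{1/3}$ to soften the singularity of one kernel from exponent $3$ to $8/3$, so that the fifth-chaos graph satisfies the integrability assumption of \cite{Martingales}, and then applies Cor.~5.6 to get the genuine $(\lambda\vee\emezo)^{-13/6}$ decay; some version of this cancellation-exploiting argument is unavoidable. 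Second, the part you defer as ``the main obstacle'' is in fact the bulk of the proof, and your sketch is missing concrete ingredients it needs: for a jump martingale with random bracket the Hermite identity does \emph{not} turn $\Pi^{\ga,\fa}_z\<1b>^n$ into an exact multiple integral, and the contractions must be split into the martingale part $[\M_{\ga,\fa}]-\langle\M_{\ga,\fa}\rangle$ (handled by softening kernel exponents and Cor.~5.6) and the predictable part, where after subtracting $\fc_\ga+\fc_\ga'$ one controls $\bC_{\ga,\fa}-2+2\be\varkappa_{\ga,3}\ga^6\un{\fC}_\ga$ using the stopping time \eqref{eq:tau-2}, Lemma~\ref{lem:Y-approximation}, and for times beyond $\tau_{\ga,\fa}$ Lemmas~\ref{lem:X2-under-prime} and \ref{lem:renorm-function-underline}; triple contractions (cubes of jumps) in $\<3b>$ additionally require the decomposition \eqref{eq:mrt_decomp} and the spin-field bounds of Lemmas~\ref{lem:Z-bound} and \ref{lem:Z-prime-bound}, and the logarithmic subdivergence behind $\fc_\ga''$ in $\<22b>$, $\<32b>$ requires the kernel estimate of Lemma~\ref{lem:CG-bounds}. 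You correctly name several of these tools, so the diagnosis is sound, but as written the proposal stops exactly where the quantitative work begins.
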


\noindent The rest of this section is devoted to the proof of this result. We are going to prove the bounds \eqref{eq:model_bound}-\eqref{eq:model_bound-delta} for any $p$ sufficiently large, and the bound for any $p \geq 1$ follow then by H\"{o}lder inequality. 

For every symbol $\tau \in \bar{\CW}$, we use the definition of the discrete model in Section~\ref{sec:model-lift} and the expansion \cite[Eq.~2.14]{Martingales} to write $\iota_\eps \bigl(\Pi^{\ga, \fa}_{z}\tau\bigr)(\phi_z^\lambda)$ as a sum of terms of the form
\begin{align}\label{eq:terms-in-Pi}
	&\int_{D_\eps} \phi_z^\lambda(\bar z) \biggl(\int_{D_\eps^{ n}} F_{\bar z} (z_1, \ldots, z_n) \, \d \bM^{ n}_{\ga, \fa} (z_1, \ldots, z_n)\biggr) \d \bar z \\
	&\hspace{3cm} = \int_{D_\eps^{ n}} \biggl( \int_{D_\eps} \phi_z^\lambda(\bar z) F_{\bar z} (z_1, \ldots, z_n) \, \d \bar z \biggr) \d \bM^{ n}_{\ga, \fa} (z_1, \ldots, z_n), \nonumber
\end{align}
where the measure $\bM^{ n}_{\ga, \fa}$ is defined in Section~2.1 in \cite{Martingales} for the martingales $\M_{\ga, \fa}$, and a function $F$ of $n$ space-time variables. Similarly, we write $\iota_\eps(\Pi_{z}^{\ga, \de, \fa}\tau)(\phi_z^\lambda)$ as a sum of terms of the form
\begin{align}\label{eq:terms-in-Pi-de}
	&\int_{D_\eps} \phi_z^\lambda(\bar z) \biggl(\int_{D_\eps^{ n}} F_{\bar z} (z_1, \ldots, z_n) \, \d \bM^{ n}_{\ga, \fa, (\de)} (z_1, \ldots, z_n)\biggr) \d \bar z \\ 
	&\hspace{3cm} = \int_{D_\eps^{ n}} \biggl( \int_{D_\eps} \phi_z^\lambda(\bar z) \big( F_{\bar z} \star_\eps \rho_{\ga, \de} \big) (z_1, \ldots, z_n) \, \d \bar z \biggr) \d \bM^{ n}_{\ga, \fa} (z_1, \ldots, z_n), \nonumber
\end{align}
where $\bM^{ n}_{\ga, \fa, (\de)} (z_1, \ldots, z_n)$ stays for the product measure associated to the regularised martingales $\xi_{\ga, \de, \fa}$, defined in \eqref{eq:xi-gamma-delta}.  The functions $F$ will be typically defined in terms of the singular part $\mywidetilde{\SK}^\ga$ of the decomposition $\widetilde{G}^\ga = \mywidetilde{\SK}^\ga + \mywidetilde{\SR}^\ga$ done in Appendix~\ref{sec:decompositions}, or in terms of the function $\mywidetilde{\SK}^{\ga, \de} := \varrho_{\ga, \de} \star_\eps \mywidetilde{\SK}^\ga$ where $\varrho_{\ga, \de}$ is the mollifier from \eqref{eq:xi-gamma-delta}.

To bound the terms \eqref{eq:terms-in-Pi} and their difference with those in \eqref{eq:terms-in-Pi-de}, we are going to use Corollary~5.6 in \cite{Martingales}. For this, it is convenient to use graphical notation to represent the function $F$ and integrals, where nodes represent variables and arrows represent kernels. In what follows, the vertex ``\,\tikz[baseline=-3] \node [root] {};\,'' labelled with $z$ represents the basis point $z \in D_\eps$; the arrow ``\,\tikz[baseline=-0.1cm] \draw[testfcn] (1,0) to (0,0);\,'' represents a test function $\phi^\lambda_z$; the arrow ``\,\tikz[baseline=-0.1cm] \draw[keps] (0,0) to (1,0);\,'' represents the discrete kernel $\mywidetilde{\SK}^\ga$, and we will write two labels $(a_e, r_e)$ on this arrow, which correspond to the labels on graphs as described in \cite[Sec.~5]{Martingales}. More precisely, since the kernel $\mywidetilde{\SK}^\ga$ satisfies \cite[Assum.~4]{Martingales} with $a_e=3$ (see Lemma~\ref{lem:Pgt}), we draw ``\,\tikz[baseline=-0.1cm] \draw[keps] (0,0) to node[labl,pos=0.45] {\tiny 3,0} (1,0);\,''. 

Each variable $z_i$, integrated with respect to the measure $\bM^{ n}_{\ga, \fa}$ with $n \geq 2$ is denoted by a node ``\,\tikz[baseline=-3] \node [var_very_blue] {};\,''; the variable integrated with respect to the martingale $\M_{\ga, \fa}$ we denote by ``\,\tikz[baseline=-3] \node [var_blue] {};\,''. By the node ``\,\tikz[baseline=-3] \node [dot] {};\,'' we denote a variable integrated out in $D_\eps$.

\subsubsection{The element $\tau = \protect\<1b>$} 

We represent the function $\Pi^{\ga, \fa}_{z}\tau$, defined in \eqref{eq:lift-hermite}, diagrammatically as
\begin{equation*}
\iota_\eps \bigl(\Pi^{\ga, \fa}_{z}\tau\bigr)(\phi_z^\lambda)
\;=\; 
\begin{tikzpicture}[scale=0.35, baseline=0cm]
	\node at (0.9,0.2)  [root] (root) {};
	\node at (0.9,0.2) [rootlab] {$z$};
	\node at (-2.3, 0.2)  [dot] (int) {};
	\node at (-5.5,0.2)  [var_blue] (left) {};	
	\draw[testfcn] (int) to (root);	
	\draw[keps] (left) to node[labl,pos=0.45] {\tiny 3,0} (int);
\end{tikzpicture}\;.
\end{equation*}
This diagram is in the form \eqref{eq:terms-in-Pi} with $n=1$, where in this case, in the inner integral, we have the generalised convolution $\CK^{\lambda, \emezo}_{\CCG, z}$ given by (as in \cite[Eq.~5.13]{Martingales})
\begin{equation*}
\CK^{\lambda, \emezo}_{\CCG, z}(z^{\var}) = \int_{D_\eps} \!\!\phi_z^\lambda(\bar z)\, \mywidetilde{\SK}^\ga(\bar z - z^{\var})\, \d \bar z.
\end{equation*}
One can check that \cite[Assum.~3]{Martingales} is satisfied for this diagram with a trivial contraction and the bound \cite[Eq.~5.16]{Martingales} holds with the sets $\tilde \CCV_{\!\var} = \Gamma = \{ 1 \}$ and labeling $\L = \{ \nil \}$. The set $B$ in this bound has to be $\emptyset$, while $A$ might be either $\{ 1 \}$ or $\emptyset$. From the diagram we see that $| \tilde \CCV_{\!\var}| = 1$ and $|\hat \CCV_{\!\bar\star} \setminus \hat \CCV^\uparrow_{\!\star}| = 1$; therefore, the value of the constant $\nu_\gamma$ in \cite[Eq.~5.15]{Martingales} is $-\frac{1}{2}$. Applying \cite[Cor.~5.6]{Martingales}, we get that, for any $\bar\kappa > 0$ and any $p \geq 2$ large enough:
\begin{equation*}
	\Bigl( \E \bigl| \iota_\eps \bigl(\Pi^{\ga, \fa}_{z}\tau\bigr)(\phi_z^\lambda) \bigr|^p\Bigr)^{\frac{1}{p}} \lesssim (\lambda \vee \emezo)^{- \frac{1}{2}} \Bigl( 1 + \eps^{\frac{9}{4} - \bar\kappa} \emezo^{-\frac52} \Bigr).
\end{equation*}
Since $\emezo \approx \ga^3$ and $\eps \approx \ga^4$, this expression is bounded by a multiple of $(\lambda \vee \emezo)^{- \frac{1}{2}}$ as required in \eqref{eq:model_bound} (recall that $|\tau| = -\frac{1}{2}-\kappa$). 

In what follows, we use the notation and terminology from \cite[Sec.~5]{Martingales} in the same way as we did for this element $\tau$, and we prefer not to make references every time.

\subsubsection{The element $\tau = \protect\<2b>$}
\label{sec:second-symbol}

Using the definition \eqref{eq:lift-hermite} and the expansion \cite[Eq.~2.11]{Martingales}, the function $\Pi^{\ga, \fa}_{z}\tau$ can be represented by the diagrams
\begin{equation}\label{eq:Pi2}
\iota_\eps \bigl(\Pi^{\ga, \fa}_{z}\tau\bigr)(\phi_z^\lambda)
\;=\; 
\begin{tikzpicture}[scale=0.35, baseline=0cm]
	\node at (0,-2.2)  [root] (root) {};
	\node at (0,-2.2) [rootlab] {$z$};
	\node at (0,-2.5) {$$};
	\node at (0,0)  [dot] (int) {};
	\node at (-1.5,2.5)  [var_blue] (left) {};
	\node at (1.5,2.5)  [var_blue] (right) {};
	
	\draw[testfcn] (int) to (root);
	
	\draw[keps] (left) to node[labl,pos=0.45] {\tiny 3,0} (int);
	\draw[keps] (right) to node[labl,pos=0.45] {\tiny 3,0} (int);
\end{tikzpicture}
\; +\;
\begin{tikzpicture}[scale=0.35, baseline=0cm]
	\node at (0,-2.2)  [root] (root) {};
	\node at (0,-2.2) [rootlab] {$z$};
	\node at (0,0)  [dot] (int) {};
	\node at (0,2.5)  [var_very_blue] (left) {};
	
	\draw[testfcn] (int) to (root);
	
	\draw[keps] (left) to[bend left=60] node[labl,pos=0.45] {\tiny 3,0} (int);
	\draw[keps] (left) to[bend left=-60] node[labl,pos=0.45] {\tiny 3,0} (int);
\end{tikzpicture}
\; - \; (\fc_\ga + \fc_\ga' ) \, 
\begin{tikzpicture}[scale=0.35, baseline=-0.5cm]
	\node at (0,-2.2)  [root] (root) {};
	\node at (0,-2.2) [rootlab] {$z$};
	\node at (0,0)  [dot] (int) {};
	
	\draw[testfcn] (int) to (root);
\end{tikzpicture}\;.
\end{equation}
Let us denote by ``\,\tikz[baseline=-3] \node [var_red_triangle] {};\,'' the integration against the family of martingales given by the predictable quadratic variation $x \mapsto \langle \M_{\ga, \fa}(x) \rangle$, and by ``\,\tikz[baseline=-3] \node [var_red_square] {};\,'' the integration in the family of martingales $x \mapsto [\M_{\ga, \fa}(x)] - \langle \M_{\ga, \fa}(x) \rangle$. Then we can write \eqref{eq:Pi2} as
\begin{equation}\label{eq:Pi2-new}
\iota_\eps \bigl(\Pi^{\ga, \fa}_{z}\tau\bigr)(\phi_z^\lambda)
\;=\;
\begin{tikzpicture}[scale=0.35, baseline=0cm]
	\node at (0,-2.2)  [root] (root) {};
	\node at (0,-2.2) [rootlab] {$z$};
	\node at (0,-2.5) {$$};
	\node at (0,0)  [dot] (int) {};
	\node at (-1.5,2.5)  [var_blue] (left) {};
	\node at (1.5,2.5)  [var_blue] (right) {};
	
	\draw[testfcn] (int) to (root);
	
	\draw[keps] (left) to node[labl,pos=0.45] {\tiny 3,0} (int);
	\draw[keps] (right) to node[labl,pos=0.45] {\tiny 3,0} (int);
\end{tikzpicture}
\; +\;
\begin{tikzpicture}[scale=0.35, baseline=0cm]
	\node at (0,-2.2)  [root] (root) {};
	\node at (0,-2.2) [rootlab] {$z$};
	\node at (0,0)  [dot] (int) {};
	\node at (0,2.5)  [var_red_square] (left) {};
	
	\draw[testfcn] (int) to (root);
	
	\draw[keps] (left) to[bend left=60] node[labl,pos=0.45] {\tiny 3,0} (int);
	\draw[keps] (left) to[bend left=-60] node[labl,pos=0.45] {\tiny 3,0} (int);
\end{tikzpicture}
\; +\; \left(
\begin{tikzpicture}[scale=0.35, baseline=0cm]
	\node at (0,-2.2)  [root] (root) {};
	\node at (0,-2.2) [rootlab] {$z$};
	\node at (0,0)  [dot] (int) {};
	\node at (0,2.5)  [var_red_triangle] (left) {};
	
	\draw[testfcn] (int) to (root);
	
	\draw[keps] (left) to[bend left=60] node[labl,pos=0.45] {\tiny 3,0} (int);
	\draw[keps] (left) to[bend left=-60] node[labl,pos=0.45] {\tiny 3,0} (int);
\end{tikzpicture}
\; - \; (\fc_\ga + \fc_\ga' ) \, 
\begin{tikzpicture}[scale=0.35, baseline=-0.5cm]
	\node at (0,-2.2)  [root] (root) {};
	\node at (0,-2.2) [rootlab] {$z$};
	\node at (0,0)  [dot] (int) {};
	
	\draw[testfcn] (int) to (root);
\end{tikzpicture} \right).
\end{equation} 
Let us denote the first two of these diagrams by $\iota_\eps \bigl(\Pi_{z}^{\ga, 1}\tau\bigr)(\phi_z^\lambda)$ and $\iota_\eps \bigl(\Pi_{z}^{\ga, 2}\tau\bigr)(\phi_z^\lambda)$ respectively, and let $\iota_\eps \bigl(\Pi_{z}^{\ga, 3}\tau\bigr)(\phi_z^\lambda)$ denote the expression in the brackets in \eqref{eq:Pi2-new}.

Let us analyse the first diagram in \eqref{eq:Pi2-new}. \cite[Assum.~3]{Martingales} is satisfied for it with a trivial contraction, and the bound \cite[Eq.~5.16]{Martingales} holds with the sets $\tilde \CCV_{\!\var} = \Gamma = \{1, 2\}$ and labeling $\L = \{ \nil, \nil \}$. The sets $B$ in \cite[Eq.~5.16]{Martingales} needs to be $\emptyset$, while $A$ can be $\emptyset$, $\{1\}$, $\{2\}$ or $\{1, 2\}$. Furthermore, we have $| \hat \CCV_{\!\var}| = 2$ and $|\hat \CCV_{\!\bar\star} \setminus \hat \CCV^\uparrow_{\!\star}| = 2$ and the value of the constant $\nu_\gamma$ in \cite[Eq.~5.16]{Martingales} is $-1$. Applying \cite[Cor.~5.6]{Martingales} to this diagram, we get for any $\bar \kappa > 0$ and for any $p \geq 2$ large enough 
\begin{equation*}
 \Bigl(\E \bigl|\iota_\eps \bigl(\Pi_{z}^{\ga, 1}\tau\bigr)(\phi_z^\lambda)\bigr|^p\Bigr)^{\frac{1}{p}} \lesssim (\lambda \vee \emezo)^{-1} \Bigl( 1 + \eps^{\frac94 - \bar \kappa} \emezo^{-\frac52} + \eps^{\frac92 - \bar \kappa} \emezo^{-5} \Bigr).
\end{equation*}
Recalling that $|\tau| = -1-2\kappa$, we get the required bound \eqref{eq:model_bound}.

For the second diagram in \eqref{eq:Pi2-new} we have $\tilde \CCV_{\!\var} = \{1\}$, $\Gamma = \emptyset$ and the labeling $\L = \{ \diamond \}$. However, the graph does not satisfy \cite[Assum.~3]{Martingales}. To resolve this problem, we note that multiplication of a kernel by $\emezo^{3-a}$ with $a > 0$ ``improves'' its regularity by $3-a$, meaning that the singularity of the kernel now diverges like $\emezo^a$ instead of like $\emezo^3$. Then for $0 < a < \frac{5}{2}$ we can write 
\begin{equation}\label{eq:renorm-of-2}
\iota_\eps \bigl(\Pi_{z}^{\ga, 2}\tau\bigr)(\phi_z^\lambda) = \emezo^{2 (a - 3)} 
\begin{tikzpicture}[scale=0.35, baseline=0cm]
	\node at (0,-2.2)  [root] (root) {};
	\node at (0,-2.2) [rootlab] {$z$};
	\node at (0,0)  [dot] (int) {};
	\node at (0,2.5)  [var_red_square] (left) {};
	
	\draw[testfcn] (int) to (root);
	
	\draw[keps] (left) to[bend left=60] node[labl,pos=0.45] {\tiny a,0} (int);
	\draw[keps] (left) to[bend left=-60] node[labl,pos=0.45] {\tiny a,0} (int);
\end{tikzpicture}\;,
\end{equation}
and \cite[Assum.~3]{Martingales} is satisfied. Then for any $\bar \kappa > 0$ and any $p \geq 2$ large enough \cite[Cor.~5.6]{Martingales} yields
\begin{equation*}
	\Bigl(\E \bigl|\iota_\eps \bigl(\Pi_{z}^{\ga, 2}\tau\bigr)(\phi_z^\lambda)\bigr|^p\Bigr)^{\frac{1}{p}} \lesssim \emezo^{2 (a - 3)} (\lambda \vee \emezo)^{\frac{5}{2} - 2 a} \Bigl( \eps^{\frac94} + \eps^{\frac92 - \bar \kappa} \emezo^{-\frac{5}{2}} \Bigr).
\end{equation*}
For $\frac{3}{2} < a \leq \frac{7}{4}$ and $\bar \kappa > 0$ small enough the right-hand side is bounded by $c_\gamma (\lambda \vee \emezo)^{-1}$, where $c_\gamma$ vanishes as $\ga \to 0$.

The term $\iota_\eps \bigl(\Pi_{z}^{\ga, 3}\tau\bigr)(\phi_z^\lambda)$ requires a more complicated analysis. Using the quadratic covariation \eqref{eq:bC} and the definition of the renormalisation constants \eqref{eq:renorm-constant1} and \eqref{eq:renorm-constant3}, we can write
\begin{align}
&\iota_\eps \bigl(\Pi_{z}^{\ga, 3}\tau\bigr)(\phi_z^\lambda) = \frac{1}{2} \int_{D_\eps} \phi^\lambda_z (\bar z) \biggl( \eps^3 \sum_{\tilde{y} \in \Lattice} \int_0^\infty \mywidetilde{\SK}^\ga_{\bar t - \tilde{s}}(\bar x - \tilde{y})^2 \bigl(\bC_{\ga, \fa}(\tilde{s}, \tilde{y}) - 2 + 2 \be \varkappa_{\ga, 3} \ga^6 \un{\fC}_\ga\bigr) \d \tilde{s} \biggr) \d \bar z \nonumber \\
&\quad + \frac{1}{2} \int_{D_\eps} \phi^\lambda_z (\bar z) \biggl( \eps^3 \sum_{\tilde{y} \in \Lattice} \int_{-\infty}^0 \mywidetilde{\SK}^\ga_{\bar t - \tilde{s}}(\bar x - \tilde{y})^2 \bigl(\widetilde{\bC}_{\ga, \fa}(-\tilde{s}, \tilde{y}) - 2 + 2 \be \varkappa_{\ga, 3} \ga^6 \un{\fC}_\ga\bigr) \d \tilde{s} \biggr) \d \bar z \label{eq:Pi-3}
\end{align}
for $\bar z = (\bar t, \bar x)$ and where $\widetilde{\bC}_{\ga, \fa}$ is the bracket process \eqref{eq:bC} for the martingale $\widetilde{\M}_{\ga, \fa}$ used in \eqref{eq:martingale-extension}. This is the definition \eqref{eq:martingale-extension} which requires us to consider the two integrals: for positive and negative times. Since the two terms in \eqref{eq:Pi-3} are bounded in the same way, we will derive below only a bound on the first term.

Using the rescaled process $S_\ga$, defined in \eqref{eq:S-def}, from formula \eqref{eq:bC} we then get
\begin{equation*}
\bC_{\ga, \fa}(\tilde{s}, \tilde{y}) - 2 = 
\begin{cases}
- 2 \varkappa_{\ga, 3} \ga^3 S_\ga (\tilde{s}, \tilde{y}) \tanh \bigl( \be \delta X_\ga(\tilde{s}, \tilde{y}) \bigr) + 2 (1 - \varkappa_{\ga, 3}) &\text{for}~~ \tilde{s} < \tau_{\ga, \fa}, \\
- 2 \varkappa_{\ga, 3} \ga^6 S'_{\ga, \fa} (\tilde{s}, \tilde{y}) X'_{\ga, \fa}(\tilde{s}, \tilde{y}) + 2 (1 - \varkappa_{\ga, 3}) &\text{for}~~ \tilde{s} \geq \tau_{\ga, \fa}.
\end{cases}
\end{equation*}
From \eqref{eq:c-gamma-2} we have $1 - \varkappa_{\ga, 3} = \CO(\ga^{4})$. Moreover, the function $\tanh$ can be approximated by its first-order Taylor polynomial: $\tanh(x) = x + \CO(x^3)$, and \eqref{eq:X-apriori} yields $\| X_\ga(\cdot) \|_{L^\infty} \lesssim \emezo^{\eta}$ almost surely. Hence, $|\tanh \bigl( \be \delta X_\ga(\tilde{s}, \tilde{y}) \bigr) - \be \delta X_\ga(\tilde{s}, \tilde{y})| \lesssim \delta^3 \emezo^{3\eta} \lesssim \ga^{9 (1 + \eta)}$ almost surely uniformly in $\tilde{y}$ and $\tilde{s} < \tau_{\ga, \fa}$. Then the preceding expression equals
\begin{equation}\label{eq:bC-new}
\bC_{\ga, \fa}(\tilde{s}, \tilde{y}) - 2 = 
\begin{cases}
- 2 \varkappa_{\ga, 3} \be \ga^6 S_\ga (\tilde{s}, \tilde{y}) X_\ga(\tilde{s}, \tilde{y}) + \Err_{\ga, \fa}(\tilde{s}, \tilde{y}) &\text{for}~~ \tilde{s} < \tau_{\ga, \fa}, \\
- 2 \varkappa_{\ga, 3} \ga^6 S'_{\ga, \fa} (\tilde{s}, \tilde{y}) X'_{\ga, \fa}(\tilde{s}, \tilde{y}) + \Err_{\ga, \fa}'(\tilde{s}, \tilde{y}) &\text{for}~~ \tilde{s} \geq \tau_{\ga, \fa},
\end{cases}
\end{equation}
where the error terms are almost surely uniformly bounded on the respective time intervals by $|\Err_{\ga, \fa}(\tilde{s}, \tilde{y})| \lesssim \ga^{9 (1 + \eta) \wedge 4}$ and $|\Err_{\ga, \fa}'(\tilde{s}, \tilde{y})| \lesssim \ga^{4}$. Using \eqref{eq:bC-new}, we then write the first term in \eqref{eq:Pi-3} as
\begin{align}
	&-\be \varkappa_{\ga, 3} \ga^6 \int_{D_\eps} \phi^\lambda_z (\bar z) \biggl( \eps^3 \sum_{\tilde{y} \in \Lattice} \int_{0}^{\tau_{\ga, \fa}} \!\!\!\mywidetilde{\SK}^\ga_{\bar t-\tilde{s}}(\bar x-\tilde{y})^2 \bigl(S_\ga (\tilde{s}, \tilde{y}) X_\ga(\tilde{s}, \tilde{y}) - \un{\fC}_\ga\bigr) \d \tilde{s} \biggr) \d \bar z \nonumber\\
	&\qquad -\be \varkappa_{\ga, 3} \ga^6 \int_{D_\eps} \phi^\lambda_z (\bar z) \biggl( \eps^3 \sum_{\tilde{y} \in \Lattice} \int_{\tau_{\ga, \fa}}^\infty \!\!\!\mywidetilde{\SK}^\ga_{\bar t-\tilde{s}}(\bar x-\tilde{y})^2 \bigl(S'_{\ga, \fa} (\tilde{s}, \tilde{y}) X'_{\ga, \fa}(\tilde{s}, \tilde{y}) - \un{\fC}_\ga\bigr) \d \tilde{s} \biggr) \d \bar z \nonumber \\
	&\qquad + \Err^\lambda_{\ga, \fa}(z), \label{eq:renorm-of-2-last}
\end{align}
where $|\Err^\lambda_{\ga, \fa}(z)| \lesssim \ga^{6 + 9 \eta}$ almost surely, uniformly in $\ga \in (0,1]$ and $z \in D_\eps$. In the bound on the error term we used the bounds on the error terms in \eqref{eq:bC-new}, the assumption $-\frac{4}{7} < \eta < -\frac{1}{2}$ in Theorem~\ref{thm:main}, and the bound $\int_{D_\eps} \mywidetilde{\SK}^\ga(z)^2\, \d z \lesssim \emezo^{-1}$ which follows from Lemma~\ref{lem:renorm-constants} and the definition \eqref{eq:renorm-constant1}. We note that the assumptions on $\eta$ imply $6 + 9 \eta > 0$, and hence all moments of the error term $\Err^\lambda_{\ga, \fa}(z)$ vanish as $\ga \to 0$.

We denote the first two terms in \eqref{eq:renorm-of-2-last} by $\iota_\eps \bigl(\Pi_{z}^{\ga, 4}\tau\bigr)(\phi_z^\lambda)$ and $\iota_\eps \bigl(\Pi_{z}^{\ga, 5}\tau\bigr)(\phi_z^\lambda)$ respectively, and we start with bounding the first of them.

We first show that the rescaled spin field $S_{\ga}$ can be replaced in this expression by its local average. After that we can work with the product of two spin fields in \eqref{eq:renorm-of-2-last} similarly to how we work with $X_\ga^2$. We can now write
\begin{align}
	&\iota_\eps \bigl(\Pi_{z}^{\ga, 4}\tau\bigr)(\phi_z^\lambda) \label{eq:tree2-2-withXX} \\
	& = - \be \varkappa_{\ga, 3} \ga^6 \int_{D_\eps} \phi^\lambda_z (\bar z) \biggl( \eps^3 \sum_{\tilde{y} \in \Lattice} \int_{0}^{\tau_{\ga, \fa}} \!\!\!\mywidetilde{\SK}^\ga_{\bar t-\tilde{s}}(\bar x-\tilde{y})^2 \bigl(\un{X}_\ga(\tilde{s}, \tilde{y}) X_\ga(\tilde{s}, \tilde{y}) - \un{\fC}_\ga\bigr) \d \tilde{s} \biggr) \d \bar z \nonumber \\ 
	&- \be \varkappa_{\ga, 3} \ga^6 \int_{D_\eps} \phi^\lambda_z (\bar z) \biggl( \eps^3 \sum_{\tilde{y} \in \Lattice} \int_{0}^{\tau_{\ga, \fa}} \!\!\!\mywidetilde{\SK}^\ga_{\bar t-\tilde{s}}(\bar x-\tilde{y})^2 \big( S_\ga(\tilde{s}, \tilde{y}) X_\ga(\tilde{s}, \tilde{y}) - \un{X}_\ga(\tilde{s}, \tilde{y}) X_\ga(\tilde{s}, \tilde{y}) \big) \d \tilde{s} \biggr) \d \bar z. \nonumber
\end{align}
Using Lemma~\ref{lem:Y-approximation}, the last term is absolutely bounded by a constant times $\ga^{3 + 3 \eta}$, and it vanishes as $\ga \to 0$. Using the a priori bound, provided by the stopping time \eqref{eq:tau-2}, the first term in \eqref{eq:tree2-2-withXX} is absolutely bounded by a constant times
\begin{equation}
	(\lambda \vee \emezo)^{-1- \un\kappa} \emezo^{\un\kappa/2 - 1} \ga^6 \int_0^{\infty} \eps^3 \sum_{\tilde y \in \Lattice} \mywidetilde{\SK}^\ga_{\tilde s}(\tilde y)^2 \d \tilde s \lesssim (\lambda \vee \emezo)^{-1- \un\kappa} \emezo^{\un\kappa/2 - 2} \ga^6 \lesssim (\lambda \vee \emezo)^{-1- \un\kappa} \emezo^{\un\kappa/2}.\label{eq:some-term-again} 
\end{equation}
Here, we used Lemma~\ref{lem:renorm-constants} to bound the integral, because it coincides with the renormalisation constant \eqref{eq:renorm-constant1}. Since we assumed $\kappa \geq \un{\kappa}$, the preceding expression is bounded by $(\lambda \vee \emezo)^{-1 - \kappa} \emezo^{\un\kappa/2}$. 

It is left to bund the secdon term in \eqref{eq:renorm-of-2-last}. As in \eqref{eq:tree2-2-withXX}, we get that $\iota_\eps \bigl(\Pi_{z}^{\ga, 5}\tau\bigr)(\phi_z^\lambda)$ equals 
\begin{equation*}
	 -\be \ga^6 \int_0^{\infty} \eps^3 \sum_{y \in \Lattice} \mywidetilde{\SK}^\ga_{s}(y)^2 \biggl( \int_{\tau_{\ga, \fa}}^\infty \eps^3 \sum_{\bar x \in \Lattice} \phi^\lambda_z (\bar t + s, \bar x + y) \Bigl( \un{X}'_{\ga, \fa}(\bar t, \bar x) X'_{\ga, \fa}(\bar t, \bar x) - \un{\fC}_\ga \Bigr) \d \bar t \biggr) \d s
\end{equation*}
up to an error, vanishing as $\ga \to 0$. Here, the process $\un{X}'_{\ga, \fa}$ is defined as in \eqref{eq:X-under} but via the spin field $\sigma'_{\ga, \fa}$. Furthermore, we replace the constant $\un{\fC}_\ga$ by the function \eqref{eq:C-gamma-function} and get
\begin{align}
	& -\be \ga^6 \int_0^{\infty} \eps^3 \sum_{y \in \Lattice} \mywidetilde{\SK}^\ga_{s}(y)^2 \biggl( \int_{\tau_{\ga, \fa}}^\infty \eps^3 \sum_{\bar x \in \Lattice} \phi^\lambda_z (\bar t + s, \bar x + y) \bigl( \un{\fC}_\ga(\bar t - \tau_{\ga, \fa}) - \un{\fC}_\ga\bigr) \d \bar t \biggr) \d s \nonumber \\
	 &\qquad -\be \ga^6 \int_0^{\infty} \eps^3 \sum_{y \in \Lattice} \mywidetilde{\SK}^\ga_{s}(y)^2 \biggl( \int_{\tau_{\ga, \fa}}^\infty \eps^3 \sum_{\bar x \in \Lattice} \phi^\lambda_z (\bar t + s, \bar x + y) \label{eq:some-bound} \\
	 &\hspace{6cm}\times \Bigl( \un{X}'_{\ga, \fa}(\bar t, \bar x) X'_{\ga, \fa}(\bar t, \bar x) - \un{\fC}_\ga(\bar t - \tau_{\ga, \fa}) \Bigr) \d \bar t \biggr) \d s.  \nonumber
\end{align}
Applying Lemma~\ref{lem:renorm-function-underline} with any $c \in (0,1)$, the absolute value of the first term in \eqref{eq:some-bound} is bounded by a constant times 
\begin{equation}\label{eq:term-1}
	\ga^6 \emezo^{c -1} \int_0^{\infty} \eps^3 \sum_{y \in \Lattice} \mywidetilde{\SK}^\ga_{s}(y)^2 \biggl( \int_{\tau_{\ga, \fa}}^\infty \eps^3 \sum_{\bar x \in \Lattice} |\phi^\lambda_z (\bar t + s, \bar x + y)| (\bar t - \tau_{\ga, \fa})^{-c/2} \d \bar t \biggr) \d s.
\end{equation}
Using the scaling properties of the involved functions, this expression is of order $\ga^6 \emezo^{c -2} (\lambda \vee \emezo)^{-c}$. Recalling that $\emezo \approx \ga^3$, it vanishes as $\ga \to 0$.

Now, we consider the second term in \eqref{eq:some-bound}. Multiplying and dividing the random process in the brackets by $(t - \tau_{\ga, \fa})^{-\frac{\eta}{2}}$, we estimate the absolute value of this expression by
\begin{align*}
	 -\be \ga^6 \int_0^{\infty} \eps^3 \sum_{y \in \Lattice} \mywidetilde{\SK}^\ga_{s}(y)^2 &\biggl( \int_{\tau_{\ga, \fa}}^\infty \eps^3 \sum_{\bar x \in \Lattice} |\phi^\lambda_z (\bar t + s, \bar x + y)| (t - \tau_{\ga, \fa})^{\frac{\eta}{2}} \d \bar t \biggr) \d s \\
	 &\times \biggl( \sup_{\bar t \in [\tau_{\ga, \fa}, T]} (t - \tau_{\ga, \fa})^{-\frac{\eta}{2}} \Bigl| \un{X}'_{\ga, \fa}(\bar t, \bar x) X'_{\ga, \fa}(\bar t, \bar x) - \un{\fC}_\ga(\bar t - \tau_{\ga, \fa}) \Bigr| \biggr) 
\end{align*}
for a sufficiently large $T$. We can restrict the variable $\bar t$ by $T$ in this formula because $\mywidetilde{\SK}^\ga$ and $\phi^\lambda_z$ are compactly supported. Applying the H\"{o}lder inequality and Lemma~\ref{lem:X2-under-prime}, the $p$-th moment of this expression is bounded by a constant multiple of 
\begin{equation*}
	 \ga^6 \un{\emezo}^{\eta} \biggl(\E \biggl[ \int_0^{\infty} \eps^3 \sum_{y \in \Lattice} \mywidetilde{\SK}^\ga_{s}(y)^2 \biggl( \int_{\tau_{\ga, \fa}}^\infty \eps^3 \sum_{\bar x \in \Lattice} |\phi^\lambda_z (\bar t + s, \bar x + y)| (\bar t - \tau_{\ga, \fa})^{\frac{\eta}{2}} \d \bar t \biggr) \d s \biggr]^{2 p}\biggr)^{\frac{1}{2p}}.
\end{equation*}
As in \eqref{eq:term-1}, this expression is bounded by a constant times $\ga^6 \un{\emezo}^{\eta} \emezo^{-1} (\lambda \vee \emezo)^{\eta}$. Recalling that $\un\emezo = \emezo \ga^{\un\kappa}$ and $\emezo \approx \ga^3$, this expression vanishes as $\ga \to 0$.

The analysis which we performed the renormalised contraction of two vertices in \eqref{eq:Pi2} will be used many times for the other diagrams below. In order to draw less diagrams, we prefer to introduce a new vertex 
\begin{equation*}
\begin{tikzpicture}[scale=0.35, baseline=0.4cm]
	\node at (0,0)  [dot] (int) {};
	\node at (0,2.5)  [var_very_pink] (left) {};
	
	\draw[keps] (left) to[bend left=60] node[labl,pos=0.45] {\tiny 3,0} (int);
	\draw[keps] (left) to[bend left=-60] node[labl,pos=0.45] {\tiny 3,0} (int);
\end{tikzpicture}
\; := \;
\begin{tikzpicture}[scale=0.35, baseline=0.4cm]
	\node at (0,0)  [dot] (int) {};
	\node at (0,2.5)  [var_very_blue] (left) {};
	
	\draw[keps] (left) to[bend left=60] node[labl,pos=0.45] {\tiny 3,0} (int);
	\draw[keps] (left) to[bend left=-60] node[labl,pos=0.45] {\tiny 3,0} (int);
\end{tikzpicture}
\; - \; \fc_\ga.
\end{equation*}

\subsubsection{The element $\tau = \protect\<3b>$}
\label{sec:third-element}

The definition \eqref{eq:lift-hermite} of the renormalized model and the expansion \cite[Eq.~2.11]{Martingales} yield a diagrammatical representation of the map $\Pi^{\ga, \fa}_{z}\tau$:
\begin{equation}\label{eq:Psi3}
\iota_\eps \bigl(\Pi^{\ga, \fa}_{z}\tau\bigr)(\phi_z^\lambda)
\;=\; 
\begin{tikzpicture}[scale=0.35, baseline=0cm]
	\node at (0,-2.2)  [root] (root) {};
	\node at (0,-2.2) [rootlab] {$z$};
	\node at (0,-2.5) {$$};
	\node at (0,0)  [dot] (int) {};
	\node at (-2,2)  [var_blue] (left) {};
	\node at (0,2.9)  [var_blue] (cent) {};
	\node at (2,2)  [var_blue] (right) {};
	
	\draw[testfcn] (int) to (root);
	
	\draw[keps] (left) to node[labl,pos=0.45] {\tiny 3,0} (int);
	\draw[keps] (right) to node[labl,pos=0.45] {\tiny 3,0} (int);
	\draw[keps] (cent) to node[labl,pos=0.4] {\tiny 3,0} (int);
\end{tikzpicture}
\; +\;3
\begin{tikzpicture}[scale=0.35, baseline=0cm]
	\node at (0,-2.2)  [root] (root) {};
	\node at (0,-2.2) [rootlab] {$z$};
	\node at (0,0)  [dot] (int) {};
	\node at (-1.3,2.3)  [var_very_pink] (left) {};
	\node at (2.1,2.3)  [var_blue] (right) {};
	
	\draw[testfcn] (int) to (root);
	
	\draw[keps] (left) to[bend left=60] node[labl,pos=0.45] {\tiny 3,0} (int);
	\draw[keps] (left) to[bend left=-60] node[labl,pos=0.45] {\tiny 3,0} (int);
	\draw[keps] (right) to[bend left=30] node[labl,pos=0.45] {\tiny 3,0} (int);
\end{tikzpicture}
\; +\;
\begin{tikzpicture}[scale=0.35, baseline=0cm]
	\node at (0,-2.2)  [root] (root) {};
	\node at (0,-2.2) [rootlab] {$z$};
	\node at (0,0)  [dot] (int) {};
	\node at (0,2.9)  [var_very_blue] (left) {};
	
	\draw[testfcn] (int) to (root);
	
	\draw[keps] (left) to[bend left=90] node[labl,pos=0.6] {\tiny 3,0} (int);
	\draw[keps] (left) to[bend left=-90] node[labl,pos=0.6] {\tiny 3,0} (int);
	\draw[keps] (left) to node[labl,pos=0.4] {\tiny 3,0} (int);
\end{tikzpicture}\;.
\end{equation}
Using \cite[Cor.~5.6]{Martingales}, for any $\bar \kappa > 0$ and for any $p \geq 2$ large enough, we bound the $p$-th moment of first diagram in \eqref{eq:Psi3} by a constant times
\[
	(\lambda \vee \emezo)^{-\frac32} \Bigl( 1 + \eps^{\frac{9}{4} - \bar \kappa} \emezo^{-\frac52} + \eps^{\frac{9}{2}- \bar \kappa} \emezo^{-5} + \eps^{\frac{27}{4}- \bar \kappa} \emezo^{-\frac{15}{2}} \Bigr),
\]
which is the required bound \eqref{eq:model_bound} with $|\tau| = - \frac{3}{2}-3\kappa$.

We demonstrate once again how to analyse renormalised contraction of two vertices in the second diagram in \eqref{eq:Psi3}, and we prefer not to repeat analogous computation in what follows. As in \eqref{eq:Pi2-new} and \eqref{eq:renorm-of-2}, we write 
\[
	\begin{tikzpicture}[scale=0.35, baseline=0cm]
		\node at (0,-2.2)  [root] (root) {};
		\node at (0,-2.2) [rootlab] {$z$};
		\node at (0,0)  [dot] (int) {};
		\node at (-1.3,2.3)  [var_very_pink] (left) {};
		\node at (2.1,2.3)  [var_blue] (right) {};
		
		\draw[testfcn] (int) to (root);
		
		\draw[keps] (left) to[bend left=60] node[labl,pos=0.45] {\tiny 3,0} (int);
		\draw[keps] (left) to[bend left=-60] node[labl,pos=0.45] {\tiny 3,0} (int);
		\draw[keps] (right) to[bend left=30] node[labl,pos=0.45] {\tiny 3,0} (int);
	\end{tikzpicture}
	\; = \emezo^{2 (a - 3)} \;
	\begin{tikzpicture}[scale=0.35, baseline=0cm]
		\node at (0,-2.2)  [root] (root) {};
		\node at (0,-2.2) [rootlab] {$z$};
		\node at (0,0)  [dot] (int) {};
		\node at (-1.3,2.3)  [var_red_square] (left) {};
		\node at (2.1,2.3)  [var_blue] (right) {};
		
		\draw[testfcn] (int) to (root);
		
		\draw[keps] (left) to[bend left=60] node[labl,pos=0.45] {\tiny a,0} (int);
		\draw[keps] (left) to[bend left=-60] node[labl,pos=0.45] {\tiny a,0} (int);
		\draw[keps] (right) to[bend left=30] node[labl,pos=0.45] {\tiny 3,0} (int);
	\end{tikzpicture}
	\; + \left(
	\begin{tikzpicture}[scale=0.35, baseline=0cm]
		\node at (0,-2.2)  [root] (root) {};
		\node at (0,-2.2) [rootlab] {$z$};
		\node at (0,0)  [dot] (int) {};
		\node at (-1.3,2.3)  [var_red_triangle] (left) {};
		\node at (2.1,2.3)  [var_blue] (right) {};
		
		\draw[testfcn] (int) to (root);
		
		\draw[keps] (left) to[bend left=60] node[labl,pos=0.45] {\tiny 3,0} (int);
		\draw[keps] (left) to[bend left=-60] node[labl,pos=0.45] {\tiny 3,0} (int);
		\draw[keps] (right) to[bend left=30] node[labl,pos=0.45] {\tiny 3,0} (int);
	\end{tikzpicture}
	\; - \; \fc_\ga \, 
	\begin{tikzpicture}[scale=0.35, baseline=0cm]
		\node at (1.5,2)  [var_blue] (right) {};
		\node at (0,-2.2)  [root] (root) {};
		\node at (0,-2.2) [rootlab] {$z$};
		\node at (0,0)  [dot] (int) {};
		
		\draw[testfcn] (int) to (root);
		\draw[keps] (right) to node[labl,pos=0.45] {\tiny 3,0} (int);
	\end{tikzpicture}\right),
\]
for any $0 < a < \frac{5}{2}$. \cite[Assum.~3]{Martingales} is satisfied for the two preceding diagrams, with $\tilde \CCV_{\!\var} = \{ 1, 2 \}$, $\Gamma = \{2\}$ and the labeling is $\L = \{ \diamond, \nil \}$ for the first diagram and $\L = \{ \triangledown, \nil \}$ for the second. Applying \cite[Cor.~5.6]{Martingales} to the first diagram, we get, for any $\bar \kappa > 0$ and for any $p \geq 2$ large enough, a bound on the $p$-th moment of the order
\[
	\emezo^{2 (a - 3)} (\lambda \vee \emezo)^{2 - 2a} \Bigl(\eps^{\frac94} + \eps^{\frac92 - \bar \kappa} \emezo^{-\frac{5}{2}}\Bigr).
\]
For $\frac{3}{2} < a \leq \frac{7}{4}$ and $\bar \kappa > 0$ small enough the right-hand side is bounded by $c_\gamma (\lambda \vee \emezo)^{-3/2}$, where $c_\gamma$ vanishes as $\ga \to 0$. The second diagram is analyzed similarly to the third diagram in \eqref{eq:Pi2-new}, and it can be also bounded by $c_\gamma (\lambda \vee \emezo)^{-3/2}$. 

We now look at the last diagram in \eqref{eq:Psi3}. By using equation \eqref{eq:mrt_decomp}, we can write 
\begin{align*}
&\eps^{9} \sum_{\tilde x \in \Le} \sum_{\tilde s \in \R} \mywidetilde{\SK}^\ga_{\bar t - \tilde s}(\bar x - \tilde x)^3 \big(\Delta_{\tilde s} \M_{\ga, \fa}(\tilde x) \big)^{3} = 4 \eps^{9} \ga^{-6} \sum_{\tilde x \in \Le} \sum_{\tilde s \in \R} \mywidetilde{\SK}^\ga_{\bar t - \tilde s}(\bar x - \tilde x)^3 \Delta_{\tilde s} \M_{\ga, \fa}(\tilde x) \\
&\qquad = 4 \eps^{6} \ga^{-6} \int_{D_\eps} \mywidetilde{\SK}^\ga(\bar z - \tilde z)^3 \d \M_{\ga, \fa}(\tilde z) + 4 \eps^{\frac{15}{4}} \ga^{-6} \int_{D_\eps} \mywidetilde{\SK}^\ga(\bar z - \tilde z)^3 \Cd_{\ga, \fa}(\tilde z) \d \tilde z.
\end{align*}
As we explained at the beginning of this section, the kernel $\mywidetilde{\SK}^\ga$ satisfies \cite[Assum.~4]{Martingales} with $a_e=3$, and hence \cite[Lem.~5.2]{Martingales} yields the bound $|\mywidetilde{\SK}^\ga(z)| \lesssim (\Vert z\Vert_{\s} \vee \emezo)^{-3}$. Then we have $|\mywidetilde{\SK}^\ga(z)^3| \lesssim (\Vert z\Vert_{\s} \vee \emezo)^{-9}$ and \cite[Lem.~5.2]{Martingales} implies that $\mywidetilde{\SK}^\ga(z)^3$ satisfies \cite[Assum.~4]{Martingales} with $a_e=9$. This allows to write the last diagram in \eqref{eq:Psi3} as
\begin{equation}\label{eq:Psi3-last}
4 \eps^{6} \ga^{-6} \emezo^{a - 9}
\begin{tikzpicture}[scale=0.35, baseline=0cm]
	\node at (0,-2.2)  [root] (root) {};
	\node at (0,-2.2) [rootlab] {$z$};
	\node at (0,0)  [dot] (int) {};
	\node at (0,2.9)  [var_blue] (left) {};
	
	\draw[testfcn] (int) to (root);
	
	\draw[keps] (left) to node[labl,pos=0.4] {\tiny a,0} (int);
\end{tikzpicture} 
\;+\;
4 \eps^{\frac{15}{4}} \ga^{-6} \int_{D_\eps} \int_{D_\eps} \varphi_z^\lambda(\bar z)\, \mywidetilde{\SK}^\ga(\bar z - \tilde z)^3 \Cd_{\ga, \fa}(\tilde z) \d \tilde z \d \bar z,
\end{equation}
where we used the same trick as in \eqref{eq:renorm-of-2} to ``improve'' the singularity of the kernel. For $a < 5$, the first diagram in \eqref{eq:Psi3-last} satisfies \cite[Assum.~3]{Martingales}, and for any $\bar \kappa > 0$ and $p \geq 2$ large enough, \cite[Cor.~5.6]{Martingales} allows to bound its $p$-th moment by a constant multiple of 
\begin{equation*}
\eps^{6} \ga^{-6} \emezo^{a - 9} (\lambda \vee \emezo)^{\frac{5}{2} - a} \Bigl(1 + \eps^{\frac94 - \bar \kappa} \emezo^{-\frac{5}{2}}\Bigr).
\end{equation*}

For $a > 3$, this expression is of order $c_\ga (\lambda \vee \emezo)^{\frac{5}{2} - a}$, where $c_\ga \to 0$ as $\ga \to 0$, which is the required bound \eqref{eq:model_bound}.

Now we will analyse the second term in \eqref{eq:Psi3-last}. Because of our extension of the martingales \eqref{eq:martingale-extension}, we need to bound separately the part of \eqref{eq:Psi3-last} with positive and negative times. Because the bounds in the two cases are the same, we will write only the analysis for positive times. Using \eqref{eq:C-gamma} and changing the integration variables, we can write it as a constant multiple of
\begin{equation}\label{eq:Psi3-last-as-two}
\begin{aligned}
&\ga^{12} \int_{D_\eps} \mywidetilde{\SK}^\ga(\bar z)^3 \int_{\R_+ \times \Lattice} \varphi_{z - \bar z}^\lambda(\tilde z) \Cd_{\ga, \fa}(\tilde z) \d \tilde z \d \bar z \\
&\qquad = \ga^{12} \int_{D_\eps} \mywidetilde{\SK}^\ga(\bar z)^3 \int_{[0, \tau_{\ga, \fa}] \times \Lattice} \varphi_{z - \bar z}^\lambda(\tilde z) \Bigl( S_\ga(\tilde z) - \ga^{-3}\tanh \big( \be \ga^3 X_\ga(\tilde z) \big) \Bigr) \d \tilde z \d \bar z \\
&\qquad\qquad + \ga^{12} \int_{D_\eps} \mywidetilde{\SK}^\ga(\bar z)^3 \int_{[\tau_{\ga, \fa}, \infty) \times \Lattice} \varphi_{z - \bar z}^\lambda(\tilde z) \bigl(S'_{\ga, \fa}(\tilde z) - X'_{\ga, \fa}(\tilde z) \bigr) \d \tilde z \d \bar z,
\end{aligned}
\end{equation}
where we used the rescaled spin field \eqref{eq:S-def} and where $S'_{\ga, \fa}$ is defined by \eqref{eq:S-def} for the spin field $\sigma'_{\ga, \fa}$. 

Let us bound the first term in \eqref{eq:Psi3-last-as-two}. From the decomposition of the kernel $\mywidetilde{\SK}^\ga$, provided in the beginning of Section~\ref{sec:lift}, we get $| \mywidetilde{\SK}^\ga(z)| \lesssim (\| z \|_\s\vee \emezo)^{-3}$. Then from \cite[Lem.~7.3]{HairerMatetski} we get $\int_{D_\eps} \mywidetilde{\SK}^\ga(\bar z)^3 \d \bar z \lesssim \emezo^{-4}$. Approximating the function $\tanh$ by its Taylor expansion and using \eqref{eq:beta}, we write the first term in \eqref{eq:Psi3-last-as-two} as
\begin{equation}\label{eq:Psi3-very-last}
\ga^{12} \int_{D_\eps}  \mywidetilde{\SK}^\ga(\bar z)^3 \int_{[0, \tau_{\ga, \fa}] \times \Lattice} \varphi_{z - \bar z}^\lambda(\tilde z) \bigl( S_\ga(\tilde z) - X_\ga(\tilde z) \bigr) \d \tilde z \d \bar z + \Err_{\gamma, \lambda},
\end{equation}
where the error term $\Err_{\gamma, \lambda}$ is absolutely bounded by a constant times 
\begin{align*}
\ga^{18} \int_{D_\eps} &\mywidetilde{\SK}^\ga(\bar z)^3 \int_{[0, \tau_{\ga, \fa}] \times \Lattice} |\varphi_{z - \bar z}^\lambda(\tilde z)| \Bigl( \big( \CGG + A \big) \| X_\ga(\tilde t)\|_{L^\infty}  + \| X_\ga(\tilde t) \|^3_{L^\infty} \Bigr) \d \tilde z \d \bar z \\
&\qquad \lesssim \ga^{18} \emezo^{-4} \sup_{t \geq 0} \Bigl( \big( \CGG + A \big) \| X_\ga( t)\|_{L^\infty}  + \| X_\ga(t) \|^3_{L^\infty} \Bigr),
\end{align*}
with $\tilde t$ being the time variable in $\tilde z$. Here, we used $\int_{D_\eps} |\varphi_{z - \bar z}^\lambda(\tilde z)| \d \tilde z \lesssim 1$. The a priori bound \eqref{eq:X-apriori} allows to estimate the preceding expression by $\ga^{18} \emezo^{3 \eta-4} \lesssim \ga^{6 + 9 \eta}$, which vanishes as $\gamma \to 0$ because $\eta > -\frac{2}{3}$ in the assumptions of Theorem~\ref{thm:main}.

Now, we will bound the first term in \eqref{eq:Psi3-very-last}. From the definitions \eqref{eq:X-gamma} and \eqref{eq:S-def} we conclude that $X_\ga(t, x) = \eps^3 \sum_{y \in \Lattice} K_\ga(x - y) S_\ga(t, y)$. Then we can write
\begin{equation}\label{eq:Z-is-X}
\int_{[0, \tau_{\ga, \fa}] \times \Lattice} \varphi_{z - \bar z}^\lambda(\tilde z) \bigl( S_\ga(\tilde z) - X_\ga(\tilde z) \bigr) \d \tilde z = \int_{[0, \tau_{\ga, \fa}] \times \Lattice} \psi_{z - \bar z}^\lambda(\tilde z) S_\ga(\tilde z) \d \tilde z,
\end{equation}
with $\psi_{z - \bar z}^\lambda(\tilde t, \tilde x) = \varphi_{z - \bar z}^\lambda(\tilde t, \tilde x) - \eps^3 \sum_{y \in \Lattice} \varphi_{z - \bar z}^\lambda(\tilde t, y) K_\ga(y - \tilde x)$. This function can be viewed as a rescaled test function, which for any $\kappa_1 \in [0, 1)$ and any $k \in \N_0^4$ satisfies $\| D^k \psi_{z - \bar z}^\lambda \|_{L^\infty} \lesssim \emezo^{\kappa_1} \lambda^{-5 - |k|_\s - \kappa_1}$. Then for any $\tilde \kappa > 0$, Lemma~\ref{lem:Z-bound} yields 
\begin{equation*}
\biggl| \int_{D_\eps} \psi_{z - \bar z}^\lambda(\tilde z) S_\ga(\tilde z) \d \tilde z\biggr| \lesssim \fa \emezo^{\kappa_1} \lambda^{\eta - \kappa_1}
 \end{equation*}
 uniformly in $\lambda \in [\emezo^{1- \tilde \kappa}, 1]$. For $\lambda < \emezo^{1- \tilde \kappa}$ we can use $|S_\ga(\tilde z)| \leq \ga^{-3}$ and estimate the left-hand side by a constant multiple of $\emezo^{-1}$. Since $-1 < \eta < -\frac{1}{2}$, from the two preceding bounds we conclude that
  \begin{equation}\label{eq:3-bound}
\biggl| \int_{D_\eps} \psi_{z - \bar z}^\lambda(\tilde z) S_\ga(\tilde z) \d \tilde z\biggr| \lesssim \fa \emezo^{\kappa_1} (\lambda \vee \emezo)^{-\frac{1 + \kappa_1}{1 - \tilde \kappa}}.
 \end{equation}
 Moreover, as above we have $\int_{D_\eps} \mywidetilde{\SK}^\ga(\bar z)^3 \d \bar z \lesssim \emezo^{-4}$. Hence, the first term in \eqref{eq:Psi3-very-last} is absolutely bounded by a constant times $\fa \emezo^{\kappa_1} (\lambda \vee \emezo)^{-\frac{1 + \kappa_1}{1 - \tilde \kappa}}$. If we take $\tilde \kappa = \frac{1 - 2 \kappa_1}{3}$, we get an estimate by $\fa \emezo^{\kappa_1} (\lambda \vee \emezo)^{-\frac{3}{2}}$, which vanishes as $\gamma \to 0$. Taking $\kappa_1$ close to $0$ we make $\tilde \kappa$ close to $\frac{1}{2}$,  and Lemma~\ref{lem:Z-bound} suggests that the test functions may be taken from $\CB^2_\s$.
 
 It is left to bound the last term in \eqref{eq:Psi3-last-as-two}. Identity \eqref{eq:Z-is-X} with the time interval $[\tau_{\ga, \fa}, \infty)$ holds for the processes $S'_{\ga, \fa}$ and $X'_{\ga, \fa}$. Applying Lemma~\ref{lem:Z-prime-bound}, we get the same bound as \eqref{eq:3-bound}, i.e.
 \begin{equation*}
 \biggl| \int_{[\tau_{\ga, \fa}, \infty) \times \Lattice} \psi_{z - \bar z}^\lambda(\tilde z) S'_{\ga, \fa}(\tilde z) \d \tilde z \biggr| \lesssim \emezo^{\kappa_1} (\lambda \vee \emezo)^{-\frac{3}{2}}
 \end{equation*}
 uniformly in $\lambda \in (0, 1]$.

\subsubsection{The element $\tau = \protect\<22b>$}

Using the definition \eqref{eq:Pi-rest} and the expansion \cite[Eq.~2.11]{Martingales}, we can represent the map $\Pi^{\ga, \fa}_{z}\tau$ diagrammatically as
\begin{align} \nonumber
\iota_\eps \bigl(\Pi^{\ga, \fa}_{z}\tau\bigr)(\phi_z^\lambda)
&\;=\; 
\begin{tikzpicture}[scale=0.35, baseline=-0.5cm]
	\node at (0,-5.1)  [root] (root) {};
	\node at (0,-5.1) [rootlab] {$z$};
	\node at (0,0) [dot] (int) {};
	\node at (-2,2)  [var_blue] (left) {};
	\node at (2,2)  [var_blue] (right) {};
	\node at (0,-2.9) [dot] (cent1) {};
	\node at (2,-0.9)  [var_blue] (right1) {};
	\node at (-2,-0.9)  [var_blue] (left1) {};
	\draw[testfcn] (cent1) to (root);
	\draw[keps] (left) to node[labl,pos=0.45] {\tiny 3,0} (int);
	\draw[keps] (right) to node[labl,pos=0.45] {\tiny 3,0} (int);
	\draw[keps] (int) to node[labl,pos=0.45] {\tiny 3,1} (cent1);
	\draw[keps] (right1) to node[labl,pos=0.45] {\tiny 3,0} (cent1);
	\draw[keps] (left1) to node[labl,pos=0.45] {\tiny 3,0} (cent1);
\end{tikzpicture}
\;+\;2\;
\begin{tikzpicture}[scale=0.35, baseline=-0.5cm]
	\node at (0,-5.1)  [root] (root) {};
	\node at (0,-5.1) [rootlab] {$z$};
	\node at (0,0)  [dot] (int) {};
	\node at (-2,2)  [var_blue] (left) {};
	\node at (0,-2.9) [dot] (cent1) {};
	\node at (2.5,-1.45)  [var_very_blue] (right1) {};
	\node at (-2,-0.9)  [var_blue] (left1) {};
	\draw[testfcn] (cent1) to (root);
	\draw[keps] (left) to node[labl,pos=0.45] {\tiny 3,0} (int);
	\draw[keps] (right1) to node[labl,pos=0.45] {\tiny 3,0} (int);
	\draw[keps] (int) to node[labl,pos=0.45] {\tiny 3,1} (cent1);
	\draw[keps] (right1) to node[labl,pos=0.45] {\tiny 3,0} (cent1);
	\draw[keps] (left1) to node[labl,pos=0.45] {\tiny 3,0} (cent1);
\end{tikzpicture}
\; +\;
\begin{tikzpicture}[scale=0.35, baseline=-0.5cm]
	\node at (0,-5.1)  [root] (root) {};
	\node at (0,-5.1) [rootlab] {$z$};
	\node at (0,0)  [dot] (int) {};
	\node at (0,2.9)  [var_very_pink] (left) {};
	\node at (0,-2.9) [dot] (cent1) {};
	\node at (2,-0.9)  [var_blue] (right1) {};
	\node at (-2,-0.9)  [var_blue] (left1) {};
	\draw[testfcn] (cent1) to (root);
	\draw[keps] (left) to[bend left=60] node[labl,pos=0.45] {\tiny 3,0} (int);
	\draw[keps] (left) to[bend left=-60] node[labl,pos=0.45] {\tiny 3,0} (int);
	\draw[keps] (int) to node[labl,pos=0.45] {\tiny 3,1} (cent1);
	\draw[keps] (right1) to node[labl,pos=0.45] {\tiny 3,0} (cent1);
	\draw[keps] (left1) to node[labl,pos=0.45] {\tiny 3,0} (cent1);
\end{tikzpicture}
\; +\;
\begin{tikzpicture}[scale=0.35, baseline=-0.5cm]
	\node at (0,-5.1)  [root] (root) {};
	\node at (0,-5.1) [rootlab] {$z$};
	\node at (0,0) [dot] (int) {};
	\node at (-2,2)  [var_blue] (left) {};
	\node at (2,2)  [var_blue] (cent) {};
	\node at (0,-2.9) [dot] (cent1) {};
	\node at (2.9,-1.7)  [var_very_pink] (right1) {};
	\draw[testfcn] (cent1) to (root);
	\draw[keps] (left) to node[labl,pos=0.45] {\tiny 3,0} (int);
	\draw[keps] (cent) to node[labl,pos=0.45] {\tiny 3,0} (int);
	\draw[keps] (int) to node[labl,pos=0.4] {\tiny 3,1} (cent1);
	\draw[keps] (right1) to[bend left=-30] node[labl,pos=0.45] {\tiny 3,0} (cent1);
	\draw[keps] (right1) to[bend left=30] node[labl,pos=0.45] {\tiny 3,0} (cent1);
\end{tikzpicture} 
 \\[0.2cm] \label{eq:22b}
&\qquad \;+\;
\begin{tikzpicture}[scale=0.35, baseline=-0.5cm]
	\node at (0,-5.1)  [root] (root) {};
	\node at (0,-5.1) [rootlab] {$z$};
	\node at (0,0)  [dot] (int) {};
	\node at (-1.3,2.3)  [var_very_pink] (left) {};
	\node at (0,-2.9) [dot] (cent1) {};
	\node at (2.9,-1.7) [var_very_pink] (right1) {};
	\draw[testfcn] (cent1) to (root);
	\draw[keps] (left) to[bend left=60] node[labl,pos=0.45] {\tiny 3,0} (int);
	\draw[keps] (left) to[bend left=-60] node[labl,pos=0.45] {\tiny 3,0} (int);
	\draw[keps] (int) to node[labl,pos=0.45] {\tiny 3,1} (cent1);
	\draw[keps] (right1) to[bend left=-30] node[labl,pos=0.45] {\tiny 3,0} (cent1);
	\draw[keps] (right1) to[bend left=30] node[labl,pos=0.45] {\tiny 3,0} (cent1);
\end{tikzpicture}
 \;+\;2\,
\begin{tikzpicture}[scale=0.35, baseline=-0.5cm]
	\node at (0,-5.1)  [root] (root) {};
	\node at (0,-5.1) [rootlab] {$z$};
	\node at (0,0) [dot] (int) {};
	\node at (-2,2) [var_blue] (left) {};
	\node at (0,-2.9) [dot] (cent1) {};
	\node at (3,-1.45)  [var_very_blue] (right1) {};
	\draw[testfcn] (cent1) to (root);
	\draw[keps] (left) to node[labl,pos=0.45] {\tiny 3,0} (int);
	\draw[keps] (right1) to[bend left=-30] node[labl,pos=0.45] {\tiny 3,0} (int);
	\draw[keps] (int) to node[labl,pos=0.45] {\tiny 3,1} (cent1);
	\draw[keps] (right1) to[bend left=-30] node[labl,pos=0.45] {\tiny 3,0} (cent1);
	\draw[keps] (right1) to[bend left=30] node[labl,pos=0.45] {\tiny 3,0} (cent1);
\end{tikzpicture} 
\;+\;2\,
\begin{tikzpicture}[scale=0.35, baseline=-0.5cm]
	\node at (0,-5.1)  [root] (root) {};
	\node at (0,-5.1) [rootlab] {$z$};
	\node at (0,0)  [dot] (int) {};
	\node at (-1.3,2.3)  [var_very_blue] (left) {};
	\node at (0,-2.9) [dot] (cent1) {};
	\node at (2,-1.45)  [var_blue] (right1) {};
	\draw[testfcn] (cent1) to (root);
	\draw[keps] (left) to[bend left=60] node[labl,pos=0.45] {\tiny 3,0} (int);
	\draw[keps] (left) to[bend left=-60] node[labl,pos=0.45] {\tiny 3,0} (int);
	\draw[keps] (int) to node[labl,pos=0.45] {\tiny 3,1} (cent1);
	\draw[keps] (right1) to node[labl,pos=0.45] {\tiny 3,0} (cent1);
	\draw[keps] (left) to[bend left=-80] node[labl,pos=0.55] {\tiny 3,0} (cent1);
\end{tikzpicture} \\[0.2cm]
&\qquad 
\; +\;
\begin{tikzpicture}[scale=0.35, baseline=-0.5cm]
	\node at (0,-5.1)  [root] (root) {};
	\node at (0,-5.1) [rootlab] {$z$};
	\node at (0,0)  [dot] (int) {};
	\node at (0,2.9)  [var_very_blue] (left) {};
	\node at (0,-2.9) [dot] (cent1) {};
	\draw[testfcn] (cent1) to (root);
	\draw[keps] (left) to[bend left=70] node[labl,pos=0.45] {\tiny 3,0} (int);
	\draw[keps] (left) to[bend left=-70] node[labl,pos=0.45] {\tiny 3,0} (int);
	\draw[keps] (int) to node[labl,pos=0.45] {\tiny 3,1} (cent1);
	\draw[keps] (left) to[bend left=100] node[labl,pos=0.55] {\tiny 3,0} (cent1);
	\draw[keps] (left) to[bend right=100] node[labl,pos=0.55] {\tiny 3,0} (cent1);
\end{tikzpicture}
\; + \left(\;2\;
\begin{tikzpicture}[scale=0.35, baseline=-0.5cm]
	\node at (0,-5.1)  [root] (root) {};
	\node at (0,-5.1) [rootlab] {$z$};
	\node at (0,1) [dot] (int) {};
	\node at (0,-2.9) [dot] (cent1) {};
	\node at (2,-0.9)  [var_very_blue] (right1) {};
	\node at (-2,-0.9)  [var_very_blue] (left1) {};
	\draw[testfcn] (cent1) to (root);
	\draw[keps] (left1) to node[labl,pos=0.45] {\tiny 3,0} (int);
	\draw[keps] (right1) to node[labl,pos=0.45] {\tiny 3,0} (int);
	\draw[keps] (int) to node[labl,pos=0.45] {\tiny 3,1} (cent1);
	\draw[keps] (right1) to node[labl,pos=0.45] {\tiny 3,0} (cent1);
	\draw[keps] (left1) to node[labl,pos=0.45] {\tiny 3,0} (cent1);
\end{tikzpicture}
 \;-\; \fc_\ga''\,
 \begin{tikzpicture}[scale=0.35, baseline=-0.5cm]
	\node at (0,-2.2)  [root] (root) {};
	\node at (0,-2.2) [rootlab] {$z$};
	\node at (0,0)  [dot] (int) {};
	\draw[testfcn] (int) to (root);
\end{tikzpicture}\right), \nonumber
\end{align}
where the renormalisation constant $\fc_\ga''$ is defined in \eqref{eq:renorm-constant2}, and where the edge with the label ``$3,1$'' represents the kernel in \eqref{eq:Pi-rest}, where ``$1$'' refers to the positive renormalisation (see \cite[Sec.~5]{Martingales}). 

Applying \cite[Cor.~5.6]{Martingales}, the high moments of the first and second diagrams are bounded by a constant multiplier of $(\lambda \vee \emezo)^{-\kappa/2}$. Analysing contractions of vertices in the same way as we did in \eqref{eq:Pi2} and \eqref{eq:Psi3}, the diagrams number $3, \ldots, 7$ are bounded by $c_\ga (\lambda \vee \emezo)^{-\kappa/2}$ with a constant $c_\ga$ vanishing as $\ga \to 0$.

Regarding the eight tree, for any $\kappa > 0$, we first rewrite it as (as before we use \cite[Lem.~5.2]{Martingales} to show that a product of singular kernels again satisfies \cite[Assum.~4]{Martingales})
\begin{equation}
	\begin{tikzpicture}[scale=0.35, baseline=-0.5cm]
		\node at (0,-5.1)  [root] (root) {};
		\node at (0,-5.1) [rootlab] {$z$};
		\node at (0,0)  [dot] (int) {};
		\node at (0,2.9)  [var_very_blue] (left) {};
		\node at (0,-2.9) [dot] (cent1) {};
		\draw[testfcn] (cent1) to (root);
		\draw[keps] (left) to[bend left=70] node[labl,pos=0.45] {\tiny 3,0} (int);
		\draw[keps] (left) to[bend left=-70] node[labl,pos=0.45] {\tiny 3,0} (int);
		\draw[keps] (int) to node[labl,pos=0.45] {\tiny 3,1} (cent1);
		\draw[keps] (left) to[bend left=100] node[labl,pos=0.55] {\tiny 3,0} (cent1);
		\draw[keps] (left) to[bend right=100] node[labl,pos=0.55] {\tiny 3,0} (cent1);
	\end{tikzpicture} \; = \emezo^{-2 - 2\kappa} \;
	\begin{tikzpicture}[scale=0.35, baseline=-0.5cm]
		\node at (0,-5.1)  [root] (root) {};
		\node at (0,-5.1) [rootlab] {$z$};
		\node at (0,0)  [dot] (int) {};
		\node at (0,2.9)  [var_very_blue] (left) {};
		\node at (0,-2.9) [dot] (cent1) {};
		\draw[testfcn] (cent1) to (root);
		\draw[keps] (left) to node[labl,pos=0.45] {\tiny 5-$\kappa$,0} (int);
		\draw[keps] (int) to node[labl,pos=0.45] {\tiny 3,1} (cent1);
		\draw[keps] (left) to[bend left=100] node[labl,pos=0.55] {\tiny 5-$\kappa$,0} (cent1);
	\end{tikzpicture} \, ,
\end{equation}
where we multiplied some kernels by positive powers of $\emezo$ in order to satisfy all the hypotheses of \cite[Cor.~5.6]{Martingales}. Once we apply it, we then get the bound
\[
	\emezo^{-2 - 2\kappa} ( \lambda \vee \emezo )^{-3+2\kappa} \big( \eps^{\frac92 - \tilde \kappa} + \eps^{9 - \tilde \kappa} \emezo^{-\frac52} + \eps^{9 - \tilde \kappa} \emezo^{-5} \big) \lesssim ( \lambda \vee \emezo )^{-3+2\kappa} \emezo^{4-2\kappa},
\]
which vanishes as $\ga \to 0$.

Recalling the definition of the positively renormalised kernel in \eqref{eq:Pi-rest}, the expression in the brackets in \eqref{eq:22b} may be written as
\begin{equation}\label{eq:22b-renorm}
\left(2\;
\begin{tikzpicture}[scale=0.35, baseline=-0.5cm]
	\node at (0,-4.2)  [root] (root) {};
	\node at (0,-4.2) [rootlab] {$z$};
	\node at (0,-4.5) {$$};
	\node at (0,1) [dot] (int) {};
	\node at (0,-2) [dot] (cent1) {};
	\node at (2,-0.5)  [var_very_blue] (right1) {};
	\node at (-2,-0.5)  [var_very_blue] (left1) {};
	\draw[testfcn] (cent1) to (root);
	\draw[keps] (left1) to node[labl,pos=0.45] {\tiny 3,0} (int);
	\draw[keps] (right1) to node[labl,pos=0.45] {\tiny 3,0} (int);
	\draw[keps] (int) to node[labl,pos=0.45] {\tiny 3,0} (cent1);
	\draw[keps] (right1) to node[labl,pos=0.45] {\tiny 3,0} (cent1);
	\draw[keps] (left1) to node[labl,pos=0.45] {\tiny 3,0} (cent1);
\end{tikzpicture}
\;-\; \fc_\ga''\,
 \begin{tikzpicture}[scale=0.35, baseline=-0.5cm]
	\node at (0,-2.2)  [root] (root) {};
	\node at (0,-2.2) [rootlab] {$z$};
	\node at (0,0)  [dot] (int) {};
	\draw[testfcn] (int) to (root);
\end{tikzpicture}\right)
\;-\; 2\;
\begin{tikzpicture}[scale=0.35, baseline=-0.5cm]
	\node at (0,-4.2)  [root] (root) {};
	\node at (0,-4.2) [rootlab] {$z$};
	\node at (0,-4.5) {$$};
	\node at (0,1) [dot] (int) {};
	\node at (0,-2) [dot] (cent1) {};
	\node at (2,-0.5)  [var_very_blue] (right1) {};
	\node at (-2,-0.5)  [var_very_blue] (left1) {};
	\draw[testfcn] (cent1) to (root);
	\draw[keps] (left1) to node[labl,pos=0.45] {\tiny 3,0} (int);
	\draw[keps] (right1) to node[labl,pos=0.45] {\tiny 3,0} (int);
	\draw[keps] (right1) to node[labl,pos=0.45] {\tiny 3,0} (cent1);
	\draw[keps] (left1) to node[labl,pos=0.45] {\tiny 3,0} (cent1);
	\draw [keps] (int) to[out=0,in=90] (3,-1.4) to [out=-90, in=0] node[labl,pos=0] {\tiny 3,0} (root);
\end{tikzpicture}\;.
\end{equation}
The last diagram in \eqref{eq:22b-renorm} is readily bounded using \cite[Cor.~5.6]{Martingales} by a multiple of $(\lambda \vee \emezo)^{-\kappa / 2}$, while the expression in the brackets requires some work. Using the notation from \eqref{eq:Pi2-new}, the expression in the brackets in \eqref{eq:22b-renorm} may be written as
\begin{equation}\label{eq:22b-last}
 2\;
\begin{tikzpicture}[scale=0.35, baseline=-0.5cm]
	\node at (0,-4.2)  [root] (root) {};
	\node at (0,-4.2) [rootlab] {$z$};
	\node at (0,-4.5) {$$};
	\node at (0,1) [dot] (int) {};
	\node at (0,-2) [dot] (cent1) {};
	\node at (2,-0.5)  [var_red_square] (right1) {};
	\node at (-2,-0.5)  [var_red_square] (left1) {};
	\draw[testfcn] (cent1) to (root);
	\draw[keps] (left1) to node[labl,pos=0.45] {\tiny 3,0} (int);
	\draw[keps] (right1) to node[labl,pos=0.45] {\tiny 3,0} (int);
	\draw[keps] (int) to node[labl,pos=0.45] {\tiny 3,0} (cent1);
	\draw[keps] (right1) to node[labl,pos=0.45] {\tiny 3,0} (cent1);
	\draw[keps] (left1) to node[labl,pos=0.45] {\tiny 3,0} (cent1);
\end{tikzpicture}
\; + 4 \;
\begin{tikzpicture}[scale=0.35, baseline=-0.5cm]
	\node at (0,-4.2)  [root] (root) {};
	\node at (0,-4.2) [rootlab] {$z$};
	\node at (0,-4.5) {$$};
	\node at (0,1) [dot] (int) {};
	\node at (0,-2) [dot] (cent1) {};
	\node at (2,-0.5)  [var_red_square] (right1) {};
	\node at (-2,-0.5)  [var_red_triangle] (left1) {};
	\draw[testfcn] (cent1) to (root);
	\draw[keps] (left1) to node[labl,pos=0.45] {\tiny 3,0} (int);
	\draw[keps] (right1) to node[labl,pos=0.45] {\tiny 3,0} (int);
	\draw[keps] (int) to node[labl,pos=0.45] {\tiny 3,0} (cent1);
	\draw[keps] (right1) to node[labl,pos=0.45] {\tiny 3,0} (cent1);
	\draw[keps] (left1) to node[labl,pos=0.45] {\tiny 3,0} (cent1);
\end{tikzpicture}\;
\; + \; \left( 2\;
\begin{tikzpicture}[scale=0.35, baseline=-0.5cm]
	\node at (0,-4.2)  [root] (root) {};
	\node at (0,-4.2) [rootlab] {$z$};
	\node at (0,-4.5) {$$};
	\node at (0,1) [dot] (int) {};
	\node at (0,-2) [dot] (cent1) {};
	\node at (2,-0.5)  [var_red_triangle] (right1) {};
	\node at (-2,-0.5)  [var_red_triangle] (left1) {};
	\draw[testfcn] (cent1) to (root);
	\draw[keps] (left1) to node[labl,pos=0.45] {\tiny 3,0} (int);
	\draw[keps] (right1) to node[labl,pos=0.45] {\tiny 3,0} (int);
	\draw[keps] (int) to node[labl,pos=0.45] {\tiny 3,0} (cent1);
	\draw[keps] (right1) to node[labl,pos=0.45] {\tiny 3,0} (cent1);
	\draw[keps] (left1) to node[labl,pos=0.45] {\tiny 3,0} (cent1);
\end{tikzpicture}
\;-\; \fc_\ga''\,
 \begin{tikzpicture}[scale=0.35, baseline=-0.5cm]
	\node at (0,-2.2)  [root] (root) {};
	\node at (0,-2.2) [rootlab] {$z$};
	\node at (0,0)  [dot] (int) {};
	\draw[testfcn] (int) to (root);
\end{tikzpicture}\right),
\end{equation}
where the first two diagram are bounded using \cite[Cor.~5.6]{Martingales} by $c_\ga (\lambda \vee \emezo)^{-\kappa/2}$ with a constant $c_\ga$ vanishing as $\ga \to 0$.

It is left to bound the expression in the brackets in \eqref{eq:22b-last}. For this, let us define the random kernel
\begin{equation}\label{eq:CG-def}
\CG_{\ga} (z_1, z_2) = 
 \begin{tikzpicture}[scale=0.35, baseline=-0.2cm]
	\node at (0,1) [root] (int) {};
	\node at (0,1.6) {\scriptsize $z_2$};
	\node at (0,-2) [root] (cent1) {};
	\node at (0,-2) [rootlab] {$z_1$};
	\node at (2,-0.5)  [var_red_triangle] (right1) {};
	\node at (-2,-0.5)  [var_red_triangle] (left1) {};
	\draw[keps] (left1) to node[labl,pos=0.45] {\tiny 3,0} (int);
	\draw[keps] (right1) to node[labl,pos=0.45] {\tiny 3,0} (int);
	\draw[keps] (int) to node[labl,pos=0.45] {\tiny 3,0} (cent1);
	\draw[keps] (right1) to node[labl,pos=0.45] {\tiny 3,0} (cent1);
	\draw[keps] (left1) to node[labl,pos=0.45] {\tiny 3,0} (cent1);
\end{tikzpicture}\;, 
\end{equation}
which may be written explicitly as 
\begin{align}\label{eq:CG-explicit}
\CG_{\ga} (z_1, z_2) &= \int_{D_\eps} \int_{D_\eps} \mywidetilde{\SK}^\ga(z_1 - z_3) \mywidetilde{\SK}^\ga(z_1 - z_4) \mywidetilde{\SK}^\ga(z_1 - z_2) \\
&\hspace{2cm} \times \mywidetilde{\SK}^\ga(z_2 - z_3) \mywidetilde{\SK}^\ga(z_2 - z_4) \bC_{\ga, \fa}(z_3) \bC_{\ga, \fa}(z_4) \d z_3 \d z_4.\nonumber
\end{align}
where we used the bracket process \eqref{eq:bC}. Then the expression in the brackets in \eqref{eq:22b-last} is absolutely bounded by
\begin{equation}\label{eq:last-term}
\int_{D_\eps} |\phi_z^\lambda(z_1)| \left| 2 \int_{D_\eps} \CG_{\ga} (z_1, z_2) \d z_2 - \fc_\ga'' \right| \d z_1 \leq \sup_{z_1 \in D_\eps} \left| 2 \int_{D_\eps} \CG_{\ga} (z_1, z_2) \d z_2 - \fc_\ga'' \right|.
\end{equation}
We need the following bounds on the kernel $\CG_{\ga}$.

\begin{lemma}\label{lem:CG-bounds}
There exists a non-random constant $C > 0$, independent of $\ga$, such that 
\begin{equation}\label{eq:CG-bound1}
\bigl| \CG_{\ga} (z_1, z_2)\bigr| \leq C \bigl(\| z_1 - z_2 \|_\s \vee \emezo\bigr)^{-5},
\end{equation}
uniformly in $z_1 \neq z_2$. Moreover, for any $\theta \in (0, 1)$ we have
\begin{equation}\label{eq:CG-bound2}
\left| 2 \int_{D_\eps} \CG_{\ga} (z_1, z_2) \d z_2 - \fc_\ga'' \right| \leq C \emezo^\theta,
\end{equation}
uniformly over $z_1$.
\end{lemma}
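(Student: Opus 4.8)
The plan is to read \eqref{eq:CG-explicit} as the product of the ``direct'' kernel $\mywidetilde{\SK}^\ga(z_1-z_2)$ with the square of the ``propagator''
$Q_{\bC}(z_1,z_2):=\int_{D_\eps}\mywidetilde{\SK}^\ga(z_1-z_3)\,\mywidetilde{\SK}^\ga(z_2-z_3)\,\bC_{\ga,\fa}(z_3)\,\d z_3$, i.e.\ $\CG_\ga(z_1,z_2)=\mywidetilde{\SK}^\ga(z_1-z_2)\,Q_{\bC}(z_1,z_2)^2$ up to a fixed numerical factor. For \eqref{eq:CG-bound1} I would first recall that $\mywidetilde{\SK}^\ga$ satisfies \cite[Assum.~4]{Martingales} with $a_e=3$, so $|\mywidetilde{\SK}^\ga(z)|\lesssim(\|z\|_\s\vee\emezo)^{-3}$ by Lemma~\ref{lem:Pgt} and \cite[Lem.~5.2]{Martingales}, and that $|\bC_{\ga,\fa}|$ is bounded uniformly (Section~\ref{sec:martingales-properties}). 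A single application of the parabolic convolution estimate --- convolving two kernels of order $-3$ in the $5$-dimensional parabolic scaling produces a kernel of order $-1$, with $\emezo$ acting as a lower cut-off --- gives $|Q_{\bC}(z_1,z_2)|\lesssim(\|z_1-z_2\|_\s\vee\emezo)^{-1}$; squaring and multiplying by $|\mywidetilde{\SK}^\ga(z_1-z_2)|$ yields the order $-5$ claimed in \eqref{eq:CG-bound1}. This is the ``labelled graph'' estimate behind \cite[Cor.~5.6]{Martingales}, here with a trivial noise structure.

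For \eqref{eq:CG-bound2} the key observation is that $\fc_\ga''$ is precisely the quantity obtained from $2\int_{D_\eps}\CG_\ga(z_1,z_2)\,\d z_2$ by freezing the bracket densities to their leading value: replacing $\bC_{\ga,\fa}(z_3)\bC_{\ga,\fa}(z_4)$ by the constant $(2\varkappa_{\ga,2})^2$ in \eqref{eq:CG-explicit} produces a kernel $\CG_\ga^{\mathrm{ref}}(z_1-z_2)$ which is translation invariant, and, after integration over $z_2\in D_\eps$ and the change of variables undoing the time reflection of \eqref{eq:martingale-extension}, reproduces the right-hand side of \eqref{eq:renorm-constant2}; the normalisations in \eqref{eq:renorm-constant2}, \eqref{eq:CG-def} and in the coefficient of the tree in \eqref{eq:22b-last} are chosen precisely so that $2\int_{D_\eps}\CG_\ga^{\mathrm{ref}}(z_1-z_2)\,\d z_2=\varkappa_{\ga,2}^2\,\fc_\ga''$, and $|\varkappa_{\ga,2}^2-1|\lesssim\ga^4$ while $\fc_\ga''\lesssim\log(1/\emezo)$, so the ``constant'' discrepancy is $\lesssim\ga^4\log(1/\emezo)\lesssim\emezo^\theta$. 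Thus $2\int_{D_\eps}\CG_\ga(z_1,z_2)\,\d z_2-\fc_\ga''$ is, up to an $\emezo^\theta$ term, a fixed multiple of $\int_{D_\eps}\bigl[\CG_\ga(z_1,z_2)-\CG_\ga^{\mathrm{ref}}(z_1-z_2)\bigr]\,\d z_2$. Writing $Q_{\bC}^2-Q_{\mathrm{ref}}^2=(Q_{\bC}-Q_{\mathrm{ref}})(Q_{\bC}+Q_{\mathrm{ref}})$ and using the bounds of the previous step for the two factors, $|\CG_\ga(z_1,z_2)-\CG_\ga^{\mathrm{ref}}(z_1-z_2)|\lesssim\bigl(\sup_{z_3}|\bC_{\ga,\fa}(z_3)-2\varkappa_{\ga,2}|\bigr)(\|z_1-z_2\|_\s\vee\emezo)^{-5}$, and since $\int_{D_\eps}(\|z_1-z_2\|_\s\vee\emezo)^{-5}\,\d z_2\lesssim\log(1/\emezo)$, any polynomial smallness of $\bC_{\ga,\fa}-2\varkappa_{\ga,2}$ closes the estimate.

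On the pre-stopping-time set this smallness is immediate: from the expansion \eqref{eq:bC-new} together with the a priori bound \eqref{eq:X-apriori} one has $|\bC_{\ga,\fa}-2\varkappa_{\ga,2}|\lesssim\fa\,\ga^{3(1+\eta)}+\ga^4$ pathwise, which is a positive power of $\ga$ because $\eta>-1$, so this contribution to \eqref{eq:CG-bound2} is $\lesssim\fa\,\ga^{3(1+\eta)}\log(1/\emezo)\lesssim\emezo^\theta$ for $\theta$ small. The genuinely delicate part, and the step I expect to be the main obstacle, is the region where one or both of $z_3,z_4$ lies after $\tau_{\ga,\fa}$: there $\bC_{\ga,\fa}-2\varkappa_{\ga,2}=-2\varkappa_{\ga,2}\,\delta\,\sigma'_{\ga,\fa}X'_{\ga,\fa}$ is only small in the moment sense (pathwise it is merely $O(1)$, since $\delta X'_{\ga,\fa}$ is a spin average), so one cannot simply freeze the bracket and must estimate that piece of $\CG_\ga$ directly, exploiting the linear equation \eqref{eq:X-prime-equation-P} for $X'_{\ga,\fa}$ from the proof of Lemma~\ref{lem:X-prime-bound} and, where necessary, a Burkholder--Davis--Gundy argument in the spirit of Lemma~\ref{lem:X2-under-prime}; the compact time support of $\mywidetilde{\SK}^\ga$ confines this region to a bounded window, which is what makes the bound $\emezo^\theta$ attainable. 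Finally, the discretisation errors in matching $\mywidetilde{\SK}^\ga$ to the singular kernel appearing in \eqref{eq:renorm-constant2} are of strictly positive order in $\emezo$ by the kernel estimates of Appendix~\ref{sec:decompositions}, and are absorbed into the same bound.
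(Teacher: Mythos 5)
Your proof of \eqref{eq:CG-bound1} coincides with the paper's: uniform boundedness of $\bC_{\ga,\fa}$, the kernel bound $|\mywidetilde{\SK}^\ga(z)|\lesssim(\|z\|_\s\vee\emezo)^{-3}$, and the parabolic convolution estimate (the paper invokes \cite[Lem.~7.3]{HairerMatetski}) give the exponent $-5$. For \eqref{eq:CG-bound2} your reduction is also essentially the paper's: subtract the kernel with the bracket frozen at its constant value, so that the frozen part cancels $\fc_\ga''$ up to an $O(\ga^4\log(1/\emezo))$ bookkeeping error, and use that the sunset integral is only logarithmically divergent, so any pathwise polynomial-in-$\ga$ smallness of the bracket fluctuation closes the estimate. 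On the region $t<\tau_{\ga,\fa}$ your crude bound $|\bC_{\ga,\fa}-2\varkappa_{\ga,2}|\lesssim\fa\,\ga^{3(1+\eta)}+\ga^4$, coming from \eqref{eq:bC-new} and \eqref{eq:X-apriori}, does suffice and is in fact simpler than the paper's route, which replaces $S_\ga X_\ga$ by $\un{X}_\ga X_\ga$ via Lemma~\ref{lem:Y-approximation} and then invokes the a priori bound supplied by the stopping time \eqref{eq:tau-2}; that finer machinery is what is needed against the $\emezo^{-1}$-divergent integral $\int_{D_\eps}\mywidetilde{\SK}^\ga(z)^2\,\d z$ in Section~\ref{sec:second-symbol}, not against the logarithmic one here.

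The genuine gap is exactly the step you flag and then leave open: the contribution of times $t\geq\tau_{\ga,\fa}$, where $\bC_{\ga,\fa}-2\varkappa_{\ga,2}$ equals $-2\varkappa_{\ga,3}\ga^6 S'_{\ga,\fa}X'_{\ga,\fa}$ up to an $O(\ga^4)$ error and is pathwise only $O(1)$. Your proposed fix --- estimating this piece through \eqref{eq:X-prime-equation-P} and a Burkholder--Davis--Gundy argument in the spirit of Lemma~\ref{lem:X2-under-prime} --- can only deliver moment bounds, and that is the wrong type of statement: \eqref{eq:CG-bound2} asserts an almost sure bound with a non-random constant, uniform over $z_1$, and it is used pathwise, first in \eqref{eq:last-term} and then after \eqref{eq:sixth-symbol-last-again}, where $\sup_{z_1}\bigl|2\int_{D_\eps}\CG_{\ga}(z_1,z_2)\,\d z_2-\fc_\ga''\bigr|$ multiplies another random supremum before any expectation is taken; a moment bound on this quantity would not splice into those arguments without reworking them, and would weaken the lemma. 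The paper's proof of this lemma never leaves the pathwise setting: all randomness in $\CG_\ga$ is controlled through the deterministic a priori information built into the stopped and extended dynamics of Section~\ref{sec:a-priori} (the expansion \eqref{eq:bC-new}, Lemma~\ref{lem:Y-approximation}, and the bound attached to \eqref{eq:tau-2}), and no stochastic estimate is invoked at this stage. So to complete your argument you must produce a pathwise, non-random bound for the post-$\tau_{\ga,\fa}$ region of the same deterministic kind (or argue that this region can be removed from the statement); as written, your plan for the decisive region does not close.
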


\begin{proof}
As we state in \eqref{eq:bC}, we can uniformly bound $\bC_{\ga, \fa}$. Moreover, from the decomposition of the kernel $\mywidetilde{\SK}^\ga$, provided in Appendix~\ref{sec:decompositions}, we can conclude $| \mywidetilde{\SK}^\ga(z)| \lesssim (\| z \|_\s\vee \emezo)^{-3}$. Then the bound \eqref{eq:CG-bound1} follows from \cite[Lem.~7.3]{HairerMatetski}.

Using the definitions \eqref{eq:CG-explicit} and \eqref{eq:renorm-constant2}, the expression in the absolute value in \eqref{eq:CG-bound2} may be written explicitly as
\begin{align*}
&2 \int_{D_\eps} \int_{D_\eps} \int_{D_\eps} \mywidetilde{\SK}^\ga(z_1 - z_3) \mywidetilde{\SK}^\ga(z_1 - z_4) \mywidetilde{\SK}^\ga(z_1 - z_2) \\
&\hspace{3cm} \times \mywidetilde{\SK}^\ga(z_2 - z_3) \mywidetilde{\SK}^\ga(z_2 - z_4) \bigl(\bC_{\ga, \fa}(z_3) - 2\bigr) \bC_{\ga, \fa}(z_4) \d z_2 \d z_3 \d z_4 \\
&\qquad + 4 \int_{D_\eps} \int_{D_\eps} \int_{D_\eps} \mywidetilde{\SK}^\ga(z_1 - z_3) \mywidetilde{\SK}^\ga(z_1 - z_4) \SK^\ga(z_1 - z_2) \\
&\hspace{4cm} \times \mywidetilde{\SK}^\ga(z_2 - z_3) \mywidetilde{\SK}^\ga(z_2 - z_4) \bigl(\bC_{\ga, \fa}(z_4) - 2\bigr) \d z_2 \d z_3 \d z_4.
\end{align*}
 Moreover, we can write the difference $\bC_{\ga, \fa} - 2$ as in \eqref{eq:bC-new}. After that, we apply Lemma~\ref{lem:Y-approximation} to replace the product $S_\ga X_\ga$ by $\un{X}_\ga X_\ga$, up to an error term. Then the preceding expression equals 
 \begin{equation}\label{eq:22b-very-last}
\begin{aligned}
&- 4 \be \ga^6 \int_{D_\eps} \int_{D_\eps} \int_{D_\eps} \mywidetilde{\SK}^\ga(z_1 - z_3) \mywidetilde{\SK}^\ga(z_1 - z_4) \mywidetilde{\SK}^\ga(z_1 - z_2) \\
&\hspace{2cm} \times \mywidetilde{\SK}^\ga(z_2 - z_3) \mywidetilde{\SK}^\ga(z_2 - z_4) \un{X}_\ga (z_3) X_\ga(z_3) \bC_{\ga, \fa}(z_4) \d z_2 \d z_3 \d z_4  \\
&\qquad - 8 \be \ga^6 \int_{D_\eps} \int_{D_\eps} \int_{D_\eps} \mywidetilde{\SK}^\ga(z_1 - z_3) \mywidetilde{\SK}^\ga(z_1 - z_4) \mywidetilde{\SK}^\ga(z_1 - z_2) \\
&\hspace{3cm} \times \mywidetilde{\SK}^\ga(z_2 - z_3) \mywidetilde{\SK}^\ga(z_2 - z_4) \un{X}_\ga (z_4) X_\ga(z_4) \d z_2 \d z_3 \d z_4 + \Err_{\gamma, \lambda}, 
\end{aligned}
\end{equation}
where the error term satisfies 
\begin{align*}
|\Err_{\gamma, \lambda}| &\lesssim \ga^{6 + 3 \eta} \,
\left|\; \begin{tikzpicture}[scale=0.35, baseline=-0.6cm]
	\node at (0,0) [dot] (int) {};
	\node at (0,-2.9) [root] (cent1) {};
	\node at (0,-2.9) [rootlab] {$0$};
	\node at (2.5,-1.45)  [dot] (right1) {};
	\node at (-2.5,-1.45)  [dot] (left1) {};
	\draw[keps] (left1) to node[labl,pos=0.45] {\tiny 3,0} (int);
	\draw[keps] (right1) to node[labl,pos=0.45] {\tiny 3,0} (int);
	\draw[keps] (int) to node[labl,pos=0.45] {\tiny 3,0} (cent1);
	\draw[keps] (right1) to node[labl,pos=0.45] {\tiny 3,0} (cent1);
	\draw[keps] (left1) to node[labl,pos=0.45] {\tiny 3,0} (cent1);
\end{tikzpicture}\; \right|\;.
\end{align*}
Using the bounds on singular kernels derived in \cite[Lem.~7.3]{HairerMatetski}, we obtain $|\Err_{\gamma, \lambda}| \lesssim \ga^{6 + 3 \eta - \bar{\kappa}}$ for any $\bar{\kappa} > 0$. From \eqref{eq:tau-2} we have the a priori bound $\| \un{X}_\ga(t)  X_\ga(t)\|_{L^\infty} \lesssim \emezo^{\un\kappa/2-2}$ for $t < \tau_{\ga, \fa}$, which allows to bound the first two terms in \eqref{eq:22b-very-last} by a constant multiple of $\gamma^{6 - 3(2 - \un{\kappa}/2) - \bar{\kappa}} = \gamma^{3 {\un\kappa}/2 - \bar{\kappa}}$. Taking $\bar{\kappa}$ sufficiently small, this gives the required bound \eqref{eq:CG-bound2}.
\end{proof}

Applying \eqref{eq:CG-bound2}, we bound \eqref{eq:last-term} by a positive power of $\gamma$. This finishes the proof of the required bound \eqref{eq:model_bound} for the element $\tau$.

\subsubsection{The element $\tau = \protect\<31b>$}

The definition \eqref{eq:Pi-rest} and the expansion \cite[Eq.~2.15]{Martingales} allow to represent the map $\Pi^{\ga, \fa}_{z}\tau$ diagrammatically as
\begin{align}\label{eq:fourth-symbol}
\iota_\eps \bigl(\Pi^{\ga, \fa}_{z}\tau\bigr)(\phi_z^\lambda)
&\;=\; 
\begin{tikzpicture}[scale=0.35, baseline=-0.5cm]
	\node at (0,-5.1)  [root] (root) {};
	\node at (0,-5.1) [rootlab] {$z$};
	\node at (0,0) [dot] (int) {};
	\node at (-2,2) [var_blue] (left) {};
	\node at (0,2.9)  [var_blue] (cent) {};
	\node at (2,2)  [var_blue] (right) {};
	\node at (0,-2.9) [dot] (cent1) {};
	\node at (2,-0.9)  [var_blue] (right1) {};
	\draw[testfcn] (cent1) to (root);
	\draw[keps] (left) to node[labl,pos=0.45] {\tiny 3,0} (int);
	\draw[keps] (right) to node[labl,pos=0.45] {\tiny 3,0} (int);
	\draw[keps] (cent) to node[labl,pos=0.45] {\tiny 3,0} (int);
	\draw[keps] (int) to node[labl,pos=0.45] {\tiny 3,1} (cent1);
	\draw[keps] (right1) to node[labl,pos=0.45] {\tiny 3,0} (cent1);
\end{tikzpicture}
\; +\;3\,
\begin{tikzpicture}[scale=0.35, baseline=-0.5cm]
	\node at (0,-5.1)  [root] (root) {};
	\node at (0,-5.1) [rootlab] {$z$};
	\node at (0,0) [dot] (int) {};
	\node at (-2,2) [var_blue] (left) {};
	\node at (0,2.9)  [var_blue] (cent) {};
	\node at (0,-2.9) [dot] (cent1) {};
	\node at (2.5,-1.45)  [var_very_blue] (right1) {};
	\draw[testfcn] (cent1) to (root);
	\draw[keps] (left) to node[labl,pos=0.5] {\tiny 3,0} (int);
	\draw[keps] (cent) to node[labl,pos=0.4] {\tiny 3,0} (int);
	\draw[keps] (int) to node[labl,pos=0.45] {\tiny 3,1} (cent1);
	\draw[keps] (right1) to node[labl,pos=0.45] {\tiny 3,0} (int);
	\draw[keps] (right1) to node[labl,pos=0.45] {\tiny 3,0} (cent1);
\end{tikzpicture}
\; +\;3\,
\begin{tikzpicture}[scale=0.35, baseline=-0.5cm]
	\node at (0,-5.1)  [root] (root) {};
	\node at (0,-5.1) [rootlab] {$z$};
	\node at (0,0)  [dot] (int) {};
	\node at (-1.3,2.3)  [var_very_pink] (left) {};
	\node at (2.1,2.3)  [var_blue] (right) {};
	\node at (0,-2.9) [dot] (cent1) {};
	\node at (2,-0.9)  [var_blue] (right1) {};
	\draw[testfcn] (cent1) to (root);
	\draw[keps] (left) to[bend left=60] node[labl,pos=0.45] {\tiny 3,0} (int);
	\draw[keps] (left) to[bend left=-60] node[labl,pos=0.45] {\tiny 3,0} (int);
	\draw[keps] (right) to[bend left=30] node[labl,pos=0.45] {\tiny 3,0} (int);
	\draw[keps] (int) to node[labl,pos=0.45] {\tiny 3,1} (cent1);
	\draw[keps] (right1) to node[labl,pos=0.45] {\tiny 3,0} (cent1);
\end{tikzpicture}
 \\[0.2cm]
& \nonumber
\qquad \; +\;3\,
\begin{tikzpicture}[scale=0.35, baseline=-0.5cm]
	\node at (0,-5.1)  [root] (root) {};
	\node at (0,-5.1) [rootlab] {$z$};
	\node at (0,0)  [dot] (int) {};
	\node at (-1.3,2.3)  [var_very_pink] (left) {};
	\node at (0,-2.9) [dot] (cent1) {};
	\node at (2.5,-1.45)  [var_very_blue] (right1) {};
	\draw[testfcn] (cent1) to (root);
	\draw[keps] (left) to[bend left=60] node[labl,pos=0.45] {\tiny 3,0} (int);
	\draw[keps] (left) to[bend left=-60] node[labl,pos=0.45] {\tiny 3,0} (int);
	\draw[keps] (right1) to node[labl,pos=0.45] {\tiny 3,0} (int);
	\draw[keps] (int) to node[labl,pos=0.45] {\tiny 3,1} (cent1);
	\draw[keps] (right1) to node[labl,pos=0.45] {\tiny 3,0} (cent1);
\end{tikzpicture}
\; +\;3\,
\begin{tikzpicture}[scale=0.35, baseline=-0.5cm]
	\node at (0,-5.1)  [root] (root) {};
	\node at (0,-5.1) [rootlab] {$z$};
	\node at (0,0)  [dot] (int) {};
	\node at (-1.3,2.3)  [var_very_blue] (left) {};
	\node at (2,2)  [var_blue] (right) {};
	\node at (0,-2.9) [dot] (cent1) {};
	\draw[testfcn] (cent1) to (root);
	\draw[keps] (left) to[bend left=60] node[labl,pos=0.45] {\tiny 3,0} (int);
	\draw[keps] (left) to[bend left=-60] node[labl,pos=0.45] {\tiny 3,0} (int);
	\draw[keps] (right) to[bend left=30] node[labl,pos=0.45] {\tiny 3,0} (int);
	\draw[keps] (int) to node[labl,pos=0.45] {\tiny 3,1} (cent1);
	\draw[keps] (left) to[bend left=-80] node[labl,pos=0.45] {\tiny 3,0} (cent1);
\end{tikzpicture} 
\; +\;
\begin{tikzpicture}[scale=0.35, baseline=-0.5cm]
	\node at (0,-5.1)  [root] (root) {};
	\node at (0,-5.1) [rootlab] {$z$};
	\node at (0,0)  [dot] (int) {};
	\node at (0,2.9)  [var_very_blue] (left) {};
	\node at (0,-2.9) [dot] (cent1) {};
	\node at (2,-0.9)  [var_blue] (right1) {};
	\draw[testfcn] (cent1) to (root);
	\draw[keps] (left) to[bend left=90] node[labl,pos=0.6] {\tiny 3,0} (int);
	\draw[keps] (left) to[bend left=-90] node[labl,pos=0.6] {\tiny 3,0} (int);
	\draw[keps] (left) to node[labl,pos=0.4] {\tiny 3,0} (int);
	\draw[keps] (int) to node[labl,pos=0.45] {\tiny 3,1} (cent1);
	\draw[keps] (right1) to node[labl,pos=0.45] {\tiny 3,0} (cent1);
\end{tikzpicture}
\; +\;
\begin{tikzpicture}[scale=0.35, baseline=-0.5cm]
	\node at (0,-5.1)  [root] (root) {};
	\node at (0,-5.3) {$$};
	\node at (0,-5.1) [rootlab] {$z$};
	\node at (0,0)  [dot] (int) {};
	\node at (0,2.9)  [var_very_blue] (left) {};
	\node at (0,-2.9) [dot] (cent1) {};
	\draw[testfcn] (cent1) to (root);
	\draw[keps] (left) to[bend left=90] node[labl,pos=0.6] {\tiny 3,0} (int);
	\draw[keps] (left) to[bend left=-90] node[labl,pos=0.6] {\tiny 3,0} (int);
	\draw[keps] (left) to node[labl,pos=0.4] {\tiny 3,0} (int);
	\draw[keps] (int) to node[labl,pos=0.45] {\tiny 3,1} (cent1);
	\draw[keps] (left) to[bend left=100] node[labl,pos=0.55] {\tiny 3,0} (cent1);
\end{tikzpicture},
\end{align}
where as before the arrow ``\,\tikz[baseline=-0.1cm] \draw[keps] (0,0) to node[labl,pos=0.45] {\tiny 3,1} (1,0);\,'' represents the positively renormalised kernel in \eqref{eq:Pi-rest}.

\cite[Cor.~5.6]{Martingales} allows to bound the moments of the first two diagrams in \eqref{eq:fourth-symbol} by a constant multiple of $(\lambda \vee \emezo)^{-\kappa}$, which yields the required bound \eqref{eq:model_bound}. Analysing the contractions of two and three vertices as before, all the other diagrams, except the last one, are bounded using \cite[Cor.~5.6]{Martingales} by $c_\ga (\lambda \vee \emezo)^{-\kappa}$ for a vanishing $c_\ga$ as $\ga \to 0$.

To bound the last diagram in \eqref{eq:fourth-symbol}, we use a positive power of $\emezo$ to improve the singularity of the kernel:
\begin{equation*}
\begin{tikzpicture}[scale=0.35, baseline=-0.5cm]
	\node at (0,-5.1)  [root] (root) {};
	\node at (0,-5.1) [rootlab] {$z$};
	\node at (0,0)  [dot] (int) {};
	\node at (0,2.9)  [var_very_blue] (left) {};
	\node at (0,-2.9) [dot] (cent1) {};
	\draw[testfcn] (cent1) to (root);
	\draw[keps] (left) to[bend left=90] node[labl,pos=0.6] {\tiny 3,0} (int);
	\draw[keps] (left) to[bend left=-90] node[labl,pos=0.6] {\tiny 3,0} (int);
	\draw[keps] (left) to node[labl,pos=0.4] {\tiny 3,0} (int);
	\draw[keps] (int) to node[labl,pos=0.45] {\tiny 3,1} (cent1);
	\draw[keps] (left) to[bend left=100] node[labl,pos=0.55] {\tiny 3,0} (cent1);
\end{tikzpicture}
\;=\; \emezo^{3 a - 9}
\begin{tikzpicture}[scale=0.35, baseline=-0.5cm]
	\node at (0,-5.1)  [root] (root) {};
	\node at (0,-5.1) [rootlab] {$z$};
	\node at (0,0)  [dot] (int) {};
	\node at (0,2.9)  [var_very_blue] (left) {};
	\node at (0,-2.9) [dot] (cent1) {};
	\draw[testfcn] (cent1) to (root);
	\draw[keps] (left) to[bend left=90] node[labl,pos=0.6] {\tiny a,0} (int);
	\draw[keps] (left) to[bend left=-90] node[labl,pos=0.6] {\tiny a,0} (int);
	\draw[keps] (left) to node[labl,pos=0.4] {\tiny a,0} (int);
	\draw[keps] (int) to node[labl,pos=0.45] {\tiny 3,1} (cent1);
	\draw[keps] (left) to[bend left=100] node[labl,pos=0.55] {\tiny 3,0} (cent1);
\end{tikzpicture},
\end{equation*}
for $a < \frac{5}{3}$. Then \cite[Cor.~5.6]{Martingales} allows to bound the right-hand side by a constant times
\begin{equation*}
\emezo^{3a - 9} (\lambda \vee \emezo)^{4 - 3 a} \Bigl( \eps^{9-\bar\kappa} \emezo^{-5} + \eps^{\frac92-\bar\kappa} + \eps^{\frac{27}{4}} \Bigr), 
\end{equation*}
for any $\bar\kappa >0$. Choosing appropriate values of $a$ and $\bar \kappa$, we can estimate this by $c_\ga (\lambda \vee \emezo)^{-\kappa}$, where $c_\ga$ vanishes as $\ga \to 0$.

\subsubsection{The element $\tau = \protect\<32b>$}

Using \eqref{eq:Pi-rest} and \cite[Eq.~2.15]{Martingales} we can write
\begin{align} \label{eq:sixth-symbol}
&\iota_\eps \bigl(\Pi^{\ga, \fa}_{z}\tau\bigr)(\phi_z^\lambda)
\;=\; 
\begin{tikzpicture}[scale=0.35, baseline=-0.5cm]
	\node at (0,-5.1)  [root] (root) {};
	\node at (0,-5.1) [rootlab] {$z$};
	\node at (0,0) [dot] (int) {};
	\node at (-2,2) [var_blue] (left) {};
	\node at (0,2.9)  [var_blue] (cent) {};
	\node at (2,2)  [var_blue] (right) {};
	\node at (0,-2.9) [dot] (cent1) {};
	\node at (2,-0.9)  [var_blue] (right1) {};
	\node at (-2,-0.9)  [var_blue] (left1) {};
	\draw[testfcn] (cent1) to (root);
	\draw[keps] (left) to node[labl,pos=0.45] {\tiny 3,0} (int);
	\draw[keps] (right) to node[labl,pos=0.45] {\tiny 3,0} (int);
	\draw[keps] (cent) to node[labl,pos=0.45] {\tiny 3,0} (int);
	\draw[keps] (int) to node[labl,pos=0.45] {\tiny 3,1} (cent1);
	\draw[keps] (right1) to node[labl,pos=0.45] {\tiny 3,0} (cent1);
	\draw[keps] (left1) to node[labl,pos=0.45] {\tiny 3,0} (cent1);
\end{tikzpicture}
\;+\;6\;
\begin{tikzpicture}[scale=0.35, baseline=-0.5cm]
	\node at (0,-5.1)  [root] (root) {};
	\node at (0,-5.1) [rootlab] {$z$};
	\node at (0,0)  [dot] (int) {};
	\node at (2.5,-1.45)  [var_very_blue] (right1) {};
	\node at (-2,-0.9)  [var_blue] (left1) {};
	\node at (-2,2) [var_blue] (left) {};
	\node at (0,2.9)  [var_blue] (cent) {};
	\draw[testfcn] (cent1) to (root);
	\draw[keps] (left) to node[labl,pos=0.45] {\tiny 3,0} (int);
	\draw[keps] (cent) to node[labl,pos=0.45] {\tiny 3,0} (int);
	\draw[keps] (right1) to node[labl,pos=0.45] {\tiny 3,0} (int);
	\draw[keps] (int) to node[labl,pos=0.45] {\tiny 3,1} (cent1);
	\draw[keps] (right1) to node[labl,pos=0.45] {\tiny 3,0} (cent1);
	\draw[keps] (left1) to node[labl,pos=0.45] {\tiny 3,0} (cent1);
\end{tikzpicture}
\; +\;3\,
\begin{tikzpicture}[scale=0.35, baseline=-0.5cm]
	\node at (0,-5.1)  [root] (root) {};
	\node at (0,-5.1) [rootlab] {$z$};
	\node at (0,0)  [dot] (int) {};
	\node at (-1.3,2.3)  [var_very_pink] (left) {};
	\node at (2.1,2.3)  [var_blue] (right) {};
	\node at (0,-2.9) [dot] (cent1) {};
	\node at (2,-0.9)  [var_blue] (right1) {};
	\node at (-2,-0.9)  [var_blue] (left1) {};
	\draw[testfcn] (cent1) to (root);
	\draw[keps] (left) to[bend left=60] node[labl,pos=0.45] {\tiny 3,0} (int);
	\draw[keps] (left) to[bend left=-60] node[labl,pos=0.45] {\tiny 3,0} (int);
	\draw[keps] (right) to[bend left=30] node[labl,pos=0.45] {\tiny 3,0} (int);
	\draw[keps] (int) to node[labl,pos=0.45] {\tiny 3,1} (cent1);
	\draw[keps] (right1) to node[labl,pos=0.45] {\tiny 3,0} (cent1);
	\draw[keps] (left1) to node[labl,pos=0.45] {\tiny 3,0} (cent1);
\end{tikzpicture} 
\;+\; 
\begin{tikzpicture}[scale=0.35, baseline=-0.5cm]
	\node at (0,-5.1)  [root] (root) {};
	\node at (0,-5.1) [rootlab] {$z$};
	\node at (0,0) [dot] (int) {};
	\node at (-2,2) [var_blue] (left) {};
	\node at (0,2.9) [var_blue] (cent) {};
	\node at (2,2) [var_blue] (right) {};
	\node at (0,-2.9) [dot] (cent1) {};
	\node at (2.9,-1.7) [var_very_pink] (right1) {};
	\draw[testfcn] (cent1) to (root);
	\draw[keps] (left) to node[labl,pos=0.45] {\tiny 3,0} (int);
	\draw[keps] (right) to node[labl,pos=0.45] {\tiny 3,0} (int);
	\draw[keps] (cent) to node[labl,pos=0.45] {\tiny 3,0} (int);
	\draw[keps] (int) to node[labl,pos=0.45] {\tiny 3,1} (cent1);
	\draw[keps] (right1) to[bend left=-30] node[labl,pos=0.45] {\tiny 3,0} (cent1);
	\draw[keps] (right1) to[bend left=30] node[labl,pos=0.45] {\tiny 3,0} (cent1);
\end{tikzpicture} \\[0.2cm] \nonumber
&\qquad 
\;+\;3\,
\begin{tikzpicture}[scale=0.35, baseline=-0.5cm]
	\node at (0,-5.1)  [root] (root) {};
	\node at (0,-5.1) [rootlab] {$z$};
	\node at (0,0)  [dot] (int) {};
	\node at (-1.3,2.3)  [var_very_pink] (left) {};
	\node at (2.1,2.3)  [var_blue] (right) {};
	\node at (0,-2.9) [dot] (cent1) {};
	\node at (2.9,-1.7) [var_very_pink] (right1) {};
	\draw[testfcn] (cent1) to (root);
	\draw[keps] (left) to[bend left=60] node[labl,pos=0.45] {\tiny 3,0} (int);
	\draw[keps] (left) to[bend left=-60] node[labl,pos=0.45] {\tiny 3,0} (int);
	\draw[keps] (int) to node[labl,pos=0.45] {\tiny 3,1} (cent1);
	\draw[keps] (right1) to[bend left=-30] node[labl,pos=0.45] {\tiny 3,0} (cent1);
	\draw[keps] (right1) to[bend left=30] node[labl,pos=0.45] {\tiny 3,0} (cent1);
	\draw[keps] (right) to[bend left=30] node[labl,pos=0.45] {\tiny 3,0} (int);
\end{tikzpicture}
\; +\; 3 \left(
2\;
\begin{tikzpicture}[scale=0.35, baseline=-0.5cm]
	\node at (0,-5.1)  [root] (root) {};
	\node at (0,-5.1) [rootlab] {$z$};
	\node at (0,0) [dot] (int) {};
	\node at (0,2.9)  [var_blue] (cent) {};
	\node at (0,-2.9) [dot] (cent1) {};
	\node at (2.5,-1.45)  [var_very_blue] (right1) {};
	\node at (-2.5,-1.45)  [var_very_blue] (left1) {};
	\draw[testfcn] (cent1) to (root);
	\draw[keps] (left1) to node[labl,pos=0.45] {\tiny 3,0} (int);
	\draw[keps] (right1) to node[labl,pos=0.45] {\tiny 3,0} (int);
	\draw[keps] (cent) to node[labl,pos=0.45] {\tiny 3,0} (int);
	\draw[keps] (int) to node[labl,pos=0.45] {\tiny 3,1} (cent1);
	\draw[keps] (right1) to node[labl,pos=0.45] {\tiny 3,0} (cent1);
	\draw[keps] (left1) to node[labl,pos=0.45] {\tiny 3,0} (cent1);
\end{tikzpicture}
\; -\; \fc_\ga'' \;
\begin{tikzpicture}[scale=0.35, baseline=-0.7cm]
	\node at (0,-5.1)  [root] (root) {};
	\node at (0,-5.1) [rootlab] {$z$};
	\node at (0,0)  [var_blue] (cent) {};
	\node at (0,-2.9) [dot] (cent1) {};
	\draw[testfcn] (cent1) to (root);
	\draw[keps] (cent) to node[labl,pos=0.45] {\tiny 3,0} (cent1);
\end{tikzpicture} \right)
\; +\;6\,
\begin{tikzpicture}[scale=0.35, baseline=-0.5cm]
	\node at (0,-5.1)  [root] (root) {};
	\node at (0,-5.1) [rootlab] {$z$};
	\node at (0,0)  [dot] (int) {};
	\node at (-1.3,2.3)  [var_very_pink] (left) {};
	\node at (0,-2.9) [dot] (cent1) {};
	\node at (2.5,-1.45)  [var_very_blue] (right1) {};
	\node at (-2,-0.9)  [var_blue] (left1) {};
	\draw[testfcn] (cent1) to (root);
	\draw[keps] (left) to[bend left=60] node[labl,pos=0.45] {\tiny 3,0} (int);
	\draw[keps] (left) to[bend left=-60] node[labl,pos=0.45] {\tiny 3,0} (int);
	\draw[keps] (right1) to node[labl,pos=0.45] {\tiny 3,0} (int);
	\draw[keps] (int) to node[labl,pos=0.45] {\tiny 3,1} (cent1);
	\draw[keps] (right1) to node[labl,pos=0.45] {\tiny 3,0} (cent1);
	\draw[keps] (left1) to node[labl,pos=0.45] {\tiny 3,0} (cent1);
\end{tikzpicture} \\[0.2cm] \nonumber
&\qquad
\; +\;
\begin{tikzpicture}[scale=0.35, baseline=-0.5cm]
	\node at (0,-5.1)  [root] (root) {};
	\node at (0,-5.1) [rootlab] {$z$};
	\node at (0,0)  [dot] (int) {};
	\node at (0,2.9)  [var_very_blue] (left) {};
	\node at (0,-2.9) [dot] (cent1) {};
	\node at (2,-0.9)  [var_blue] (right1) {};
	\node at (-2,-0.9)  [var_blue] (left1) {};
	\draw[testfcn] (cent1) to (root);
	\draw[keps] (left) to[bend left=90] node[labl,pos=0.4] {\tiny 3,0} (int);
	\draw[keps] (left) to[bend left=-90] node[labl,pos=0.4] {\tiny 3,0} (int);
	\draw[keps] (left) to node[labl,pos=0.6] {\tiny 3,0} (int);
	\draw[keps] (int) to node[labl,pos=0.45] {\tiny 3,1} (cent1);
	\draw[keps] (right1) to node[labl,pos=0.45] {\tiny 3,0} (cent1);
	\draw[keps] (left1) to node[labl,pos=0.45] {\tiny 3,0} (cent1);
\end{tikzpicture}
\; +\;3\,
\begin{tikzpicture}[scale=0.35, baseline=-0.5cm]
	\node at (0,-5.1)  [root] (root) {};
	\node at (0,-5.1) [rootlab] {$z$};
	\node at (0,-4.5) {$$};
	\node at (0,0) [dot] (int) {};
	\node at (-2,2) [var_blue] (left) {};
	\node at (0,2.9)  [var_blue] (cent) {};
	\node at (0,-2.9) [dot] (cent1) {};
	\node at (3,-1.45)  [var_very_blue] (right1) {};
	\draw[testfcn] (cent1) to (root);
	\draw[keps] (left) to node[labl,pos=0.45] {\tiny 3,0} (int);
	\draw[keps] (right1) to[bend left=-30] node[labl,pos=0.45] {\tiny 3,0} (int);
	\draw[keps] (cent) to node[labl,pos=0.45] {\tiny 3,0} (int);
	\draw[keps] (int) to node[labl,pos=0.45] {\tiny 3,1} (cent1);
	\draw[keps] (right1) to[bend left=-30] node[labl,pos=0.45] {\tiny 3,0} (cent1);
	\draw[keps] (right1) to[bend left=30] node[labl,pos=0.45] {\tiny 3,0} (cent1);
\end{tikzpicture}
\; +\;6\,
\begin{tikzpicture}[scale=0.35, baseline=-0.5cm]
	\node at (0,-5.1)  [root] (root) {};
	\node at (0,-5.1) [rootlab] {$z$};
	\node at (0,0)  [dot] (int) {};
	\node at (-1.3,2.3)  [var_very_blue] (left) {};
	\node at (2.1,2.3)  [var_blue] (right) {};
	\node at (0,-2.9) [dot] (cent1) {};
	\node at (2,-0.9)  [var_blue] (right1) {};
	\draw[testfcn] (cent1) to (root);s
	\draw[keps] (left) to[bend left=60] node[labl,pos=0.45] {\tiny 3,0} (int);
	\draw[keps] (left) to[bend left=-60] node[labl,pos=0.45] {\tiny 3,0} (int);
	\draw[keps] (right) to[bend left=30] node[labl,pos=0.45] {\tiny 3,0} (int);
	\draw[keps] (int) to node[labl,pos=0.45] {\tiny 3,1} (cent1);
	\draw[keps] (left) to[bend left=-80] node[labl,pos=0.45] {\tiny 3,0} (cent1);
	\draw[keps] (right1) to node[labl,pos=0.45] {\tiny 3,0} (cent1);
\end{tikzpicture} 
\; +\;
\begin{tikzpicture}[scale=0.35, baseline=-0.5cm]
	\node at (0,-5.1)  [root] (root) {};
	\node at (0,-5.1) [rootlab] {$z$};
	\node at (0,0)  [dot] (int) {};
	\node at (0,2.9)  [var_very_blue] (left) {};
	\node at (0,-2.9) [dot] (cent1) {};
	\node at (2.9,-1.7) [var_very_pink] (right1) {};
	\draw[testfcn] (cent1) to (root);
	\draw[keps] (left) to[bend left=90] node[labl,pos=0.4] {\tiny 3,0} (int);
	\draw[keps] (left) to[bend left=-90] node[labl,pos=0.4] {\tiny 3,0} (int);
	\draw[keps] (left) to node[labl,pos=0.6] {\tiny 3,0} (int);
	\draw[keps] (int) to node[labl,pos=0.45] {\tiny 3,1} (cent1);
	\draw[keps] (right1) to[bend left=-30] node[labl,pos=0.45] {\tiny 3,0} (cent1);
	\draw[keps] (right1) to[bend left=30] node[labl,pos=0.45] {\tiny 3,0} (cent1);
\end{tikzpicture} \\[0.2cm] \nonumber
&\qquad\; +\;6\,
\begin{tikzpicture}[scale=0.35, baseline=-0.5cm]
	\node at (0,-5.1)  [root] (root) {};
	\node at (0,-5.1) [rootlab] {$z$};
	\node at (0,0)  [dot] (int) {};
	\node at (-1.3,2.3)  [var_very_blue] (left) {};
	\node at (0,-2.9) [dot] (cent1) {};
	\node at (2.5,-1.45)  [var_very_blue] (right1) {};
	\draw[testfcn] (cent1) to (root);
	\draw[keps] (left) to[bend left=60] node[labl,pos=0.45] {\tiny 3,0} (int);
	\draw[keps] (left) to[bend left=-60] node[labl,pos=0.45] {\tiny 3,0} (int);
	\draw[keps] (right1) to node[labl,pos=0.45] {\tiny 3,0} (int);
	\draw[keps] (int) to node[labl,pos=0.45] {\tiny 3,1} (cent1);
	\draw[keps] (right1) to node[labl,pos=0.45] {\tiny 3,0} (cent1);
	\draw[keps] (left) to[bend left=-80] node[labl,pos=0.55] {\tiny 3,0} (cent1);
\end{tikzpicture}
\; +\;3\,
\begin{tikzpicture}[scale=0.35, baseline=-0.5cm]
	\node at (0,-5.1)  [root] (root) {};
	\node at (0,-5.1) [rootlab] {$z$};
	\node at (0,0)  [dot] (int) {};
	\node at (-1.3,2.3)  [var_very_pink] (left) {};
	\node at (0,-2.9) [dot] (cent1) {};
	\node at (3,-1.45) [var_very_blue] (right1) {};
	\draw[testfcn] (cent1) to (root);
	\draw[keps] (left) to[bend left=60] node[labl,pos=0.45] {\tiny 3,0} (int);
	\draw[keps] (left) to[bend left=-60] node[labl,pos=0.45] {\tiny 3,0} (int);
	\draw[keps] (right1) to[bend left=-20] node[labl,pos=0.45] {\tiny 3,0} (int);
	\draw[keps] (int) to node[labl,pos=0.45] {\tiny 3,1} (cent1);
	\draw[keps] (right1) to[bend left=-30] node[labl,pos=0.45] {\tiny 3,0} (cent1);
	\draw[keps] (right1) to[bend left=30] node[labl,pos=0.45] {\tiny 3,0} (cent1);
\end{tikzpicture}
\; +\; 2\,
\begin{tikzpicture}[scale=0.35, baseline=-0.5cm]
	\node at (0,-5.1)  [root] (root) {};
	\node at (0,-5.1) [rootlab] {$z$};
	\node at (0,0)  [dot] (int) {};
	\node at (0,2.9)  [var_very_blue] (left) {};
	\node at (0,-2.9) [dot] (cent1) {};
	\node at (2,-0.9)  [var_blue] (right1) {};
	\draw[testfcn] (cent1) to (root);
	\draw[keps] (left) to[bend left=90] node[labl,pos=0.4] {\tiny 3,0} (int);
	\draw[keps] (left) to[bend left=-90] node[labl,pos=0.4] {\tiny 3,0} (int);
	\draw[keps] (left) to node[labl,pos=0.6] {\tiny 3,0} (int);
	\draw[keps] (int) to node[labl,pos=0.45] {\tiny 3,1} (cent1);
	\draw[keps] (left) to[bend right=100] node[labl,pos=0.55] {\tiny 3,0} (cent1);
	\draw[keps] (right1) to node[labl,pos=0.45] {\tiny 3,0} (cent1);
\end{tikzpicture}
\; +\;
\begin{tikzpicture}[scale=0.35, baseline=-0.5cm]
	\node at (0,-5.1)  [root] (root) {};
	\node at (0,-5.1) [rootlab] {$z$};
	\node at (0,0)  [dot] (int) {};
	\node at (0,2.9)  [var_very_blue] (left) {};
	\node at (0,-2.9) [dot] (cent1) {};
	\draw[testfcn] (cent1) to (root);
	\draw[keps] (left) to[bend left=90] node[labl,pos=0.4] {\tiny 3,0} (int);
	\draw[keps] (left) to[bend left=-90] node[labl,pos=0.4] {\tiny 3,0} (int);
	\draw[keps] (left) to node[labl,pos=0.6] {\tiny 3,0} (int);
	\draw[keps] (int) to node[labl,pos=0.45] {\tiny 3,1} (cent1);
	\draw[keps] (left) to[bend left=100] node[labl,pos=0.55] {\tiny 3,0} (cent1);
	\draw[keps] (left) to[bend right=100] node[labl,pos=0.55] {\tiny 3,0} (cent1);
\end{tikzpicture}.
\end{align}
Using \cite[Cor.~5.6]{Martingales}, the moments of the first two diagrams are bounded by a constant multiple of $(\lambda \vee \emezo)^{-\bar \kappa}$ for any $\bar \kappa > 0$. Analysing contracted vertices as before, all the other diagrams, except the expression in the brackets, are bounded by $c_\ga (\lambda \vee \emezo)^{-\bar \kappa}$ for any $\bar \kappa > 0$ and for vanishing $c_\ga$ as $\ga \to 0$. Here, the contraction of five vertices is analysed in the same way as a contraction of three, with the only difference in the powers of $\emezo$ in multipliers.

Now, we will bound the expression in the brackets in \eqref{eq:sixth-symbol}. Recalling the definition of the kernel in \eqref{eq:Pi-rest}, we can write 
\begin{align*}
2\;
\begin{tikzpicture}[scale=0.35, baseline=-0.5cm]
	\node at (0,-5.1)  [root] (root) {};
	\node at (0,-5.1) [rootlab] {$z$};
	\node at (0,0) [dot] (int) {};
	\node at (0,2.9)  [var_blue] (cent) {};
	\node at (0,-2.9) [dot] (cent1) {};
	\node at (2.5,-1.45)  [var_very_blue] (right1) {};
	\node at (-2.5,-1.45)  [var_very_blue] (left1) {};
	\draw[testfcn] (cent1) to (root);
	\draw[keps] (left1) to node[labl,pos=0.45] {\tiny 3,0} (int);
	\draw[keps] (right1) to node[labl,pos=0.45] {\tiny 3,0} (int);
	\draw[keps] (cent) to node[labl,pos=0.45] {\tiny 3,0} (int);
	\draw[keps] (int) to node[labl,pos=0.45] {\tiny 3,1} (cent1);
	\draw[keps] (right1) to node[labl,pos=0.45] {\tiny 3,0} (cent1);
	\draw[keps] (left1) to node[labl,pos=0.45] {\tiny 3,0} (cent1);
\end{tikzpicture}
\; -\; \fc_\ga'' \;
\begin{tikzpicture}[scale=0.35, baseline=-0.7cm]
	\node at (0,-5.1)  [root] (root) {};
	\node at (0,-5.1) [rootlab] {$z$};
	\node at (0,0)  [var_blue] (cent) {};
	\node at (0,-2.9) [dot] (cent1) {};
	\draw[testfcn] (cent1) to (root);
	\draw[keps] (cent) to node[labl,pos=0.45] {\tiny 3,0} (cent1);
\end{tikzpicture}
\; = \; \left(
2\;
\begin{tikzpicture}[scale=0.35, baseline=-0.5cm]
	\node at (0,-5.1)  [root] (root) {};
	\node at (0,-5.1) [rootlab] {$z$};
	\node at (0,0) [dot] (int) {};
	\node at (0,2.9)  [var_blue] (cent) {};
	\node at (0,-2.9) [dot] (cent1) {};
	\node at (2.5,-1.45)  [var_very_blue] (right1) {};
	\node at (-2.5,-1.45)  [var_very_blue] (left1) {};
	\draw[testfcn] (cent1) to (root);
	\draw[keps] (left1) to node[labl,pos=0.45] {\tiny 3,0} (int);
	\draw[keps] (right1) to node[labl,pos=0.45] {\tiny 3,0} (int);
	\draw[keps] (cent) to node[labl,pos=0.45] {\tiny 3,0} (int);
	\draw[keps] (int) to node[labl,pos=0.45] {\tiny 3,0} (cent1);
	\draw[keps] (right1) to node[labl,pos=0.45] {\tiny 3,0} (cent1);
	\draw[keps] (left1) to node[labl,pos=0.45] {\tiny 3,0} (cent1);
\end{tikzpicture}
\; -\; \fc_\ga'' \;
\begin{tikzpicture}[scale=0.35, baseline=-0.7cm]
	\node at (0,-5.1)  [root] (root) {};
	\node at (0,-5.1) [rootlab] {$z$};
	\node at (0,0)  [var_blue] (cent) {};
	\node at (0,-2.9) [dot] (cent1) {};
	\draw[testfcn] (cent1) to (root);
	\draw[keps] (cent) to node[labl,pos=0.45] {\tiny 3,0} (cent1);
\end{tikzpicture} \right)
\; -\; 2\;
\begin{tikzpicture}[scale=0.35, baseline=-0.5cm]
	\node at (0,-5.1)  [root] (root) {};
	\node at (0,-5.1) [rootlab] {$z$};
	\node at (0,0) [dot] (int) {};
	\node at (0,2.9)  [var_blue] (cent) {};
	\node at (0,-2.9) [dot] (cent1) {};
	\node at (2.5,-1.45)  [var_very_blue] (right1) {};
	\node at (-2.5,-1.45)  [var_very_blue] (left1) {};
	\draw[testfcn] (cent1) to (root);
	\draw[keps] (left1) to node[labl,pos=0.45] {\tiny 3,0} (int);
	\draw[keps] (right1) to node[labl,pos=0.45] {\tiny 3,0} (int);
	\draw[keps] (cent) to node[labl,pos=0.45] {\tiny 3,0} (int);
	\draw[keps] (right1) to node[labl,pos=0.45] {\tiny 3,0} (cent1);
	\draw[keps] (left1) to node[labl,pos=0.45] {\tiny 3,0} (cent1);
	\draw [keps] (int) to[out=0,in=90] (3.5,-2.5) to [out=-90, in=0] node[labl,pos=0] {\tiny 3,0} (root);
\end{tikzpicture}\;.
\end{align*}
Applying \cite[Cor.~5.6]{Martingales}, the moments of the last diagram are bounded by a constant multiple of $(\lambda \vee \emezo)^{-\bar \kappa}$ for any $\bar \kappa > 0$. Similarly to \eqref{eq:22b-last}, we can write the expression in the brackets as
\begin{equation}\label{eq:sixth-symbol-last}
2\;
\begin{tikzpicture}[scale=0.35, baseline=-0.5cm]
	\node at (0,-5.1)  [root] (root) {};
	\node at (0,-5.1) [rootlab] {$z$};
	\node at (0,0) [dot] (int) {};
	\node at (0,2.9)  [var_blue] (cent) {};
	\node at (0,-2.9) [dot] (cent1) {};
	\node at (2.5,-1.45)  [var_red_square] (right1) {};
	\node at (-2.5,-1.45)  [var_red_square] (left1) {};
	\draw[testfcn] (cent1) to (root);
	\draw[keps] (left1) to node[labl,pos=0.45] {\tiny 3,0} (int);
	\draw[keps] (right1) to node[labl,pos=0.45] {\tiny 3,0} (int);
	\draw[keps] (cent) to node[labl,pos=0.45] {\tiny 3,0} (int);
	\draw[keps] (int) to node[labl,pos=0.45] {\tiny 3,0} (cent1);
	\draw[keps] (right1) to node[labl,pos=0.45] {\tiny 3,0} (cent1);
	\draw[keps] (left1) to node[labl,pos=0.45] {\tiny 3,0} (cent1);
\end{tikzpicture}
\;+\; 4\;
\begin{tikzpicture}[scale=0.35, baseline=-0.5cm]
	\node at (0,-5.1)  [root] (root) {};
	\node at (0,-5.1) [rootlab] {$z$};
	\node at (0,0) [dot] (int) {};
	\node at (0,2.9)  [var_blue] (cent) {};
	\node at (0,-2.9) [dot] (cent1) {};
	\node at (2.5,-1.45)  [var_red_square] (right1) {};
	\node at (-2.5,-1.45)  [var_red_triangle] (left1) {};
	\draw[testfcn] (cent1) to (root);
	\draw[keps] (left1) to node[labl,pos=0.45] {\tiny 3,0} (int);
	\draw[keps] (right1) to node[labl,pos=0.45] {\tiny 3,0} (int);
	\draw[keps] (cent) to node[labl,pos=0.45] {\tiny 3,0} (int);
	\draw[keps] (int) to node[labl,pos=0.45] {\tiny 3,0} (cent1);
	\draw[keps] (right1) to node[labl,pos=0.45] {\tiny 3,0} (cent1);
	\draw[keps] (left1) to node[labl,pos=0.45] {\tiny 3,0} (cent1);
\end{tikzpicture}
\;+\; \left(2\;
\begin{tikzpicture}[scale=0.35, baseline=-0.5cm]
	\node at (0,-5.1)  [root] (root) {};
	\node at (0,-5.1) [rootlab] {$z$};
	\node at (0,0) [dot] (int) {};
	\node at (0,2.9)  [var_blue] (cent) {};
	\node at (0,-2.9) [dot] (cent1) {};
	\node at (2.5,-1.45)  [var_red_triangle] (right1) {};
	\node at (-2.5,-1.45)  [var_red_triangle] (left1) {};
	\draw[testfcn] (cent1) to (root);
	\draw[keps] (left1) to node[labl,pos=0.45] {\tiny 3,0} (int);
	\draw[keps] (right1) to node[labl,pos=0.45] {\tiny 3,0} (int);
	\draw[keps] (cent) to node[labl,pos=0.45] {\tiny 3,0} (int);
	\draw[keps] (int) to node[labl,pos=0.45] {\tiny 3,0} (cent1);
	\draw[keps] (right1) to node[labl,pos=0.45] {\tiny 3,0} (cent1);
	\draw[keps] (left1) to node[labl,pos=0.45] {\tiny 3,0} (cent1);
\end{tikzpicture}
\; -\; \fc_\ga'' \;
\begin{tikzpicture}[scale=0.35, baseline=-0.7cm]
	\node at (0,-5.1)  [root] (root) {};
	\node at (0,-5.1) [rootlab] {$z$};
	\node at (0,0)  [var_blue] (cent) {};
	\node at (0,-2.9) [dot] (cent1) {};
	\draw[testfcn] (cent1) to (root);
	\draw[keps] (cent) to node[labl,pos=0.45] {\tiny 3,0} (cent1);
\end{tikzpicture}
\right),
\end{equation}
where the first two diagram are bounded using \cite[Cor.~5.6]{Martingales} by $c_\ga (\lambda \vee \emezo)^{-\kappa/2}$ with a constant $c_\ga$ vanishing as $\ga \to 0$.

Now, we will bound the expression in the brackets in \eqref{eq:sixth-symbol-last}. We write 
\begin{equation}\label{eq:sixth-symbol-last-again}
2\;
\begin{tikzpicture}[scale=0.35, baseline=-0.5cm]
	\node at (0,-5.1)  [root] (root) {};
	\node at (0,-5.1) [rootlab] {$z$};
	\node at (0,0) [dot] (int) {};
	\node at (0,2.9)  [var_blue] (cent) {};
	\node at (0,-2.9) [dot] (cent1) {};
	\node at (2.5,-1.45)  [var_red_triangle] (right1) {};
	\node at (-2.5,-1.45)  [var_red_triangle] (left1) {};
	\draw[testfcn] (cent1) to (root);
	\draw[keps] (left1) to node[labl,pos=0.45] {\tiny 3,0} (int);
	\draw[keps] (right1) to node[labl,pos=0.45] {\tiny 3,0} (int);
	\draw[keps] (cent) to node[labl,pos=0.45] {\tiny 3,0} (int);
	\draw[keps] (int) to node[labl,pos=0.45] {\tiny 3,0} (cent1);
	\draw[keps] (right1) to node[labl,pos=0.45] {\tiny 3,0} (cent1);
	\draw[keps] (left1) to node[labl,pos=0.45] {\tiny 3,0} (cent1);
\end{tikzpicture}
\; -\; \fc_\ga'' \;
\begin{tikzpicture}[scale=0.35, baseline=-0.7cm]
	\node at (0,-5.1)  [root] (root) {};
	\node at (0,-5.1) [rootlab] {$z$};
	\node at (0,0)  [var_blue] (cent) {};
	\node at (0,-2.9) [dot] (cent1) {};
	\draw[testfcn] (cent1) to (root);
	\draw[keps] (cent) to node[labl,pos=0.45] {\tiny 3,0} (cent1);
\end{tikzpicture}
\; =\;
\left(2\; \begin{tikzpicture}[scale=0.35, baseline=-0.5cm]
	\node at (0,-5.1)  [root] (root) {};
	\node at (0,-5.1) [rootlab] {$z$};
	\node at (0,0) [dot] (int) {};
	\node at (0,2.9)  [var_blue] (cent) {};
	\node at (0,-2.9) [dot] (cent1) {};
	\node at (2.5,-1.45)  [dot] (right1) {};
	\node at (-2.5,-1.45)  [dot] (left1) {};
	\draw[testfcn] (cent1) to (root);
	\draw[keps] (left1) to node[labl,pos=0.45] {\tiny 3,0} (int);
	\draw[keps] (right1) to node[labl,pos=0.45] {\tiny 3,0} (int);
	\draw[keps] (cent) to node[labl,pos=0.45] {\tiny 3,0} (int);
	\draw[keps] (int) to node[labl,pos=0.45] {\tiny 3,0} (cent1);
	\draw[keps] (right1) to node[labl,pos=0.45] {\tiny 3,0} (cent1);
	\draw[keps] (left1) to node[labl,pos=0.45] {\tiny 3,0} (cent1);
\end{tikzpicture}
\; -\; \fc_\ga'' \;
\begin{tikzpicture}[scale=0.35, baseline=-0.7cm]
	\node at (0,-5.1)  [root] (root) {};
	\node at (0,-5.1) [rootlab] {$z$};
	\node at (0,0)  [var_blue] (cent) {};
	\node at (0,-2.9) [dot] (cent1) {};
	\draw[testfcn] (cent1) to (root);
	\draw[keps] (cent) to node[labl,pos=0.45] {\tiny 3,0} (cent1);
\end{tikzpicture}
\right)
\; + \;
\Err_{\ga, \lambda}(z)
\end{equation}
where the error term $\Err_{\ga, \lambda}$ is defined via random kernel \eqref{eq:CG-def} and can be bounded as
\begin{equation*}
|\Err_{\ga, \lambda}(z)| \lesssim \sup_{z_1 \in D_\eps} \left| 2 \int_{D_\eps} \CG_{\ga} (z_1, z_2) \d z_2 - \fc_\ga'' \right|\; \sup_{z_2 \in D_\eps} \Bigl| \begin{tikzpicture}[scale=0.35, baseline=0cm]
	\node at (-2.3, 0.2)  [root] (root) {};
	\node at (-2.3, 0.2) [rootlab] {$z_2$};
	\node at (-5.5,0.2)  [var_blue] (left) {};	
	\draw[keps] (left) to node[labl,pos=0.45] {\tiny 3,0} (root);
\end{tikzpicture} \Bigr|
\end{equation*}
(we recall the renormalisation constant \eqref{eq:renorm-constant2}). Using \cite[Cor.~5.6]{Martingales} we can bound the high moments of the last supremum by a constant multiple of $\emezo^{-\frac{1}{2}}$, while Lemma~\ref{lem:CG-bounds} allows to bound the first supremum by a constant multiple of $\emezo^\theta$ with $\theta \in (\frac{1}{2}, 1)$. Hence, all high moments of the error term vanish as $\ga \to 0$.

It is left to bound the expression in the brackets in \eqref{eq:sixth-symbol-last-again}. For this, we define the kernel 
\begin{equation*}
G_{\ga} (z_1, z_2) = 
 \begin{tikzpicture}[scale=0.35, baseline=-0.2cm]
	\node at (0,1) [root] (int) {};
	\node at (0,1.6) {\scriptsize $z_2$};
	\node at (0,-2) [root] (cent1) {};
	\node at (0,-2) [rootlab] {$z_1$};
	\node at (2,-0.5)  [dot] (right1) {};
	\node at (-2,-0.5)  [dot] (left1) {};
	\draw[keps] (left1) to node[labl,pos=0.45] {\tiny 3,0} (int);
	\draw[keps] (right1) to node[labl,pos=0.45] {\tiny 3,0} (int);
	\draw[keps] (int) to node[labl,pos=0.45] {\tiny 3,0} (cent1);
	\draw[keps] (right1) to node[labl,pos=0.45] {\tiny 3,0} (cent1);
	\draw[keps] (left1) to node[labl,pos=0.45] {\tiny 3,0} (cent1);
\end{tikzpicture}\;, 
\end{equation*}
and we define for any smooth, compactly supported function $\psi : \R^4 \times \R^4 \to \R$ its ``negative renormalisation''
\begin{equation*}
\bigl(\SR_\ga G_\ga\bigr)(\psi) := \int_{D_\eps} \int_{D_\eps} G_\ga(z_1, z_2) \bigl( \psi(z_1, z_2) - \psi(z_1, z_1) \bigr) \d z_1 \d z_2.
\end{equation*}
This identity defined $\SR_\ga G_\ga$ as a distribution on $\R^4 \times \R^4$ (more precisely, $\SR_\ga G_\ga$ is a function in the first variable and a distribution in the second one). Then the expression in the brackets in \eqref{eq:sixth-symbol-last-again} may be written as
\begin{equation*}
\int_{D_\eps} \int_{D_\eps} \int_{D_\eps} \phi_z^\lambda(z_1) \bigl(\SR_\ga G_\ga\bigr)(z_1, z_2) \mywidetilde{\SK}^\ga(z_3 - z_2) \d \M_{\ga, \fa}(z_3) \d z_1 \d z_2.
\end{equation*}
We note that this expression is well defined, because the distribution $\SR_\ga G_\ga$ is convolved with smooth functions. It will be convenient to represent this expression as a diagram. For this, we denote the random kernel $G_\ga$ by an edge ``\,\tikz[baseline=-0.1cm] \draw[kernelBig] (0,0) to node[labl,pos=0.45] {\tiny 5,0} (1,0);\,'', and we denote $\SR_\ga G_\ga$ by ``\,\tikz[baseline=-0.1cm] \draw[kernelBig] (0,0) to node[labl,pos=0.45] {\tiny 5,-1} (1,0);\,''. Here, the label ``$5$'' refers to the order of singularity of $G_\ga$ (which can be proved similarly to \eqref{eq:CG-bound1}), and the label ``$-1$'' refers to the order of negative renormalisation (see \cite[Sec.~5]{Martingales}). Then the preceding expression can be represented as
\begin{equation*}
\begin{tikzpicture}[scale=0.35, baseline=0cm]
	\node at (0,0)  [root] (root) {};
	\node at (0,0) [rootlab] {$z$};
	\node at (2.2,0) [dot] (cent1) {};
	\node at (5.1,0) [dot] (int) {};
	\node at (7.7,0)  [var_blue] (cent) {};
	\draw[testfcn] (cent1) to (root);
	\draw[keps] (cent) to node[labl,pos=0.45] {\tiny 3,0} (int);
	\draw[kernelBig] (int) to node[labl,pos=0.45] {\tiny 5,-1} (cent1);
\end{tikzpicture}\;.
\end{equation*}
Applying \cite[Cor.~5.6]{Martingales}, the high enough moments of this expression are bounded by constant multiples of $(\lambda \vee \emezo)^{-\bar \kappa}$ for any $\bar \kappa > 0$.

\subsubsection{The element $\tau = \protect\<4b>$}

The definition \eqref{eq:model-Hermite} and the expansion \cite[Eq.~2.15]{Martingales} yield a diagrammatical representation of the map $\Pi^{\ga, \fa}_{z}\tau$:
\begin{align} \nonumber
\iota_\eps \bigl(\Pi^{\ga, \fa}_{z}\tau\bigr)(\phi_z^\lambda) &\;=\;  
\begin{tikzpicture}[scale=0.35, baseline=0cm]
	\node at (0,-2.2)  [root] (root) {};
	\node at (0,-2.2) [rootlab] {$z$};
	\node at (0,-2.5) {$$};
	\node at (0,0)  [dot] (int) {};
	\node at (-3.2,2)  [var_blue] (left) {};
	\node at (-1.1,2.9)  [var_blue] (cent_left) {};
	\node at (1.1,2.9)  [var_blue] (cent_right) {};
	\node at (3.2,2)  [var_blue] (right) {};
	\draw[testfcn] (int) to (root);
	\draw[keps] (left) to node[labl,pos=0.45] {\tiny 3,0} (int);
	\draw[keps] (cent_left) to node[labl,pos=0.45] {\tiny 3,0} (int);
	\draw[keps] (cent_right) to node[labl,pos=0.45] {\tiny 3,0} (int);
	\draw[keps] (right) to node[labl,pos=0.4] {\tiny 3,0} (int);
\end{tikzpicture}
\; + \;6\,
\begin{tikzpicture}[scale=0.35, baseline=0cm]
	\node at (0,-2.2)  [root] (root) {};
	\node at (0,-2.2) [rootlab] {$z$};
	\node at (0,0)  [dot] (int) {};
	\node at (-2.2,2)  [var_very_pink] (left) {};
	\node at (1.1,2.9)  [var_blue] (cent) {};
	\node at (3.2,2)  [var_blue] (right) {};
	\draw[testfcn] (int) to (root);
	\draw[keps] (left) to[bend left=60] node[labl,pos=0.45] {\tiny 3,0} (int);
	\draw[keps] (left) to[bend left=-60] node[labl,pos=0.45] {\tiny 3,0} (int);
	\draw[keps] (cent) to[bend left=60] node[labl,pos=0.4] {\tiny 3,0} (int);
	\draw[keps] (right) to[bend left=60] node[labl,pos=0.4] {\tiny 3,0} (int);
\end{tikzpicture}
\; + \;4\,
\begin{tikzpicture}[scale=0.35, baseline=0cm]
	\node at (0,-2.2)  [root] (root) {};
	\node at (0,-2.2) [rootlab] {$z$};
	\node at (0,0)  [dot] (int) {};
	\node at (-2,2)  [var_very_blue] (left) {};
	\node at (2,2)  [var_blue] (right) {};
	\draw[testfcn] (int) to (root);
	\draw[keps] (left) to[bend left=90] node[labl,pos=0.45] {\tiny 3,0} (int);
	\draw[keps] (left) to[bend left=-90] node[labl,pos=0.45] {\tiny 3,0} (int);
	\draw[keps] (left) to node[labl,pos=0.4] {\tiny 3,0} (int);
	\draw[keps] (right) to[bend left=60] node[labl,pos=0.4] {\tiny 3,0} (int);
\end{tikzpicture}
\\[0.2cm]
&\qquad \label{eq:Psi4}
\; +\;
\begin{tikzpicture}[scale=0.35, baseline=0cm]
	\node at (0,-2.2)  [root] (root) {};
	\node at (0,-2.2) [rootlab] {$z$};
	\node at (0,0)  [dot] (int) {};
	\node at (0,2.9)  [var_very_blue] (left) {};
	\draw[testfcn] (int) to (root);
	\draw[keps] (left) to[out=0, in=0, distance=2.5cm] node[labl,pos=0.55] {\tiny 3,0} (int);
	\draw[keps] (left) to[out=180, in=180, distance=2.5cm] node[labl,pos=0.55] {\tiny 3,0} (int);
	\draw[keps] (left) to[out=-35, in=35] node[labl,pos=0.45] {\tiny 3,0} (int);
	\draw[keps] (left) to[out=225, in=135] node[labl,pos=0.45] {\tiny 3,0} (int);
\end{tikzpicture}
\; +\; \;3\,
\begin{tikzpicture}[scale=0.35, baseline=0cm]
	\node at (0,-2.2)  [root] (root) {};
	\node at (0,-2.2) [rootlab] {$z$};
	\node at (0,0)  [dot] (int) {};
	\node at (-2.2,2)  [var_very_pink] (left) {};
	\node at (2.2,2)  [var_very_pink] (right) {};
	\draw[testfcn] (int) to (root);
	\draw[keps] (left) to[bend left=30] node[labl,pos=0.45] {\tiny 3,0} (int);
	\draw[keps] (left) to[bend left=-30] node[labl,pos=0.45] {\tiny 3,0} (int);
	\draw[keps] (right) to[bend left=30] node[labl,pos=0.45] {\tiny 3,0} (int);
	\draw[keps] (right) to[bend left=-30] node[labl,pos=0.45] {\tiny 3,0} (int);
\end{tikzpicture}\;. \nonumber
\end{align}
The high enough moments of the first diagram are bounded using \cite[Cor.~5.6]{Martingales} by a constant multiplier of $(\lambda \vee \emezo)^{-2}$, which is the required bound \eqref{eq:model_bound}. Reducing the singularity of kernels in the same way as we did above, \cite[Cor.~5.6]{Martingales} allows to bound the moments of the other diagrams by $c_\gamma (\lambda \vee \emezo)^{-2 - \bar{\kappa}}$, for any $\bar{\kappa} > 0$ and where $c_\gamma$ vanishes as $\ga \to 0$. 

\subsubsection{The element $\tau = \protect\<5b>$} 
\label{sec:fifth-chaos}

Similarly to the previous element, we can write
\begin{align*}
	&\iota_\eps \bigl(\Pi^{\ga, \fa}_{z}\tau\bigr)(\phi_z^\lambda)
	\;=\; 
	\begin{tikzpicture}[scale=0.35, baseline=0cm]
		\node at (0,-2.2)  [root] (root) {};
		\node at (0,-2.2) [rootlab] {$z$};
		\node at (0,0)  [dot] (int) {};
		\node at (0,3.2)  [var_blue] (cent) {};
		\node at (-2.8,0.8)  [var_blue] (left) {};
		\node at (-2,2.5)  [var_blue] (cent_left) {};
		\node at (2,2.5)  [var_blue] (cent_right) {};
		\node at (2.8,0.8)  [var_blue] (right) {};
		\draw[testfcn] (int) to (root);
		\draw[keps] (left) to node[labl,pos=0.45] {\tiny 3,0} (int);
		\draw[keps] (cent_left) to node[labl,pos=0.45] {\tiny 3,0} (int);
		\draw[keps] (cent_right) to node[labl,pos=0.45] {\tiny 3,0} (int);
		\draw[keps] (right) to node[labl,pos=0.4] {\tiny 3,0} (int);
		\draw[keps] (cent) to node[labl,pos=0.4] {\tiny 3,0} (int);
	\end{tikzpicture}
	+ 10 
	\begin{tikzpicture}[scale=0.35, baseline=0cm]
		\node at (0,-2.2)  [root] (root) {};
		\node at (0,-2.2) [rootlab] {$z$};
		\node at (0,0)  [dot] (int) {};
		\node at (0,3.2)  [var_blue] (cent) {};
		\node at (-2,2)  [var_very_pink] (cent_left) {};
		\node at (2.1,2.8)  [var_blue] (cent_right) {};
		\node at (3.5,1.5)  [var_blue] (right) {};
		\draw[testfcn] (int) to (root);
		\draw[keps] (cent_left) to[bend left=45] node[labl,pos=0.45] {\tiny 3,0} (int);
		\draw[keps] (cent_left) to[bend left=-45] node[labl,pos=0.45] {\tiny 3,0} (int);
		\draw[keps] (cent) to[bend left=45] node[labl,pos=0.4] {\tiny 3,0} (int);
		\draw[keps] (right) to[bend left=45] node[labl,pos=0.4] {\tiny 3,0} (int);
		\draw[keps] (cent_right) to[bend left=45] node[labl,pos=0.4] {\tiny 3,0} (int);
	\end{tikzpicture} 
	+ 10
	\begin{tikzpicture}[scale=0.35, baseline=0cm]
		\node at (0,-2.2)  [root] (root) {};
		\node at (0,-2.2) [rootlab] {$z$};
		\node at (0,0)  [dot] (int) {};
		\node at (-2,2)  [var_very_blue] (left) {};
		\node at (1.5,2.5)  [var_blue] (cent_right) {};
		\node at (3,1.5)  [var_blue] (right) {};
		\draw[testfcn] (int) to (root);
		\draw[keps] (left) to[bend left=90] node[labl,pos=0.45] {\tiny 3,0} (int);
		\draw[keps] (left) to[bend left=-90] node[labl,pos=0.45] {\tiny 3,0} (int);
		\draw[keps] (left) to node[labl,pos=0.4] {\tiny 3,0} (int);
		\draw[keps] (right) to[bend left=60] node[labl,pos=0.4] {\tiny 3,0} (int);
		\draw[keps] (cent_right) to[bend left=45] node[labl,pos=0.4] {\tiny 3,0} (int);
	\end{tikzpicture} \\
& \qquad + 10
	\begin{tikzpicture}[scale=0.35, baseline=0cm]
		\node at (0,-2.2)  [root] (root) {};
		\node at (0,-2.2) [rootlab] {$z$};
		\node at (0,0)  [dot] (int) {};
		\node at (-2,2)  [var_very_blue] (left) {};
		\node at (2.5,0)  [var_very_pink] (right) {};
		\draw[testfcn] (int) to (root);
		\draw[keps] (left) to[bend left=90] node[labl,pos=0.45] {\tiny 3,0} (int);
		\draw[keps] (left) to[bend left=-90] node[labl,pos=0.45] {\tiny 3,0} (int);
		\draw[keps] (left) to node[labl,pos=0.4] {\tiny 3,0} (int);
		\draw[keps] (right) to[bend left=30] node[labl,pos=0.45] {\tiny 3,0} (int);
		\draw[keps] (right) to[bend left=-30] node[labl,pos=0.45] {\tiny 3,0} (int);
	\end{tikzpicture}
+ 5
	\begin{tikzpicture}[scale=0.35, baseline=0cm]
		\node at (0,-2.2)  [root] (root) {};
		\node at (0,-2.2) [rootlab] {$z$};
		\node at (0,0)  [dot] (int) {};
		\node at (-2,2)  [var_very_blue] (left) {};
		\node at (3,1.5)  [var_blue] (right) {};
		\draw[testfcn] (int) to (root);
		\draw[keps] (left) to[out=45, in=45, distance=2.5cm] node[labl,pos=0.55] {\tiny 3,0} (int);
		\draw[keps] (left) to[out=225, in=225, distance=2.5cm] node[labl,pos=0.55] {\tiny 3,0} (int);
		\draw[keps] (left) to[out=10, in=80] node[labl,pos=0.45] {\tiny 3,0} (int);
		\draw[keps] (left) to[out=270, in=180] node[labl,pos=0.45] {\tiny 3,0} (int);
		\draw[keps] (right) to[bend left=60] node[labl,pos=0.4] {\tiny 3,0} (int);
	\end{tikzpicture} 
	+
	\begin{tikzpicture}[scale=0.35, baseline=0cm]
		\node at (0,-2.2)  [root] (root) {};
		\node at (0,-2.2) [rootlab] {$z$};
		\node at (0,0)  [dot] (int) {};
		\node at (0,2.9)  [var_very_blue] (left) {};
		\draw[testfcn] (int) to (root);
		\draw[keps] (left) to[out=180, in=180, distance=3.2cm] node[labl,pos=0.55] {\tiny 3,0} (int);
		\draw[keps] (left) to[out=0, in=0, distance=3.2cm] node[labl,pos=0.55] {\tiny 3,0} (int);
		\draw[keps] (left) to[out=0, in=0, distance=1.5cm] node[labl,pos=0.4] {\tiny 3,0} (int);
		\draw[keps] (left) to[out=180, in=180, distance=1.5cm] node[labl,pos=0.4] {\tiny 3,0} (int);
		\draw[keps] (left) to node[labl,pos=0.45] {\tiny 3,0} (int);
	\end{tikzpicture}
	+ 15
	\begin{tikzpicture}[scale=0.35, baseline=0cm]
		\node at (0,-2.2)  [root] (root) {};
		\node at (0,-2.2) [rootlab] {$z$};
		\node at (0,0)  [dot] (int) {};
		\node at (-2,2)  [var_very_pink] (cent_left) {};
		\node at (2,2)  [var_very_pink] (cent_right) {};
		\node at (3.5,1.5)  [var_blue] (right) {};
		\draw[testfcn] (int) to (root);
		\draw[keps] (cent_left) to[bend left=45] node[labl,pos=0.45] {\tiny 3,0} (int);
		\draw[keps] (cent_left) to[bend left=-45] node[labl,pos=0.45] {\tiny 3,0} (int);
		\draw[keps] (cent_right) to[bend left=45] node[labl,pos=0.45] {\tiny 3,0} (int);
		\draw[keps] (cent_right) to[bend left=-45] node[labl,pos=0.45] {\tiny 3,0} (int);
		\draw[keps] (right) to[bend left=45] node[labl,pos=0.4] {\tiny 3,0} (int);
	\end{tikzpicture}.
\end{align*}
The first diagram does not satisfy Assumption~3(2) in \cite{Martingales}, which means that we cannot bound it uniformly in $\ga$. This is the reason why we assumed a weaker bound on this element in Definition~\ref{def:model}. Multiplying the diagram by $\gamma \approx \emezo^{1/3}$, we can decrease the order of singularity of one of the kernels:
\begin{align*} 
\gamma\;
	\begin{tikzpicture}[scale=0.35, baseline=0cm]
		\node at (0,-2.2)  [root] (root) {};
		\node at (0,-2.2) [rootlab] {$z$};
		\node at (0,0)  [dot] (int) {};
		\node at (0,3.2)  [var_blue] (cent) {};
		\node at (-2.8,0.8)  [var_blue] (left) {};
		\node at (-2,2.5)  [var_blue] (cent_left) {};
		\node at (2,2.5)  [var_blue] (cent_right) {};
		\node at (2.8,0.8)  [var_blue] (right) {};
		\draw[testfcn] (int) to (root);
		\draw[keps] (left) to node[labl,pos=0.45] {\tiny 3,0} (int);
		\draw[keps] (cent_left) to node[labl,pos=0.45] {\tiny 3,0} (int);
		\draw[keps] (cent_right) to node[labl,pos=0.45] {\tiny 3,0} (int);
		\draw[keps] (right) to node[labl,pos=0.4] {\tiny 3,0} (int);
		\draw[keps] (cent) to node[labl,pos=0.4] {\tiny 3,0} (int);
	\end{tikzpicture}
	\;=\;
	\begin{tikzpicture}[scale=0.35, baseline=0cm]
		\node at (0,-2.2)  [root] (root) {};
		\node at (0,-2.2) [rootlab] {$z$};
		\node at (0,0)  [dot] (int) {};
		\node at (0,3.2)  [var_blue] (cent) {};
		\node at (-2.8,0.8)  [var_blue] (left) {};
		\node at (-2,2.5)  [var_blue] (cent_left) {};
		\node at (2,2.5)  [var_blue] (cent_right) {};
		\node at (2.8,0.8)  [var_blue] (right) {};
		\draw[testfcn] (int) to (root);
		\draw[keps] (left) to node[labl,pos=0.45] {\tiny 8/3,0} (int);
		\draw[keps] (cent_left) to node[labl,pos=0.45] {\tiny 3,0} (int);
		\draw[keps] (cent_right) to node[labl,pos=0.45] {\tiny 3,0} (int);
		\draw[keps] (right) to node[labl,pos=0.4] {\tiny 3,0} (int);
		\draw[keps] (cent) to node[labl,pos=0.4] {\tiny 3,0} (int);
	\end{tikzpicture}
\end{align*}
Then Assumption~3 in \cite{Martingales} is satisfied and by \cite[Cor.~5.6]{Martingales}, the moments of the first diagram are bounded by a constant multiple of $(\lambda \vee \emezo)^{-13/6}$, and, reducing the singularity of kernels as before, the moments of the other diagrams, multiplied by $\ga$, are bounded by $c_\ga (\lambda \vee \emezo)^{-3/6}$ with $c_\ga$ vanishing when $\ga \to 0$. 

\subsubsection{Proof of the bounds \eqref{eq:model_bound-delta}}

We draw ``\,\tikz[baseline=-0.1cm] \draw[kepsdot] (0,0) to node[labl,pos=0.45] {\tiny 3,0} (1,0);\,'' for the kernel $\mywidetilde{\SK}^{\ga, \de}$, because it has the same singularity as $\mywidetilde{\SK}^{\ga}$ (see Appendix~\ref{sec:decompositions}), and we draw ``\tikz[baseline=-0.1cm] \draw[keps] (0,0) to node[labl,pos=0.45] {\tiny 3+$\theta$,0} (1.5,0);\,'' for the difference $\delta^{-\theta}(\mywidetilde{\SK}^\ga - \mywidetilde{\SK}^{\ga, \de})$, because it satisfies \cite[Assum.~4]{Martingales} with $a_e=3+\theta$, for any $\theta > 0$ small enough (see Appendix~\ref{sec:decompositions}).

We start with proving the bound \eqref{eq:model_bound-delta} for the element $\tau = \protect\<1b>$. As we described in the beginning of Section~\ref{sec:Models_bounds}, the difference $\mywidetilde{\SK}^\ga - \mywidetilde{\SK}^{\ga, \de}$ satisfies \cite[Assum.~4]{Martingales} with $a_e=3+\theta$, for any $\theta > 0$ small enough, and we represent this difference by the edge ``\,\tikz[baseline=-0.1cm] \draw[keps] (0,0) to node[labl,pos=0.45] {\tiny 3+$\theta$,0} (1.5,0);\,'' with the multiplier $\de^\theta$. Then we write the function $\Pi^{\ga, \fa}_{z} \tau - \Pi_{z}^{\ga, \de, \fa} \tau$ as
\begin{equation*}
\iota_\eps \bigl(\Pi^{\ga, \fa}_{z} \tau - \Pi_{z}^{\ga, \de, \fa} \tau \bigr)(\phi_z^\lambda)
\;=\; \delta^{\theta}
\begin{tikzpicture}[scale=0.35, baseline=0cm]
	\node at (0.9,0.2)  [root] (root) {};
	\node at (0.9,0.2) [rootlab] {$z$};
	\node at (-2.3, 0.2)  [dot] (int) {};
	\node at (-6,0.2)  [var_blue] (left) {};	
	\draw[testfcn] (int) to (root);	
	\draw[keps] (left) to node[labl,pos=0.45] {\tiny 3+$\theta$,0} (int);	
\end{tikzpicture}\;,
\end{equation*} 
with the kernel given by
\begin{equation*}
	\CK^{\lambda, \emezo, \de}_{\CCG, z}(z^{\var}) = \int_{D_\eps} \!\!\phi_z^\lambda(\bar z)\, \big( \mywidetilde{\SK}^\ga - \mywidetilde{\SK}^{\ga, \de} \big)(\bar z - z^{\var}) \, \d \bar z.
\end{equation*}
Applying \cite[Cor.~5.6]{Martingales}, we get for any $\bar\kappa > 0$ and $p \geq 2$ large enough
\begin{equation*}
	\Bigl( \E \bigl| \iota_\eps \bigl(\Pi^{\ga, \fa}_{z} \tau - \Pi_{z}^{\ga, \de, \fa} \tau \bigr)(\phi_z^\lambda) \bigr|^p\Bigr)^{\frac{1}{p}} \lesssim \de^{\theta} (\lambda \vee \emezo)^{- \frac{1}{2}-\theta} \Bigl( 1 + \eps^{\frac{9}{4} - \bar\kappa} \emezo^{-\frac52} \Bigr) \lesssim \de^{\theta} (\lambda \vee \emezo)^{- \frac{1}{2}-\theta},
\end{equation*}
which is the required bound \eqref{eq:model_bound-delta} for the element $\tau$.

Now, we will prove the bound \eqref{eq:model_bound-delta} for the element $\tau = \protect\<2b>$. Similarly to \eqref{eq:Pi2-new} we can write
\begin{align}\label{eq:Pi2-conv}
	\iota_\eps \bigl(\Pi^{\ga, \fa}_{z} \tau - \Pi_{z}^{\ga, \de, \fa} \tau\bigr)(\phi_z^\lambda)
	& \;=\; \de^{\theta}
	\begin{tikzpicture}[scale=0.35, baseline=0cm]
		\node at (0,-2.2)  [root] (root) {};
		\node at (0,-2.2) [rootlab] {$z$};
		\node at (0,-2.5) {$$};
		\node at (0,0)  [dot] (int) {};
		\node at (-2,2.5)  [var_blue] (left) {};
		\node at (2,2.5)  [var_blue] (right) {};
		\draw[testfcn] (int) to (root);
		\draw[keps] (left) to node[labl,pos=0.45] {\tiny $3$,0} (int);
		\draw[keps] (right) to node[labl,pos=0.45] {\tiny 3+$\theta$,0} (int);
	\end{tikzpicture}
	\; +\; \de^{\theta}
	\begin{tikzpicture}[scale=0.35, baseline=0cm]
		\node at (0,-2.2)  [root] (root) {};
		\node at (0,-2.2) [rootlab] {$z$};
		\node at (0,-2.5) {$$};
		\node at (0,0)  [dot] (int) {};
		\node at (-2,2.5)  [var_blue] (left) {};
		\node at (2,2.5)  [var_blue] (right) {};
		\draw[testfcn] (int) to (root);
		\draw[keps] (left) to node[labl,pos=0.45] {\tiny 3+$\theta$,0} (int);
		\draw[kepsdot] (right) to node[labl,pos=0.45] {\tiny $3$,0} (int);
	\end{tikzpicture}
	\; +\; \de^\theta 
	\begin{tikzpicture}[scale=0.35, baseline=0cm]
		\node at (0,-2.2)  [root] (root) {};
		\node at (0,-2.2) [rootlab] {$z$};
		\node at (0,0)  [dot] (int) {};
		\node at (0,2.5)  [var_red_square] (left) {};
		\draw[testfcn] (int) to (root);
		\draw[keps] (left) to[bend left=80] node[labl,pos=0.45] {\tiny 3+$\theta$,0} (int);
		\draw[keps] (left) to[bend left=-80] node[labl,pos=0.45] {\tiny 3,0} (int);
	\end{tikzpicture} 
	\; + \; \de^\theta
	\begin{tikzpicture}[scale=0.35, baseline=0cm]
		\node at (0,-2.2)  [root] (root) {};
		\node at (0,-2.2) [rootlab] {$z$};
		\node at (0,0)  [dot] (int) {};
		\node at (0,2.5)  [var_red_square] (left) {};
		\draw[testfcn] (int) to (root);
		\draw[kepsdot] (left) to[bend left=80] node[labl,pos=0.45] {\tiny $3$,0} (int);
		\draw[keps] (left) to[bend left=-80] node[labl,pos=0.45] {\tiny 3+$\theta$,0} (int);
	\end{tikzpicture} \; \\[0.2cm]
	&  +\; \left(
	\begin{tikzpicture}[scale=0.35, baseline=0cm]
		\node at (0,-2.2)  [root] (root) {};
		\node at (0,-2.2) [rootlab] {$z$};
		\node at (0,0)  [dot] (int) {};
		\node at (0,2.5)  [var_red_triangle] (left) {};
		\draw[testfcn] (int) to (root);
		\draw[keps] (left) to[bend left=80] node[labl,pos=0.45] {\tiny $3$,0} (int);
		\draw[keps] (left) to[bend left=-80] node[labl,pos=0.45] {\tiny $3$,0} (int);
	\end{tikzpicture} \; - \; (\fc_\ga + \fc_\ga' ) \, 
	\begin{tikzpicture}[scale=0.35, baseline=-0.5cm]
		\node at (0,-2.2)  [root] (root) {};
		\node at (0,-2.2) [rootlab] {$z$};
		\node at (0,0)  [dot] (int) {};
		\draw[testfcn] (int) to (root);
	\end{tikzpicture} \right)
	\; - \; \left(
	\begin{tikzpicture}[scale=0.35, baseline=0cm]
		\node at (0,-2.2)  [root] (root) {};
		\node at (0,-2.2) [rootlab] {$z$};
		\node at (0,0)  [dot] (int) {};
		\node at (0,2.5)  [var_red_triangle] (left) {};
		\draw[testfcn] (int) to (root);
		\draw[kepsdot] (left) to[bend left=80] node[labl,pos=0.45] {\tiny $3$,0} (int);
		\draw[kepsdot] (left) to[bend left=-80] node[labl,pos=0.45] {\tiny $3$,0} (int);
	\end{tikzpicture} \; - \; (\fc_{\ga, \de} + \fc_{\ga, \de}' ) \, 
	\begin{tikzpicture}[scale=0.35, baseline=-0.5cm]
		\node at (0,-2.2)  [root] (root) {};
		\node at (0,-2.2) [rootlab] {$z$};
		\node at (0,0)  [dot] (int) {};
		\draw[testfcn] (int) to (root);
	\end{tikzpicture} \right). \nonumber
\end{align} 
The moments of the first four terms in \eqref{eq:Pi2-conv} are bounded using \cite[Cor.~5.6]{Martingales} by a constant multiple of $\de^\theta (\lambda \vee \emezo)^{-1-\theta}$ in the same way as we bounded the respective terms in \eqref{eq:Pi2-new}. We prefer to provide more details for the two expressions in parentheses in \eqref{eq:Pi2-conv}. In the same way as in \eqref{eq:Pi-3}, we can write the whole expression in the last line in \eqref{eq:Pi2-conv} as
\begin{align*}
&\frac{1}{2} \int_{D_\eps} \phi^\lambda_z (\bar z) \biggl( \eps^3 \sum_{\tilde{y} \in \Lattice} \int_0^\infty \Bigl(\mywidetilde{\SK}^\ga_{\bar t - \tilde{s}}(\bar x - \tilde{y})^2 - \mywidetilde{\SK}^{\ga, \de}_{\bar t - \tilde{s}}(\bar x - \tilde{y})^2\Bigr) \bigl(\bC_{\ga, \fa}(\tilde{s}, \tilde{y}) - 2 + 2 \be \varkappa_{\ga, 3} \ga^6 \un{\fC}_\ga\bigr) \d \tilde{s} \biggr) \d \bar z \\
&+ \frac{1}{2} \int_{D_\eps} \phi^\lambda_z (\bar z) \biggl( \eps^3 \sum_{\tilde{y} \in \Lattice} \int_{-\infty}^0 \Bigl(\mywidetilde{\SK}^\ga_{\bar t - \tilde{s}}(\bar x - \tilde{y})^2 - \mywidetilde{\SK}^{\ga, \de}_{\bar t - \tilde{s}}(\bar x - \tilde{y})^2\Bigr) \bigl(\widetilde{\bC}_{\ga, \fa}(-\tilde{s}, \tilde{y}) - 2 + 2 \be \varkappa_{\ga, 3} \ga^6 \un{\fC}_\ga\bigr) \d \tilde{s} \biggr) \d \bar z.
\end{align*}
We bound this expression in the same way as we bounded \eqref{eq:Pi-3}, with the only difference that now we bound the difference of the two kernels as 
\begin{equation*}
\biggl| \int_{D_\eps} \Bigl(\mywidetilde{\SK}^\ga(z)^2 - \mywidetilde{\SK}^{\ga, \de}(z)^2\Bigr)\, \d z \biggr| \lesssim \delta^\theta \emezo^{-1 - \theta}
\end{equation*}
(see explanations at the beginning of this section). Hence, the expression in the last line in \eqref{eq:Pi2-conv} is bounded by constant times $\de^\theta (\lambda \vee \emezo)^{-1-\theta}$, as required. 
\medskip

The bound \eqref{eq:model_bound-delta} for the other elements in $\bar\CW$ can be proved by analogy, and we prefer to provide only the idea of the proof. For any element $\tau \in \bar\CW$ we can write 
\[
	\iota_\eps \bigl(\Pi^{\ga, \fa}_{z}\tau - \Pi_{z}^{\ga, \de, \fa}\tau\bigr)(\phi_z^\lambda) = \sum_{i \in A} \iota_\eps \bigl(\Pi_{z}^{\ga, i}\tau - \Pi_{z}^{\ga, (\de), i}\tau\bigr)(\phi_z^\lambda),
\]
for a finite set $A$, and where the new maps $\Pi_{z}^{\ga, i} \tau$ and $\Pi_{z}^{\ga, (\de), i} \tau$ are coming from expanding products of martingales \cite[Eq.~5.1]{Martingales}. These two maps can be represented by diagrams, as we did above, with the only difference that the edges in the diagram of $\Pi_{z}^{\ga, (\de), i} \tau$ incident to the noise nodes are given by the kernels $\mywidetilde{\SK}^{\ga, \de}$. We can further write
\begin{equation}\label{eq:Pi-expansion}
	\iota_\eps \bigl(\Pi_{z}^{\ga, i}\tau - \Pi_{z}^{\ga, (\de), i}\tau\bigr)(\phi_z^\lambda) = \sum_{j \in B_i} \iota_\eps \bigl(\Pi_{z}^{\ga, (\de), i, j}\tau\bigr)(\phi_z^\lambda),
\end{equation}
where the diagram for $\iota_\eps \bigl(\Pi_{z}^{\ga, (\de), i, j}\tau\bigr)(\phi_z^\lambda)$ is obtained from $\iota_\eps \bigl(\Pi_{z}^{\ga, (\de), i}\tau\bigr)(\phi_z^\lambda)$ by replacing one of the kernels incident to noise nodes by $\mywidetilde{\SK}^\ga - \mywidetilde{\SK}^{\ga, \de}$, and some other nodes by $\mywidetilde{\SK}^{\ga, \de}$.

Applying \cite[Cor.~5.6]{Martingales} to each element in \eqref{eq:Pi-expansion}, in the same way as we did in the previous sections, we get the required bound \eqref{eq:model_bound-delta}.

\subsection{Proof of Proposition~\ref{prop:models-converge}} 
\label{sec:convergence-of-models-proof}

We start with proving the required bounds on the maps $\Pi^{\ga, \fa}$ and $\Pi^{\ga, \de, \fa}$. From the preceding sections we conclude that in the setting of this proposition, for $\bar \kappa > 0$ sufficiently small and for every $\tau \in \CW^\ex \setminus \{\<5b>, \<40eb>, \<50eb>, \<20b>, \<30b>\}$ with $|\tau| < 0$, we have
\begin{subequations}\label{eqs:moment-bounds-Pi-Gamma}
\begin{equation}\label{eq:moment-bounds-Pi}
\E \Bigl[ \bigl|\bigl(\iota_\eps \Pi^{\ga, \fa}_{z} \tau\bigr) (\varphi^\lambda_{z})\bigr|^p\Bigr] \lesssim \lambda^{(|\tau| + \bar \kappa) p}, \qquad \E \Bigl[ \bigl| \bigl(\Pi^{\ga, \fa}_{z} \tau\bigr) (\bar z) \bigr|^p \Bigr] \lesssim \emezo^{(|\tau| + \bar \kappa) p},
\end{equation}
and 
\begin{align}
\E \Bigl[ \bigl|\bigl(\iota_\eps \Pi^{\ga, \fa}_{z} \tau - \iota_\eps \Pi^{\ga, \de, \fa}_{z} \tau\bigr) (\varphi^\lambda_{z})\bigr|^p\Bigr] &\lesssim \lambda^{(|\tau| + \bar \kappa) p} \de^{\theta p}, \\
 \E \Bigl[ \bigl|\bigl(\Pi^{\ga, \fa}_{z} \tau - \Pi^{\ga, \de, \fa}_{z} \tau\bigr) (\bar z) \bigr|^p \Bigr] &\lesssim \emezo^{(|\tau| + \bar \kappa) p} \de^{\theta p},
\end{align}
\end{subequations}
uniformly over $z \in [-T, T] \times [-1, 1]^3$, $\| \bar z - z \|_\s \leq \emezo$ and other quantities as in \eqref{eq:Pi-bounds}. For the element $\tau = \<5b>$ these bounds hold with $|\tau|$ replaced by $|\tau| + \frac{1}{3}$ and the proportionality constants of order $\ga^{-p}$. In these and the following bounds the proportionality constants depends on $p$ and $T$, but are independent of all the other quantities. These bounds readily yield the respective bounds for the elements $\<40eb>$ and $\<50eb>$, because of the definition \eqref{eq:Pi-E} and the simple bounds $\ga^6 \lesssim \emezo^2 \lesssim (\lambda \vee \emezo)^2$ and $\ga^6 \lesssim \gamma \emezo^{2 - 1/3} \lesssim \gamma (\lambda \vee \emezo)^{2-1/3}$.

It is left to prove these bounds for the symbols $\<20b>$ and $\<30b>$. We will prove stronger bounds 
\begin{equation}\label{eq:Pi-strong-bound}
\begin{aligned}
\E \Bigl[ \bigl|\bigl(\Pi^{\ga, \fa}_{z} \bar \tau\bigr)(\bar z)\bigr|^p\Bigr] &\lesssim \bigl(\| z - \bar z \|_\s \vee \emezo\bigr)^{(|\bar \tau| + \bar \kappa) p}, \\
 \E \Bigl[ \bigl|\bigl(\Pi^{\ga, \fa}_{z} \bar \tau - \Pi^{\ga, \de, \fa}_{z}\bar \tau\bigr)(\bar z)\bigr|^p\Bigr] &\lesssim \bigl(\| z - \bar z \|_\s \vee \emezo\bigr)^{(|\bar \tau| + \bar \kappa) p} \de^{\theta p},
\end{aligned}
\end{equation}
for $\bar \tau \in \{\<20b>, \<30b>\}$, from which the required bounds \eqref{eqs:moment-bounds-Pi-Gamma} follow at once. From the definition \eqref{eq:Pi-I} and the expansion of $\mywidetilde{\SK}^\ga$, provided in Appendix~\ref{sec:decompositions}, we have 
\begin{equation}\label{eq:Pi-strong-bound-proof}
\Bigl(\E \Bigl[ \bigl|\bigl(\Pi^{\ga, \fa}_{z} \bar \tau\bigr)(\bar z)\bigr|^p\Bigr]\Bigr)^{\frac{1}{p}} \lesssim \sum_{n = 0}^{\mywidetilde{M}} \Bigl(\E \Bigl[ \bigl| \iota_\eps \bigl(\Pi^{\ga, \fa}_z \tau\bigr) \bigl(\widetilde{K}^{\ga, n}(\bar z - \bigcdot) - \widetilde{K}^{\ga, n}(z - \bigcdot)\bigr) \bigr|^p\Bigr]\Bigr)^{\frac{1}{p}},
\end{equation}
for $\tau \in \{\<2b>, \<3b>\}$. In order to estimate this sum, we need to consider two cases: $\| z - \bar z \|_\s \geq 2^{-n}$ and $\| z - \bar z \|_\s < 2^{-n}$.

If $\| z - \bar z \|_\s \geq 2^{-n}$, then we apply the Minkowski inequality to bound the $n$-th term in \eqref{eq:Pi-strong-bound-proof} by
\begin{equation}\label{eq:Pi-strong-bound-proof1}
\Bigl(\E \Bigl[ \bigl| \iota_\eps \bigl(\Pi^{\ga, \fa}_z \tau\bigr) \bigl(\widetilde{K}^{\ga, n}(\bar z - \bigcdot)\bigr) \bigr|^p\Bigr]\Bigr)^{\frac{1}{p}} + \Bigl(\E \Bigl[ \bigl| \iota_\eps \bigl(\Pi^{\ga, \fa}_z \tau\bigr) \bigl(\widetilde{K}^{\ga, n}(z - \bigcdot)\bigr) \bigr|^p\Bigr]\Bigr)^{\frac{1}{p}}.
\end{equation}
Moreover, from the identities $\Pi^{\ga, \fa}_{z} = \Pi^{\ga, \fa}_{\bar z} \Gamma^{\ga, \fa}_{\!\bar z z}$ and $\Gamma^{\ga, \fa}_{\!\bar z z} \tau = \tau$ for $\tau \in \{\<2b>, \<3b>\}$ (the first identity follows from the definition of the model, and the second identity follows from Table~\ref{tab:linear_transformations}), we can replace $\Pi^{\ga, \fa}_z$ in the first term in \eqref{eq:Pi-strong-bound-proof1} by $\Pi^{\ga, \fa}_{\bar z}$. Then the bounds \eqref{eq:Kn_bound} and \eqref{eq:moment-bounds-Pi} allow to estimate \eqref{eq:Pi-strong-bound-proof1} by a constant multiple of $2^{- (|\bar \tau| + \bar \kappa) n}$. Then the part of the sum in \eqref{eq:Pi-strong-bound-proof} over $n$ satisfying $\| z - \bar z \|_\s \geq 2^{-n}$ is bounded by a constant times
\begin{equation*}
\sum_{\substack{ 0 \leq n \leq M : \\ \| z - \bar z \|_\s \geq 2^{-n}}} 2^{- (|\bar \tau| + \bar \kappa) n} \lesssim \bigl(\| z - \bar z \|_\s \vee \emezo\bigr)^{|\bar \tau| + \bar \kappa}.
\end{equation*}

If $\| z - \bar z \|_\s < 2^{-n}$, then we need to distinguish the two cases $n < \mywidetilde{M}$ and $n = \mywidetilde{M}$. For $n < \mywidetilde{M}$ we can write 
\begin{equation*}
\widetilde{K}^{\ga, n}(\bar z - \tilde z) - \widetilde{K}^{\ga, n}(z - \tilde z) = \sum_{i = 0}^3 \int_{L_i} \partial_{u_i} \widetilde{K}^{\ga, n}(z + u - \tilde z) \d u,
\end{equation*}
for line segments $L_i$, parallel to the coordinate axes, such that their union is a path connecting the origin and $\bar z - z$. In particular, the length of each $L_i$ is bounded by $\| z - \bar z \|^{\s_i}_\s$, where $\s_0 = 2$ and $\s_i = 1$ for $i = 1,2,3$. Then we have 
\begin{equation*}
\iota_\eps \bigl(\Pi^{\ga, \fa}_z \tau\bigr) \bigl(\widetilde{K}^{\ga, n}(\bar z - \bigcdot) - \widetilde{K}^{\ga, n}(z - \bigcdot)\bigr) = \sum_{i = 0}^3 \int_{L_i} \iota_\eps \bigl(\Pi^{\ga, \fa}_{z + u} \tau\bigr) \bigl(\partial_{u_i} K^{\ga, n}(z + u - \bigcdot)\bigr) \d u,
\end{equation*}
where we replaced $\Pi^{\ga, \fa}_{z}$ by $\Pi^{\ga, \fa}_{z + u}$ in the same was as we did in \eqref{eq:Pi-strong-bound-proof1}. The bounds \eqref{eq:Kn_bound} and \eqref{eq:moment-bounds-Pi} yield 
\begin{align*}
\Bigl(\E \Bigl[ \bigl| \iota_\eps \bigl(\Pi^{\ga, \fa}_z \tau\bigr) \bigl(K^{\ga, n}(\bar z - \bigcdot) - K^{\ga, n}(z - \bigcdot)\bigr) \bigr|^p\Bigr]\Bigr)^{\frac{1}{p}} \lesssim \sum_{i = 0}^3 2^{- (|\bar \tau| - \s_i + \bar \kappa) n} \| z - \bar z \|^{\s_i}_\s.
\end{align*}
Since $|\bar \tau| - \s_i < 0$, we can take $\bar \kappa > 0$ small enough, such that $|\bar \tau| - \s_i + \bar \kappa < 0$. Then the part of the sum in \eqref{eq:Pi-strong-bound-proof} over $n$ satisfying $\| z - \bar z \|_\s < 2^{-n}$ is bounded by a constant times
\begin{equation*}
\sum_{i = 0}^3 \| z - \bar z \|^{\s_i}_\s \sum_{\substack{ 0 \leq n < \mywidetilde{M} : \\ \| z - \bar z \|_\s < 2^{-n}}} 2^{- (|\bar \tau| - \s_i + \bar \kappa) n} \lesssim \bigl(\| z - \bar z \|_\s \vee \emezo\bigr)^{|\bar \tau| + \bar \kappa}.
\end{equation*}

In the case $n = \mywidetilde{M}$ we have $\| z - \bar z \|_\s < \emezo$, and the radius of support of the function $\widetilde{K}^{\ga, \mywidetilde{M}}(\bar z - \tilde z) - \widetilde{K}^{\ga, \mywidetilde{M}}(z - \tilde z)$ in $\tilde z$ is of order $\emezo$. Then \eqref{eq:Kn_bound} and the second bound in \eqref{eq:moment-bounds-Pi} yield
\begin{align*}
\Bigl(\E \Bigl[ \bigl| \iota_\eps &\bigl(\Pi^{\ga, \fa}_z \tau\bigr) \bigl(\widetilde{K}^{\ga, \mywidetilde{M}}(\bar z - \bigcdot) - \widetilde{K}^{\ga, \mywidetilde{M}}(z - \bigcdot)\bigr) \bigr|^p\Bigr]\Bigr)^{\frac{1}{p}} \\
&\qquad \lesssim \emezo^{|\tau| + \bar \kappa} \int_{D_\eps} \bigl| \widetilde{K}^{\ga, \mywidetilde{M}}(\bar z - \tilde z) - \widetilde{K}^{\ga, \mywidetilde{M}}(z - \tilde z)\bigr| \d \tilde z \lesssim \emezo^{|\bar \tau| + \bar \kappa} \lesssim \bigl(\| z - \bar z \|_\s \vee \emezo\bigr)^{|\bar \tau| + \bar \kappa}.
\end{align*}
This finishes the proof of the first bound in \eqref{eq:Pi-strong-bound}.

The second bound in \eqref{eq:Pi-strong-bound} can be proved by analogy, but instead of \eqref{eq:Kn_bound} we need to use 
\begin{equation*}
\bigl|D^k \bigl(\widetilde{K}^{\ga, n} - \widetilde{K}^{\ga, n} \star_\eps \varrho_{\ga, \de}\bigr)(z)\bigr| \leq C \de^\theta 2^{n(3 + |k|_\s - \theta)},
\end{equation*}
for respective $n$ and $k$. This bound follows readily from the properties of $\widetilde{K}^{\ga, n}$ and $\varrho_{\ga, \de}$.

The bounds on $\Pi^{\ga, \fa}$ yield the bounds on $\Gamma^{\ga, \fa}$. Indeed, the definition provided above \eqref{eq:Gamma-lift} yields $\Gamma^{\ga, \fa}_{\!z \bar z} \tau = \tau - (\Pi^{\ga, \fa}_{z} \tau)(\bar z) \blueOne$ for $\tau= \<20b>$, and from \eqref{eq:Pi-strong-bound} we get
\begin{align*}
\E \Bigl[ \bigl| \Gamma^{\ga, \fa}_{\!z \bar z} \tau \bigr|_0^p\Bigr] &= \E \Bigl[ \bigl|\bigl(\Pi^{\ga, \fa}_{z} \tau\bigr)(\bar z)\bigr|^p\Bigr] \lesssim \bigl(\| z - \bar z \|_\s \vee \emezo\bigr)^{(|\tau| + \bar \kappa) p}, \\
\E \Bigl[ \bigl| \bigl(\Gamma^{\ga, \fa}_{\!z \bar z} - \Gamma^{\ga, \de, \fa}_{\!z \bar z}\bigr) \tau \bigr|_0^p\Bigr] &= \E \Bigl[ \bigl|\bigl(\Pi^{\ga, \fa}_{z} \tau - \Pi^{\ga, \de, \fa}_{z} \tau\bigr)(\bar z)\bigr|^p\Bigr] \lesssim \bigl(\| z - \bar z \|_\s \vee \emezo\bigr)^{(|\tau| + \bar \kappa) p} \de^{\theta p},
\end{align*}
which is the required bound. In the same way we get bounds for all other elements $\tau \in \CW^\ex$ such that $\Gamma^{\ga, \fa}_{\!z \bar z} \tau \neq \tau$.

\medskip
Now we will use a Kolmogorov-type result to show that the bounds \eqref{eqs:moment-bounds-Pi-Gamma} and \eqref{eq:Pi-strong-bound} yield \eqref{eq:prop:models-converge}, with a small loss of regularity. For every $\tau \in \CW^\ex \setminus \{\<5b>, \<40eb>, \<50eb>\}$ with $|\tau| < 0$ the bounds 
\begin{align*}
\E \biggl[ \sup_{\lambda \in [\emezo, 1]}\sup_{\varphi \in \CB^{2}_\s} \sup_{z \in K} \lambda^{-|\tau| p} \bigl|\bigl(\iota_\eps \Pi^{\ga, \fa}_{z} \tau\bigr) (\varphi^\lambda_{z})\bigr|^p\biggr] &\lesssim 1,\\
\E \biggl[ \sup_{\lambda \in [\emezo, 1]}\sup_{\varphi \in \CB^{2}_\s} \sup_{z \in K} \lambda^{-|\tau| p} \bigl|\bigl(\iota_\eps \Pi^{\ga, \fa}_{z} \tau - \iota_\eps \Pi^{\ga, \de, \fa}_{z} \tau\bigr) (\varphi^\lambda_{z})\bigr|^p\biggr] &\lesssim \de^{\theta p},
\end{align*}
uniformly in $\ga > 0$, can be proved in exactly the same way as \cite[Lem.~10.2]{Regularity}. For the element $\tau = \<5b>$ these bounds hold with $|\tau| + \frac{1}{3}$ in place of $|\tau|$ and the proportionality constant of order $\ga^p$. These bounds for the elements $\<40eb>$ and $\<50eb>$ readily follow as in \eqref{eqs:moment-bounds-Pi-Gamma}. Furthermore, from \eqref{eq:Pi-strong-bound} and the Kolmogorov continuity criterion \cite{Kallenberg} we conclude that 
\begin{equation*}
\E \biggl[ \sup_{z, \bar z \in K} \frac{|(\Pi^{\ga, \fa}_{z} \bar \tau)(\bar z)|^p}{(\| z - \bar z \|_\s \vee \emezo)^{|\bar \tau| p}}\biggr] \lesssim 1, \qquad \E \biggl[ \sup_{z, \bar z \in K} \frac{|(\Pi^{\ga, \fa}_{z} \bar \tau - \Pi^{\ga, \de, \fa}_{z}\bar \tau)(\bar z)|^p}{(\| z - \bar z \|_\s \vee \emezo)^{|\bar \tau| p}}\biggr] \lesssim \de^{\theta p},
\end{equation*}
for $\bar \tau \in \{\<20b>, \<30b>\}$ and for any compact set $K \subset \R^4$. Finally, we get the required bounds on the maps $\Gamma^{\ga, \fa}$ and $\Gamma^{\ga, \de, \fa}$, because they are defined in \eqref{eq:Gamma-lift} via $\Pi^{\ga, \fa}$ and $\Pi^{\ga, \de, \fa}$.

\section{A discrete solution map}
\label{sec:discrete-solution}

In order to prove the desired convergence in Theorem~\ref{thm:main}, we first need to write equation \eqref{eq:IsingKacEqn-new} in the framework of regularity structures. 

We use the discrete model $Z^{\ga, \fa}_\lift$ construction in Section~\ref{sec:lift}, and define the integration operators on the space of modelled distributions via the kernel $\widetilde{G}^\ga$ as in \cite[Sec.~4]{erhard2017discretisation}. More precisely, we write $\widetilde{G}^\ga = \mywidetilde{\SK}^\ga + \mywidetilde{\SR}^\ga$ as in the beginning of Section~\ref{sec:lift}. Then we use the singular part $\mywidetilde{\SK}^\ga$ to define the map $\CK^\ga_{\kappa}$ as in \cite[Eq.~4.6]{erhard2017discretisation} for the value $\beta = 2$. We use the regularity $\kappa$ by analogy with \eqref{eq:abstract-integral}. We note that we do not need to consider the map $\CA^\ga$ from \cite[Eq.~4.16]{erhard2017discretisation}, since it vanishes in our case (see \cite[Rem.~4.10]{erhard2017discretisation}). We lift the smooth part $\mywidetilde{\SR}^\ga$ to a modelled distribution $R^\ga_{1 + 3 \kappa}$ by a Taylor's expansion as in \cite[Eq.~5.17]{HairerMatetski}. Then we define the map
\begin{equation}\label{eq:P-operator}
\CP^\ga := \CK^\ga_{\kappa} + R^\ga_{1 + 3 \kappa} \CR^{\ga, \fa}
\end{equation}
on a suitable space of modelled distributions, where $\CR^{\ga, \fa}$ is the reconstruction map associated to the model by \eqref{eq:def_rec_op}. In order to use Theorem~4.8 and Lemma~6.2 in \cite{erhard2017discretisation}, we need to show that the respective assumptions in \cite{erhard2017discretisation} are satisfied. Assumptions~4.1 and 4.4 hold trivially, because the action of the model $Z^{\ga, \fa}_\lift$ on polynomials coincides with the canonical continuous polynomial model. Assumption~4.3 follows from our definition of the space $\CX_\eps$ in Section~\ref{sec:DiscreteModels} and properties of the kernel $\mywidetilde{\SK}^\ga$. Assumption~4.7 can be shown by brutal bounds of the terms in \cite[Eq.~4.6]{erhard2017discretisation}, combined with the definitions of discrete models and modelled distributions from Sections~\ref{sec:DiscreteModels} and \ref{sec:DiscreteModelledDistributions}. Finally, Assumption~6.1 follows readily from the Taylor's approximations and smoothness of the function $\mywidetilde{\SK}^\ga$. As we said above, the map $\CA^\ga$ vanishes in our case and Assumption~6.3 trivially holds.

Our goal is to write the solution of \eqref{eq:IsingKacEqn-new} as a reconstruction of an abstract equation of the form \eqref{eq:abstract_equation}. However, the complicated non-linearity in \eqref{eq:IsingKacEqn-new} makes the definition of this equation more difficult. 

As follows from \eqref{eq:hom5}, applying $\CE$ increases homogeneity by $2$. However, applying $\CE$ to a modelled distribution $f \in \CD^\zeta$ does not give in general an element of $\CD^{\zeta+2}$, because $\CE$ vanishes on polynomials. To resolve this problem, we define the domain of this map 
\begin{equation*}
\Dom_{\CE} := \{\<4b>, \<5b>, \blueOne\}
\end{equation*}
and consider a modelled distribution of the form 
\begin{equation}\label{eq:f-in-the-domain}
f(z) = \sum_{\tau \in \Dom_{\CE}} f_{\tau}(z) \tau.
\end{equation}
Then we define the map
\begin{equation}\label{eq:E-hat}
	\bigl(\widehat{\CE}_\ga f\bigr)(z) := \CE\bigl(f(z)\bigr) + \ga^6 f_{\blueOne}(z) \blueOne.
\end{equation}
We need to consider $f$ of the form \eqref{eq:f-in-the-domain}, because $f$ should be in the domain of the map $\CE$. If $\CR^{\ga, \fa}$ is the reconstruction map for the model $Z^{\ga, \fa}_\lift$, then we use Remark~\ref{rem:Eps-reconstruct} to conclude
\begin{equation}\label{eq:E-reconstruction}
\CR^{\ga, \fa} \bigl(\widehat{\CE}_\ga f\bigr)(z) = \ga^6 \sum_{\tau \in \Dom_{\CE}} f_{\tau}(z) \bigl(\CR^{\ga, \fa}\tau\bigr) (z) = \ga^6 \bigl(\CR^{\ga, \fa} f\bigr)(z).
\end{equation}
Moreover, we can show that this map increases regularity. Throughout this section we are going to use the time-dependent norms on modelled distributions introduced in Remark~\ref{rem:T-space}. 

As we showed in Remark~\ref{rem:extension-to-Xi}, the model and the reconstruction map are extended to the symbol $\blueXi$. Then the map \eqref{eq:P-operator} can be applied to this symbol and we define the modelled distribution 
\begin{equation}\label{eq:V-gamma}
W_{\ga, \fa}(z) := \CP^\ga \1_+ (\blueXi)(z).
\end{equation}
Furthermore, for $\zeta = 1 + 3 \kappa$ and $\eta \in \R$ we define the abstract equation
\begin{equation}\label{eq:AbstractDiscreteEquation} 
U_{\ga, \fa} = \CQ_{< \zeta} \Bigl(G^\ga X_\ga^0 + \CP^\ga \1_+ \bigl( F_\ga(U_{\ga, \fa}) + E^{(1)}_\ga(U_{\ga, \fa}) + E^{(2)}_\ga(U_{\ga, \fa}) \bigr) + \sqrt 2\, W_{\ga, \fa}\Bigr),
\end{equation}
for a modelled distribution $U_{\ga, \fa} \in D_\eps^{\zeta, \eta} (Z^{\ga, \fa}_\lift)$, where $G^\ga X_\ga^0$ is the polynomial lift of the operator \eqref{eq:P-gamma} applied to $X_\ga^0$ where the discrete heat kernel $G_t^\ga$ is defined on $\Lattice$ by \eqref{eq:From-P-to-G}. The function $F_\ga$ describes the non-linearity in \eqref{eq:IsingKacEqn-new} and is defined as
\begin{equation}\label{eq:F-ga}
F_\ga(U_{\ga, \fa}) := \CQ_{\leq 0} \Bigl( \Bigl(- \frac{\be^3}{3} + B_\ga\Bigr) U_{\ga, \fa}^3 + (A_\ga + A) U_{\ga, \fa}\Bigr),
\end{equation}
for constants $A_\ga$ and $B_\ga$ whose values will be chosen in Lemma~\ref{lem:solution}. We need to consider these constants because of our definition of the renormalised products in \eqref{eq:model-Hermite}. As we will see in the following lemma, we need to take constants $A_\ga$ and $B_\ga$ in \eqref{eq:F-ga}, vanishing as $\ga \to 0$, in order to obtain exactly \eqref{eq:IsingKacEqn-new} after reconstruction of \eqref{eq:AbstractDiscreteEquation}. The function $E^{(1)}_\ga$ in \eqref{eq:AbstractDiscreteEquation} describes the remainder after the Taylor's approximation of the function $\tanh$ in \eqref{eq:expr_error_term}, and is given by
\begin{equation}\label{eq:E1-ga}
E^{(1)}_\ga(U_{\ga, \fa}) := \frac{1}{\delta \alpha} R_5 \bigl(\be \ga^3 \CR^{\ga, \fa} U_{\ga, \fa}\bigr) \blueOne,  
\end{equation}
where $\CR^{\ga, \fa}$ is the reconstruction map, defined in \eqref{eq:def_rec_op}, and $R_5 : \R \to \R$ is the remainder in the Taylor's approximation of the fifth order of the function $\tanh$, i.e. 
\begin{equation}\label{eq:Taylor}
R_5(x) := \tanh x - x + \frac{x^3}{3} - \frac{x^5}{5}.
\end{equation}
The function $E^{(2)}_\ga$ in \eqref{eq:AbstractDiscreteEquation} is defined as
\begin{equation}\label{eq:E2-ga}
E^{(2)}_\ga(U_{\ga, \fa}) := \frac{\be^5}{5} \widehat{\CE}_\ga \biggl(\sum_{\tau \in \{\scalebox{0.7}{\<4b>}, \scalebox{0.7}{\<5b>}\}} \Bigl( \CQ_{\tau} U_{\ga, \fa}^5 - \bigl(\CR^{\ga, \fa} \CQ_{\tau} U_{\ga, \fa}^5\bigr) \blueOne \Bigr) + H_5 \bigl(\CR^{\ga, \fa} U_{\ga, \fa}, 2 \fc_\ga\bigr) \blueOne \biggr),
\end{equation}
where $\CQ_\tau$ is the projection from the model space to the span of $\tau$, $H_5$ is the $5$-th Hermite polynomial \eqref{eq:def_Hermite} and the renormalisation constant $\fc_\ga$ is defined in \eqref{eq:renorm-constant1}. The expression in the brackets in \eqref{eq:E2-ga} is spanned by the elements of $\Dom_{\CE}$, which allows us to apply the map $\widehat{\CE}_\ga$. 

A natural definition of the non-linearity \eqref{eq:E2-ga} could be $\frac{\be^5}{5} \CQ_{\leq 0} \widehat{\CE}_\ga \CQ_{\leq 0}U_{\ga, \fa}^5$. This definition however uses elements of negative homogeneities which appear in the product $U_{\ga, \fa}^5$. We can make sense of it only if we add extra elements into the model space $\CT^\ex$ and define the map $\widehat{\CE}_\ga$ on these elements. In order to have the dimension of $\CT^\ex$ minimal, we have to make a more complicated definition \eqref{eq:E2-ga}. More precisely, in the brackets in \eqref{eq:E2-ga} we keep only the two elements of $U_{\ga, \fa}^5$ with the smallest homogeneities (these are $\CQ_{\tau} U_{\ga, \fa}^5$ with $\tau \in \{\<4b>, \<5b>\}$). The other parts of $U_{\ga, \fa}^5$ we reconstruct and write in \eqref{eq:E2-ga} as a multiplier of $\blueOne$. Then if we apply the reconstruction map $\CR^{\ga, \fa}$ to the expression in the brackets in \eqref{eq:E2-ga}, we get $H_5 \bigl(\CR^{\ga, \fa} U_{\ga, \fa}, 2 \fc_\ga\bigr)$, which is a renormalised fifth power of the solution $\CR^{\ga, \fa} U_{\ga, \fa}$. We use the renormalisation constant $2 \fc_\ga$, because of the multiplier $\sqrt 2$ of the force term $Y_{\ga, \fa}$ in \eqref{eq:IsingKacEqn-new} and a scaling property of Hermite polynomials. More precisely, in order to renormalise the fifth power of $\sqrt 2\, Y_{\ga, \fa}$, we need to use $H_5 \bigl(\sqrt 2\, Y_{\ga, \fa}, 2 \fc_\ga\bigr)$.

We can show that reconstruction of \eqref{eq:AbstractDiscreteEquation} recovers the discrete equation \eqref{eq:IsingKacEqn-new}.

\begin{lemma}\label{lem:solution}
	Let $Z^{\ga, \fa}_{\lift}$ be the model constructed in Section~\ref{sec:lift}, and let the reconstruction map $ \CR^{\ga, \fa}$ be defined for the model $Z^{\ga, \fa}_{\lift}$ in \eqref{eq:def_rec_op}. Let $U_{\ga, \fa} \in \CD^{1 + 3 \kappa, \eta}_\emezo (Z^{\ga, \fa}_{\lift})$ be a solution of \eqref{eq:AbstractDiscreteEquation}. Then it may be written as
	\begin{equation}\label{eq:expansion}
		U_{\ga, \fa} (z) = \sqrt2\, \<1b> + v_{\ga, \fa}(z) \blueOne + \Bigl(- \frac{\be^3}{3} + B_\ga\Bigr) \Bigl(2 \sqrt{2}\, \<30b> + 6 v_{\ga, \fa}(z) \<20b>\Bigr) + \sum_{i = 1,2,3} v^{(i)}_{\ga, \fa}(z) \X_i,
	\end{equation}
	 for some functions $v_{\ga, \fa}, v^{(i)}_{\ga, \fa} : \R_+ \times \Le \to \R$. More precisely, we have $v_{\ga, \fa} = X_{\ga, \fa} - \sqrt2\, Y_{\ga, \fa}$, where $X_{\ga, \fa} := \CR^{\ga, \fa} U_{\ga, \fa}$, $Y_{\ga, \fa} = \CR^{\ga, \fa}\<1b>$, and $v_{\ga, \fa}$ solves the ``remainder equation''
	 \begin{align}\label{eq:discrete-remainder}
	 v_{\ga, \fa}(t, x) = P^\ga_t X_\ga^0(x) + \int_0^t \widetilde{P}^\ga_{t-s} &\Bigl( \Bigl(- \frac{\be^3}{3} + B_\ga\Bigr) \bigl(v_{\ga, \fa} + \sqrt2\, Y_{\ga, \fa}\bigr)^3 \\
	 &\qquad + (\fC_\ga + A) \bigl(v_{\ga, \fa} + \sqrt2\, Y_{\ga, \fa}\bigr) + E_{\ga, \fa} \Bigr)(s, x)\, \d s, \nonumber
	\end{align}
	 where $E_{\ga, \fa}$ is given by \eqref{eq:expr_error_term} with $X_{\ga, \fa}$ replaced by $v_{\ga, \fa} + \sqrt2\, Y_{\ga, \fa}$.
	
	Furthermore, there exist $A_\ga$ and $B_\ga$, vanishing as $\ga \to 0$, such that the function $X_{\ga, \fa} = \CR^{\ga, \fa} U_{\ga, \fa}$ solves \eqref{eq:IsingKacEqn-new} with the renormalisation constant 
	\begin{equation}\label{eq:C-exact}
		\fC_\ga = 2 \bigl(\fc_\ga + \fc_\ga' - 2 \fc_\ga''\bigr),
	\end{equation}
	where $\fc_\ga$, $\fc_\ga'$ and $\fc_\ga''$ are defined in \eqref{eq:renorm-constant1}, \eqref{eq:renorm-constant3} and \eqref{eq:renorm-constant2} respectively.
\end{lemma}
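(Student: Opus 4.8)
The plan is to prove Lemma~\ref{lem:solution} by unwinding the abstract equation \eqref{eq:AbstractDiscreteEquation} iteratively, exactly as in the continuous case leading to \eqref{eq:U-expansion}, and then applying the reconstruction map. First I would establish the expansion \eqref{eq:expansion}: starting from the Picard iteration for \eqref{eq:AbstractDiscreteEquation}, one notes that the only elements of $\CU^\ex$ with homogeneity $\le 1 + 3\kappa$ that can be produced by $\CP^\ga \1_+$ applied to the right-hand side are $\<1b>$, $\<30b>$ (from $U_{\ga,\fa}^3$ via $\CI(\CI(\blueXi)^3)$) and $\<20b>$ (with coefficient $v_{\ga,\fa}$ from the cross-term $\sqrt 2\,\<1b>\cdot v_{\ga,\fa}\blueOne$, times the factor $3$ from the cube), together with the polynomial contributions $v_{\ga,\fa}\blueOne$ and $v^{(i)}_{\ga,\fa}\X_i$; all remaining trees in $U_{\ga,\fa}^3$ have homogeneity $> 1 + 3\kappa$ after one integration and are truncated by $\CQ_{<\zeta}$. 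The factor $-\frac{\be^3}{3} + B_\ga$ is read off directly from \eqref{eq:F-ga}. Crucially one checks that $E^{(1)}_\ga$ and $E^{(2)}_\ga$ contribute only to the $\blueOne$-component after applying $\CP^\ga\1_+$, since $E^{(1)}_\ga$ is a multiple of $\blueOne$ by \eqref{eq:E1-ga}, and $\widehat{\CE}_\ga$ of the bracketed expression in \eqref{eq:E2-ga} has homogeneity $\ge -\frac52 - 5\kappa + 2 = -\frac12 - 5\kappa$ on its non-polynomial part (the $\CE(\<4b>)$, $\CE(\<5b>)$ terms), so that after integration it lands above homogeneity $\frac32$ and is truncated, leaving only the $\ga^6 \times (\text{function})\,\blueOne$ piece; this piece reconstructs to $\frac{\be^5}{5}\ga^6 H_5(\CR^{\ga,\fa}U_{\ga,\fa}, 2\fc_\ga)$ by \eqref{eq:E-reconstruction}.

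Next I would identify $v_{\ga,\fa} = X_{\ga,\fa} - \sqrt 2\, Y_{\ga,\fa}$ and derive the remainder equation \eqref{eq:discrete-remainder}. Applying $\CR^{\ga,\fa}$ to \eqref{eq:expansion}, using Remark~\ref{rem:positive-vanish} (so that $\CR^{\ga,\fa}\<30b> = 0$, $\CR^{\ga,\fa}\<20b> = 0$ since these have positive homogeneity, and $\CR^{\ga,\fa}\X_i = 0$), one gets $X_{\ga,\fa} = \CR^{\ga,\fa}U_{\ga,\fa} = \sqrt 2\, Y_{\ga,\fa} + v_{\ga,\fa}$. Then applying $\CR^{\ga,\fa}$ to the whole equation \eqref{eq:AbstractDiscreteEquation} and using the properties of the discrete integration operator $\CP^\ga$ from \cite[Thm.~4.8]{erhard2017discretisation} (which says that reconstruction intertwines $\CP^\ga$ with discrete convolution against $\widetilde G^\ga$, equivalently $\widetilde P^\ga$ after restricting to $\Le$), one obtains the mild equation for $X_{\ga,\fa}$. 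Subtracting the mild equation for $\sqrt2\, Y_{\ga,\fa}$ (which solves the linear equation driven by $\sqrt2\,\M_{\ga,\fa}$) gives \eqref{eq:discrete-remainder}, provided we correctly match the reconstructed nonlinearity against the genuine nonlinearity of \eqref{eq:IsingKacEqn-new}.

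The main obstacle, and the bulk of the work, is the bookkeeping of the renormalisation constants in \eqref{eq:C-exact}. Here I would carefully compute $\CR^{\ga,\fa}$ of each nonlinear contribution. The reconstructed cube: $\CR^{\ga,\fa}$ applied to $\CQ_{\le 0}U_{\ga,\fa}^3$ produces $(\CR^{\ga,\fa}U_{\ga,\fa})^3$ minus Wick-type counterterms coming from the definitions \eqref{eq:model-Hermite}, \eqref{eq:cherry} — namely $\CR^{\ga,\fa}\<3b> = (\CR^{\ga,\fa}\<1b>)^3 - 3\fc_\ga \CR^{\ga,\fa}\<1b>$ contributes, the cross term $3 v_{\ga,\fa}\,\CR^{\ga,\fa}\<2b> = 3v_{\ga,\fa}((\CR^{\ga,\fa}\<1b>)^2 - \fc_\ga - \fc_\ga')$ contributes the shifts $-3\fc_\ga$ and $-3\fc_\ga'$, and the element $\<32b>$ appearing in $U_{\ga,\fa}^3$ (through $\<30b>\cdot\<2b>$-type terms, before truncation it is present at the level of the right-hand side) carries the $-3\fc_\ga''$ in \eqref{eq:Pi-rest}. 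The factor $2$ in \eqref{eq:C-exact} comes from the $\sqrt 2$ in front of $Y_{\ga,\fa}$: renormalising $(\sqrt 2\, Y_{\ga,\fa} + v)^3$ requires the Wick constant scaled by $2$, i.e. $2\fc_\ga$ in place of $\fc_\ga$, and similarly $2\fc_\ga''$. Collecting all these, and using that the fifth-power error term $E^{(2)}_\ga$ reconstructs (via \eqref{eq:E-reconstruction}) to exactly the $\frac{\be^5}{5}\ga^6 X_{\ga,\fa}^5$ piece that is missing from the Taylor remainder $R_5$ in \eqref{eq:E1-ga}, one sees that $E^{(1)}_\ga + E^{(2)}_\ga$ reconstructs to $\frac{1}{\delta\alpha}(\tanh(\be\ga^3 X_{\ga,\fa}) - \be\ga^3 X_{\ga,\fa} + \frac{(\be\ga^3 X_{\ga,\fa})^3}{3})$ plus lower-order corrections absorbable into $A_\ga$, $B_\ga$, which recovers $E_{\ga,\fa}$ of \eqref{eq:expr_error_term}. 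The constants $A_\ga$ and $B_\ga$ are then fixed (as in \cite{IsingKac}) to cancel the discrepancy between $-\frac{\be^3}{3}$ and the coefficient of the genuine cubic term, and between $A$ and the genuine linear coefficient, once the renormalisation constant is declared to be \eqref{eq:C-exact}; one checks $A_\ga, B_\ga \to 0$ using Lemma~\ref{lem:renorm-constants} and the scaling \eqref{eq:scalings}. I expect the delicate point to be tracking precisely which trees survive the $\CQ_{<\zeta}$ and $\CQ_{\le 0}$ truncations in the product $U_{\ga,\fa}^5$, and verifying that the somewhat ad hoc definition \eqref{eq:E2-ga} indeed reconstructs to the clean expression $H_5(\CR^{\ga,\fa}U_{\ga,\fa}, 2\fc_\ga)$ — this requires the identity $\CR^{\ga,\fa}(\CQ_\tau U_{\ga,\fa}^5) = (\text{coefficient})\cdot\CR^{\ga,\fa}\tau$ for $\tau \in \{\<4b>,\<5b>\}$ together with the Hermite recursion \eqref{eq:def_Hermite}, which encodes the combinatorics of Wick renormalisation of a fifth power.
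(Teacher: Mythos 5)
Your proposal follows essentially the same route as the paper's proof: obtain \eqref{eq:expansion} by iterating \eqref{eq:AbstractDiscreteEquation} and truncating, apply the reconstruction map with the renormalised model values (including $\CR^{\ga,\fa}\<22b>=-\fc_\ga''$, $\CR^{\ga,\fa}\<32b>=-3\fc_\ga'' Y_{\ga,\fa}$ and the Hermite identities), observe that $E^{(1)}_\ga+E^{(2)}_\ga$ reconstructs to $E_{\ga,\fa}$ up to a cubic and a linear correction of order $\ga^6\fc_\ga$ and $\ga^6\fc_\ga^2$, and then fix $B_\ga$ (to restore the coefficient $-\be^3/3$) and $A_\ga$ (to match the declared constant \eqref{eq:C-exact}), checking both vanish via Lemma~\ref{lem:renorm-constants} and the scaling. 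The only difference is that the paper carries out the algebra explicitly (the expansion of $\CQ_{\leq0}U_{\ga,\fa}^3$, the identity $\fC_\ga = 2(\be^3-3B_\ga)(\fc_\ga+\fc_\ga'-2\fc_\ga''(\be^3-3B_\ga))+12\fc_\ga^2\ga^6\be^5+A_\ga$, and the $\be^3-1$, $\be^6-1$ expansions showing $A_\ga\to0$), which your plan describes correctly but leaves at the heuristic level.
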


\begin{proof}
The expansion \eqref{eq:expansion} is obtained in the same way as \eqref{eq:U-expansion}, by iteration of \eqref{eq:AbstractDiscreteEquation}. If we define the functions $X_{\ga, \fa} = \CR^{\ga, \fa} U_{\ga, \fa}$ and $Y_{\ga, \fa} = \CR^{\ga, \fa}\<1b>$, then we obtain 
\begin{equation}\label{eq:XandY}
X_{\ga, \fa}(z) = \sqrt2\, Y_{\ga, \fa}(z) + v_{\ga, \fa}(z),
\end{equation}
where the reconstructions of the elements with strictly positive homogeneities vanish (see Remark~\ref{rem:positive-vanish}). Using \eqref{eq:expansion}, we can write 
\begin{align*}
&\CQ_{\leq 0} U_{\ga, \fa}^3(z) = 2 \sqrt 2\, \<3b> + 6 v_{\ga, \fa}(z)\, \<2b> + 12 \sqrt 2 \Bigl(- \frac{\be^3}{3} + B_\ga\Bigr) \<32b> + 3 \sqrt 2 v_{\ga, \fa}(z)^2\, \<1b> \\
& + 24 \Bigl(- \frac{\be^3}{3} + B_\ga\Bigr) v_{\ga, \fa}(z)\, \<31b> + 36 \Bigl(- \frac{\be^3}{3} + B_\ga\Bigr) v_{\ga, \fa}(z)\, \<22b> + 4 \sum_{i = 1, 2,3} v^{(i)}_{\ga, \fa}(z) \X_i \<2b> + v_{\ga, \fa}(z)^3 \blueOne.
\end{align*}
From our definition of the model in Section~\ref{sec:model-lift} and the reconstruction map in \eqref{eq:def_rec_op} we have $( \CR^{\ga, \fa} \blueOne)(z) = 1$, $( \CR^{\ga, \fa} \<2b>)(z) = H_2(Y_{\ga, \fa}(z), \fc_\ga + \fc_\ga')$, $(\CR^{\ga, \fa} \<1b>^n)(z) = H_n(Y_{\ga, \fa}(z), \fc_\ga)$ for $n \neq 2$, $( \CR^{\ga, \fa} \X_i \<2b>)(z) = 0$, $(\CR^{\ga, \fa} \<32b>)(z) = - 3 \fc_\ga'' Y_{\ga, \fa}(z)$, $(\CR^{\ga, \fa} \<31b>)(z) = 0$ and $(\CR^{\ga, \fa} \<22b>)(z) = - \fc_\ga''$. Applying the reconstruction map to the preceding expansion, we get
\begin{align*}
&\bigl(\CR^{\ga, \fa} \CQ_{\leq 0} U_{\ga, \fa}^3 \bigr)(z) = 2 \sqrt 2 \bigl(Y_{\ga, \fa}(z)^3 - 3\fc_\ga Y_{\ga, \fa}(z)\bigr) + 6 v_{\ga, \fa}(z) \bigl(Y_{\ga, \fa}(z)^2 - \fc_\ga - \fc_\ga'\bigr) \\ 
& - 36 \sqrt 2 \fc_\ga'' \Bigl(- \frac{\be^3}{3} + B_\ga\Bigr) Y_{\ga, \fa}(z) + 3 \sqrt 2 v_{\ga, \fa}(z)^2 Y_{\ga, \fa}(z) - 36 \fc_\ga'' \Bigl(- \frac{\be^3}{3} + B_\ga\Bigr) v_{\ga, \fa}(z) + v_{\ga, \fa}(z)^3 \\
&\hspace{3cm}= X_{\ga, \fa}(z)^3 - 6 \bigl(\fc_\ga + \fc_\ga' - 2 \fc_\ga'' (\beta^3 - 3 B_\ga)\bigr) X_{\ga, \fa}(z),
\end{align*}
where we used \eqref{eq:XandY}. Hence, the reconstruction $\bigl(\CR^{\ga, \fa} F_\ga(U_{\ga, \fa}) \bigr)(z)$ of the function \eqref{eq:F-ga} gives
\begin{equation*}
\Bigl(- \frac{\be^3}{3} + B_\ga\Bigr) \Bigl( X_{\ga, \fa}(z)^3 - 6 \bigl(\fc_\ga + \fc_\ga' - 2 \fc_\ga'' (\beta^3 - 3 B_\ga)\bigr) X_{\ga, \fa}(z) \Bigr) + (A_\ga + A) X_{\ga, \fa}(z).
\end{equation*}
Reconstruction of the function \eqref{eq:E1-ga} is trivial: $\bigl(\CR^{\ga, \fa} E^{(1)}_\ga(U_{\ga, \fa}) \bigr)(z) = \frac{1}{\delta \alpha} R_5\bigl(\be \ga^3 X_{\ga, \fa}(z)\bigr)$.

Now, we turn to reconstruction of the function \eqref{eq:E2-ga}. Expansion \eqref{eq:expansion} yields 
\begin{equation}\label{eq:U5-expansion}
	U_{\ga, \fa}(z)^5 = 4 \sqrt2\, \<5b> + 20 v_{\ga, \fa}(z)\, \<4b> + \widetilde{U}_{\ga, \fa}(z),
\end{equation}
where the remainder $\widetilde{U}_{\ga, \fa}(z)$ takes values in the span of elements with homogeneities greater than $-\frac{3}{2} - 3 \kappa$. Then the expression in the brackets in \eqref{eq:E2-ga} is
\begin{equation*}
 4 \sqrt2\, \<5b> + 20 v_{\ga, \fa}(z)\, \<4b> - \Bigl(4 \sqrt2\, \bigl(\CR^{\ga, \fa} \<5b>\bigr)(z) + 20 v_{\ga, \fa}(z)\, \bigl(\CR^{\ga, \fa} \<4b>\bigr)(z) - H_5 \bigl(X_{\ga, \fa}(z), 2 \fc_\ga\bigr)\Bigr) \blueOne.
\end{equation*}
Using \eqref{eq:E-hat}, the function \eqref{eq:E2-ga} equals 
\begin{align*}
E^{(2)}_\ga(U_{\ga, \fa})(z) &= \frac{\be^5}{5} \biggl(4 \sqrt2\, \<50eb> + 20 v_{\ga, \fa}(z) \<40eb> \\
&\qquad - \ga^6 \Bigl(4 \sqrt2\, \bigl(\CR^{\ga, \fa} \<5b>\bigr)(z) + 20 v_{\ga, \fa}(z)\, \bigl(\CR^{\ga, \fa} \<4b>\bigr)(z) - H_5 \bigl(X_{\ga, \fa}(z), 2 \fc_\ga\bigr)\Bigr) \blueOne \biggr),
\end{align*}
and applying the reconstruction map gives
\begin{align*}
	\bigl(\CR^{\ga, \fa} E^{(2)}_\ga(U_{\ga, \fa})\bigr)(z) &= \ga^6 \frac{\be^5}{5} H_5 \bigl(X_{\ga, \fa}(z), 2 \fc_\ga\bigr) \\
	&= \ga^6 \frac{\be^5}{5} \Bigl( X_{\ga, \fa}(z)^5 - 20 \fc_\ga X_{\ga, \fa}(z)^3 + 60 \fc_\ga^2 X_{\ga, \fa}(z) \Bigr). 
\end{align*}
Here, we used the definition of the reconstruction map \eqref{eq:def_rec_op} and Remark~\ref{rem:Eps-reconstruct}. 

Applying the reconstruction map to both sides of equation \eqref{eq:AbstractDiscreteEquation}, using the property $\CR^{\ga, \fa} \CP^\ga = \widetilde{G}^\ga$ and using all previous identities, we obtain 
\begin{align*}
	X_{\ga, \fa}(t, x) &= G^\ga_t X_\ga^0(x) + \sqrt 2\, Y_{\ga, \fa}(t, x) \\
&\qquad + \int_0^t \widetilde{G}^\ga_{t-s} \Bigl( \Bigl(- \frac{\be^3}{3} + B_\ga - 4 \ga^6 \be^5 \fc_\ga\Bigr) X^3_{\ga, \fa} + \bigl(\fC_\ga + A\bigr) X_{\ga, \fa} + E_{\ga, \fa} \Bigr)(s, x)\, \d s,
\end{align*}
where the error term $E_{\ga, \fa}$ is the same as in \eqref{eq:IsingKacEqn-new} and where 
\begin{equation}\label{eq:C-A}
\fC_\ga = 2 \bigl(\be^3 - 3 B_\ga\bigr) \bigl(\fc_\ga + \fc_\ga' - 2 \fc_\ga'' (\beta^3 - 3 B_\ga)\bigr) + 12 \fc_\ga^2 \ga^6 \be^5 + A_\ga.
\end{equation}
In order to have this equation equal to \eqref{eq:IsingKacEqn-new}, we need to take $B_\ga = 4 \ga^6 \be^5 \fc_\ga$ and $A_\ga$ from the previous identity. Lemma~\ref{lem:renorm-constants} suggests that $|B_\ga| \lesssim \emezo$ which vanishes as $\ga \to 0$.

It is left to show that if we take $\fC_\ga$ of the form \eqref{eq:C-exact}, then the constant $A_\ga$, defined via \eqref{eq:C-A}, vanish as $\ga \to 0$. We recall that $\be$ depends on $\fC_\ga$ via \eqref{eq:beta}. From \eqref{eq:C-exact} and \eqref{eq:C-A} we have
\begin{equation}\label{eq:A-expansion}
A_\ga = - 2 (\be^3 - 1) (\fc_\ga + \fc_\ga') + 4 (\be^6 - 1) \fc_\ga'' + 6 (\fc_\ga + \fc_\ga') B_\ga - 12 \fc_\ga'' B_\ga (2 \beta^3 - 3 B_\ga) - 12 \fc_\ga^2 \ga^6 \be^5.
\end{equation}
Using \eqref{eq:beta} and \eqref{eq:C-exact}, we can write 
\begin{align*}
\be^3 - 1 &= \sum_{k = 1, 2, 3} {3 \choose k} \ga^{6 k} \bigl(2 (\fc_\ga + \fc_\ga' - 2 \fc_\ga'') + c + A \bigr)^k, \\
 \be^6 - 1 &= \sum_{k = 1, \ldots, 6} {6 \choose k} \ga^{6 k} \bigl(2 (\fc_\ga + \fc_\ga' - 2 \fc_\ga'') + c + A \bigr)^k.
\end{align*}
From Lemma~\ref{lem:renorm-constants} we have $\fc_\ga = c_2 \emezo^{-1} + \tilde{\fc}_\ga$ and $\fc_\ga'' = c_1 \log \emezo + \tilde{\fc}''_\ga$, where $|\tilde{\fc}_\ga| \leq C |\log \emezo|$ and $|\tilde{\fc}''_\ga| \leq C$ for some constant $C > 0$ independent of $\ga$. Moreover, the definition \eqref{eq:renorm-constant3} boundedness of $\fc_\ga'$ uniformly in $\ga \in (0,1]$. From \eqref{eq:c-gamma-2} we furthermore have $\emezo = \ga^3 \varkappa_{\ga, 3}$ and hence $\emezo^{-1} = \ga^{-3} +  \ga^{-3} c_{\ga, 3}$, where $|c_{\ga, 3}| \leq \ga^4 / (1 - \ga^4) \to 0$ as $\ga \to 0$. Using these bounds in \eqref{eq:A-expansion}, we can see that $A_\ga$ vanishes as $\ga \to 0$. 
\end{proof}

\begin{remark}
In what follows we will always consider equation \eqref{eq:AbstractDiscreteEquation} with the values $A_\ga$ and $B_\ga$ from Lemma~\ref{lem:solution}, which makes the reconstructed solution of \eqref{eq:AbstractDiscreteEquation} coincide with the solution of \eqref{eq:IsingKacEqn-new}.
\end{remark}

 Let $Z^{\ga, \de, \fa}_{\lift}$ be another random discrete model constructed in Section~\ref{sec:convergence-of-models} and let us consider equation
\begin{equation}\label{eq:AbstractDiscreteEquation-delta}
U_{\ga, \de, \fa} = \CQ_{< \zeta} \Bigl(G^{\ga} X_{\ga, \de}^{0} + \CP^{\ga, \de} \1_+ \CQ_{\leq 0} \Bigl(- \frac{\be^3}{3} \bigl(U_{\ga, \de, \fa}\bigr)^3 + (A_{\ga, \de} + A) U_{\ga, \de, \fa}\Bigr) + \sqrt 2\, W_{\ga, \de, \fa}\Bigr),
\end{equation}
which is defined in the same way as \eqref{eq:AbstractDiscreteEquation}, but with respect to the model $Z^{\ga, \de, \fa}_\lift$. The initial condition at time $0$ is
\begin{equation}\label{eq:initial-gamma-delta}
X_{\ga, \de}^{0}(x) := \eps^3 \sum_{y \in \Lattice} \psi_{\ga, \de}(x - y) X_\ga^0(y),
\end{equation}
where $X_\ga^0$ is defined in the statement of Theorem~\ref{thm:main} and the function $\psi_{\ga, \de}$ is a discrete approximation of the function $\psi_{\de}$ from \eqref{eq:Phi43-delta}:
\begin{equation*}
\psi_{\ga, \de}(x) := \eps^{-3} \int_{|y - x| \leq \eps/2} \psi_\de(y) \d y.
\end{equation*}
As in Lemma~\ref{lem:solution} we can readily conclude that there is a choice of $A_{\ga, \de}$ such that the function $X_{\ga, \de, \fa} = \CR^{\ga, \de, \fa} U_{\ga, \de, \fa}$ solves 
\begin{equation}\label{eq:Discrete-equation-regular-noise}
	X_{\ga, \de, \fa}(t, x) = P^\ga_t X_{\ga, \de}^{0}(x) + \int_0^t \widetilde{P}^\ga_{t-s} \Bigl( -\frac{\be^3}{3} \bigl(X_{\ga, \de, \fa}\bigr)^3 + (\fC_{\ga, \de} + A) X_{\ga, \de, \fa} + \sqrt 2\,\xi_{\ga, \de, \fa} \Bigr)(s, x)\, \d s.
\end{equation}
where the driving noise is defined in \eqref{eq:xi-gamma-delta}. This equation is a modification of the Ising-Kac equation \eqref{eq:IsingKacEqn-new}, driven by a mollified noise and without the error term. The renormalisation $\fC_{\ga, \de}$ we take to be in the form \eqref{eq:C-exact}, but defined via the constants $\fc_{\ga, \de}$, $\fc_{\ga, \de}'$ and $\fc_{\ga, \de}''$ introduced in \eqref{eq:convolved_constants}.

Now we will study the solution map of \eqref{eq:AbstractDiscreteEquation}. In particular, we need to show that it is continuous with respect to the model and the initial state.

\begin{proposition}\label{prop:SolutionMap}
Let $Z^{\ga, \fa}_{\lift}$ be the random discrete model constructed in Section~\ref{sec:lift} and let the initial state $X_\ga^0$ satisfy the assumptions of Theorem~\ref{thm:main}. Then for almost every realisation of $Z^{\ga, \fa}_{\lift}$ there exists (possibly infinite) $T_{\ga, \fa} > 0$ such that \eqref{eq:AbstractDiscreteEquation} has a unique solution $U_{\ga, \fa} \in \CD^{\zeta, \eta}_\emezo(Z^{\ga, \fa}_\lift)$ on the time interval $[0, T_{\ga, \fa})$, where $\zeta = 1 + 3 \kappa$ and the constant $\eta$ is from Theorem~\ref{thm:main}. 

Let moreover $X_{\ga, \fa} = \CR^{\ga, \fa} U_{\ga, \fa}$ where $\CR^{\ga, \fa}$ is the reconstruction map \eqref{eq:def_rec_op} associated to the model. Then for every $L > 0$ there is $T^L_{\ga, \fa} \in (0, T_{\ga, \fa})$, such that $\lim_{L \to \infty} T^L_{\ga, \fa} = T_{\ga, \fa}$ almost surely, and 
\begin{equation}\label{eq:SolutionMap-bound}
\sup_{t \in [0, T \wedge T^L_{\ga, \fa}]} \| X_{\ga, \fa}(t) \|^{(\emezo)}_{\CC^\eta} \leq C,
\end{equation}
for any $T > 0$, provided $\| X^0_{\ga, \fa} \|^{(\emezo)}_{\CC^\eta} \leq L$ and $\$ Z^{\ga, \fa}_{\lift}\$_{T+1}^{(\emezo)} \leq L$, where we use the norm \eqref{eq:eps-norm-1}. The constant $C$ depends on $L$ and is independent of $\ga$.

Let $Z^{\ga, \de, \fa}_\lift$ be the model defined in Section~\ref{sec:convergence-of-models}. Then there is a solution $U_{\ga, \de, \fa} \in \CD^{\zeta, \eta}_\emezo(Z^{\ga, \de, \fa}_\lift)$ of equation \eqref{eq:AbstractDiscreteEquation-delta} on an interval $[0, T_{\ga, \de, \fa})$. Let furthermore $X_{\ga, \de, \fa} = \CR^{\ga, \de, \fa} U_{\ga, \de, \fa}$, where $\CR^{\ga, \de, \fa}$ is the respective reconstruction map. Then there exist $\delta_0 > 0$, $\theta > 0$ and $T^L_{\ga, \de, \fa} \in (0, T_{\ga, \de, \fa})$, such that $\lim_{L \to \infty} T^L_{\ga, \de, \fa} = T_{\ga, \de, \fa}$ almost surely and 
\begin{equation}\label{eq:SolutionMap-bound-delta}
\sup_{t \in [0, T \wedge T^L_{\ga, \fa} \wedge T^L_{\ga, \de, \fa}]} \| (X_{\ga, \fa} - X_{\ga, \de, \fa})(t) \|^{(\emezo)}_{\CC^\eta} \leq C \de^{\theta},
\end{equation}
uniformly over $\de \in (0, \de_0)$, provided $\| X^0_{\ga, \fa} \|^{(\emezo)}_{\CC^\eta} \leq L$, $\$ Z^{\ga, \fa}_{\lift}\$_{T+1}^{(\emezo)} \leq L$, $\| X^0_{\ga, \fa} - X^{0}_{\ga, \delta, \fa} \|^{(\emezo)}_{\CC^\eta} \leq \de^{\theta}$ and $\$ Z^{\ga, \fa}_{\lift}; Z^{\ga, \de, \fa}_{\lift}\$_{T+1}^{(\emezo)} \leq \de^{\theta}$. 
\end{proposition}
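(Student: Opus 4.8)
The plan is to treat \eqref{eq:AbstractDiscreteEquation} as a fixed-point problem in the space $\CD^{\zeta, \eta}_{\emezo, T}(Z^{\ga, \fa}_\lift)$ and to run the standard Picard iteration argument of \cite[Sec.~7]{Regularity} in its discrete incarnation from \cite[Sec.~6]{erhard2017discretisation}. First I would verify that the right-hand side of \eqref{eq:AbstractDiscreteEquation} maps a ball in $\CD^{\zeta, \eta}_{\emezo, T}$ into itself and is a contraction for $T$ small enough, using: (i) the multiplicative properties of modelled distributions (so $U_{\ga, \fa}^3, U_{\ga, \fa}^5$ live in the appropriate $\CD^{\bar\zeta, \bar\eta}$ spaces, with the projections $\CQ_{\leq 0}$ and $\CQ_\tau$ keeping homogeneities controlled); (ii) the Schauder estimate for $\CP^\ga = \CK^\ga_\kappa + R^\ga_{1+3\kappa}\CR^{\ga,\fa}$ from \cite[Thm.~4.8]{erhard2017discretisation}, whose hypotheses were checked just above the statement of Lemma~\ref{lem:solution}; (iii) the fact that $\widehat{\CE}_\ga$ raises regularity by $2$ (this is the role of \eqref{eq:hom5} and of the careful definition \eqref{eq:E2-ga}, which lives in the span of $\Dom_{\CE}$); and (iv) the observation that $E^{(1)}_\ga(U_{\ga,\fa})$ is a multiple of $\blueOne$ whose coefficient $\frac{1}{\delta\alpha}R_5(\be\ga^3\CR^{\ga,\fa}U_{\ga,\fa})$ is, by the a priori bound \eqref{eq:X-apriori} and $|R_5(x)|\lesssim |x|^7$ (equivalently Remark~\ref{rem:regularity-explanation}), bounded by a positive power of $\ga$. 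Since the non-linear terms carry the small coefficients $A_\ga, B_\ga$ (vanishing as $\ga\to 0$ by Lemma~\ref{lem:solution}) or positive powers of $\ga$, the Picard map is uniformly Lipschitz, so we obtain a unique local solution $U_{\ga,\fa}$ on a random interval $[0, T_{\ga,\fa})$, where $T_{\ga,\fa}$ is the blow-up time of $\| U_{\ga,\fa}(t)\|_{\CD^{\zeta,\eta}_\emezo}$.

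Next I would upgrade this to the quantitative a priori bound \eqref{eq:SolutionMap-bound}. Define $T^L_{\ga,\fa}$ as the first time the $\CD^{\zeta,\eta}_\emezo$-norm of $U_{\ga,\fa}$ reaches $L$ (so $T^L_{\ga,\fa}\uparrow T_{\ga,\fa}$ as $L\to\infty$). On $[0, T\wedge T^L_{\ga,\fa}]$ the modelled-distribution norm is bounded by $L$, the model norm is bounded by hypothesis, and the initial data norm is bounded by $L$; then the reconstruction theorem (Proposition~\ref{prop:ReconThm_v2}, together with the fact from Lemma~\ref{lem:solution} that $X_{\ga,\fa}=\CR^{\ga,\fa}U_{\ga,\fa} = \sqrt2\, Y_{\ga,\fa}+v_{\ga,\fa}$ with $v_{\ga,\fa}\in\CC^{3/2+3\eta}$) gives a bound on $\|X_{\ga,\fa}(t)\|^{(\emezo)}_{\CC^\eta}$ depending only on $L$, uniformly in $\ga$. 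Here one also uses that $\CR^{\ga,\fa}$ kills elements of positive homogeneity (Remark~\ref{rem:positive-vanish}), so the singular sector carrying the divergent pieces $\<20b>, \<30b>$ of $U_{\ga,\fa}$ contributes only through the bounded quantities $v_{\ga,\fa}$ and $Y_{\ga,\fa}$.

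For the difference bound \eqref{eq:SolutionMap-bound-delta} I would run the same fixed-point argument for \eqref{eq:AbstractDiscreteEquation-delta}, and then compare the two solutions on $[0, T\wedge T^L_{\ga,\fa}\wedge T^L_{\ga,\de,\fa}]$ using local Lipschitz continuity of the solution map in $(X^0, Z)$ — exactly as in \cite[Prop.~9.10]{Regularity} and \cite[Sec.~6]{erhard2017discretisation}, but keeping track of the $\de^\theta$-smallness of the data. The three sources of discrepancy are: the initial states ($\| X^0_{\ga,\fa}-X^0_{\ga,\de,\fa}\|^{(\emezo)}_{\CC^\eta}\leq\de^\theta$ by hypothesis), the models ($\$ Z^{\ga,\fa}_\lift; Z^{\ga,\de,\fa}_\lift\$^{(\emezo)}_{T+1}\leq\de^\theta$ by hypothesis, and this is where Proposition~\ref{prop:models-converge} feeds in), and the fact that \eqref{eq:AbstractDiscreteEquation} carries the extra non-linear terms $B_\ga U^3$, $E^{(1)}_\ga$, $E^{(2)}_\ga$ that \eqref{eq:AbstractDiscreteEquation-delta} does not — but these are all $\CO(\ga^{\theta'})$ for some $\theta'>0$ (by Lemma~\ref{lem:solution} and \eqref{eq:X-apriori}), so for $\ga<\ga_0$ they are absorbed into the $\de^\theta$ error after possibly shrinking $\theta$. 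Applying $\$\,;\,\$^{(\emezo)}_{\zeta,\eta;T}$-versions of the Schauder and multiplication estimates, together with the distance version of the reconstruction bound in Proposition~\ref{prop:ReconThm_v2}, closes the Gronwall-type loop and yields \eqref{eq:SolutionMap-bound-delta}.

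The main obstacle is bookkeeping rather than a genuinely new idea: one must check that every estimate in \cite{erhard2017discretisation} is applied with the correct weights at time $0$, that the projections in \eqref{eq:F-ga}, \eqref{eq:E1-ga}, \eqref{eq:E2-ga} keep all intermediate products inside $\CT^\ex_{<\zeta}$ with $\alpha\wedge\eta>-2$ so that Proposition~\ref{prop:ReconThm_v2} applies, and — most delicately — that the $\ga$-dependent coefficients $A_\ga, B_\ga$ and the Taylor remainder $E^{(1)}_\ga$ do not spoil the uniform-in-$\ga$ contraction constant. The assumption $\eta>-\tfrac47$ (equivalently $\kappa<\tfrac{1}{14}$, cf.\ Remark~\ref{rem:regularity-explanation}) is exactly what guarantees the error term $E^{(1)}_\ga$ carries a strictly positive power of $\ga$, so this restriction is used essentially here; verifying it propagates correctly through the iteration is the crux.
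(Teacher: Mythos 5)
Your overall strategy is the same as the paper's: a Picard/Banach fixed-point argument for \eqref{eq:AbstractDiscreteEquation} in $\CD^{\zeta,\eta}_{\emezo,T}$ using the discrete Schauder estimate for $\CP^\ga$, multiplication of modelled distributions, smallness in $T$, stopping at the level where the norm reaches $L$, and Proposition~\ref{prop:ReconThm_v2} to pass from $U_{\ga,\fa}$ to $X_{\ga,\fa}$. Two steps, however, need repair. First, inside the iteration you control the coefficient of $E^{(1)}_\ga$ by invoking the a priori bound \eqref{eq:X-apriori}; that bound is a property of the Ising--Kac dynamics up to the stopping time $\tau^{(1)}_{\ga,\fa}$ and is not available for a generic element $U_{\ga,\fa}$ of the ball you iterate on, so as written the argument is circular. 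The paper instead bounds $|(\CR^{\ga,\fa}U_{\ga,\fa})(z)| \lesssim \emezo^{\alpha\wedge\eta}\, \$ U_{\ga,\fa} \$^{(\emezo)}_{\zeta,\eta;T}$ via Proposition~\ref{prop:ReconThm_v2} and then uses $|R_5(x)|\lesssim |x|^r$ with $r\in(6,7)$; the fix is one line, but it matters for the logic. Relatedly, the genuinely delicate part of the paper's contraction estimate is not $E^{(1)}_\ga$ but $E^{(2)}_\ga$: since $U_{\ga,\fa}^5$ is not $\CT^\ex$-valued, one must check by hand that the bracket in \eqref{eq:E2-ga}, i.e.\ $\widetilde V_{\ga,\fa}$ of \eqref{eq:H}, is a modelled distribution with a positive power of $\emezo$ in its norm; this uses the Hermite identity \eqref{eq:last-term-new} to rewrite $H_5(\CR^{\ga,\fa}U_{\ga,\fa},2\fc_\ga)-\sum_\tau \CR^{\ga,\fa}\CQ_\tau U_{\ga,\fa}^5$ in terms of $\CR^{\ga,\fa}\<1b>^m$ and the model bounds, together with a careful choice of exponents. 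Your item (iii) treats this as automatic, whereas it is the bulk of the paper's proof.

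Second, and more seriously, your route to \eqref{eq:SolutionMap-bound-delta} does not yield the stated estimate. The terms present in \eqref{eq:AbstractDiscreteEquation} but absent from \eqref{eq:AbstractDiscreteEquation-delta} --- $B_\ga U_{\ga,\fa}^3$, $E^{(1)}_\ga$, $E^{(2)}_\ga$, and the mismatch $A_\ga - A_{\ga,\de}$ --- are small in $\ga$ only; they do not shrink as $\de\to 0$ for fixed $\ga$, so the claim that they can be ``absorbed into the $\de^\theta$ error after possibly shrinking $\theta$'' is false ($\ga^{\theta'}\leq \de^{\theta}$ fails as $\de\to 0$ with $\ga$ fixed). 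Your comparison therefore gives at best a bound of order $\de^{\theta}+\ga^{\theta'}$, which is strictly weaker than \eqref{eq:SolutionMap-bound-delta} and, in the form in which the proposition is used in \eqref{eq:X-delta-stopped} (a limit $\de\to 0$ uniformly over $\ga\in(0,\ga_0)$), the $\ga$-independent $\de^\theta$ rate is exactly what is required. To close this you must either show that the contribution of these structural discrepancies to $X_{\ga,\fa}-X_{\ga,\de,\fa}$ is itself $O(\de^\theta)$, or reorganise the comparison so that these terms are not treated by absorption into the $\de^\theta$ budget.
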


\begin{proof}
To prove existence of a local solution, we use a purely deterministic argument. For this, we take $T > 0$ and any realisation of the discrete model $Z^{\ga}_{\lift}$ such that $\$ Z^{\ga, \fa}_{\lift}\$_{T+1}^{(\emezo)}$ is finite. Proposition~\ref{prop:models-converge} suggests that it happens almost surely. The spaces of modelled distributions are considered below with respect to $Z^{\ga}_{\lift}$.

We proved in Lemma~\ref{lem:solution} that if a solution $U_{\ga, \fa}$ exists, then it has the form \eqref{eq:expansion}. Hence, in this proof we will be looking for a solution in this form. 

Let $\CM^{\ga, \fa}_{T}(U_{\ga, \fa})$ be the right-hand side of \eqref{eq:AbstractDiscreteEquation}, restricted to the time interval $[0, T]$. We need to prove that $\CM^{\ga, \fa}_{T}$ is a contraction map on $\CD^{\zeta, \eta}_{\emezo, T}$, uniformly in $\ga$, for $T > 0$ small enough (see Remark~\ref{rem:T-space} for the definition of the time-dependent space). More precisely, let us take $U_{\ga, \fa}, \bar U_{\ga, \fa} \in \CD^{\zeta, \eta}_{\emezo, T}$. Then we will prove that for some $\nu > 0$ we have 
\begin{subequations}\label{eqs:contraction}
\begin{align}
\$\CM^{\ga, \fa}_{T}(U_{\ga, \fa}) \$_{\zeta, \eta; T}^{(\emezo)} &\lesssim \| X_\ga^0 \|^{(\emezo)}_{\CC^\eta} + T^\nu \bigl(1 + \$ U_{\ga, \fa} \$_{\zeta, \eta; T}^{(\emezo)}\bigr)^5, \label{eq:contraction1}\\
\$ \CM^{\ga, \fa}_{T}(U_{\ga, \fa}); \CM^{\ga, \fa}_{T}(\bar U_{\ga, \fa}) \$_{\zeta, \eta; T}^{(\emezo)} &\lesssim T^\nu \bigl(1 + \$ U_{\ga, \fa} \$_{\zeta, \eta; T}^{(\emezo)}\bigr)^4 \$ U_{\ga, \fa}; \bar U_{\ga, \fa} \$_{\zeta, \eta; T}^{(\emezo)}. \label{eq:contraction2}
\end{align}
\end{subequations}
Then for any $T > 0$ small enough, $\CM^{\ga, \fa}_{T}$ is a contraction map on $\CD^{\zeta, \eta}_{\emezo, T}$. The proportionality constants in these bounds are multiples of $\$ Z^{\ga, \fa}_\lift \$^{(\emezo)}_{T+1}$, which implies that $0 < T < T_{\ga, \fa}$, for some $T_{\ga, \fa} > 0$, depending on $\$ Z^{\ga, \fa}_\lift \$^{(\emezo)}_{T+1}$. 

We first prove the bound \eqref{eq:contraction1}. For $\bar\zeta > 0$ and $\bar\eta > - 2$, we apply \cite[Thm.~4.22]{erhard2017discretisation} and get
\begin{align}\label{eq:M-bound}
\$\CM^{\ga, \fa}_{T}(U_{\ga, \fa}) \$_{\zeta, \eta; T}^{(\emezo)} &\lesssim \$ G^\ga X_\ga^0 \$_{\zeta, \eta; T}^{(\emezo)} + \$ W_{\ga, \fa} \$_{\zeta, \eta; T}^{(\emezo)} \\
&\qquad + T^\nu \Bigl( \$ F_\ga(U_{\ga, \fa}) \$_{\bar \zeta, \bar\eta; T}^{(\emezo)} + \$ E^{(1)}_\ga(U_{\ga, \fa}) \$_{\bar \zeta, \bar\eta; T}^{(\emezo)} + \$ E^{(2)}_\ga(U_{\ga, \fa}) \$_{\bar \zeta, \bar\eta; T}^{(\emezo)} \Bigr), \nonumber
\end{align}
for some $\nu > 0$. We are going to bound the terms on the right-hand side one by one, and a precise choice of $\bar \zeta$ and $\bar \eta$ will be clear from these bounds.

 Similarly to \cite[Lem.~7.5]{Regularity}, we get $\$ G^\ga X_\ga^0 \$_{\zeta, \eta; T}^{(\emezo)} \lesssim \| X_\ga^0 \|^{(\emezo)}_{\CC^\eta}$. Furthermore, from \cite[Lem.~2.3]{Martingales} and \cite[Thm.~4.22]{erhard2017discretisation} we have the bound $\$ W_{\ga, \fa} \$_{\zeta, \eta; T}^{(\emezo)} \lesssim T^\nu$ on the term \eqref{eq:V-gamma}.

Now, we will bound the function \eqref{eq:F-ga}. From \cite[Sec.~4 and 6.2]{Regularity} we get
\begin{equation*}
\$ F_\ga(U_{\ga, \fa}) \$_{\zeta_1, \eta_1; T}^{(\emezo)} \lesssim \$ U_{\ga, \fa}^3 \$_{\zeta_1, \eta_1; T}^{(\emezo)} + \$ U_{\ga, \fa} \$_{\zeta_1, \eta_1; T}^{(\emezo)} \lesssim \bigl(\$ U_{\ga, \fa} \$_{\zeta, \eta; T}^{(\emezo)}\bigr)^3 + \$ U_{\ga, \fa} \$_{\zeta, \eta; T}^{(\emezo)},
\end{equation*}
for $\zeta_1 \leq \zeta - 1 - 2\kappa$ and $\eta_1 \leq \eta - 1 - 2 \kappa$. Here, we used the fact that $U_{\ga, \fa}$ lives in a sector of regularity $\alpha = -\frac{1}{2} - \kappa$. Recalling that $\zeta = 1 + 3 \kappa$ and $\kappa < \frac{1}{14}$, the ranges of $\zeta_1$ and $\eta_1$ allow to chose $\bar \zeta$ and $\bar \eta$ as in \eqref{eq:M-bound}.

Now we will bound the function \eqref{eq:E1-ga}. From Proposition~\ref{prop:ReconThm_v2} we get $| (\CR^{\ga, \fa} U_{\ga, \fa}) (z) | \lesssim \emezo^{\alpha \wedge \eta} \$ U_{\ga, \fa} \$_{\zeta, \eta; T}^{(\emezo)}$. Then for $r \in (6, 7)$ the definition \eqref{eq:Taylor} yields
\begin{align*}
\Bigl|\frac{1}{\delta \alpha} R_5 \bigl(\be \ga^3 \CR^{\ga, \fa} U_{\ga, \fa}(z)\bigr)\Bigr| &\lesssim \frac{1}{\delta \alpha} |\be \ga^3 \CR^{\ga, \fa} U_{\ga, \fa}(z)|^{r} \\
&\lesssim \ga^{3 r - 9} \emezo^{r (\alpha \wedge \eta)} \$ U_{\ga, \fa} \$_{\zeta, \eta; T}^{(\emezo)} \lesssim \ga^{\frac{3 r}{2} - 9 - 3 r\kappa} \$ U_{\ga, \fa} \$_{\zeta, \eta; T}^{(\emezo)}.
\end{align*}
From this we obtain the following bound on the function \eqref{eq:E1-ga}:
\begin{equation*}
\$ E^{(1)}_\ga(U_{\ga, \fa}) \$_{\bar\zeta, \bar\eta; T}^{(\emezo)} \lesssim \ga^{\frac{3 r}{2} - 9 - 3 r\kappa} \$ U_{\ga, \fa} \$_{\zeta, \eta; T}^{(\emezo)}.
\end{equation*}
If $\kappa < \frac{1}{14}$, then there is a value of $r$ such that the last term vanishes as $\ga \to 0$. 

In order to bound the function \eqref{eq:E2-ga}, we need to bound the modelled distribution inside the brackets in \eqref{eq:E2-ga}, which we denote by $\widetilde{V}_{\ga, \fa}$. Using the expansion \eqref{eq:U5-expansion}, we can write  
\begin{equation}\label{eq:U5-expansion-new}
U_{\ga, \fa}(z)^5 = 4 \sqrt2\, \<5b> + 20 v_{\ga, \fa}(z)\, \<4b> + \widetilde{U}_{\ga, \fa}(z),
\end{equation}
where the elements spanning $\widetilde{U}_{\ga, \fa}$ have homogeneities greater than $-\frac{3}{2} - 7 \kappa$. We note that $\widetilde{U}_{\ga, \fa}(z)$ does not belong to $\CT^\ex$, but is rather an element of $\fT^\ex$ (see Section~\ref{sec:model-space} for the definition of this space). In particular, we cannot apply the model to $\widetilde{U}_{\ga, \fa}(z)$ and hence we cannot measure the regularity of $\widetilde{U}_{\ga, \fa}(z)$ as a modelled distribution. Instead, we write 
\begin{equation}\label{eq:H}
\widetilde{V}_{\ga, \fa}(z) = 4 \sqrt2\, \<5b> + 20 v_{\ga, \fa}(z)\, \<4b>  + \biggl(H_5 \bigl(\CR^{\ga, \fa} U_{\ga, \fa}(z), 2 \fc_\ga\bigr) - \sum_{\tau \in \{\scalebox{0.7}{\<4b>}, \scalebox{0.7}{\<5b>}\}} \bigl(\CR^{\ga, \fa} \CQ_{\tau} U_{\ga, \fa}^5\bigr)(z) \biggr) \blueOne,
\end{equation}
and we are going to show that this is a modelled distribution in a suitable space. Table~\ref{tab:linear_transformations} suggests that $\Gamma^{\ga, \fa}_{\bar z z} \widetilde{V}_{\ga, \fa}(z) = \widetilde{V}_{\ga, \fa}(z)$, and hence the second term in the definition \eqref{eq:def_dgamma_norm} of modelled distributions contains the difference $\widetilde{V}_{\ga, \fa}(z) - \widetilde{V}_{\ga, \fa}(\bar z)$. Now, we will derive bounds on $\widetilde{V}_{\ga, \fa}(z)$ and $\widetilde{V}_{\ga, \fa}(z) - \widetilde{V}_{\ga, \fa}(\bar z)$.

For the first term in \eqref{eq:H} we have 
\begin{equation}\label{eq:H-bound1}
| \widetilde{V}_{\ga, \fa}(z) |_{|\scalebox{0.7}{\<5b>}|} = 4 \sqrt2, \qquad\qquad | \widetilde{V}_{\ga, \fa}(z) - \widetilde{V}_{\ga, \fa}(\bar z)|_{|\scalebox{0.7}{\<5b>}|} = 0.
\end{equation}
Since $U_{\ga, \fa} \in \CD^{\zeta, \eta}_{\emezo, T}$ and the expansion \eqref{eq:expansion} holds, we conclude that
\begin{align}
|v_{\ga, \fa}(z)| &= \frac{1}{2 \beta^3} |U_{\ga, \fa} (z)|_{0} \lesssim \bigl(\| z \|_\s \vee \emezo\bigr)^{\eta} \$ U_{\ga, \fa} \$_{\zeta, \eta; T}^{(\emezo)}, \label{eq:v-bound}\\
|v_{\ga, \fa}(z) - v_{\ga, \fa}(\bar z)| &= \frac{1}{2 \beta^3} |U_{\ga, \fa} (z) - \Gamma^{\ga, \fa}_{\!z \bar z} U_{\ga, \fa}(\bar z)|_{|\scalebox{0.7}{\<20b>}|} \\
&\lesssim \bigl(\| z - \bar z \|_\s \vee \emezo\bigr)^{\zeta - |\scalebox{0.7}{\<20b>}|} \Vert z, \bar z\Vert_{\emezo}^{\eta - \zeta} \$ U_{\ga, \fa} \$_{\zeta, \eta; T}^{(\emezo)},\nonumber
\end{align}
where we used the definition of the modelled distribution \eqref{eq:def_dgamma_norm}. Hence, for the second term in \eqref{eq:H} we have 
\begin{equation}\label{eq:H-bound2}
\begin{aligned}
| \widetilde{V}_{\ga, \fa}(z) |_{|\scalebox{0.7}{\<4b>}|} &\lesssim \bigl(\| z \|_\s \vee \emezo\bigr)^{\eta} \$ U_{\ga, \fa} \$_{\zeta, \eta; T}^{(\emezo)}, \\
 | \widetilde{V}_{\ga, \fa}(z) - \widetilde{V}_{\ga, \fa}(\bar z)|_{|\scalebox{0.7}{\<4b>}|} &\lesssim \bigl(\| z - \bar z \|_\s \vee \emezo\bigr)^{\zeta - |\scalebox{0.7}{\<20b>}|} \Vert z, \bar z\Vert_{\emezo}^{\eta - \zeta} \$ U_{\ga, \fa} \$_{\zeta, \eta; T}^{(\emezo)}.
\end{aligned}
\end{equation}

Now, we will bound the last term in \eqref{eq:H}. From the expansion \eqref{eq:expansion} and Remark~\ref{rem:positive-vanish}, we get 
\begin{equation*}
\CR^{\ga, \fa} U_{\ga, \fa}(z) = \sqrt2\, Y_{\ga, \fa}(z) + v_{\ga, \fa}(z),
\end{equation*}
where $Y_{\ga, \fa} = \CR^{\ga, \fa}\<1b>$. Using then the expansion \eqref{eq:U5-expansion-new}, the definition of the reconstruction map \eqref{eq:def_rec_op} and the definition of the model \eqref{eq:model-Hermite}, the last term in \eqref{eq:H} may be written as
\begin{align}
&H_5 \bigl(\CR^{\ga, \fa} U_{\ga, \fa}(z), 2 \fc_\ga\bigr) - \sum_{\tau \in \{\scalebox{0.7}{\<4b>}, \scalebox{0.7}{\<5b>}\}} \bigl(\CR^{\ga, \fa} \CQ_{\tau} U_{\ga, \fa}^5\bigr) (z) \label{eq:last-term} \\
&\qquad = H_5 \bigl(\CR^{\ga, \fa} U_{\ga, \fa}(z), 2 \fc_\ga\bigr) - 4 \sqrt2\, \bigl(\CR^{\ga, \fa} \<5b>\bigr)(z) - 20 v_{\ga, \fa}(z)\, \bigl(\CR^{\ga, \fa} \<4b>\bigr)(z) \nonumber \\
&\qquad = H_5 \bigl(\sqrt2\, Y_{\ga, \fa}(z) + v_{\ga, \fa}(z), 2 \fc_\ga\bigr) - 4 \sqrt2\, H_5 \bigl(Y_{\ga, \fa}(z), \fc_\ga\bigr) - 20 v_{\ga, \fa}(z)\, H_4 \bigl(Y_{\ga, \fa}(z), \fc_\ga\bigr). \nonumber
\end{align}
The following expansion holds for the Hermite polynomials
\begin{equation*}
H_n(u + v, c) = \sum_{m = 0}^n {n \choose m} H_m(u, c) v^{n - m},
\end{equation*}
which can be found in \cite{Abramowitz}. Moreover, from the definition \eqref{eq:def_Hermite} we get the scaling identity $H_n(a u, a^2 c) = a^n H_n(u, c)$ for any $a > 0$. Applying these identities, expression \eqref{eq:last-term} turns to
\begin{align}
&H_5 \bigl(\CR^{\ga, \fa} U_{\ga, \fa}(z), 2 \fc_\ga\bigr) - \sum_{\tau \in \{\scalebox{0.7}{\<4b>}, \scalebox{0.7}{\<5b>}\}} \bigl(\CR^{\ga, \fa} \CQ_{\tau} U_{\ga, \fa}^5\bigr) (z) \label{eq:last-term-new} \\
&\quad = \sum_{m = 0}^3 {5 \choose m} 2^{\frac{m}{2}} H_m \bigl(Y_{\ga, \fa}(z), \fc_\ga\bigr) v_{\ga, \fa}(z)^{5 - m} = \sum_{m = 0}^3 {5 \choose m} 2^{\frac{m}{2}} \bigl(\CR^{\ga, \fa} \<1b>^{m}\bigr)(z) v_{\ga, \fa}(z)^{5 - m}. \nonumber
\end{align}
where we postulate $\bigl(\CR^{\ga, \fa} \<1b>^{0}\bigr)(z) = 1$. For $m \in \{1, 2, 3\}$, the definitions \eqref{eq:def_rec_op} and the bound \eqref{eq:Pi-bounds} yield $| (\CR^{\ga, \fa} \<1b>^{m} )(z) | \lesssim \emezo^{|\scalebox{0.7}{\<1b>}| m} \$ Z^{\ga, \fa}_{\lift}\$_{T+1}^{(\emezo)}$. Combining this with the bound on the function $v_\ga$ in \eqref{eq:v-bound}, we estimate expression \eqref{eq:last-term-new} as
\begin{equation}\label{eq:H-bound3}
 | \widetilde{V}_{\ga, \fa}(z) |_{0} \lesssim \sum_{m = 0}^3 \emezo^{|\scalebox{0.7}{\<1b>}| m} \bigl(\| z \|_\s \vee \emezo\bigr)^{(5 - m) \eta},
\end{equation}
where the proportionality constant is a multiple of $\bigl(1 + \$ Z^{\ga, \fa}_{\lift}\$_{T+1}^{(\emezo)}\bigr) \bigl(1 + \$ U_{\ga, \fa} \$_{\zeta, \eta; T}^{(\emezo)} \bigr)^5$.

Using the derived bounds, we can now estimate the function \eqref{eq:E2-ga}. From Table~\ref{tab:linear_transformations} we conclude that $\Gamma^{\ga, \fa}_{\bar z z} (\widehat{\CE}_\ga\widetilde{V}_{\ga, \fa})(z) = (\widehat{\CE}_\ga\widetilde{V}_{\ga, \fa})(z)$, and the second term in the definition of the norm \eqref{eq:def_dgamma_norm} contains only the difference $(\widehat{\CE}_\ga\widetilde{V}_{\ga, \fa})(z) - (\widehat{\CE}_\ga\widetilde{V}_{\ga, \fa})(\bar z)$. Hence, we need to bound $(\widehat{\CE}_\ga\widetilde{V}_{\ga, \fa})(z)$ and $(\widehat{\CE}_\ga\widetilde{V}_{\ga, \fa})(z) - (\widehat{\CE}_\ga\widetilde{V}_{\ga, \fa})(\bar z)$.

From \eqref{eq:H-bound1} we get 
\begin{equation*}
| (\widehat{\CE}_\ga\widetilde{V}_{\ga, \fa})(z) |_{|\scalebox{0.7}{\<5b>}| + 2} = 4 \sqrt2, \qquad \qquad | (\widehat{\CE}_\ga\widetilde{V}_{\ga, \fa})(z) - (\widehat{\CE}_\ga\widetilde{V}_{\ga, \fa})(\bar z) |_{|\scalebox{0.7}{\<5b>}| + 2} = 0.
\end{equation*}
Similarly, from \eqref{eq:H-bound2} we have 
\begin{align*}
| (\widehat{\CE}_\ga\widetilde{V}_{\ga, \fa})(z) |_{|\scalebox{0.7}{\<4b>}| + 2} &\lesssim (\| z \|_\s \vee \emezo)^{\eta} \$ U_{\ga, \fa} \$_{\zeta, \eta; T}^{(\emezo)}, \\
 | (\widehat{\CE}_\ga\widetilde{V}_{\ga, \fa})(z) - (\widehat{\CE}_\ga\widetilde{V}_{\ga, \fa})(\bar z)|_{|\scalebox{0.7}{\<4b>}| + 2} &\lesssim (\| z - \bar z \|_\s \vee \emezo)^{\zeta - |\scalebox{0.7}{\<20b>}|} \Vert z, \bar z\Vert_{\emezo}^{\eta - \zeta} \$ U_{\ga, \fa} \$_{\zeta, \eta; T}^{(\emezo)}.
\end{align*}
Finally, \eqref{eq:H-bound3} yields
\begin{align*}
| (\widehat{\CE}_\ga \widetilde{V}_{\ga, \fa})(z) |_{0} &\lesssim \ga^6 \sum_{m = 0}^3 \emezo^{|\scalebox{0.7}{\<1b>}| m} \bigl(\| z \|_\s \vee \emezo\bigr)^{(5 - m) \eta} \lesssim \emezo^{\vartheta} (\| z \|_{\s} \vee \emezo)^{\eta_2-\vartheta} ,
\end{align*}
for any $0 < \vartheta \leq \frac{1}{2} - 3 \kappa$ and $\eta_2 = 5 \eta + 2$ (recall that $|\<1b>| = -\frac{1}{2} - \kappa$ and $|\<20b>| = 1 - 2\kappa$), and where the proportionality constant is a multiple of $\bigl(1 + \$ Z^{\ga, \fa}_{\lift}\$_{T+1}^{(\emezo)}\bigr) \bigl(1 + \$ U_{\ga, \fa} \$_{\zeta, \eta; T}^{(\emezo)} \bigr)^5$. Using this bound, we get furthermore 
\begin{align*}
&| (\widehat{\CE}_\ga \widetilde{V}_{\ga, \fa})(z) - (\widehat{\CE}_\ga \widetilde{V}_{\ga, \fa})(\bar z) |_{0} \leq | (\widehat{\CE}_\ga \widetilde{V}_{\ga, \fa})(z) |_{0} + |(\widehat{\CE}_\ga \widetilde{V}_{\ga, \fa})(\bar z) |_{0}\\
&\qquad \lesssim \emezo^{\vartheta} \Bigl((\| z \|_{\s} \vee \emezo)^{\eta_2-\vartheta} + (\| \bar z \|_{\s} \vee \emezo)^{\eta_2-\vartheta}\Bigr) \lesssim \emezo^{\vartheta - \bar \vartheta} (\| z - \bar z\|_\s \vee \emezo)^{\bar \vartheta} \Vert z, \bar z\Vert_{\emezo}^{\eta_2-\vartheta} ,
\end{align*}
for any $0 < \bar \vartheta < \vartheta$. Combining the preceding bounds on $\widehat{\CE}_\ga \widetilde{V}_{\ga, \fa}$, we conclude that the following bound holds for the function \eqref{eq:E2-ga}:
\begin{equation}\label{eq:E2-bound}
\$ E^{(2)}_\ga(U_{\ga, \fa}) \$_{\zeta_3, \eta_3; T}^{(\emezo)} \lesssim \emezo^{\vartheta - \bar \vartheta} \bigl(1 + \$ Z^{\ga, \fa}_{\lift}\$_{T+1}^{(\emezo)}\bigr) \bigl(1 + \$ U_{\ga, \fa} \$_{\zeta, \eta; T}^{(\emezo)} \bigr)^5,
\end{equation}
for any $\zeta_3$ and $\eta_3$ satisfying $\zeta_3 \leq \bar \vartheta$, $\zeta_3 \leq \zeta - |\<20b>| + |\<4b>| + 2$, $\eta_3 \leq \eta_2 - \vartheta$, $\eta_3 \leq \eta - |\<20b>| + |\<4b>| + 2$, $\eta_3 - \zeta_3 \leq \eta - \zeta$ and $\eta_3 - \zeta_3 \leq \eta_2 - \vartheta$. Taking $\vartheta = 2 \kappa$, $\bar \vartheta = \kappa$, $\zeta_3 = \kappa$ and $\eta_3 = \eta - 1 - 2 \kappa$, all these conditions are satisfied and moreover we have $\zeta_3 > 0$ and $\eta_3 > -2$, which allows to take $\bar \zeta$ and $\bar \eta$ as in \eqref{eq:M-bound}. We note that \eqref{eq:E2-bound} vanishes as $\ga \to 0$, because the power of $\emezo$ is strictly positive. 

We have just finished the proof of the bound \eqref{eq:M-bound}, from which \eqref{eq:contraction1} follows. The bound \eqref{eq:contraction2} can be proved similarly and we prefer to omit the details. Then the Banach fixed point theorem yields existence of a fixed point of the map $\CM^{\ga, \fa}_{T}$, and hence we get a local solution of equation \eqref{eq:AbstractDiscreteEquation}. By patching the local solution in the standard way, we get the maximal time $T_{\ga, \fa}$ such that the solution exists on the time interval $[0, T_{\ga, \fa})$. One can see that the time $T_{\ga, \fa}$ is the one at which $\| X_{\ga, \fa}(t) \|^{(\emezo)}_{\CC^\eta}$ diverges. Applying Proposition~\ref{prop:ReconThm_v2} to the function $X_{\ga, \fa} = \CR^{\ga, \fa} U_{\ga, \fa}$, we then get the required bound \eqref{eq:SolutionMap-bound}.

A bound on the solutions $U_{\ga, \de, \fa}$ can be proved respectively. Furthermore, in the same way as we proved \eqref{eq:SolutionMap-bound}, we get the bound \eqref{eq:SolutionMap-bound-delta}.
\end{proof}

Proposition~\ref{prop:SolutionMap} gives a local solution $X_{\ga, \fa}$, and by analogy with \eqref{eq:remainder-equation} we can also study the respective solution $v_{\ga, \fa}$ of the remainder equation \eqref{eq:discrete-remainder}. More precisely, we define it as $v_{\ga, \fa} = X_{\ga, \fa} - \sqrt2\, Y_{\ga, \fa}$, where $Y_{\ga, \fa} = \CR^{\ga, \fa}\<1b>$. Then from Proposition~\ref{prop:SolutionMap} we can conclude that in the setting of \eqref{eq:SolutionMap-bound} we have 
\begin{equation}\label{eq:discrete-remainder-bound}
\sup_{t \in [0, T \wedge T^L_{\ga, \fa}]} \| v_{\ga, \fa}(t) \|^{(\emezo)}_{\CC^{3 / 2 + 3 \eta}} \leq C.
\end{equation}
In the same way, for a local solution $X_{\ga, \de, \fa}$ we set $Y_{\ga, \de, \fa} = \CR^{\ga, \de, \fa}\<1b>$ and $v_{\ga, \de, \fa} = X_{\ga, \de, \fa} - \sqrt2\, Y_{\ga, \de, \fa}$. Then in the setting of \eqref{eq:SolutionMap-bound-delta} we have
\begin{equation}\label{eq:discrete-remainder-bound-delta}
\sup_{t \in [0, T \wedge T^L_{\ga, \fa} \wedge T^L_{\ga, \de, \fa}]} \| (v_{\ga, \fa} - v_{\ga, \de, \fa})(t) \|^{(\emezo)}_{\CC^{3 / 2 + 3 \eta}} \leq C \de^\theta.
\end{equation}

\subsection{Controlling the process $\un{X}_{\ga, \fa}$}

Similarly to $X_{\ga, \fa}$, we can also control the process $\un{X}_{\ga, \fa}$ defined in \eqref{eq:X-under}. For this, we define the discrete kernel $\un{P}^\ga_t(x) := \bigl(P^\ga_t \ae \un{K}_\ga\bigr)(x)$ on $x \in \Le$ and by analogy with \eqref{eq:IsingKacEqn} we then get 
\begin{align*}
	\un{X}_\ga(t, x) &= P^\ga_t \un{X}^0_\ga(x) + \sqrt 2\, \un{Y}_\ga(t, x) \\
	&\qquad + \int_0^t \un{P}^{\ga}_{t-s} \Bigl( -\frac{\be^3}{3} X^3_\ga + \bigl(\fC_\ga + A\bigr) X_\ga + E_\ga \Bigr)(s, x)\, \d s,
\end{align*}
where 
\begin{equation*}
	\un{Y}_\ga (t, x) := \frac{1}{\sqrt 2} \eps^3 \sum_{y \in \Le} \int_0^t \un{P}^{\ga}_{t-s}(x-y) \,\d \M_\ga(s, y).
\end{equation*}
We defined the respective kernel $\un{G}^\ga_t(x)$ on $x \in \Lattice$ by \eqref{eq:From-P-to-G}. This kernel is different from $\widetilde{G}^\ga$ only by the scale, which is $\emezo$ for the latter and $\un{\emezo} := \emezo \ga^{\un\kappa}$ for the former. Hence, in the same way as we did in Appendix~\ref{sec:decompositions}, we may write $\un{G}^\ga = \un{\SK}^\ga + \un{\SR}^\ga$ and we may defined the respective abstract map $\un{\CP}^\ga$ as in \eqref{eq:P-operator}. We also define the respective lift of the martingales $\un{Z}^{\ga, \fa}_{\lift}$, which is defined in the same way as $Z^{\ga, \fa}_{\lift}$ in Section~\ref{sec:model-lift}, but where in the definitions \eqref{eq:lift-hermite} and \eqref{eq:Pi-I} we use the kernel $\un{\SK}^\ga$. We note that we need to use the norms on scale $\un{\emezo}$ to work with these objects, i.e. we have $\$ \un{Z}^{\ga, \fa}_\lift \$^{(\un{\emezo})}_{T}$ bounded and $\un{\CP}^\ga$ acts on suitable spaces $\CD^{\zeta, \eta}_{\un{\emezo}, T}$. If $U_{\ga, \fa}$ is a solution of \eqref{eq:AbstractDiscreteEquation}, then we define 
\begin{equation}\label{eq:U-bar}
\un{U}_{\ga, \fa} = \CQ_{< \zeta} \Bigl(G^\ga \un{X}_\ga^0 + \un{\CP}^\ga \1_+ \bigl( F_\ga(U_{\ga, \fa}) + E^{(1)}_\ga(U_{\ga, \fa}) + E^{(2)}_\ga(U_{\ga, \fa}) \bigr) + \sqrt 2\, \un{W}_{\ga, \fa}\Bigr),
\end{equation}
where $\un{W}_{\ga, \fa}(z) := \un{\CP}^\ga \1_+ (\blueXi)(z)$. We have from Lemma~\ref{lem:solution} that the solution of \eqref{eq:IsingKacEqn-new} is obtained as $X_{\ga, \fa} = \CR^{\ga, \fa} U_{\ga, \fa}$. Recalling that $X_{\ga, \fa}$ equals $X_{\ga}$, the solution of \eqref{eq:IsingKacEqn}, on the time interval $[0, \tau_{\ga, \fa}]$, we conclude that $\un{X}_{\ga} = \un{\CR}^\ga \un{U}_{\ga, \fa}$ on $[0, \tau_{\ga, \fa}]$. Furthermore, we may get a bound on $\un{X}_{\ga, \fa} = \un{\CR}^\ga \un{U}_{\ga, \fa}$.

\begin{proposition}\label{prop:under-X}
Let $X_{\ga, \fa}$ be the local solution defined in Proposition~\ref{prop:SolutionMap}, and let $\un{X}_{\ga, \fa}$ be as above. Then in the setting of \eqref{eq:SolutionMap-bound} one has
\begin{equation*}
\sup_{t \in [0, T \wedge T^L_{\ga, \fa}]} \| \un{X}_{\ga, \fa}(t) \|^{(\un{\emezo})}_{\CC^\eta} \leq C,
\end{equation*}
where we use the norm \eqref{eq:eps-norm-1} with the scale $\un{\emezo} := \emezo \ga^{\un\kappa}$.
\end{proposition}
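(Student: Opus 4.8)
The plan is to mirror the proof of Proposition~\ref{prop:SolutionMap}, but working with the kernel $\un{\SK}^\ga$ and the model $\un{Z}^{\ga, \fa}_{\lift}$ in place of $\mywidetilde{\SK}^\ga$ and $Z^{\ga, \fa}_{\lift}$, and replacing every occurrence of the scale $\emezo$ by the scale $\un{\emezo} := \emezo \ga^{\un\kappa}$. The key structural observation is that $\un{X}_{\ga, \fa} = \un{\CR}^\ga \un{U}_{\ga, \fa}$, where $\un{U}_{\ga, \fa}$ is \emph{not} obtained by re-solving a fixed-point equation, but rather by applying the explicit linear map \eqref{eq:U-bar} to the already-constructed solution $U_{\ga, \fa}$ of \eqref{eq:AbstractDiscreteEquation}. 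This is crucial: we do not need to prove a contraction estimate for $\un{\CP}^\ga$; we only need to bound the output of $\un{\CP}^\ga \1_+$ applied to the fixed (known) right-hand side $F_\ga(U_{\ga, \fa}) + E^{(1)}_\ga(U_{\ga, \fa}) + E^{(2)}_\ga(U_{\ga, \fa})$, together with the term $G^\ga \un{X}^0_\ga$ and the force term $\un{W}_{\ga, \fa}$.

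First I would record that $\un{Z}^{\ga, \fa}_{\lift}$ satisfies the analogue of Proposition~\ref{prop:models-converge} with the scale $\un{\emezo}$, i.e. $\$ \un{Z}^{\ga, \fa}_{\lift}\$^{(\un{\emezo})}_{T+1} \leq L$ almost surely (after possibly enlarging $L$), since $\un{\SK}^\ga$ has exactly the same singularity structure as $\mywidetilde{\SK}^\ga$, differing only by its cutoff scale $\un{\emezo}$ instead of $\emezo$; the moment bounds of Section~\ref{sec:convergence-of-models} go through verbatim with $\emezo$ replaced by $\un{\emezo}$ throughout the diagrammatic estimates (using that $\un{\emezo} \approx \ga^{3+\un\kappa}$, so $\eps \approx \ga^4 <\!\!< \un{\emezo}$ still holds for $\un\kappa < 1$). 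Then I would invoke \cite[Thm.~4.22]{erhard2017discretisation} for the operator $\un{\CP}^\ga$ to bound $\$ \un{U}_{\ga, \fa}\$^{(\un{\emezo})}_{\zeta, \eta; T}$ in terms of $\| \un{X}^0_\ga\|^{(\un{\emezo})}_{\CC^\eta}$, $\$ \un{W}_{\ga, \fa}\$^{(\un{\emezo})}_{\zeta, \eta; T}$, and the $\CD^{\bar\zeta, \bar\eta}_{\un{\emezo}, T}$-norms of the three nonlinear terms $F_\ga, E^{(1)}_\ga, E^{(2)}_\ga$. The norms of these nonlinear terms are controlled exactly as in the proof of Proposition~\ref{prop:SolutionMap}: the arguments there bound $\$ F_\ga(U_{\ga, \fa})\$$, $\$ E^{(1)}_\ga(U_{\ga, \fa})\$$, $\$ E^{(2)}_\ga(U_{\ga, \fa})\$$ by powers of $\emezo$ and of $\$ U_{\ga, \fa}\$^{(\emezo)}_{\zeta, \eta; T}$; since $\un{\emezo} \geq \emezo$ and $\un{\emezo} \approx \ga^{3+\un\kappa}$ still vanishes as $\ga \to 0$, the same bounds hold with $\un{\emezo}$ in place of $\emezo$, using the a priori bound \eqref{eq:SolutionMap-bound} on $X_{\ga, \fa}$ and hence on $\$ U_{\ga, \fa}\$$. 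Note $F_\ga(U_{\ga, \fa})$, $E^{(i)}_\ga(U_{\ga, \fa})$ do not themselves involve the scale, so the only place the scale enters is through the modelled-distribution norms, and those are monotone in the scale in the relevant direction. Finally, applying Proposition~\ref{prop:ReconThm_v2} with the model $\un{Z}^{\ga, \fa}_{\lift}$ to the modelled distribution $\un{U}_{\ga, \fa}$ yields the bound on $\iota_\eps \un{X}_{\ga, \fa}$ tested against rescaled bump functions on scales above $\un{\emezo}$; for scales below $\un{\emezo}$ we use the trivial bound $|\un{X}_{\ga, \fa}(t,x)| \leq \ga^{-3}$ (from $\un{X}_{\ga, \fa} = \un{K}_\ga \ae S'_{\ga, \fa}$ or the analogue) together with $\ga^{-3} \lesssim \un{\emezo}^{\eta}$, which holds since $\eta > -1$ and $\un{\emezo} \approx \ga^{3+\un\kappa}$. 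This gives $\| \un{X}_{\ga, \fa}(t)\|^{(\un{\emezo})}_{\CC^\eta} \leq C$ uniformly on $[0, T \wedge T^L_{\ga, \fa}]$.

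The main obstacle I anticipate is bookkeeping rather than conceptual: one must check that all the scale-dependent exponents appearing in the bounds of the nonlinear terms in the proof of Proposition~\ref{prop:SolutionMap} — in particular the power $\emezo^{\vartheta - \bar\vartheta}$ in \eqref{eq:E2-bound}, the factor $\ga^{\frac{3r}{2} - 9 - 3r\kappa}$ in the bound on $E^{(1)}_\ga$, and the interplay $\emezo^{\vartheta}(\|z\|_\s \vee \emezo)^{\eta_2 - \vartheta}$ — remain positive powers of $\ga$ (or at worst uniformly bounded) after substituting $\emezo \mapsto \un{\emezo} = \emezo\ga^{\un\kappa}$. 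Since $\un\kappa \in (0, \frac{1}{10})$ is small and the exponents in question have room to spare (they were shown to be strictly positive and depend continuously on the parameters), the substitution preserves positivity for $\un\kappa$ small enough; but this requires going back through those estimates with $\un{\emezo}$ and confirming nothing degenerates. A secondary point to verify is that $\un{W}_{\ga, \fa}$ satisfies $\$ \un{W}_{\ga, \fa}\$^{(\un{\emezo})}_{\zeta, \eta; T} \lesssim T^\nu$, which follows from \cite[Lem.~2.3]{Martingales} and \cite[Thm.~4.22]{erhard2017discretisation} exactly as for $W_{\ga, \fa}$, using that the lift $\un{Z}^{\ga, \fa}_{\lift}$ of the noise on scale $\un{\emezo}$ has the required bounds. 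Apart from these checks, the proof is a direct transcription of the argument for Proposition~\ref{prop:SolutionMap}, so I would keep the write-up brief, citing that proposition and indicating only the substitution $\emezo \mapsto \un{\emezo}$ and the use of \eqref{eq:U-bar} to avoid re-running the fixed-point argument.
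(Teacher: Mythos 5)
Your overall route is the same as the paper's: use the explicit formula \eqref{eq:U-bar} for $\un{U}_{\ga,\fa}$ (no new fixed point), estimate its right-hand side as in the proof of \eqref{eq:contraction1} but at scale $\un{\emezo}$, and then reconstruct. However, you omit the one genuinely new estimate, which occupies the first half of the paper's proof: the bound on the new initial datum. You reduce everything to $\| \un{X}^0_\ga \|^{(\un{\emezo})}_{\CC^\eta}$ and never bound it. Note that $\un{X}^0_\ga = \un{K}_\ga \ae S_\ga(0)$ is an average of the spin field at the \emph{finer} scale $\un{\emezo} \approx \ga^{3+\un\kappa}$, while the hypothesis \eqref{eq:initial-convergence} only controls the $\emezo$-scale average $X^0_\ga$; controlling test-function pairings at scales $\lambda \in [\un{\emezo}, \emezo)$ requires the deconvolution bound on the spin field of Lemma~\ref{lem:Z-bound} (valid only for $\lambda \geq \emezo^{1-\tilde\kappa}$, which is exactly where the smallness of $\un\kappa$ enters), combined with the smoothness of $\un{K}_\ga$ at scale $\un{\emezo}$. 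This cannot be obtained by the substitution $\emezo \mapsto \un{\emezo}$ alone, and without it the Schauder step has no input for the term $G^\ga \un{X}^0_\ga$.

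Two of your quantitative claims are also wrong as stated. First, $\un{\emezo} = \emezo\, \ga^{\un\kappa} \leq \emezo$, not $\geq \emezo$: the $\un{\emezo}$-scale norms (both \eqref{eq:eps-norm-1} and the modelled-distribution norms) are \emph{stronger} than the $\emezo$-scale ones, so the monotonicity you invoke to transfer the bounds on $F_\ga(U_{\ga,\fa})$, $E^{(1)}_\ga$, $E^{(2)}_\ga$ goes the wrong way; the transfer is legitimate, but because the model $\un{Z}^{\ga,\fa}_\lift$ satisfies the analogue of Proposition~\ref{prop:models-converge} at scale $\un{\emezo}$ and the estimates of \eqref{eq:contraction1} are rerun with that scale (as the paper does), not by norm comparison. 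Second, your treatment of scales $\lambda < \un{\emezo}$ fails: the inequality $\ga^{-3} \lesssim \un{\emezo}^{\eta}$ is false, since $\un{\emezo}^{\eta} \approx \ga^{(3+\un\kappa)\eta}$ with $\eta \in (-\tfrac{4}{7}, -\tfrac{1}{2})$ is only of order $\ga^{-3/2}$ to $\ga^{-12/7}$; the condition $\eta > -1$ would give $\ga^{-3} \lesssim \un{\emezo}^{-1}$, which is not what is needed. In the paper the regime $\lambda < \un{\emezo}$ is handled through the small-scale parts of the model bounds and the reconstruction estimate of Proposition~\ref{prop:ReconThm_v2} applied to $\un{U}_{\ga,\fa}$, not through a trivial sup bound on $\un{X}_{\ga,\fa}$.
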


\begin{proof}
For any $0 < \tilde{\emezo} \leq \emezo$ we have $\| X^{0}_\ga\|^{(\tilde{\emezo})}_{\CC^{\bar \eta}} \leq \| X^{0}_\ga\|^{(\emezo)}_{\CC^{\bar \eta}}$. Taking $\un{\emezo} < \tilde{\emezo} < \emezo$, Lemma~\ref{lem:Z-bound} yields $\bigl| \bigl( \iota_\eps S_{\ga} (t)\bigr) (\phi_x^\lambda) \bigr| \lesssim \la^{\bar\eta} \| X^{0}_\ga\|^{(\emezo)}_{\CC^{\bar \eta}}$ for any smooth compactly supported $\varphi$ and $\lambda \in [\un{\emezo}, 1]$. From the assumption \eqref{eq:initial-convergence} on the initial condition, the last quantity is bounded uniformly in $\ga \in (0, \ga_\star)$. Since the function $\un{K}_\ga$ is smooth and rescaled by $\un{\emezo}$, we get $\sup_{\ga \in (0, \ga_\star)} \| \un{X}^{0}_\ga\|^{(\un{\emezo})}_{\CC^{\bar \eta}} < \infty$. Estimating then the right-hand side of \eqref{eq:U-bar} in exactly the same way as we bounded \eqref{eq:contraction1}, we get $\$\un{U}_{\ga, \fa} \$_{\zeta, \eta; T}^{(\un\emezo)} \lesssim 1$, for any $T \in (0, T_{\ga, \fa})$, where the proportionality constant is independent of $\ga$ and $T$. Recalling that $\un{X}_{\ga, \fa} = \un{\CR}^\ga \un{U}_{\ga, \fa}$, the bound \eqref{eq:SolutionMap-bound} follows from Proposition~\ref{prop:ReconThm_v2} and moment bounds for the model.
\end{proof}

Let us define $\un{v}_{\ga, \fa} := \un{X}_{\ga, \fa} - \sqrt2\, \un{Y}_{\ga, \fa}$ with $\un{Y}_{\ga, \fa} = \un{\CR}^{\ga, \fa}\<1b>$. Then by bounding the right-hand side of \eqref{eq:U-bar} without the term $\sqrt 2\, \un{W}_{\ga, \fa}$, in the same way as we did in the proof of Proposition~\ref{prop:under-X}, in the setting of \eqref{eq:SolutionMap-bound} we get 
\begin{equation}\label{eq:remainder-bound}
\sup_{t \in [0, T \wedge T^L_{\ga, \fa}]} \| \un{v}_{\ga, \fa}(t) \|^{(\un{\emezo})}_{\CC^{3 / 2 + 3 \eta}} \leq C.
\end{equation}

We also need to control the process $\un{X}_\ga X_\ga$ which appears in the definition of the stopping time \eqref{eq:tau-2}. In what follows, when using the norm $\| \bigcdot \|_{L^\infty}$ of these processes, we compute the norm on $\Le$. Writing as before $X_{\ga, \fa} = \sqrt2\, Y_{\ga, \fa} + v_{\ga, \fa}$ and $\un{X}_{\ga, \fa} = \sqrt2\, \un{Y}_{\ga, \fa} + \un{v}_{\ga, \fa}$ with $Y_{\ga, \fa} = \CR^{\ga, \fa}\<1b>$ and $\un{Y}_{\ga, \fa} = \un{\CR}^{\ga, \fa}\<1b>$, we get 
\begin{align}
\bigl\| \bigl(\un{X}_{\ga, \fa} X_{\ga, \fa} &- 2\, \un{Y}_{\ga, \fa} Y_{\ga, \fa}\bigr)(t) \bigr\|^{(\un{\emezo})}_{\CC^{3 / 2 + 3 \eta}} \label{eq:X-un-X} \\
&\qquad \lesssim \| \un{Y}_{\ga, \fa}(t)\|_{L^\infty} \| v_{\ga, \fa}(t) \|_{L^\infty} + \| \un{v}_{\ga, \fa}(t) \|_{L^\infty} \bigl(\| Y_{\ga, \fa}(t) \|_{L^\infty} + \| v_{\ga, \fa}(t) \|_{L^\infty}\bigr). \nonumber
\end{align}
Propositions~\ref{prop:models-converge} and \ref{prop:ReconThm_v2} yield $\E \bigl[\sup_{t \in [0, T]} \| Y_{\ga, \fa}(t) \|_{L^\infty}^p \bigr] \lesssim \emezo^{|\<1b>| p}$ for all $p \geq 1$ large enough, and respectively $\E \bigl[\sup_{t \in [0, T]} \| \un{Y}_{\ga, \fa}(t) \|_{L^\infty}^p \bigr] \lesssim \un{\emezo}^{|\<1b>| p}$. Moreover, from \eqref{eq:discrete-remainder-bound} and \eqref{eq:remainder-bound} we get 
\begin{equation*}
\sup_{t \in [0, T \wedge T^L_{\ga, \fa}]} \| v_{\ga, \fa}(t) \|_{L^\infty} \lesssim \emezo^{\frac{3}{2} + 3 \eta}, \qquad \sup_{t \in [0, T \wedge T^L_{\ga, \fa}]} \| \un{v}_{\ga, \fa}(t) \|_{L^\infty} \lesssim \un{\emezo}^{\frac{3}{2} + 3 \eta}.
\end{equation*}
Using these bounds and Minkowski inequality, we get from \eqref{eq:X-un-X}
\begin{equation}\label{eq:unX-X-minus-Y}
\sup_{t \in [0, T \wedge T_{\ga, \fa}]} \bigl\| \bigl(\un{X}_{\ga, \fa} X_{\ga, \fa} - 2\, \un{Y}_{\ga, \fa} Y_{\ga, \fa}\bigr)(t) \bigr\|_{L^\infty} \lesssim \emezo^{|\<1b>| + \frac{3}{2} + 3 \eta} \ga^{|\<1b>| \un \kappa},
\end{equation}
where we used the definition $\un{\emezo} = \emezo \ga^{\un\kappa}$ and the bounds on $\eta$ in the statement of Theorem~\ref{thm:main}. If we take $\un \kappa \leq \kappa < \frac{1}{10}$, where $\kappa$ is the value used in the definition of the regularity structure \eqref{eqs:hom}, the preceding expression is bounded by $C \emezo^{\un\kappa - 1}$. Furthermore, for any $\un{\eta} < -1$  we get the estimate 
\begin{equation*}
\E \biggl[\sup_{t \in [0, T]} \Bigl(\| \un{Y}_{\ga, \fa}(t) Y_{\ga, \fa}(t) - \tfrac{1}{2} \un{\fC}_\ga(t) \|^{(\un{\emezo})}_{\C^{\un{\eta}}}\Bigr)^p\biggr] \lesssim 1,
\end{equation*}
for any $p \geq 1$ large enough and any $T > 0$. This estimate is obtained in the same way as Lemma~\ref{lem:X2-under-prime}, because the difference in the processes involved in these estimates is only in the initial states. Moreover, we have $|\un{\fC}_\ga - 2 \un{\fC}_\ga(t)| \lesssim 1$ where the constant $\un{\fC}_\ga$ is defined in \eqref{eq:c-under}. Combining this bound with \eqref{eq:unX-X-minus-Y}, we get the following result. 

\begin{lemma}\label{lem:unX-X-bound}
Let $\un \kappa \leq \kappa$, where $\kappa$ is the value used in \eqref{eqs:hom} and $\un{\kappa}$ is from \eqref{eq:under-K-def}. For any $\un{\eta} < -1$, $T > 0$ and any $p \geq 1$ large enough, in the setting of \eqref{eq:SolutionMap-bound} one has
\begin{equation*}
\E \biggl[\sup_{t \in [0, T \wedge T^L_{\ga, \fa}]} \Bigl(\| \un{X}_{\ga, \fa}(t) X_{\ga, \fa}(t) - \un{\fC}_\ga \|^{(\un{\emezo})}_{\C^{\un{\eta}}}\Bigr)^p\biggr] \lesssim \emezo^{(\un\kappa - 1) p},
\end{equation*}
where $\eta$ is from the statement of Theorem~\ref{thm:main} and $\un{\emezo} = \emezo \ga^{\un\kappa}$.
\end{lemma}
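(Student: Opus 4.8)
The plan is to assemble Lemma~\ref{lem:unX-X-bound} from the three ingredients already prepared: the a priori decomposition $X_{\ga, \fa} = \sqrt2\,Y_{\ga, \fa} + v_{\ga, \fa}$ and $\un{X}_{\ga, \fa} = \sqrt2\,\un{Y}_{\ga, \fa} + \un{v}_{\ga, \fa}$, where $Y_{\ga, \fa} = \CR^{\ga, \fa}\<1b>$ and $\un{Y}_{\ga, \fa} = \un{\CR}^{\ga, \fa}\<1b>$, the moment bounds on the linearised parts coming from Propositions~\ref{prop:models-converge} and \ref{prop:ReconThm_v2}, and the remainder bounds \eqref{eq:discrete-remainder-bound} and \eqref{eq:remainder-bound}. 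First I would write the product $\un{X}_{\ga, \fa} X_{\ga, \fa}$ exactly as in \eqref{eq:X-un-X}, isolating the ``Wick-type'' leading term $2\,\un{Y}_{\ga, \fa} Y_{\ga, \fa}$ and bounding the four cross/remainder terms using the $L^\infty$ bounds on $Y_{\ga, \fa}$, $\un{Y}_{\ga, \fa}$ (of orders $\emezo^{|\scalebox{0.7}{\<1b>}|}$ and $\un{\emezo}^{|\scalebox{0.7}{\<1b>}|}$ respectively) together with the pointwise control of $v_{\ga, \fa}$ and $\un{v}_{\ga, \fa}$ on $[0, T \wedge T^L_{\ga, \fa}]$ from \eqref{eq:discrete-remainder-bound}, \eqref{eq:remainder-bound}. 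This is precisely the computation carried out right before the lemma statement, yielding the deterministic-on-the-stopped-interval bound \eqref{eq:unX-X-minus-Y}, and then the observation $\un{\emezo} = \emezo\ga^{\un\kappa}$ with $\un\kappa \le \kappa < \tfrac{1}{10}$ lets one absorb this into $\emezo^{\un\kappa - 1}$.

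Second, I would treat the genuinely random leading term $2\,\un{Y}_{\ga, \fa} Y_{\ga, \fa}$: renormalising it by $\un{\fC}_\ga(t)$ (the time-dependent version \eqref{eq:C-gamma-function}), one has the moment estimate on $\|\un{Y}_{\ga, \fa}(t) Y_{\ga, \fa}(t) - \tfrac12\un{\fC}_\ga(t)\|^{(\un\emezo)}_{\C^{\un\eta}}$ uniformly in $\ga$. This is where I would appeal to the proof of Lemma~\ref{lem:X2-under-prime}: the argument there is carried out for $\un{X}'_{\ga, \fa} X'_{\ga, \fa}$ with zero initial data, but the only difference with $\un{Y}_{\ga, \fa} Y_{\ga, \fa}$ is the initial state, so the Burkholder--Davis--Gundy plus quadratic-covariation analysis (splitting into $I^{(1)}_{\ga,\fa},\dots,I^{(7)}_{\ga,\fa}$ there, using Lemma~\ref{lem:Y-approximation} to pass from the spin field to its local average, Lemma~\ref{lem:renorm-function-underline} to handle $\un{\fC}_\ga - \un{\fC}_\ga(t)$, and Lemma~\ref{lem:tilde-P-bound} for the kernel jump sizes) applies verbatim. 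Then Lemma~\ref{lem:renorm-function-underline} (or directly Lemma~\ref{lem:renorm-constant-underline}) gives $|\un{\fC}_\ga - 2\un{\fC}_\ga(t)| \lesssim 1$, which is harmless compared to the $\emezo^{\un\kappa - 1}$ scale.

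Finally I would combine the two pieces: write
\[
\un{X}_{\ga, \fa} X_{\ga, \fa} - \un{\fC}_\ga = \bigl(\un{X}_{\ga, \fa} X_{\ga, \fa} - 2\,\un{Y}_{\ga, \fa} Y_{\ga, \fa}\bigr) + 2\bigl(\un{Y}_{\ga, \fa} Y_{\ga, \fa} - \tfrac12\un{\fC}_\ga(t)\bigr) + \bigl(2\un{\fC}_\ga(t) - \un{\fC}_\ga\bigr),
\]
take $\C^{\un\eta}$-seminorms, raise to the $p$-th power, take expectations, and use the triangle inequality together with the bounds just described; the dominant contribution is $\emezo^{(\un\kappa-1)p}$. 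The main obstacle is bookkeeping rather than conceptual: one must check that the embedding $\CC^{3/2+3\eta} \hookrightarrow \CC^{\un\eta}$ is valid for the chosen $\un\eta < -1$ (using $\eta > -\tfrac47$ and $\un\kappa$ small, one indeed has $3/2 + 3\eta$ can be made $> -1$, but only barely, so the relation between $\un\eta$, $\eta$, and $\un\kappa$ has to be tracked carefully), and that all the moment bounds invoked hold on the stopped interval $[0, T\wedge T^L_{\ga,\fa}]$ where the a priori bound \eqref{eq:SolutionMap-bound} is in force. Everything else is a direct transcription of estimates already established.
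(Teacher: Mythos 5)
Your proposal is correct and follows essentially the same route as the paper: decompose via \eqref{eq:X-un-X}, control the cross terms by \eqref{eq:discrete-remainder-bound}, \eqref{eq:remainder-bound} and the moment bounds on $Y_{\ga,\fa}$, $\un{Y}_{\ga,\fa}$ to get \eqref{eq:unX-X-minus-Y}, treat $2\,\un{Y}_{\ga,\fa}Y_{\ga,\fa}-\un{\fC}_\ga(t)$ by repeating the proof of Lemma~\ref{lem:X2-under-prime} (only the initial state differs), and absorb $|\un{\fC}_\ga-2\un{\fC}_\ga(t)|\lesssim 1$. The only cosmetic remark is that no embedding $\CC^{3/2+3\eta}\hookrightarrow\CC^{\un\eta}$ needs checking: since $\un\eta<0$, the seminorm $\|\cdot\|^{(\un\emezo)}_{\CC^{\un\eta}}$ of the cross terms is already dominated by their $L^\infty$ bound \eqref{eq:unX-X-minus-Y}.
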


\section{Proof of Theorem~\ref{thm:main}} 
\label{sec:ConvFinal}

Let $X_\ga$ be the rescaled spin field of the Ising-Kac model \eqref{eq:X-gamma}, and let $X$ be the solution of the $\Phi^4_3$ equation \eqref{eq:Phi43}. Our goal is to prove that 
\begin{equation}\label{eq:weak-limit}
\lim_{\ga \to 0} \E \bigl[ F (\iota_\eps X_\ga) \bigr] = \E \bigl[ F ( X )\bigr], 
\end{equation}
for any bounded, uniformly continuous function $F : \CD \bigl([0, T], \SD'(\T^3)\bigr) \to \R$. We note that the processes $X_\ga$ and $X$ are not required to be coupled, and the expectations in \eqref{eq:weak-limit} may be on different probability spaces. We fix the value $T > 0$ throughout this section. 

It will be convenient to introduce some intermediate processes. More precisely, for $\delta > 0$ we define $X_{\de}$ to be the solution of the SPDE \eqref{eq:Phi43-delta} and we define $X_{\ga, \de, \fa}$ to be the solution of equation \eqref{eq:Discrete-equation-regular-noise}. Then the limit \eqref{eq:weak-limit} follows if for some $\ga_0 > 0$ we have
\begin{subequations}
\begin{align}
&\lim_{\ga \to 0} \E \bigl[ F (\iota_\eps X_{\ga, \fa}) \bigr] = \E \bigl[ F ( X )\bigr], \label{eq:bound-1}\\
\lim_{\fa \to \infty} &\lim_{\ga \in (0, \ga_0)} \E \bigl| F (\iota_\eps X_\ga) - F (\iota_\eps X_{\ga, \fa}) \bigr| = 0, \label{eq:bound-2}
\end{align}
\end{subequations}
where \eqref{eq:bound-1} holds for each fixed $\fa \geq 1$. Note that the two processes in \eqref{eq:bound-2} are defined on the same probability space. Furthermore, \eqref{eq:bound-1} follows if for some $\de_0 > 0$ we have
\begin{subequations}
\begin{align}
&\lim_{\de \to 0} \E \bigl| F (X_\de) - F ( X ) \bigr| = 0, \label{eq:bound-3} \\
&\lim_{\ga \to 0} \E \bigl[ F (\iota_\eps X_{\ga, \de, \fa}) \bigr] = \E \bigl[ F ( X_\de )\bigr], \label{eq:bound-4} \\
\lim_{\de \to 0} &\sup_{\ga \in (0, \ga_0)} \E \bigl| F (\iota_\eps X_{\ga, \fa}) - F (\iota_\eps X_{\ga, \de, \fa}) \bigr| = 0, \label{eq:bound-5}
\end{align}
\end{subequations}
where \eqref{eq:bound-4} holds for every fixed $\de \in (0, \de_0)$. Again we used that the pairs of processes in \eqref{eq:bound-3} and \eqref{eq:bound-5} are defined on the same probability spaces. The limit \eqref{eq:bound-3} follows from a much stronger convergence stated in Theorem~\ref{thm:Phi-solution}. The limit \eqref{eq:bound-4} is proved in Lemma~\ref{lem:convergence2}.
\medskip

In order to compare the discrete and continuous heat kernels, we introduce the metric
\begin{equation}\label{eq:heat-metric}
\| G^\ga_t; G_t \|^{(\emezo)}_{L^1} := \sum_{x \in \Lattice} \int_{|\bar x - x| \leq \eps} \bigl|  G^\ga_t(x) - G_t(\bar x)\bigr| \d \bar x.
\end{equation}
Here, we use the heat kernel $G_t(x) = (2 \pi t)^{-3/2} e^{- |x|^3 / t}$ and the discrete kernel $G^\ga_t : \Lattice \to \R$ defined in \eqref{eq:From-P-to-G}. We will also use the discrete kernel $\widetilde{G}^\ga$ defined in \eqref{eq:From-P-to-G-tilde}. 

\begin{lemma}\label{lem:heat-bounds}
For any $0 < t \leq 1$ one has
\begin{equation}\label{eq:heat-limits}
\lim_{\ga \to 0} \| G^\ga_t; G_t \|^{(\emezo)}_{L^1} = 0, \qquad\qquad \lim_{\ga \to 0} \| \widetilde{G}^\ga_t; G_t \|^{(\emezo)}_{L^1} = 0.
\end{equation}
\end{lemma}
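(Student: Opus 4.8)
The statement concerns $L^1$-convergence of discrete heat kernels to the continuum heat kernel, measured in the metric \eqref{eq:heat-metric}. The natural approach is to pass to the Fourier side, where both kernels have explicit representations: by \eqref{eq:tildeP} and \eqref{eq:From-P-to-G} the discrete kernel $G^\ga_t$ has Fourier transform $\exp(\varkappa_{\ga,3}^2(\widehat K_\ga(\om)-1)t/\alpha)$, and $\widetilde G^\ga_t$ has the extra factor $\widehat K_\ga(\om)$; the continuum kernel $G_t$ has Fourier transform $e^{-\pi^2|\om|^2 t}$ (with the paper's normalisation of $\SF$). The key input is Lemma~\ref{lem:Kg}: for $\ga$ small, $1-\widehat K_\ga(\om)\ge C_1(|\ga^3\om|^2\wedge 1)$, $|\widehat K_\ga(\om)|\le 1$ for $|\om|\le\ga^{-3}$, $|\widehat K_\ga(\om)|\le C_2|\ga^3\om|^{-k}$ for $|\om|\ge\ga^{-3}$, and — crucially — the second moment normalisation \eqref{eq:K-moments} forces $\varkappa_{\ga,3}^2(\widehat K_\ga(\om)-1)/\alpha \to -\pi^2|\om|^2$ (up to the factor $\pi^2$ coming from the paper's Fourier convention) pointwise in $\om$ as $\ga\to 0$, since $\alpha=\ga^6$ and $\widehat K_\ga(\om)-1 = -\tfrac16\ga^6|\om|^2\cdot(\text{something}\to \pi^2\cdot 6)+o(\ga^6)$ by Taylor expansion using $\int \KK(x)|x|^2\,\d x = 6$.

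First I would reduce \eqref{eq:heat-metric} to a comparison of Fourier multipliers: write the difference $G^\ga_t(x)-G_t(\bar x)$ as $(G^\ga_t(x)-G_t(x)) + (G_t(x)-G_t(\bar x))$. The second piece, integrated over $|\bar x - x|\le\eps$ and summed, is bounded by $\eps\,\sup_{x}\sum_{y}\|\nabla G_t\|_{L^\infty([x,x+\eps])}\cdot\eps^3\lesssim\eps\,\| \nabla G_t\|_{L^1}\lesssim \eps\, t^{-1/2}$, which vanishes as $\ga\to 0$ for each fixed $t>0$. For the first piece I would bound $\sum_x \eps^3 |G^\ga_t(x)-G_t(x)|$ by pairing with $\|\cdot\|_{L^\infty}$ via duality, or more directly: split $\om$ into low modes $|\om|_\sinfty\le R$ and high modes $|\om|_\sinfty>R$ for a slowly growing cutoff $R=R(\ga)$. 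On the low modes, dominated convergence (using the pointwise convergence of the multiplier and a uniform-in-$\ga$ Gaussian-type domination $|\SF_{\!\!\eps}G^\ga_t(\om)|\le e^{-c|\ga^3\om|^2 t/\ga^6\wedge c t/\ga^6}\le e^{-ct|\om|^2}$ for $|\om|\le\ga^{-3}$, valid since $1-\widehat K_\ga\ge C_1|\ga^3\om|^2$) gives convergence of the Fourier coefficients, hence of the kernels in $L^1$ after controlling the transition from $\ell^1$ in Fourier to $L^1$ in space via $\eps^3\sum_x|g(x)|\lesssim \sum_\om (1+|\om|^2)^{-2}\|(1-\Delta)^2 g\|$-type estimates (or simply, since we only need an $L^1$ bound on a compact torus, comparing $\eps^3\sum_x$ to $\int$ using the fast decay of $\SF_{\!\!\eps}G^\ga_t$). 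On the high modes, the decay $|\widehat K_\ga(\om)|\lesssim|\ga^3\om|^{-k}$ together with $|\SF_{\!\!\eps}G^\ga_t(\om)|\le e^{-c t/\ga^6}$ for $\ga^{-3}\le|\om|_\sinfty\le N$ shows the high-mode contribution is $O(\ga^M)$ for any $M$; the continuum kernel's high-mode tail is exponentially small in $R$. The $\widetilde G^\ga_t$ case is identical since multiplying by $\widehat K_\ga(\om)$ — which is bounded by $1$ and converges to $1$ pointwise — does not affect any of these estimates.

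The main obstacle, I expect, is the passage from Fourier decay bounds to the $L^1$ bound in \eqref{eq:heat-metric} with the correct interplay between the lattice sum $\eps^3\sum_x$ and the continuum integral, and in particular making the domination uniform in $\ga$ down to the threshold where $|\ga^3\om|\approx 1$: there the bound $1-\widehat K_\ga(\om)\ge C_1(|\ga^3\om|^2\wedge 1)$ only gives $e^{-ct/\ga^6}$, which is fine, but one must check there is no loss in the number of such modes (there are $\sim\ga^{-9}$ of them, and $\ga^{-9}e^{-ct/\ga^6}\to 0$). A clean way to organise this is to note that for fixed $t>0$, $\SF_{\!\!\eps}G^\ga_t(\om)$ is, uniformly in $\ga$ small, bounded by $C\exp(-c t(|\om|^2\wedge\ga^{-6}))$, so the kernels $G^\ga_t$ are uniformly (in $\ga$) bounded in every $\CC^m$ norm on the torus by $C_m t^{-(3+m)/2}$; combined with pointwise convergence of the Fourier coefficients this gives $\CC^m$-convergence, which implies the $L^1$-metric convergence a fortiori. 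I would also remark that the restriction $0<t\le 1$ is only for convenience (the estimates are in fact uniform on $[t_0,\infty)$ for any $t_0>0$), and that this lemma, which is needed in Lemma~\ref{lem:convergence2} and the final convergence argument of Section~\ref{sec:ConvFinal}, is the quantitative heat-kernel input replacing the operator-norm convergence $\widetilde\Delta_\ga\to\Delta$ noted informally after \eqref{eq:Laplacian-discrete}.
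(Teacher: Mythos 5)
Your route (Fourier multipliers, low/high-mode splitting, dominated convergence) is genuinely different from the paper's, and most of its ingredients are sound: the low-mode expansion is exactly Lemma~\ref{lem:Kg0}, the high-mode decay is \eqref{eq:K3}, the extra factor $\widehat K_\ga(\om)$ for $\widetilde G^\ga_t$ is harmless, and the $\eps\,t^{-1/2}$ estimate for the piece $G_t(x)-G_t(\bar x)$ is correct. The genuine gap is the step that converts Fourier information into the metric \eqref{eq:heat-metric}. That metric is an $L^1$-type quantity over the \emph{whole} lattice $\Lattice$ against the \emph{whole-space} Gaussian $G_t$; it is not a torus quantity. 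Your closing "clean way" — uniform $\CC^m$ bounds plus pointwise convergence of Fourier coefficients give $\CC^m$-convergence, "which implies the $L^1$-metric convergence a fortiori" — fails on a non-compact domain: sup-norm (or $\CC^m$) convergence does not imply $L^1$ convergence without uniform-in-$\ga$ spatial decay of the difference $G^\ga_t-G_t$. The parenthetical "we only need an $L^1$ bound on a compact torus" conflates $G^\ga_t$ (defined on all of $\Lattice$ via \eqref{eq:From-P-to-G}, with continuous extension \eqref{eq:G-extension}) with the periodic kernel $P^\ga_t$; similarly your use of $\SF_{\!\!\eps}$ with finitely many modes $|\om|_\sinfty\le N$ belongs to the torus picture, not to the object being compared here. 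To close the gap on the Fourier side you must produce weighted decay in $x$, i.e.\ bound $\om$-derivatives of the multiplier difference uniformly in $\ga$ (your "$(1-\Delta_\om)^2$" aside, which you gesture at but do not carry out) — this is precisely the integration-by-parts bookkeeping that constitutes the proof of Lemma~\ref{lem:Pgt}. A quicker repair: invoke Lemma~\ref{lem:Pgt} with large $n$ (and the explicit Gaussian) to make the region $|x|\ge R$ uniformly negligible, and use your low/high-mode argument only to get uniform convergence on $|x|\le R$.

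For comparison, the paper does not argue on the Fourier side at this point at all: it uses the pointwise weighted bound $|G^\ga_t(x)-G_t(x)|\le C\,\eps^\theta\,(t^{1/2}+|x|+\eps)^{-3-\theta}$ (a consequence of the kernel bounds and decomposition of Appendix~\ref{sec:kernels}, i.e.\ of Lemma~\ref{lem:Pgt}) together with the \hol-type estimate $|G_t(x)-G_t(\bar x)|\le C(t^{1/2}+|x|\wedge|\bar x|)^{-3-\theta}|x-\bar x|^\theta$, and simply integrates, obtaining the rate $\eps^\theta$ (resp.\ $\emezo^\theta$ for $\widetilde G^\ga_t$) directly. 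So the decay-in-$x$ input you are missing is exactly the content the paper's one-line proof leans on; your approach can be made to work, but only after supplying that same input.
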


\begin{proof}
From the explicit formula for the heat kernel we can get (see \cite[Lem.~7.4]{Regularity})
\begin{equation*}
|G_t(x) - G_t(\bar x)| \leq C \bigl(t^{1/2} + (|x| \wedge |\bar x|)\bigr)^{-3 - \theta} |x - \bar x|^\theta,
\end{equation*}
for any $\theta \in [0, 1]$. Similarly, from the bounds on the discrete kernels provided at the end of Appendix~\ref{sec:decompositions} we get
\begin{equation}\label{eq:heat-bounds}
 |G^\ga_t(x) - G_t(x)| \leq C \eps^\theta \bigl(t^{1/2} + |x| + \eps\bigr)^{-3 - \theta},
\end{equation}
Then the integral in \eqref{eq:heat-metric} is estimated by a constant multiple of $\eps^\theta \bigl(t^{1/2} + |x| + \eps\bigr)^{-3 - \theta}$, and the whole expression \eqref{eq:heat-metric} can be estimated by a constant times $\eps^\theta$. This gives the first limit in \eqref{eq:heat-limits}, and the second follows in the same way, where the bounds for $\widetilde{G}^\ga_t$ are of the form \eqref{eq:heat-bounds} with $\eps$ being replaced by $\emezo$.
\end{proof}

\begin{lemma}\label{lem:convergence2}
For any $\fa \geq 1$, $\de \in (0, 1)$ and $T > 0$, the process $X_{\ga, \de, \fa}(t)$ is almost surely uniformly bounded on $[0, T]$. Moreover, the limit \eqref{eq:bound-4} holds.
\end{lemma}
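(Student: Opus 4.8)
\textbf{Part 1: a.s.\ boundedness of $X_{\ga,\de,\fa}$.} For fixed $\de$ the mollifier $\varrho_{\ga,\de}$ in \eqref{eq:xi-gamma-delta} is smooth and compactly supported in time, so integrating by parts in $s$ leaves an integral of $\partial_s\varrho_{\ga,\de}$ against the path $\M_{\ga,\fa}(\cdot,y)$, which is a.s.\ finite on compacts (finitely many jumps of size $2\ga^{-3}$ plus a Lipschitz drift, cf.\ \eqref{eq:mrt_decomp}); hence $t\mapsto\xi_{\ga,\de,\fa}(t,x)$ is a.s.\ continuous and bounded on $[0,T]$, uniformly over the finite set $x\in\Le$. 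Equation \eqref{eq:Discrete-equation-regular-noise} is then a finite-dimensional random ODE with locally Lipschitz drift and continuous bounded forcing; pairing it with $X_{\ga,\de,\fa}^{p-1}$ for large even $p$, using that $\widetilde{\Delta}_\ga$ is negative semidefinite (its Fourier symbol $\tfrac{\ga^2}{\eps^2}(\widehat K_\ga-1)\le 0$ by Lemma~\ref{lem:Kg}, which moreover controls the only possibly anti-dissipative contribution of the convolved cubic $K_\ga\ae X^3$, namely that coming from sign–oscillations of $X$ on the scale of $K_\ga$) together with the coercivity of $-\tfrac{\be^3}{3}X^3$ against the large positive mass $\fC_{\ga,\de}+A$ and the bounded forcing, yields an a priori bound. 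This rules out finite-time blow-up, so $X_{\ga,\de,\fa}$ exists and is a.s.\ bounded on $[0,T]$; equivalently, the maximal existence time of the abstract solution $U_{\ga,\de,\fa}$ of \eqref{eq:AbstractDiscreteEquation-delta} exceeds $T$ a.s.

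\textbf{Part 2: the limit \eqref{eq:bound-4} via the discrete solution map.} Since the regularised equation \eqref{eq:Discrete-equation-regular-noise} carries no error term, its abstract form \eqref{eq:AbstractDiscreteEquation-delta} involves only the $\CW$-symbols and, modulo $A_{\ga,\de}\to 0$ and $\be\to 1$ as $\ga\to 0$, is literally the continuum abstract equation \eqref{eq:abstract_equation}. Writing $X_{\ga,\de,\fa}=\CR^{\ga,\de,\fa}U_{\ga,\de,\fa}$ and $X_\de=\CR^{(\de)}U^{(\de)}$, it therefore suffices to verify: (i) $\iota_\eps X^0_{\ga,\de}\to X^0_\de=\psi_\de\ast X^0$ in $\CC^\eta(\T^3)$, which is immediate from \eqref{eq:initial-convergence}, \eqref{eq:initial-gamma-delta} and $\psi_{\ga,\de}\to\psi_\de$; and (ii) that the discrete model $Z^{\ga,\de,\fa}_{\lift}$, restricted to the sub-structure generated by $\CW$, converges as $\ga\to 0$ to the continuum mollified model $Z^{(\de)}=(\Pi^{(\de)},\Gamma^{(\de)})$, in the model metric from Section~\ref{sec:DiscreteModels} (which, as $\emezo\to 0$, degenerates to the continuum one). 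Granting (i)--(ii), the discrete-to-continuum continuity of the integration, reconstruction and fixed-point maps from \cite{erhard2017discretisation} — together with the kernel convergence of Lemma~\ref{lem:heat-bounds} and the uniform a priori bounds of Proposition~\ref{prop:SolutionMap} (which give tightness and identify the local existence time from below) — yields $\iota_\eps X_{\ga,\de,\fa}\to X_\de$ in law in $\CD\bigl([0,T],\SD'(\T^3)\bigr)$, so \eqref{eq:bound-4} follows because $F$ is bounded and continuous.

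\textbf{The model convergence (ii).} For fixed $\de$ every element of $Z^{\ga,\de,\fa}_{\lift}$ is obtained from the mollified kernel $\mywidetilde{\SK}^{\ga,\de}$ (whose singular part converges to $\SK$; see Appendix~\ref{sec:decompositions} and Lemma~\ref{lem:heat-bounds}) and the regularised noise $\xi_{\ga,\de,\fa}$ via the Hermite/Wick formulas \eqref{eq:lift-hermite}. First, $\iota_\eps\xi_{\ga,\de,\fa}\Rightarrow\varrho_\de\ast\xi=\xi_\de$: the predictable bracket density $\bC_{\ga,\fa}$ of $\M_{\ga,\fa}$ converges uniformly to $2$ — on $[0,\tau_{\ga,\fa})$ the correction $\ga^3 S_\ga\tanh(\be\ga^3 X_\ga)$ is $\lesssim\ga^{3(1+\eta)}\to 0$ by \eqref{eq:X-apriori}, and on $[\tau_{\ga,\fa},T]$ the correction $\ga^6 S'_{\ga,\fa}X'_{\ga,\fa}$ is $\lesssim\ga^{3(1+\eta)}\to 0$ by Lemma~\ref{lem:X-prime-bound} — and since convolution with the smooth $\varrho_{\ga,\de}$ removes the jumps, a functional central limit theorem for martingale integrals applies. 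Second, the renormalisation constants are matched by construction: as in the proof of Lemma~\ref{lem:renorm-constants} one has $\fc_{\ga,\de}+\fc_{\ga,\de}'=\E\bigl[(\Pi^{\ga,\de,\fa}_z\<1b>)^2\bigr]+o(1)$, the term $\fc_{\ga,\de}'=-\be\varkappa_{\ga,3}\ga^6\un{\fC}_\ga\fc_{\ga,\de}$ accounting precisely for the deviation of $\bC_{\ga,\fa}$ from $2$ in the bracket, and $\fc_{\ga,\de}''$ is the discrete sunset; hence $\Pi^{\ga,\de,\fa}_z\<2b>$, $\Pi^{\ga,\de,\fa}_z\<22b>$, $\Pi^{\ga,\de,\fa}_z\<32b>$ are Wick-renormalised and converge to the continuum Wick-renormalised elements. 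Moreover $\ga^6\un{\fC}_\ga\lesssim\ga^6\emezo^{-1}\approx\ga^3\to 0$ (Lemma~\ref{lem:renorm-constant-underline}), so the $\fc_{\ga,\de}'$-shift vanishes in the limit and, with \eqref{eq:convolved_constants} and \eqref{eq:C-exact}, $\fC_{\ga,\de}\to\fC^{(\de)}$. The uniform-in-$\ga$ moment bounds giving tightness are a special case of those established in Section~\ref{sec:convergence-of-models} (and, $\de$ being fixed, also follow from direct computations since all the kernels involved are regular).

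\textbf{Main obstacle.} Part 1 is essentially bookkeeping once the a priori estimate is set up. The substance is the model convergence in (ii): establishing the invariance principle $\iota_\eps\xi_{\ga,\de,\fa}\Rightarrow\xi_\de$ for the jump-martingale driving noise, and verifying that the discretely defined renormalisation $\fC_{\ga,\de}$ produces, in the limit $\ga\to 0$, exactly the continuum Wick-renormalised model — equivalently $\fC_{\ga,\de}\to\fC^{(\de)}$ — rather than some constant differing by a finite shift.
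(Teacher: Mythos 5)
Your Part 1 has a genuine gap at exactly the point you wave at in parentheses. In the discrete equation \eqref{eq:Discrete-equation-regular-noise} the cubic enters only through the convolution $K_\ga \ae X^3$ (everything on the right-hand side is smeared by $\widetilde P^\ga = P^\ga \ae K_\ga$), and this destroys the coercivity you rely on: testing with $X^{p-1}$ gives $-\tfrac{\be^3}{3}\eps^3\sum_{x,y}K_\ga(x-y)X(x)^{p-1}X(y)^3$, which by Young/H\"older is only bounded by $+\tfrac{\be^3}{3}\|X\|_{L^{p+2}}^{p+2}$ — i.e.\ the nonlocal cubic can in principle be anti-dissipative at precisely the order at which the local cubic would have been dissipative, so the estimate does not close. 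Lemma~\ref{lem:Kg} does not rescue this: $|\widehat K_\ga|\le 1$ and $1-\widehat K_\ga\gtrsim |\ga^3\om|^2\wedge 1$ give dissipativity of the linear part, but they say nothing about the sign of the quintic form above (note $\widehat\KK$ is not nonnegative in general), and a sharp spike of height $\gg \ga^{-3}$ sitting on a deep negative background is not obviously controlled by the linear damping $\alpha^{-1}(K_\ga\ae X - X)$. The paper does not attempt such an intrinsic estimate: a.s.\ boundedness of $X_{\ga,\de,\fa}$ is obtained as a by-product of a pathwise comparison (same noise realisation) with an auxiliary \emph{continuum} solution $\bar X_{\ga,\de,\fa}$ of \eqref{eq:auxiliary}, whose cubic is pointwise and hence admits the Lyapunov function $u\mapsto u^2$; the comparison is a Gronwall-type $L^\infty$ argument using the kernel convergence of Lemma~\ref{lem:heat-bounds}.

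Your Part 2 is a genuinely different — and much heavier — route than the paper's, and as written its central step is asserted rather than proved. You need (ii): convergence as $\ga\to 0$ of the discrete model $Z^{\ga,\de,\fa}_{\lift}$ to the continuum mollified model $Z^{(\de)}$, plus a discrete-to-continuum continuity statement for the integration/reconstruction/fixed-point maps. Neither is available in the paper: Proposition~\ref{prop:models-converge} only gives bounds uniform in $\ga$ and compares two \emph{discrete} models ($Z^{\ga,\fa}_{\lift}$ vs.\ $Z^{\ga,\de,\fa}_{\lift}$), the discrete model metric of Section~\ref{sec:DiscreteModels} is not shown to "degenerate to the continuum one", and your justifications ("a functional CLT for martingale integrals applies", "the renormalisation constants are matched by construction", $\fc_{\ga,\de}+\fc'_{\ga,\de}=\E[(\Pi^{\ga,\de,\fa}_z\<1b>)^2]+o(1)$) are precisely the nontrivial content that would have to be established tree by tree. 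The whole point of introducing the mollified level is to avoid this: at fixed $\de$ the noise $\xi_{\ga,\de,\fa}$ is an a.s.\ bounded function, so the paper (a) proves by a classical fixed-point argument that the $L^\infty$ solution map $(\zeta,f_0)\mapsto f$ of \eqref{eq:auxiliary-general} is locally Lipschitz, (b) deduces $\bar X_{\ga,\de,\fa}\Rightarrow X_\de$ from the convergence in law of the martingales to a cylindrical Wiener process (continuous mapping, no regularity structures, no model convergence, no invariance principle beyond the companion paper's Lemma~2.3), and (c) closes \eqref{eq:bound-4} with the pathwise discrete/continuum comparison mentioned above, where $\fC_{\ga,\de}\to\fC_\de$ is elementary at fixed $\de$ since the kernels converge uniformly. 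If you want to keep your route you must supply the model-convergence proof and the law-convergence bookkeeping for the fixed point (e.g.\ via Skorokhod representation); as it stands, both parts of your argument leave the main difficulties unproved.
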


\begin{proof}
We note that the formula \eqref{eq:xi-gamma-delta} makes sense on $\R \times \T^3$ (and not just $\R \times \Le$). Let then $\bar{\xi}_{\ga, \de, \fa}$ be defined by \eqref{eq:xi-gamma-delta} on $\R \times \T^3$. In will be convenient to introduce an additional process $\bar{X}_{\ga, \de, \fa}$ on $\R \times \T^3$, which is the solution of the SPDE
	\begin{equation}\label{eq:auxiliary}
		\bigl( \partial_t - \Delta \bigr) \bar{X}_{\ga, \de, \fa} = - \frac{\be^3}{3} \bar{X}_{\ga, \de, \fa}^3 + \bigl( \fC_{\de} + A \bigr) \bar{X}_{\ga, \de, \fa} + \sqrt 2 \,\bar \xi_{\ga, \de, \fa},
	\end{equation}
	with the initial condition $X_\de^{0}$, the same as for equation \eqref{eq:Phi43-delta}. Then the limit \eqref{eq:bound-4} follows from 
	\begin{subequations}
	\begin{align}
	&\lim_{\ga \to 0} \E \bigl[ F (\bar X_{\ga, \de, \fa}) \bigr] = \E \bigl[ F ( X_\de )\bigr], \label{eq:bound-4-1} \\
	&\lim_{\ga \to 0} \E \bigl| F (\iota_\eps X_{\ga, \de, \fa}) - F (\bar X_{\ga, \de, \fa}) \bigr| = 0, \label{eq:bound-4-2}
	\end{align}
	\end{subequations}
	and we are going to prove these two limits.
	
For $T > 0$ we will use the shorthand notation $L^\infty_T := L^{\infty}([0, T] \times \T^3)$, and we will consider all the spaces and norms on $\T^3$ in the spatial variable, which we prefer not to write every time.
	
	We start with analysing the second term in \eqref{eq:bound-4-1}. For this we will show the continuous dependence of the solution of equation \eqref{eq:auxiliary} on the driving noise and the initial state. More precisely, for $f_0 \in L^\infty$, for $T > 0$ and for a function $\zeta \in L^{\infty}_T$ we consider the PDE
	\begin{equation}\label{eq:auxiliary-general}
		( \partial_t - \Delta ) f = - \frac{\be^3}{3} f^3 + ( \fC_{\de} + A ) f + \sqrt 2\, \zeta
	\end{equation}
	on $[0, T] \times \T^3$ with an initial condition $f_0 \in L^{\infty}(\T^3)$ at time $0$. Of course, the solution $f$ depends on $\delta$ and $\gamma$ through the constants $\fC_{\de}$ and $\beta$ (see \eqref{eq:beta}), but we prefer not to indicate this dependence to have a lighter notation. By our assumptions, there exists $L > 0$ such that $\| f_0 \|_{L^{\infty}} \leq L$ and $\| \zeta \|_{L^\infty_T} \leq L$. We are going to prove that there is a unique solution $f \in L^\infty_T$, and the solution map $f = \CS_T(\zeta, f_0)$ is locally continuous from $L^{\infty}_T \times L^{\infty}$ to $L^{\infty}_T$. 
	
	Let $P : \R_+ \times \T^3$ be the heat kernel, i.e., the Green's function of the parabolic operator $\partial_t - \Delta$. Then, with a little ambiguity, we write $P_t$ for the semigroup, whose action on functions is given by the convolution with the heat kernel $P_t$ on $\T^3$. Then the mild form of \eqref{eq:auxiliary-general} is 
	\begin{equation}\label{eq:f-mild}
	 f_t(x) = P_t f_0 (x) + \int_0^t P_{t-s} \Bigl(- \frac{\be^3}{3} f_s^3 + ( \fC_{\de} + A ) f_s + \sqrt 2\, \zeta_s\Bigr)(x) \d s.
	\end{equation}
	We denote by $\CM_t(f)(x)$ the right-hand side, and we are going to prove that $\CM_t(f)$ is a contraction map on $\CB_{L, t} := \{f : \| f \|_{L^\infty_t} \leq L+1\}$ for a sufficiently small $0 < t < T$.
	
	Taking $f \in \CB_{L, t}$, using the Young inequality and using the identity $\| P_t \|_{L^1} = 1$, we get 
	\begin{align*}
		\| \CM_t(f) \|_{L^\infty} &\leq \| f_0 \|_{L^\infty} + t \| f\|^3_{L^\infty_t} + t | \fC_{\de} + A | \| f \|_{L^\infty_t} + t \sqrt 2\, \| \zeta \|_{L^\infty_t} \\
		&\leq L + t \bigl( (L +1)^3 + | \fC_{\de} + A | (L+1) + \sqrt 2\, L \bigr),
	\end{align*}
	where we estimated $\be^3 \leq 3$, which follows from \eqref{eq:beta} for all $\gamma > 0$ sufficiently small. Taking $t > 0$ small enough, we get 
	\begin{equation*}
	\| \CM_t(f) \|_{L^\infty} \leq L + 1,
	\end{equation*}
	which means that $\CM_t$ maps $\CB_{L, t}$ to itself. 
	
	Let us now take $f, \bar f \in \CB_{L, t}$ with $f_0 = \bar f_0$. Then 
	\begin{equation*}
		\big( \CM_t(f) - \CM_t(\bar f) \big)(x) = - \frac{\be^3}{3} \int_{0}^t P_{t-s} \big( f_s^3 - \bar f_s^3 \big)(x) \d s + ( \fC_{\de} + A ) \int_{0}^t P_{t-s} \big( f_s - \bar f_s \big)(x) \d s,
	\end{equation*}
	which yields similarly to how we did above 
	\begin{align*}
		\| \CM_t(f) - \CM_t(\bar f)\|_{L^\infty} &\leq t \| f^3 - \bar f^3\|_{L^\infty_t} + t | \fC_{\de} + A | \| f - \bar f \|_{L^\infty_t} \\
		&\leq t \bigl( 3 L^2 + | \fC_{\de} + A | \bigr) \| f - \bar f \|_{L^\infty_t}.
	\end{align*}
	Taking $t > 0$ small enough, we get $t \bigl( 3 L^2 + | \fC_{\de} + A | \bigr) < 1$, which means that $\CM_t$ is a contraction on $\CB_{L, t}$. By the Banach fixed point theorem, there exists a unique solution $f \in L^\infty_t$ of equation \eqref{eq:f-mild}. 
	
	Let us now denote by $f = \CS_t(\zeta, f_0)$ the solution map of \eqref{eq:f-mild} on $\CB_{L, t}$. We are going to show that it is continuous with respect to $\zeta$ and $f_0$, satisfying $\| f_0 \|_{L^{\infty}} \leq L$ and $\| \zeta \|_{L^\infty_T} \leq L$. For this we take $\| \bar f_0 \|_{L^{\infty}} \leq L$ and $\| \bar \zeta \|_{L^\infty_T} \leq L$, and for $\bar f = \CS_t(\bar \zeta, \bar f_0)$ we have 
	\begin{align*}
		\big( f_t - \bar f_t \big)(x) &= P_t (f_0 - \bar f_0) (x) - \frac{\be^3}{3} \int_{0}^t P_{t-s} \big( f_s^3 - \bar f_s^3 \big)(x) \d s \\ 
		&\qquad + ( \fC_{\de} + A ) \int_{0}^t P_{t-s} \big( f_s - \bar f_s \big)(x) \d s + \sqrt 2 \int_{0}^t P_{t-s} \big( \zeta_s - \bar \zeta_s \big)(x) \d s.
	\end{align*}
	Computing the norms as above, we get 
	\begin{equation*}
		\| f - \bar f \|_{L^\infty_t} \leq \| f_0 - \bar f_0 \|_{L^\infty} + t \bigl( 3 L +  | \fC_{\de} + A | \bigr) \| f - \bar f\|_{L^\infty_t} + t \sqrt 2\, \| \zeta - \bar \zeta \|_{L^\infty_t}.
	\end{equation*}
	Since $t$ is such that $t \bigl( 3 L +  | \fC_{\de} + A | \bigr) < 1$, we can move the term proportional to $\| f - \bar f\|_{L^\infty_t}$ to the left-hand side and get 
	\begin{equation*}
		\| f - \bar f \|_{L^\infty_t} \leq C \| f_0 - \bar f_0 \|_{L^\infty} + C \| \zeta - \bar \zeta \|_{L^\infty_t},
	\end{equation*}
	where the proportionality constant $C$ depends on $\delta$ and $L$. Thus, we have a locally Lipschitz continuity of the solution map. 
	
	The extension of the solution to longer time intervals $[0, T]$ is the standard procedure, and is done by patching local solutions. Since the function $V : \R \to \R$ given by $V(u) = u^2$ is a Lyapunov function for equation \eqref{eq:auxiliary-general}, the solution is global in time and $T$ can be taken arbitrary (this standard result can be found for example in \cite[Prop.~6.23]{Hai09}). 
	
	Let us now look back at \eqref{eq:bound-4-1}. Using the constructed solution map we can write $X_{\de} = \CS(\xi_{\de}, X_{\de}^{0})$ and $\bar{X}_{\ga, \de, \fa} = \CS(\bar \xi_{\ga, \de, \fa}, X_{\de}^{0})$. By Lemma $2.3$ in \cite{Martingales} we have the convergence in law in the topology of the Skorokhod space $\CD(\R_+, \SD'(\T^3))$ of the family of martingales $(\M_{\ga, \fa}(\bigcdot, x))_{x \in \Le}$ to a cylindrical Wiener process on $L^2(\T^3)$. For any $T > 0$, we therefore get convergence in law of $\bar \xi_{\ga, \de, \fa}$ to $\xi_{\de}$, as $\ga \to 0$, in the topology of $L^\infty([0, T] \times \T^3)$. Then from continuity of the solution map $\CS$ we conclude that $\bar{X}_{\ga, \de, \fa}$ converges in law to $X_{\de}$, as $\ga \to 0$, in the topology of $L^\infty([0, T] \times \T^3)$. This yields the required limit \eqref{eq:bound-4-1}.
	\medskip
	
	Now, we will prove the limit \eqref{eq:bound-4-2}. We observe that these two processes are driven by the same noise and the live on the same probability space. It will be convenient to define an analogue of the $L^\infty$ norm to compare a discrete and continuous functions. Namely, for $f_\ga : \Lattice \to \R$ and $f : \R^3 \to \R$ we set 
	\begin{equation*}
	\| f_\ga; f \|^{(\emezo)}_{L^\infty} := \sup_{\substack{x \in \Lattice, \bar x \in \R^3 \\ |\bar x - x|_\sinfty \leq \eps / 2}} \bigl| f_{\ga}(x) - f(\bar x)\bigr|.
	\end{equation*}
	If moreover functions depend on the time variable, then set $\| f_\ga; f \|^{(\emezo)}_{L^\infty_T} := \sup_{t \in [0, T]} \| f_\ga(t); f(t) \|^{(\emezo)}_{L^\infty}$. Then the limit \eqref{eq:bound-4-2} holds if we show
	\begin{equation}\label{eq:second-term-converges}
	\lim_{\ga \to 0} \E \| X_{\ga, \de, \fa}; \bar{X}_{\ga, \de, \fa} \|^{(\emezo)}_{L^\infty_T} = 0.
	\end{equation}
	
	Now, we will prove the limit \eqref{eq:second-term-converges}. The mild form of \eqref{eq:auxiliary} is 
	\begin{equation}\label{eq:mild1}
		 \bar{X}_{\ga, \de, \fa}(t, x) = P_t X_{\de}^{0} (x) + \int_0^t P_{t-s} \Bigl(- \frac{\be^3}{3} \bar{X}_{\ga, \de, \fa}^3 + ( \fC_{\de} + A ) \bar{X}_{\ga, \de, \fa} + \sqrt 2\, \bar \xi_{\ga, \de, \fa}\Bigr)(s, x) \d s.
	\end{equation}
	As a consequence of our analysis of equation \eqref{eq:auxiliary-general}, if we take $\| X_{\de}^{0} \|_{L^{\infty}} \leq L$ and $\|\bar \xi_{\ga, \de, \fa} \|_{L^\infty_T} \leq L$, then for $0 < t \leq T$ small enough we have $\|\bar{X}_{\ga, \de, \fa} \|_{L^\infty_t} \leq L + 1$. We will use this value $t$ in what follows. We can perform the same analysis as above and conclude that if $\| X_{\de}^{0} \|_{L^{\infty}} \leq L$ then $\|X_{\ga, \de, \fa} \|_{L^\infty_t} \leq L + 1$. We prefer not to repeat the same argument twice. 
	
	We extend the processes periodically in the spatial variables. This means that we need to replace $P^\ga$ and $\widetilde{P}^\ga$ by $G^\ga$ and $\widetilde{G}^\ga$ respectively; and we need to replace $P$ by $G$ in \eqref{eq:mild1}. In what follows we are going to work with these periodic extensions. 
	
	Using the metric \eqref{eq:heat-metric}, one can readily get the bound
	\begin{equation}\label{eq:discret-continuous-bound}
	\| G^\ga_t X_{\ga, \de, \fa}^{0}; G_t X_{\de}^{0} \|^{(\emezo)}_{L^\infty} \leq \| G_t \|_{L^1} \| X_{\ga, \de, \fa}^{0}; X_{\de}^{0} \|^{(\emezo)}_{L^\infty} + \| G^\ga_t; G_t \|^{(\emezo)}_{L^1} \| X_{\de}^{0}\|_{L^\infty}.
	\end{equation}
	We have $\| G_t \|_{L^1} = 1$, and from Lemma~\ref{lem:heat-bounds} we have that $Q^\ga_t := \| G^\ga_t; G_t \|^{(\emezo)}_{L^1}$ and $\widetilde{Q}^\ga_t := \| \widetilde{G}^\ga_t; G_t \|^{(\emezo)}_{L^1}$ vanish as $\ga \to 0$. Using then the bound $\|\bar{X}_{\ga, \de, \fa} \|_{L^\infty_t} \leq L + 1$, subtracting equations, and using the bound similarly to \eqref{eq:discret-continuous-bound}, we get 
	\begin{align*}
	&\| X_{\ga, \de, \fa}; \bar{X}_{\ga, \de, \fa} \|^{(\emezo)}_{L^\infty_t} \leq \| X_{\ga, \de, \fa}^{0}; X_{\de}^{0} \|^{(\emezo)}_{L^\infty} + Q^\ga_t L + t \sqrt 2\, \| \xi_{\ga, \de, \fa}; \bar \xi_{\ga, \de, \fa}\|^{(\emezo)}_{L^\infty_t}\\
	&\qquad + t \Bigl( \| X_{\ga, \de, \fa}^3; \bar{X}_{\ga, \de, \fa}^3 \|^{(\emezo)}_{L^\infty_t} + | \fC_{\de} + A | \| X_{\ga, \de, \fa}; \bar{X}_{\ga, \de, \fa} \|^{(\emezo)}_{L^\infty_t} + |\fC_{\ga, \de} - \fC_{\de}| L\Bigr) \\
	&\qquad + t \widetilde{Q}^\ga_t \Bigl( (L + 1)^3 + | \fC_{\de} + A | (L + 1) + \sqrt 2\, L\Bigr).
	\end{align*}
	We can readily show that $\| X_{\ga, \de, \fa}^3; \bar{X}_{\ga, \de, \fa}^3 \|^{(\emezo)}_{L^\infty_t} \leq 3 L^2 \| X_{\ga, \de, \fa}; \bar{X}_{\ga, \de, \fa} \|^{(\emezo)}_{L^\infty_t}$, and the choice of $t$ allows to absorb the term proportional to $\| X_{\ga, \de, \fa}; \bar{X}_{\ga, \de, \fa} \|^{(\emezo)}_{L^\infty_t}$ to the left-hand side and get the bound 
	\begin{align*}
	\| X_{\ga, \de, \fa}; \bar{X}_{\ga, \de, \fa} \|^{(\emezo)}_{L^\infty_t} &\lesssim \| X_{\ga, \de, \fa}^{0}; X_{\de}^{0} \|^{(\emezo)}_{L^\infty} + Q^\ga_t L + t \| \xi_{\ga, \de, \fa}; \xi_{\ga, \de, \fa} \|^{(\emezo)}_{L^\infty_t} + t |\fC_{\ga, \de} - \fC_{\de}| L\\
	&\qquad + t \widetilde{Q}^\ga_t \Bigl( (L + 1)^3 + | \fC_{\de} + A | (L + 1) + \sqrt 2\, L\Bigr),
	\end{align*}
	where the proportionality constant depends on $t$ and $L$. From our assumptions in Theorem~\ref{thm:main} on the initial states we conclude that $\lim_{\ga \to 0} \| X_{\ga, \de, \fa}^{0}; X_{\de}^{0} \|^{(\emezo)}_{L^\infty} = 0$. Furthermore, we have $\lim_{\ga \to 0} \E \| \xi_{\ga, \de, \fa}; \bar \xi_{\ga, \de, \fa} \|^{(\emezo)}_{L^\infty_t} = 0$. Finally, from the definitions of the renormalisation constants we get $\lim_{\ga \to 0} \fC_{\ga, \de} = \fC_{\de}$, because the constants are defined in terms of the heat kernels and these converge uniformly as $\ga \to 0$ (see Lemma~\ref{lem:Pgt}). Then from the preceding inequality we obtain 
	\begin{equation*}
	\E \| X_{\ga, \de, \fa}; \bar{X}_{\ga, \de, \fa} \|^{(\emezo)}_{L^\infty_t} \leq C_{\ga} (L, t),
	\end{equation*}
	where $\lim_{\ga \to 0} C_{\ga} (L, t) = 0$. Since $\bar \xi_{\ga, \de, \fa}$ is almost surely bounded, the process $\bar{X}_{\ga, \de, \fa}$ almost surely does not blow up in a finite time (see the argument above), and we conclude that the same is true for $X_{\ga, \de, \fa}$ and \eqref{eq:second-term-converges} holds for any $T > 0$.
	\end{proof}

Our next aim is to prove the limit \eqref{eq:bound-5}. It will be convenient to prove the required convergence in probability. For this we need to restrict the time interval to $[0, T^L_{\ga, \fa} \wedge T^L_{\ga, \de, \fa}]$, where the stopping times $T^L_{\ga, \fa}$ and $T^L_{\ga, \de, \fa}$ are defined in Proposition~\ref{prop:SolutionMap}. Moreover, we need to introduce auxiliary stopping times providing a bound on the models. More precisely, for $L > 0$ we define
\begin{align*}
\tau^{L}_{\ga, \fa} &:= \inf \Bigl\{ t \geq 0 : \$ Z^{\ga, \fa}_{\lift}\$_{t+1}^{(\emezo)} \geq L \Bigr\} \wedge T^L_{\ga, \fa}, \quad \tau^{L}_{\ga, \de, \fa} := \inf \Bigl\{ t \geq 0 : \$ Z^{\ga, \de, \fa}_{\lift}\$_{t+1}^{(\emezo)} \geq L \Bigr\} \wedge T^L_{\ga, \de, \fa}. 
\end{align*}
Then for any $A > 0$, $L > 0$ and $T > 0$ we have 
\begin{align}
&\P\biggl( \sup_{t \in [0, T]} \| (X_{\ga, \de, \fa} - X_{\ga, \fa})(t) \|^{(\emezo)}_{\CC^\eta} \geq A \biggr) \nonumber\\
&\hspace{1cm}\leq \P\biggl( \sup_{t \in [0, T \wedge \tau^{L}_{\ga, \fa} \wedge \tau^{L}_{\ga, \de, \fa}]} \| (X_{\ga, \de, \fa} - X_{\ga, \fa})(t) \|^{(\emezo)}_{\CC^\eta} \geq A \biggr) + \P \bigl(\tau^{L}_{\ga, \fa} \wedge \tau^{L}_{\ga, \de, \fa} < T \bigr). \label{eq:weak-limit-3}
\end{align}

From the assumptions of Theorem~\ref{thm:main} we conclude that there exists $L_\star > 0$ such that $\| X^0_{\ga} \|^{(\emezo)}_{\CC^{\bar \eta}} \leq L_\star$ uniformly in $\ga \in (0, \ga_\star)$. Moreover, the definition \eqref{eq:initial-gamma-delta} yields $\sup_{\ga \in (0, \ga_\star)} \| X^0_{\ga} - X^{0}_{\ga, \delta} \|^{(\emezo)}_{\CC^\eta} \lesssim \de^{\theta}$ for any $\eta < \bar \eta$ and any $\theta > 0$ small enough. We fix $0 < \ga_0 \leq \ga_\star$ such that the result of Proposition~\ref{prop:models-converge} holds. Then from Proposition~\ref{prop:SolutionMap} we conclude that 
\begin{equation}\label{eq:X-delta-stopped}
\lim_{L \to \infty} \lim_{\de \to 0} \sup_{\ga \in (0, \ga_0)} \P\biggl( \sup_{t \in [0, T \wedge \tau^{L}_{\ga, \fa} \wedge \tau^{L}_{\ga, \de, \fa}]} \| (X_{\ga, \de, \fa} - X_{\ga, \fa})(t) \|^{(\emezo)}_{\CC^\eta} \geq A \biggr) = 0.
\end{equation}
Furthermore, we have 
\begin{equation}\label{eq:tau-bounds}
\P \bigl(\tau^{L}_{\ga, \fa} \wedge \tau^{L}_{\ga, \de, \fa} < T \bigr) \leq \P \bigl(T^{L}_{\ga, \fa} \wedge T^{L}_{\ga, \de, \fa} < T \bigr) + \P\bigl( \$ Z^{\ga, \fa}_{\lift}\$_{T+1}^{(\emezo)} \geq L \bigr) + \P\bigl( \$ Z^{\ga, \de, \fa}_{\lift}\$_{T+1}^{(\emezo)} \geq L \bigr).
\end{equation}
Markov's inequality yields $\P \bigl( \$ Z^{\ga, \fa}_{\lift}\$_{T+1}^{(\emezo)} \geq L\bigr) \leq L^{-p}\, \E \bigl( \$ Z^{\ga, \fa}_{\lift}\$_{T+1}^{(\emezo)}\bigr)^p$, for any $p \geq 1$. From Proposition~\ref{prop:models-converge} we conclude that for any $p$ the preceding expectation is bounded uniformly in $\ga \in (0,\ga_0)$. In the same way from Proposition~\ref{prop:models-converge} we conclude that $\P \bigl( \$ Z^{\ga, \de, \fa}_{\lift}\$_{T+1}^{(\emezo)} \geq L\bigr)$ is bounded uniformly in $\ga \in (0,\ga_0)$ and $\de \in (0,1)$, and hence from \eqref{eq:tau-bounds} we get 
\begin{equation}\label{eq:X-delta-stopped-better}
\lim_{L \to \infty} \sup_{\de \in (0,1)} \sup_{\ga \in (0, \ga_0)} \P \bigl(\tau^{L}_{\ga, \fa} \wedge \tau^{L}_{\ga, \de, \fa} < T \bigr) \leq \lim_{L \to \infty} \sup_{\de \in (0,1)} \sup_{\ga \in (0, \ga_0)} \P \bigl(T^{L}_{\ga, \fa} \wedge T^{L}_{\ga, \de, \fa} < T \bigr).
\end{equation}

Lemma~\ref{lem:convergence2} implies that the living time $T_{\ga, \de, \fa}$ of the process $X_{\ga, \de, \fa}$ is almost surely infinite, and hence Proposition~\ref{prop:SolutionMap} yields $\lim_{L \to \infty} T^L_{\ga, \de, \fa} = +\infty$ almost surely. Then the right-hand side of \eqref{eq:X-delta-stopped-better} equals 
\begin{equation}\label{eq:X-delta-stopped-better-2}
\lim_{L \to \infty} \sup_{\de \in (0,1)} \sup_{\ga \in (0, \ga_0)} \P \bigl(T^{L}_{\ga, \fa} < T \bigr).
\end{equation}

Furthermore, as we stated after \eqref{eq:IsingKacEqn-periodic}, we have $X_{\ga, \fa}(t) = X_{\ga}(t)$ for $t \leq \tau_{\ga, \fa}$ and $X_{\ga, \fa}(t) = X'_{\ga, \fa}(t)$ for $t > \tau_{\ga, \fa}$. Then $X_{\ga, \fa}$ is almost surely bounded on each bounded time interval, because for $t \leq \tau_{\ga, \fa}$ the process is bounded due to the definition of the stopping times \eqref{eq:tau-1}-\eqref{eq:tau}, and for $t > \tau_{\ga, \fa}$ the process is bounded due to Lemma~\ref{lem:X-prime-bound}. Hence, we conclude that the living time of the process $X_{\ga, \fa}$ is almost surely infinite, and $\lim_{L \to \infty} T^L_{\ga, \fa} = +\infty$ almost surely. This implies that \eqref{eq:X-delta-stopped-better-2} vanishes. 

From the preceding argument we conclude that the expression in \eqref{eq:weak-limit-3} vanishes, which yields convergence of the process $X_{\ga, \de, \fa}$ to $X_{\ga, \fa}$ as $\de \to 0$ in probability in the topology as in \eqref{eq:weak-limit-3}. Then the processes converge in distribution, i.e. \eqref{eq:bound-5} holds.
\medskip

We have proved the limit \eqref{eq:bound-1} and it is left to prove \eqref{eq:bound-2}. We are going to prove this limit in probability. Recalling the definition of $X_{\ga, \fa}$, for any $A > 0$ we get
\begin{equation}
\P\biggl( \sup_{t \in [0, T]} \| (X_{\ga, \fa} - X_\ga)(t) \|^{(\emezo)}_{\CC^\eta} \geq A \biggr) \leq \P\bigl( \tau_{\ga, \fa} < T \bigr), \label{eq:weak-limit-4}
\end{equation}
where the supremum vanishes if $\tau_{\ga, \fa} \geq T$. From the definition \eqref{eq:tau} we have 
\begin{equation}
\P \bigl(\tau_{\ga, \fa} < T \bigr) \leq \P \bigl(\tau^{(1)}_{\ga, \fa} < T \bigr) + \P \bigl(\tau^{(2)}_{\ga, \fa} < T \bigr). \label{eq:weak-limit-5}
\end{equation}
The stopping time \eqref{eq:tau-1} we write as $\tau^{(1)}_{\ga, \fa} = \inf \bigl\{t \geq 0 :  \| X_{\ga, \fa}(t) \|^{(\emezo)}_{\CC^{\eta}} \geq \fa \bigr\}$, and hence it coincides with the stopping time $T^{L_\fa}_{\ga, \fa}$ defined in Proposition~\ref{prop:SolutionMap} with a suitable values $L_\fa$, depending on $\fa$ and such that $\lim_{\fa \to \infty} L_\fa = \infty$. Then we have $\lim_{\fa \to \infty}\sup_{\ga \in (0, 1)} \P \bigl(\tau^{(1)}_{\ga, \fa} < T \bigr) = 0$. Convergence of the last term in \eqref{eq:weak-limit-5} to zero uniformly in $\ga \in (0, 1)$ as $\fa \to \infty$ follows from Lemma~\ref{lem:unX-X-bound}.

\subsection{The renormalisation constant}
\label{sec:renormalisation}

We readily conclude from Lemma~\ref{lem:renorm-constants} that the renormalisation constant \eqref{eq:C-exact} may be written in the form \eqref{eq:C-expansion}. 

\appendices

\section{Properties of the discrete kernels}
\label{sec:kernels}

The main result of this appendix is provided in Lemma~\ref{lem:Pgt}, which provides bounds on continuous extensions of the functions $G^\ga$ and $\widetilde{G}^\ga$ defined in \eqref{eq:From-P-to-G} and \eqref{eq:From-P-to-G-tilde}. 

Before proving these main results, we need to prove several bounds on the function $K_\ga$. By the definitions \eqref{eq:K-gamma} and \eqref{eq:Laplacian-gamma} we conclude that there exists $\ga_0 > 0$ (depending on the radius of support of the function $\fK$) such that for $\ga \in (0, \ga_0)$ and $\om \in \{-N, \ldots, N\}^3$
\begin{equation}\label{eq:B1}
	\widehat{K}_\ga (\om) = \eps ^3 \sum_{x\in \Le} \Kg(x) e^{- i \pi \om \cdot x} = \varkappa_{\ga, 1} \ga^{3} \sum_{x \in \ga \Z^3} \fK(x) e^{- i \pi \ga^3 \om \cdot x}, 
\end{equation} 
where we used the fact that $\fK$ is compactly supported to extend the sum to all $x \in \ga \Z^3$. In what follows we will always consider $\ga \in (0, \ga_0)$. Furthermore, it will be convenient to view $\widehat{K}_\ga$ as a function of a continuous argument by evaluating \eqref{eq:B1} for all $\om \in \R^3$. In this way, the function $\widehat{K}_\ga(\om)$ is smooth and we will use the notation $\om = (\om_1, \om_2, \om_3)$ and $\partial_j$ for the partial derivative with respect to $\om_j$. For a multiindex $k \in \N_0^3$ we will write $D^k = \prod_{j= 1}^3 \partial_j^{k_j}$ for a mixed derivative.

\begin{lemma}\label{lem:Kg0}
For any $c > 0$ there exists a constant $C >0$ such that
\begin{subequations}
	\begin{align}
		\bigl| \ga^{-6} \bigl(1 -\widehat{K}_\ga(\om) \bigr) -  \pi^2 |\om|^2 \bigr| &\leq C_1 \ga^3 | \om|^3, \label{eq:K2.4} \\
		\bigl| \ga^{-6}  \partial_j \widehat{K}_\ga(\om) + 2  \pi^2 \om_j \bigr| &\leq C_1 \ga^3 | \om|^2, \label{eq:K2.3}
	\end{align}
\end{subequations}
uniformly over $\ga \in (0, \ga_0)$, $|\om| \leq c\ga^{-3}$ and $j \in \{1,2,3\}$.
\end{lemma}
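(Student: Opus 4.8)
The plan is to Taylor-expand the exponential in the defining formula \eqref{eq:B1} for $\widehat{K}_\ga$ and use the moment conditions \eqref{eq:K-moments} on $\fK$. First I would write, for $|\om| \le c\ga^{-3}$, the Riemann-sum expression
\[
\widehat{K}_\ga(\om) = \varkappa_{\ga,1}\,\ga^{3}\sum_{x\in\ga\Z^3}\fK(x)\,e^{-i\pi\ga^3\om\cdot x},
\]
and note that $\ga^3\om\cdot x$ is of order $\ga^3\cdot\ga^{-3}\cdot(\text{bounded})$, hence uniformly bounded on the support of $\fK$, so the Taylor remainder estimate for $e^{iu}$ is legitimate. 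Expanding $e^{-i\pi\ga^3\om\cdot x} = 1 - i\pi\ga^3\om\cdot x - \tfrac12\pi^2\ga^6(\om\cdot x)^2 + O(\ga^9|\om|^3|x|^3)$ and summing, the zeroth order term gives $\varkappa_{\ga,1}\ga^3\sum_x\fK(x)$, the first-order term vanishes by the rotation-invariance (oddness) of $\fK$, and the second-order term involves $\sum_x\fK(x)(\om\cdot x)^2$. The Riemann sums $\ga^3\sum_{x\in\ga\Z^3}\fK(x)$, $\ga^3\sum_x\fK(x)x_jx_k$, etc., converge to the corresponding integrals $\int\fK$, $\int\fK(x)x_jx_k\,\d x$ with error $O(\ga^{M})$ for any $M$ (since $\fK$ is smooth and compactly supported — this is where the high regularity of $\fK$ enters, cf. the remark before Lemma~\ref{lem:Kg}), and similarly $\varkappa_{\ga,1}\to 1$ with polynomial-in-$\ga$ error. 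Using $\int\fK = 1$ and $\int\fK(x)|x|^2\,\d x = 6$ together with rotation-invariance (so $\int\fK(x)x_jx_k\,\d x = 2\delta_{jk}$), the second-order term becomes $\tfrac12\pi^2\cdot 2|\om|^2\ga^6 = \pi^2|\om|^2\ga^6$. This yields $1 - \widehat{K}_\ga(\om) = \pi^2\ga^6|\om|^2 + O(\ga^9|\om|^3) + O(\ga^{M})$, and since $|\om|\le c\ga^{-3}$ forces $\ga^{M}$-type errors to be dominated by $\ga^9|\om|^3$ once $M$ is chosen large, dividing by $\ga^6$ gives \eqref{eq:K2.4}.

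For \eqref{eq:K2.3} I would differentiate \eqref{eq:B1} in $\om_j$, which brings down a factor $-i\pi\ga^3 x_j$, giving
\[
\partial_j\widehat{K}_\ga(\om) = -i\pi\ga^3\,\varkappa_{\ga,1}\,\ga^3\sum_{x\in\ga\Z^3}\fK(x)\,x_j\,e^{-i\pi\ga^3\om\cdot x},
\]
and then run the same Taylor expansion of the exponential. The leading term $-i\pi\ga^6\varkappa_{\ga,1}\ga^3\sum_x\fK(x)x_j$ vanishes by oddness of $\fK$ in $x_j$; the next term $-\pi^2\ga^9\varkappa_{\ga,1}\ga^3\sum_x\fK(x)x_j(\om\cdot x)$ converges (after accounting for the Riemann-sum/$\varkappa_{\ga,1}$ errors) to $-\pi^2\ga^9\cdot 2\om_j/\ga^3$... wait, more carefully: the surviving term is $-\pi^2\ga^6\cdot\ga^6\sum_k\om_k\cdot\ga^3\sum_x\fK(x)x_jx_k$, and with the Riemann sum $\to 2\delta_{jk}$ this is $-2\pi^2\ga^6\om_j\cdot\ga^{6}/\ga^6 = -2\pi^2\ga^6\om_j$; hmm, I need to track the $\ga$ powers precisely, but the upshot is $\partial_j\widehat{K}_\ga(\om) = -2\pi^2\ga^6\om_j + O(\ga^9|\om|^2) + O(\ga^{M})$, and dividing by $\ga^6$ gives \eqref{eq:K2.3}. (The remainder terms coming from the cubic-and-higher Taylor terms of the exponential are controlled exactly as for \eqref{eq:K2.4}, using $|x|$ bounded on $\operatorname{supp}\fK$ and $\ga^3|\om|\le c$.)

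The main obstacle — really the only subtle point — is bookkeeping the two distinct sources of error and making sure the Riemann-sum-to-integral discretisation error (which is $O(\ga^{M})$ with $M$ as large as we like, thanks to smoothness of $\fK$ and Euler–Maclaurin / Poisson-summation type estimates) is dominated by the genuinely larger $O(\ga^3|\om|^3)$ and $O(\ga^3|\om|^2)$ terms uniformly over the whole range $|\om|\le c\ga^{-3}$; at the upper end $|\om|\approx c\ga^{-3}$ these behave like $O(\ga^{-6})$ and $O(\ga^{-3})$, so any polynomial decay $\ga^{M}$ with $M\ge 7$ suffices. I would also make explicit that the first-order (odd) terms vanish exactly because $\fK$ is rotation-invariant hence even, so $\fK(x)x_j$ is odd and both the lattice sum and the integral vanish. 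Everything else is a routine Taylor estimate, so no real difficulty is anticipated beyond careful constant-tracking.
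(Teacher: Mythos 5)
Your plan is correct and essentially coincides with the paper's proof: Taylor-expand the exponential in \eqref{eq:B1}, use the symmetry of $\KK$ to kill the odd and mixed terms, identify the quadratic (resp.\ linear) main term through the second moments $\int_{\R^3}\KK(x)x_j^2\,\d x=2$ coming from \eqref{eq:K-moments}, and absorb the Taylor remainder together with the Riemann-sum discretisation errors into $C\ga^3|\om|^3$ (resp.\ $C\ga^3|\om|^2$). The only difference is cosmetic: the paper exploits that $\varkappa_{\ga,1}\ga^3\sum_{x\in\ga\Z^3}\KK(x)=1$ holds exactly by the definition of $\varkappa_{\ga,1}$ in \eqref{eq:K-gamma} (so the zeroth-order term cancels identically rather than up to an $\CO(\ga^M)$ Riemann-sum error) and is content with an $\CO(\ga^3)$ error for the second-moment sums, whereas you invoke the stronger but unnecessary rapid-decay (Poisson-summation) estimate for the discretisation error.
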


\begin{proof}
For $|\om| \leq c \ga^{-3}$ a Taylor expansion and \eqref{eq:B1} yield
\begin{equation}\label{eq:one-minus-K}
\begin{aligned}
1 -  \widehat{K}_\ga (\om) 
&= \varkappa_{\ga, 1} \ga^{3} \sum_{x\in \ga \Z^3} \fK( x) \bigl(1 - e^{- i \pi \ga^3 \om \cdot x} \bigr) \\
&= \varkappa_{\ga, 1} \ga^{3} \sum_{x\in \ga \Z^3} \fK( x) \Big( i \pi \ga^3 \om \cdot x + \tfrac{1}{2} \big( \pi \ga^3 \om \cdot x\big)^2\Big) + \Err_\ga(\om),
\end{aligned}
\end{equation}
where the error term satisfies $| \Err_\ga(\om)| \leq \varkappa_{\ga, 1} \frac{\pi^3}{6} \ga^{12} |\om|^3 \sum_{x\in \ga \Z^3}  |x|^3  \fK(x) \lesssim \ga^9 |\om|^3$. In the first identity in \eqref{eq:one-minus-K} we used the definition of the constant $\varkappa_{\ga, 1}$ in \eqref{eq:K-gamma}. By the symmetry of the kernel $\fK(x)$, we have $\sum_{x\in \ga \Z^3} \fK( x) ( \om \cdot x ) =0$ and $\sum_{x\in \ga \Z^3} x_i x_j \fK(x)  =0$ for $i \neq j$. Furthermore, the sums $\ga^{3} \sum_{x\in \ga \Z^3} \fK( x) x_j^2$ converge to $\int_{\R^3} \fK(x) x_j^2 \d x = 2$ as $\ga \to 0$ with an error $\CO(\ga^3)$. The last identity follows from \eqref{eq:K-moments} and symmetry of the function $\fK$. Then from \eqref{eq:one-minus-K} we obtain \eqref{eq:K2.4}.

The remaining bound \eqref{eq:K2.3} follows in a similar manner. More precisely, using a Taylor expansion we write 
\begin{align*}
-\partial_j \widehat{K}_\ga(\om) &= \varkappa_{\ga, 1} i \pi \ga^6 \sum_{x\in \ga \Z^3} x_j  \fK(x) \big( e^{- i \pi \ga^3 \om \cdot x} -1 \big) \\
&= \varkappa_{\ga, 1} \pi^2 \ga^9 \om_j \sum_{x \in \ga \Z^3} x_j^2 \fK(x) + \Err'_\ga(\om),
\end{align*}
for an error term satisfying $|\Err'_\ga(\om)| \lesssim \ga^9 |\om|^2$. Here, we have used the symmetry of the kernel $\fK$ to add the term $-1$ in the first equality and to remove the sums containing the products $x_i x_j$ for $i \neq j$ in the Taylor expansion in the second line. The bound \eqref{eq:K2.3} then follows similarly to \eqref{eq:K2.4}.
\end{proof}

\begin{lemma}\label{lem:Kg}
For any $k \in \N_0^3$ and $m \in \N_0$ there are constants $C_1, C_2, C_3 > 0$ (where only $C_2$ and $C_3$ depend on $k$, and only $C_3$ depends on $m$) such that the following estimates hold uniformly over $\ga \in (0, \ga_0)$, $\om \in \bigl[-N-\frac12, N +\frac12\bigr]^3$ and $j \in \{1,2,3\}$:
\begin{enumerate}
\item (Most useful for $|\om| \lesssim \ga^{-3}$)
\begin{subequations}\label{eqs:K-bounds}
\begin{align}
	|\widehat{K}_\ga(\om) | &\leq 1, \label{eq:K1} \\
	|\partial_j \widehat{K}_\ga(\om) | &\leq C_1 \ga^3 \big( |\ga^3 \om| \wedge 1  \big), \label{eq:K1A} \\
	|D^{k} \widehat{K}_\ga(\om) | &\leq C_2 \ga^{3 | k |_{\sone}}, \label{eq:K1B}
\end{align}
\end{subequations}
\item (Most useful for $|\om| \gtrsim \ga^{-3}$)
\begin{equation}
    | \ga^3 \om|^{2m} \big| D^{k} \widehat{K}_\ga (\om) \big|\leq C_3 \ga^{3 |k|_{\sone}}. \label{eq:K3} 
\end{equation}
\end{enumerate}
Furthermore, the value of $\ga_0 > 0$ can be chosen small enough so that
\begin{equation}\label{eq:K2}
	 1 -\widehat{K}_\ga(\om) \geq C_4 \big( |\ga^3 \om|^2  \wedge 1 \big),
\end{equation} 
uniformly over the same values of $\ga$ and $\om$, for some $C_4 > 0$.
\end{lemma}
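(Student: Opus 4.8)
The plan is to reduce every estimate to a property of the \emph{continuous} Fourier transform $\SF\fK$, which is a Schwartz function precisely because $\fK$ is smooth and compactly supported; this is the only place where the assumed high regularity of $\fK$ enters. Starting from \eqref{eq:B1}, I would regard $\widehat{K}_\ga(\om)$ as a Riemann sum over $\ga\Z^3$ — a genuinely finite sum, since $\fK$ is supported in $B(0,r_\star)$ and $\ga<\ga_\star$ — and apply the Poisson summation formula to get
\begin{align*}
\widehat{K}_\ga(\om)&=\varkappa_{\ga,1}\sum_{l\in\Z^3}\SF\fK\bigl(\ga^3\om+2\ga^{-1}l\bigr),\\
D^k\widehat{K}_\ga(\om)&=\varkappa_{\ga,1}(-i\pi\ga^3)^{|k|_{\sone}}\sum_{l\in\Z^3}(D^k\SF\fK)\bigl(\ga^3\om+2\ga^{-1}l\bigr).
\end{align*}
Taking $\om=0$ in the first identity gives $|\varkappa_{\ga,1}-1|\lesssim\ga^M$ for every $M\in\N$. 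Using the integrality of $\om$ and $l$ together with the range $|\om|\lesssim N\lesssim\ga^{-4}$ fixed by \eqref{eq:scalings}, one checks that $|\ga^3\om+2\ga^{-1}l|\gtrsim|l|\ga^{-1}$ whenever $l\ne0$ (for $\ga\le\ga_0$ small enough). Since $\SF\fK$ and all its derivatives decay faster than any polynomial, the $l\ne0$ (``aliasing'') part of both sums is $\lesssim\ga^M$ for every $M$, so everything reduces to the single term $\varkappa_{\ga,1}\SF\fK(\ga^3\om)$ and its derivatives.

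With this reduction, the bounds \eqref{eqs:K-bounds} and \eqref{eq:K3} follow from elementary facts about $\SF\fK$. Estimate \eqref{eq:K1} is in fact immediate without Poisson: $|\widehat{K}_\ga(\om)|\le\eps^3\sum_{x\in\Le}K_\ga(x)=1$, using $K_\ga\ge0$ and $\sum_{k\in\LN}\KK_\ga(k)=1$ (equivalently $|\SF\fK|\le\int\fK=1$). For \eqref{eq:K1A} and \eqref{eq:K1B} I would also argue directly from \eqref{eq:B1}: differentiating brings down $\prod_j(-i\pi\ga^3x_j)^{k_j}$, so $|D^k\widehat{K}_\ga(\om)|\le\varkappa_{\ga,1}(\pi\ga^3)^{|k|_{\sone}}\ga^3\sum_{x\in\ga\Z^3}|\fK(x)|\prod_j|x_j|^{k_j}\lesssim\ga^{3|k|_{\sone}}$, which is \eqref{eq:K1B}; for \eqref{eq:K1A} one additionally uses the evenness of $\fK$ to replace $e^{-i\pi\ga^3\om\cdot x}$ by $e^{-i\pi\ga^3\om\cdot x}-1$ in the $j$-th derivative and $|e^{i\theta}-1|\le\min(|\theta|,2)$ to gain the extra factor $|\ga^3\om|\wedge1$. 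For \eqref{eq:K3}: if $|\om|\le\ga^{-3}$ combine \eqref{eq:K1B} with $|\ga^3\om|^{2m}\le1$; if $|\om|>\ga^{-3}$ use the Poisson representation together with $(1+|\xi|)^{2m}|D^k\SF\fK(\xi)|\lesssim1$ on the $l=0$ term, while the $l\ne0$ part is $\lesssim\ga^{3|k|_{\sone}}$ once $|\ga^3\om|^{2m}\lesssim\ga^{-2m}$ is absorbed by the (arbitrarily large) decay exponent.

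The lower bound \eqref{eq:K2} needs a little more. Fix a small $c_0\in(0,1)$ and split. On $\{|\ga^3\om|\le c_0\}$, Lemma~\ref{lem:Kg0} — in particular \eqref{eq:K2.4} — gives $1-\widehat{K}_\ga(\om)\ge\pi^2|\ga^3\om|^2-C_1|\ga^3\om|^3\ge\tfrac{\pi^2}{2}|\ga^3\om|^2$ once $c_0<\pi^2/(2C_1)$, which is the claim there since $|\ga^3\om|^2\wedge1=|\ga^3\om|^2$. On $\{|\ga^3\om|>c_0\}$, the reduction gives $\widehat{K}_\ga(\om)=\varkappa_{\ga,1}\SF\fK(\ga^3\om)+O(\ga^M)$ with $\SF\fK$ real-valued (evenness of $\fK$); since $\fK\ge0$, $\int\fK=1$, and $\mathrm{supp}\,\fK$ has non-empty interior, one has $|\SF\fK(\xi)|<1$ for every $\xi\ne0$, and together with $\SF\fK(\xi)\to0$ as $|\xi|\to\infty$ this forces $\Lambda:=\sup_{|\xi|\ge c_0}\SF\fK(\xi)<1$. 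Hence $\widehat{K}_\ga(\om)\le\tfrac12(1+\Lambda)<1$ for $\ga$ small, so $1-\widehat{K}_\ga(\om)\ge\tfrac12(1-\Lambda)\ge C_4(|\ga^3\om|^2\wedge1)$ with $C_4:=\min\{\pi^2/2,(1-\Lambda)/2\}$; shrinking $\ga_0$ absorbs the $O(\ga^M)$ errors.

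The main obstacle is the quantitative aliasing step: because $\om$ ranges up to $\sim\ga^{-4}$ while the dual lattice of $\ga\Z^3$ has spacing $2\ga^{-1}$, the shifted arguments $\ga^3\om+2\ga^{-1}l$ lie only barely at distance $\gtrsim|l|\ga^{-1}$ from the origin for $l\ne0$, so establishing this bound uses the integrality of $\om,l$ and a small choice of $\ga_0$; moreover the error control needs $\SF\fK$ and \emph{all} its derivatives to decay faster than any polynomial, i.e.\ $\fK\in C^\infty$, since a finite amount of smoothness would only yield \eqref{eq:K3} for bounded $k$ and $m$. A secondary delicate point is the soft bulk bound $\Lambda<1$ in \eqref{eq:K2}, which rests on compactness of an annulus, the strict inequality $|\SF\fK|<1$ off the origin, and decay at infinity. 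As an alternative to Poisson, \eqref{eq:K3} can be obtained by repeated summation by parts in \eqref{eq:B1}, trading for each discrete derivative in direction $j$ a factor bounded below by a multiple of $\ga^3|\om_j|\wedge\ga^{-1}$, though this is less convenient for \eqref{eq:K2}.
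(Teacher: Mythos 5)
Your proposal is correct, and for the two genuinely delicate parts it takes a different route from the paper. The easy bounds coincide: \eqref{eq:K1} from positivity and normalisation, \eqref{eq:K1B} by differentiating \eqref{eq:B1} and comparing with an integral, and \eqref{eq:K1A} by exploiting evenness of $\fK$ (the paper gets the same gain from the Taylor-expansion bound \eqref{eq:K2.3} of Lemma~\ref{lem:Kg0}, which you also invoke for the small-frequency half of \eqref{eq:K2}). The divergence is in \eqref{eq:K3} and in the bulk part of \eqref{eq:K2}: the paper stays entirely discrete, trading powers of $|\ga^3\om|^{-2}$ for applications of the discrete Laplacian $\uDelta_\ga$ to $\fK$ via summation by parts (this is where $\fK\in C^\infty$ enters, through boundedness of $\uDelta_\ga^m\fK$), and then proves \eqref{eq:K2} by a three-regime split, with the intermediate annulus handled by uniform convergence of the Riemann sums to $\SF\fK$ and the strict bound $|\SF\fK(\xi)|<1$ for $\xi\neq0$. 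You instead prove once and for all, via Poisson summation, that $\widehat{K}_\ga(\om)=\varkappa_{\ga,1}\,\SF\fK(\ga^3\om)+O(\ga^M)$ (and likewise for derivatives) uniformly over the whole zone, the smoothness of $\fK$ entering through the Schwartz property of $\SF\fK$; your quantitative aliasing step is sound, since $|\ga^3\om|_{\sinfty}\le\ga^3(N+\tfrac12)=\ga^{-1}/\varkappa_{\ga,3}$ stays strictly below the dual-lattice spacing $2\ga^{-1}$, so the shifted arguments for $l\neq0$ are of size $\gtrsim|l|\ga^{-1}$ (note this uses only the range of $\om$, not integrality, which is as it should be since the lemma treats $\om$ as a continuous variable). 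This buys a cleaner and more unified proof of \eqref{eq:K3} and lets you treat the intermediate and very large frequency ranges of \eqref{eq:K2} in one stroke, using compactness on an annulus plus decay of $\SF\fK$ at infinity to get $\sup_{|\xi|\ge c_0}\SF\fK(\xi)<1$; the paper's route avoids Poisson and any discussion of aliasing, at the cost of the discrete integration-by-parts machinery and the extra case split. Two harmless slips: with the paper's convention $\SF f(\om)=\int f(x)e^{-\pi i\om\cdot x}\,\d x$, the differentiated Poisson identity carries the prefactor $\ga^{3|k|_{\sone}}$ (the factor $(-i\pi)^{|k|_{\sone}}$ is already inside $D^k\SF\fK$), which differs from what you wrote only by a constant and a phase and affects none of the estimates; and your lower-bound constant should of course also account for $\varkappa_{\ga,1}\to1$, which your $O(\ga^M)$ absorption already covers.
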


\begin{proof}
We can get \eqref{eq:K1} from \eqref{eq:B1} as $|\widehat{K}_\ga(\om) | \leq \varkappa_{\ga, 1} \ga^{3} \sum_{x \in \ga \Z^3} \fK(x) = 1$, where we used the definition of the constant $\varkappa_{\ga, 1}$ in \eqref{eq:K-gamma}. Similarly, from \eqref{eq:B1} we get
\begin{equation}\label{eq:K-hat-deriv}
D^k \widehat{K}_\ga(\om) = \varkappa_{\ga, 1} \ga^3 \sum_{x\in \ga \Z^3} (-i \pi \ga^3 x)^k \fK(x) e^{- i \pi \ga^3 \om \cdot x },
\end{equation}
with the notation $x^k = \prod_{j= 1}^3 x_j^{k_j}$. Then we can prove \eqref{eq:K1B} as follows
\begin{equation*}
|D^{k} \widehat{K}_\ga(\om) | \lesssim \varkappa_{\ga, 1} \ga^{3 (|k|_\sone + 1)} \sum_{x \in \ga \Z^3} \fK(x) |x|^{|k|_\sone} \lesssim \ga^{3 |k|_\sone},
\end{equation*}
where we estimated the sum by an integral, which is bounded because $\fK$ is bounded and compactly supported. For $|\om| \geq \ga^{-3}$, the estimate \eqref{eq:K1A} is a particular case of \eqref{eq:K1B}, and for $|\om| \leq \ga^{-3}$ it follows from \eqref{eq:K2.3}.

The proof of \eqref{eq:K3} is more involved. If $|\om| \leq \ga^{-3}$, then the bound \eqref{eq:K3} follows from \eqref{eq:K1B}, and we need to prove it for $|\om| \geq \ga^{-3}$. For any function $f \colon \ga \Z^3 \to \R$, we define the discrete Laplacian 
\begin{equation*}
	\uDelta_\ga f (x) := \ga^{-2} \sum_{y \sim x} \bigl(f(y) - f(x)\bigr),
\end{equation*}
where the sum runs over $y \in \ga \Z^3$, which are nearest neighbours of $x$, i.e. $|y - x| = 1$. For a fixed $\om \in \R^3$ we define the function $\ee_\om : x \mapsto e^{-i \pi \ga^3 \om \cdot x}$, for which we have
\begin{equation*}
	\uDelta_\ga \ee_\om(x) = \ff_\ga(\om) \ee_\om(x) \qquad \text{with}\quad \ff_\ga(\om) := -2 \ga^{-2} \sum_{j = 1}^3 \bigl(1 - \cos(\pi \ga^3 \om_j)\bigr).
\end{equation*}
We note that for $\om \neq 0$ we have $\ff_\ga(\om) \neq 0$, and this identity allows to write \eqref{eq:B1} as 
\begin{equation*}
	\widehat{K}_\ga (\om) = \frac{\varkappa_{\ga, 1} \ga^{3}}{\ff_\ga(\om)^m} \sum_{x \in \ga \Z^3} \fK(x) \bigl(\uDelta^m_\ga \ee_\om\bigr)(x),
\end{equation*} 
for any integer $m \geq 0$. After a summation by parts we get
\begin{equation}\label{eq:K-hat-bound}
	\widehat{K}_\ga (\om) = \frac{\varkappa_{\ga, 1} \ga^{3}}{\ff_\ga(\om)^m} \sum_{x \in \ga \Z^3} \bigl(\uDelta_\ga^m \fK\bigr)(x) \,\ee_\om(x).
\end{equation}
The function $\uDelta_\ga^m \fK(x)$ converges uniformly to $\Delta^m \fK$ as $\ga \to 0$, where $\Delta$ is the three-dimensional Laplace operator (recall that $\fK$ is smooth). Hence, $\ga^{3} \sum_{x \in \ga \Z^3} \bigl(\uDelta_\ga^m \fK\bigr)(x) \ee_\om(x)$ can be absolutely estimated by an integral of $|\Delta^m \fK(x)|$. Recalling the scaling \eqref{eq:scalings}, one can see that there is a constant $C > 0$ such that $|\ff_\ga(\om)^{-m}| \leq C ( \ga^{3} |\om| )^{-2m}$ uniformly over $\ga > 0$ and $|\om_j| \leq N+\frac12$ for $j \in \{1, 2, 3\}$. Then from \eqref{eq:K-hat-bound} we get the required bound \eqref{eq:K3} for $k = 0$.

For $k \neq 0$, we use \eqref{eq:K-hat-deriv} and similarly to \eqref{eq:K-hat-bound} we get
\begin{equation*}
D^{k} \widehat{K}_\ga (\om) = \frac{\varkappa_{\ga, 1} \big(-i \pi \ga^3\big)^{|k|_{\sone}}}{\ff_\ga(\om)^m} \ga^{3} \sum_{x \in \ga \Z^3} \bigl(\uDelta_\ga^m \widetilde{\fK}_k\bigr)(x) \,\ee_\om(x),
\end{equation*}
where $\widetilde{\fK}_k(x) := x^k \fK(x)$. Estimating the sum and the function $\ff_\ga$ as before, we get \eqref{eq:K3} for any $k$.

Let us proceed to the proof of \eqref{eq:K2}. From \eqref{eq:K3}, we conclude that there exists $\overline{c}>0$ such that for $|\om| \geq \overline{c} \ga^{-3}$ we have $|\widehat{K}_\ga(\om)| \leq \frac12$. Hence, \eqref{eq:K2} holds for such $\om$. Next, we consider $\om$ such that $|\om| < \underline{c} \ga^{-3}$ for a constant $\underline{c}>0$ to be fixed below. For such $\om$, \eqref{eq:K2.4} implies the existence of $\underline{C}$ such that
\begin{equation*}
1 -  \widehat{K}_\ga (\om)  \geq \pi^2 |\om|^2 \ga^6 - \underline{C} |\om|^3 \ga^9 \geq \bigl( \pi^2 -\underline{C}\,  \underline{c}\bigr) |\om|^2 \ga^6, 
\end{equation*}
which can be bounded from below by $\pi^2 |\om|^2 \ga^6 / 2$ if we choose $\underline{c}$ small enough.

Finally, in order to treat the case $\underline{c} \ga^{-3} \leq |\om | \leq \overline{c} \ga^{-3}$, we observe that the Riemann sums 
\begin{equation*}
\Kg( \ga^{-3} \om ) = \varkappa_{\ga, 1} \ga^{3} \sum_{x \in \ga \Z^3} \fK(x) e^{- i \pi \om \cdot x}
\end{equation*}
approximate $(\SF \fK)(\om)$ uniformly for $ |\om| \in [\underline{c}, \overline{c}]$, where $\SF \fK$ is the continuous Fourier transform on $\R^3$. On the other hand, $\SF \fK$ is the Fourier transform of a probability measure with a density on $\R^3$, and as such, it is continuous and $|(\SF \fK)(\om)|<1$ if $\om \neq 0$. In particular, $|(\SF \fK)(\om)|$ is bounded away from $1$ uniformly for  $ |\om| \in [\underline{c}, \overline{c}]$. Combining these facts, we see that for $\ga$ small enough, $\Kg(\om)$ is bounded away from $1$ uniformly in $\underline{c} \ga^{-3} \leq |\om | \leq \overline{c} \ga^{-3}$.
\end{proof}

The next lemma provides estimates on the kernels $G^\ga$ and $\widetilde{G}^\ga$, defined in \eqref{eq:From-P-to-G} and \eqref{eq:From-P-to-G-tilde} respectively. One way to extend the function $G^\ga_t$ off the grid is by its Fourier transform
\begin{equation*}
	(\SF G^\ga_t)(\om) = \exp \Bigl( \varkappa_{\ga, 3}^2 \bigl( \widehat{K}_\ga(\om) - 1 \bigr) \frac{t}{\alpha} \Bigr) \1_{|\om|_{\sinfty} \leq N}
\end{equation*}
for all $\om \in \R^3$, where $\SF$ is the Fourier transform on $\R^3$ (this formula follows from \eqref{eq:tildeP}, \eqref{eq:From-P-to-G} and the Poisson summation formula). However, such extension is not convenient to work with because its Fourier transform is not smooth, which in particular does not allow to get the bounds in Lemma~\ref{lem:Pgt} below.

In order to define an extension of $G^\ga_t$ with a smooth Fourier transform we use the idea of \cite[Sec.~5.1]{HairerMatetski}. Namely, from \eqref{eq:From-P-to-G} and \eqref{eq:kernels-P} we conclude that the function $G^\ga_t$ solves the equation 
\begin{equation*}
\frac{\d}{\d t} G^\ga_t(x) = \Delta_\ga G^\ga_t(x), \qquad x \in \Lattice,
\end{equation*} 
with the initial condition $G^\ga_0(x) = \delta^{(\eps)}_{x,0}$ (the latter is defined below \eqref{eq:M-bracket} and $\Delta_\ga$ is defined in \eqref{eq:Laplacian-discrete}). Then we can write 
\begin{equation*}
G^\ga_t(x) = \bigl(e^{t \Delta_\ga} \delta^{(\eps)}_{\cdot,0}\bigr)(x), \qquad x \in \Lattice,
\end{equation*}
where $e^{t \Delta_\ga}$ is the semigroup generated by the bounded operator $\Delta_\ga$, acting on the space of bounded functions on $\Lattice$. We applied the semigroup to the function $x \mapsto \delta^{(\eps)}_{x,0}$. We take a Schwartz function $\phi : \R^3 \to \R$, such that $\phi(0) = 1$ and $\phi(x) = 0$ for all $x \in \Z^3 \setminus \{0\}$, and such that $(\SF \phi)(\om) = 0$ for $|\om|_\sinfty \geq \frac{3}{4}$.\footnote{We can define $\phi$ by its Fourier transform $\SF \phi = (\SF \fD) * \psi$, where $\fD(x) = \prod_{j=1}^3 \frac{\sin(\pi x_j)}{\pi x_j}$ is the Dirichlet kernel and $\psi \in \CC^\infty(\R^3)$ is supported in the ball of radius $\frac{1}{4}$ with center at the origin and satisfies $\int_{\R^3} \psi(x) \d x = 1$. Then $(\SF \phi)(\om)$ is smooth and vanishes for $|\om|_\sinfty \geq \frac{3}{4}$, because the Fourier transform of $\fD$ vanishes for $|\om|_\sinfty > \frac{1}{2}$. Moreover, $\phi$ is Schwartz, because its Fourier transform is Schwartz. Finally, $\phi$ takes the required values at the integer points, because $\fD$ takes the same values.} We note that the formula \eqref{eq:Laplacian-gamma} makes sense for all functions $f$ from $\CC_b(\R^3)$, which is the space of bounded continuous functions on $\R^3$, equipped with the supremum norm. Then \eqref{eq:Laplacian-discrete} allows to view $\Delta_\ga$ as a bounded operator acting on $\CC_b(\R^3)$. Setting $\phi^\eps(x) := \eps^{-3} \phi(\eps^{-1} x)$ we then define the extension of $G^\ga_t$ off the grid by 
\begin{equation}\label{eq:G-extension}
G^\ga_t(x) := \bigl(e^{t \Delta_\ga} \phi^{\eps}\bigr)(x), \qquad x \in \R^3.
\end{equation}
The respective extension of the function $\widetilde{G}^{\ga}_{t}$ is given by \eqref{eq:From-P-to-G-tilde} for all $x \in \R^3$. The advantage of such definition of the extension is that its Fourier transform is smooth. 

It will be convenient to treat these functions on the space-time domain $\R_+ \times \R^3$. For this, we write $G^\ga(z)$ where $z = (t,x)$ with $t \in \R_+$ and $x \in \R^3$, and we write $D^k G^\ga(z)$ for the mixed derivative of order $k = (k_0, \ldots, k_3) \in \N_0^4$, where the index $k_0$ corresponds to the time variable $t$ and the other indices $k_i$ correspond to the respective spatial variables. We recall the parabolically rescaled quantities $|k|_\s$ and $\| z\|_\s$ defined in Section~\ref{sec:notation}.

\begin{lemma}\label{lem:Pgt}
Let the constant $\ga_0>0$ be as in the statement of Lemma~\ref{lem:Kg}, and let $|t|_a := |t|^{1/2} + a$ for any $a > 0$. Then for every $r \in \N$, $k \in \N_0^4$ with $k_0 \leq r$ and $n \in \N_0$ there is $C > 0$ such that 
\begin{subequations}\label{eqs:G-bounds}
\begin{align}\label{eq:G-bound}
	\big| D^k G^\ga(t,x) \big| &\leq C |t|_\eps^{-3 -|k|_\s + n} \bigl(\| (t,x) \|_{\s} + \eps\bigr)^{-n},\\
	\label{eq:tildeG-bound}
	\big| D^k \widetilde{G}^\ga(t,x) \big| &\leq C |t|_\emezo^{-3 -|k|_\s + n} \bigl(\| (t,x) \|_{\s} + \emezo\bigr)^{-n},
\end{align}
\end{subequations}
uniformly over $(t,x) \in \R^4$ with $t > 0$ and $\gamma \in (0, \ga_0)$.
\end{lemma}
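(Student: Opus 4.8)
The plan is to prove both bounds in \eqref{eqs:G-bounds} from the Fourier representation of the extended kernels, feeding in the estimates on $\widehat K_\ga$ from Lemma~\ref{lem:Kg}. By construction \eqref{eq:G-extension}, $G^\ga_t=e^{t\Delta_\ga}\phi^\eps$, and since $\Delta_\ga$ acts as a Fourier multiplier (see \eqref{eq:Laplacian-discrete}, \eqref{eq:tildeP}) while $\SF\phi^\eps(\om)=\SF\phi(\eps\om)$ is a $\CC^\infty_c$ function supported in $\{|\om|_\sinfty\le\tfrac34\eps^{-1}\}$, we get $\SF G^\ga_t(\om)=e^{-t\ell_\ga(\om)}\,\SF\phi(\eps\om)$ with $\ell_\ga(\om):=\varkappa_{\ga,3}^2\bigl(1-\widehat K_\ga(\om)\bigr)/\alpha\ge 0$, and correspondingly $\SF\widetilde G^\ga_t(\om)=e^{-t\ell_\ga(\om)}\,\SF\phi(\eps\om)\,\widehat K_\ga(\om)$ by \eqref{eq:From-P-to-G-tilde}. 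Both multipliers are smooth and compactly supported in $\om$, so the extended kernels and all their derivatives are given by absolutely convergent Fourier integrals; in particular, writing $k=(k_0,k_x)$, the derivative $D^kG^\ga(t,x)$ is obtained by inserting the polynomial factor $(i\pi\om)^{k_x}$ and, since $(\partial_t)^{k_0}e^{-t\ell_\ga(\om)}=(-\ell_\ga(\om))^{k_0}e^{-t\ell_\ga(\om)}$, the factor $\ell_\ga(\om)^{k_0}$. This also explains why there is no real restriction coming from ``$k_0\le r$, all $r$'': $\phi$ is Schwartz and $\ell_\ga,\widehat K_\ga$ are smooth.

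The core is the unweighted case $n=0$. Recall the scale hierarchy from \eqref{eq:scalings}: $\eps\approx\ga^4$, $\emezo=\eps/\ga\approx\ga^3$, $\alpha=\ga^6\approx\emezo^2$, hence $\eps^2\ll\alpha\approx\emezo^2$. By \eqref{eq:K2}, $\ell_\ga(\om)\gtrsim(|\om|^2\wedge\emezo^{-2})$, so $e^{-t\ell_\ga(\om)}\le\exp(-c(|\om|^2\wedge\emezo^{-2})t)$; also $\ell_\ga(\om)\lesssim\alpha^{-1}\approx\emezo^{-2}$ always, and $\ell_\ga(\om)\approx|\om|^2$ for $|\om|\lesssim\emezo^{-1}$ by \eqref{eq:K2.4}. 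I split the Fourier integral at $|\om|=\emezo^{-1}$. On $|\om|\le\emezo^{-1}$ the time factor $\ell_\ga(\om)^{k_0}\lesssim|\om|^{2k_0}$, and $\int_{|\om|\le\emezo^{-1}}|\om|^{|k|_\s}e^{-c|\om|^2t}\,d\om\lesssim|t|^{-(3+|k|_\s)/2}\le|t|_\eps^{-3-|k|_\s}$, exactly as for the classical heat kernel (the extra factor $|\widehat K_\ga|\le1$ only helps for $\widetilde G^\ga$). On $|\om|>\emezo^{-1}$, for $G^\ga$ the support of $\SF\phi(\eps\,\cdot\,)$ confines the integral to $\emezo^{-1}<|\om|\lesssim\eps^{-1}$, where $\ell_\ga(\om)^{k_0}\lesssim\alpha^{-k_0}\lesssim\eps^{-2k_0}$ and $\int|\om|^{|k_x|}\,d\om\lesssim\eps^{-3-|k_x|}$, so this piece is $\lesssim\eps^{-3-|k|_\s}\le|t|_\eps^{-3-|k|_\s}$; for $\widetilde G^\ga$ the factor $\widehat K_\ga(\om)$, which by \eqref{eq:K3} is $\lesssim|\ga^3\om|^{-m}$ for every $m$, makes the same computation converge with an effective cutoff at scale $\emezo$ and yields $\lesssim\emezo^{-3-|k|_\s}\le|t|_\emezo^{-3-|k|_\s}$. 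This proves \eqref{eqs:G-bounds} with $n=0$.

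For general $n$ one multiplies by $x^\beta$, $|\beta|\le n$, rewrites $x^\beta e^{i\pi\om\cdot x}=(i\pi)^{-|\beta|}\partial_\om^\beta e^{i\pi\om\cdot x}$ and integrates by parts in $\om$; expanding $\partial_\om^\beta$ of the multiplier by the Leibniz and chain rules, each $\om$-derivative hitting $e^{-t\ell_\ga(\om)}$ produces a sum of factors $t\,D^\nu\ell_\ga(\om)$ with $|\nu|_\sone\ge1$. On $|\om|\le\emezo^{-1}$ one combines $|D^\nu\ell_\ga(\om)|\lesssim\alpha^{-1}\ga^{3|\nu|_\sone}$ (from \eqref{eq:K1A}, \eqref{eq:K1B}, with the sharper $\ga^3(|\ga^3\om|\wedge1)$ available for $|\nu|=1$) with the Gaussian $e^{-c|\om|^2t}$ to see that every such factor is $\lesssim|t|^{1/2}\le\lambda$, where $\lambda:=|t|^{1/2}+\eps$ (resp.\ $|t|^{1/2}+\emezo$ for $\widetilde G^\ga$); on $|\om|>\emezo^{-1}$ each $\om$-derivative is absorbed by the support/decay of $\SF\phi(\eps\,\cdot\,)$ for $G^\ga$ (resp.\ the rapid decay \eqref{eq:K3} of $\widehat K_\ga$ for $\widetilde G^\ga$), costing a harmless factor $\eps$ (resp.\ $\emezo$) $\le\lambda$. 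Derivatives hitting $(i\pi\om)^{k_x}$ or the factor $\ell_\ga(\om)^{k_0}$ only improve the bound under the same bookkeeping. Hence $|x^\beta D^kG^\ga(t,x)|\lesssim\lambda^{|\beta|-3-|k|_\s}$ for $|\beta|\le n$, and combining with the $n=0$ bound gives $|D^kG^\ga(t,x)|\lesssim\lambda^{n-3-|k|_\s}(\lambda+|x|)^{-n}\approx|t|_\eps^{n-3-|k|_\s}\bigl(\|(t,x)\|_\s+\eps\bigr)^{-n}$, i.e.\ \eqref{eq:G-bound}; the same argument with $|t|_\emezo$ in place of $|t|_\eps$ and the extra $\widehat K_\ga$-factor gives \eqref{eq:tildeG-bound}.

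The main obstacle is organizational rather than conceptual: making every estimate uniform in $\ga$ by consistently exploiting the hierarchy $\eps\ll\emezo\approx\ga^3\approx\alpha^{1/2}$ together with the regime-dependent bounds of Lemma~\ref{lem:Kg} (the quadratic lower bound \eqref{eq:K2} and the near-Laplacian expansions \eqref{eq:K2.4}--\eqref{eq:K2.3} for $|\om|\lesssim\emezo^{-1}$ versus the rapid decay \eqref{eq:K3} for $|\om|\gtrsim\emezo^{-1}$), and then keeping control of the many terms produced by the Leibniz/chain-rule expansion so that each of the $n$ spatial integrations by parts costs at most one power of the relevant length scale. As an alternative for $\widetilde G^\ga$, one may avoid repeating the Fourier argument by writing $\widetilde G^\ga_t=\eps^3\sum_{y\in\Lattice}G^\ga_t(\,\cdot-y)K_\ga(y)$ and convolving the bound \eqref{eq:G-bound} against $K_\ga$, which has unit mass and support of size $\approx\emezo$; this directly upgrades the $\eps$-scale spatial weight to an $\emezo$-scale one, though the Fourier computation is still needed to replace $|t|_\eps$ by $|t|_\emezo$ in the prefactor when $|t|^{1/2}\lesssim\emezo$.
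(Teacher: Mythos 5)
Your strategy coincides with the paper's: Fourier representation of the extensions \eqref{eq:G-extension}, a frequency split at $|\om|\approx\emezo^{-1}=\ga^{-3}$, the regime-dependent input from Lemma~\ref{lem:Kg}, and, for the weights $n>0$, integration by parts in $\om$ with Leibniz/Fa\`a di Bruno bookkeeping (the paper uses powers of $\Delta_\om$ rather than $\partial_\om^\beta$, which is immaterial). The problem is that at the crux of the $n=0$ estimate you invoke two inequalities that go the wrong way: since $|t|_\eps\ge|t|^{1/2}$ and $|t|_\eps\ge\eps$, one has $|t|^{-(3+|k|_\s)/2}\ge|t|_\eps^{-3-|k|_\s}$ and $\eps^{-3-|k|_\s}\ge|t|_\eps^{-3-|k|_\s}$, not $\le$. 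For the low-frequency piece this is harmless and is repaired exactly as in the paper: for $t\lesssim\emezo^2$ use the cutoff $|\om|\le\emezo^{-1}$ to get $\emezo^{-3-|k|_\s}\lesssim|t|_\eps^{-3-|k|_\s}$, and for $t\gtrsim\emezo^2$ the Gaussian bound gives $t^{-(3+|k|_\s)/2}\approx|t|_\eps^{-3-|k|_\s}$. For the high-frequency piece it is a genuine gap: your bound $\eps^{-3-|k|_\s}$ carries no decay in $t$ at all (at $t=1$ it misses the required $O(1)$ bound by $\eps^{-3-|k|_\s}$), so you must keep the damping $e^{-ct/\alpha}$ coming from \eqref{eq:K2} (which gives $\ell_\ga(\om)\gtrsim\alpha^{-1}$ for $|\om|\gtrsim\emezo^{-1}$) and use its smallness for large $t$, as the paper does by distinguishing $t\le\ga^4$ and $t\ge\ga^4$; the factor $\ell_\ga(\om)^{k_0}\lesssim\alpha^{-k_0}$ must be absorbed in the same way.

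Moreover, for \eqref{eq:G-bound} this repair is only partial, because the damping rate $\alpha^{-1}\approx\emezo^{-2}$ does not match the spatial scale $\eps$: the high-frequency contribution to $G^\ga$ is genuinely of size $e^{-\varkappa_{\ga,3}^2 t/\alpha}\,\eps^{-3-|k|_\s}$ (it is the ``un-jumped'' part $e^{-\varkappa_{\ga,3}^2 t/\alpha}\phi^\eps$ of $e^{t\Delta_\ga}\phi^\eps$), so in the window $\eps^2\ll t\lesssim\emezo^2$ it exceeds the target $|t|_\eps^{-3-|k|_\s}$ by a factor as large as $\ga^{-3-|k|_\s}$; your chain of estimates cannot close there, and your convolution alternative for $\widetilde G^\ga$ does not bypass this since it takes \eqref{eq:G-bound} as input. (The paper's own proof of \eqref{eq:G-bound} passes over the same regime at the step where the $|\om|>\ga^{-3}$ integral is bounded by $\eps^{-3-|k|_\s}$ for $t\le\ga^4$, so this is a delicate point of the statement itself, not merely of your write-up.) By contrast, for \eqref{eq:tildeG-bound} the extra factor $\widehat K_\ga(\om)$, rapidly decaying beyond $\emezo^{-1}$ by \eqref{eq:K3}, removes the high-frequency region, all scales match ($\emezo$ in space, $\alpha\approx\emezo^2$ in time), and your Fourier argument does go through once the reversed inequalities are replaced by the two-case analysis above; this is also the bound the paper actually uses afterwards. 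Finally, in the $n>0$ part your claim that each factor $t\,D^\nu\ell_\ga(\om)$ is $\lesssim|t|^{1/2}$ is only valid for $|\nu|_{\sone}=1$; for higher-order derivatives one gets $t\,\emezo^{|\nu|_{\sone}-2}$ and the bookkeeping must be carried out regime by regime, as in the paper's Fa\`a di Bruno computation.
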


From the bounds \eqref{eqs:G-bounds} we can apply \cite[Lem.~5.4]{HairerMatetski} and get the expansion as described in the beginning of Section~\ref{sec:lift}. For this, we note that the bounds \eqref{eq:tildeG-bound} imply that $\widetilde{G}^\ga$ is a Schwartz function in $x$, which satisfies 
\begin{equation*}
\big| D^k \widetilde{G}^\ga(t,x) \big| \leq C \bigl(\| (t,x) \|_{\s} + \emezo\bigr)^{-3 -|k|_\s}.
\end{equation*}
Moreover, we can smoothly extend $\widetilde{G}^\ga$ to $\R^4$ in the same way as in \cite[Sec.~5.1]{HairerMatetski}, so that $\widetilde{G}^\ga(t) \equiv 0$ for $t < 0$.

\begin{proof}[of Lemma~\ref{lem:Pgt}]
We start with proving \eqref{eq:G-bound}. Using \eqref{eq:tildeP}, the Fourier transform of \eqref{eq:G-extension} equals 
\begin{equation*}
(\SF G^\ga_t)(\om) = (\SF \phi^{\eps}) (\om) \exp \Bigl( \varkappa_{\ga, 3}^2 \bigl( \widehat{K}_\ga(\om) - 1 \bigr) \frac{t}{\alpha} \Bigr),
\end{equation*}
where $(\SF \phi^{\eps}) (\om) = (\SF \phi) (\eps \om)$. Then the inverse Fourier transform yields 
\begin{equation}\label{eq:G-deriv}
D^k G^\ga (t, x) = \int_{\R^3} F^{\ga}_t(\om) e^{2 \pi i \om \cdot x}\, \d \om
\end{equation}
with 
\begin{equation}\label{eq:F-def}
F^{\ga}_t(\om) := (\SF \phi) \bigl(\eps \om\bigr) \Bigl( \bigl( \widehat{K}_\ga(\om) - 1 \bigr) \frac{\varkappa_{\ga, 3}^2}{\alpha} \Bigr)^{k_0} \bigl(2 \pi i \om\bigr)^{\bar k} \exp \Bigl( \varkappa_{\ga, 3}^2 \bigl( \widehat{K}_\ga(\om) - 1 \bigr) \frac{t}{\alpha} \Bigr).
\end{equation}
To bound the integral, we consider two cases: $|\om| \leq \ga^{-3}$ and $|\om| > \ga^{-3}$.

In the case $|\om| \leq \ga^{-3}$, according to \eqref{eq:K2.4} and \eqref{eq:K2}, there exists $c > 0$ such that for all $\ga \in (0, \ga_0)$ we have
\begin{equation*}
|F^{\ga}_t(\om)| \lesssim \bigl( |\om|^2 + \ga^3 | \om|^3 \bigr)^{k_0} |\om^{\bar k}| \exp \bigl(- c |\om|^2 t \bigr) \lesssim |\om|^{|k|_\s} \exp \bigl(- c |\om|^2 t \bigr),
\end{equation*}
where we used the scaling variables \eqref{eq:scalings} and the bound \eqref{eq:c-gamma-2}. Here, we bounded the Fourier transform of $\phi$ by a constant. Restricting the domain of the integration in \eqref{eq:G-deriv} to $|\om| \leq \ga^{-3}$, we estimate the integral by a constant times 
\begin{equation*}
\int_{|\om| \leq \ga^{-3}} |\om|^{|k|_\s} \exp \bigl(- c |\om|^2 t \bigr) \d \om.
\end{equation*}
If $t \geq \ga^6$, then we change the variable of integration to $u = \sqrt t \om$ and the integral can be estimated by $C t^{- (3 + |k|_\s) / 2}$. On the other hand, if $t < \ga^6$, then we change the variable to $u = \ga^{3} \om$ and the integral gets bounded by $C \emezo^{- 3 - |k|_\s}$ (recall that $\emezo \approx \ga^3$).

Now we will consider the case $|\om| > \ga^{-3}$. Since $\varphi$ is Schwartz, the same is true for its Fourier transform, and for any $m \in \N_0$ we have $\bigl| (\SF \phi) \bigl(\eps \om\bigr) \bigr| \lesssim (1 + \eps |\om|)^{-m}$. Using then \eqref{eq:K3} and \eqref{eq:K2}, we get
\begin{align}\label{eq:F-bound}
|F^{\ga}_t(\om)| &\lesssim (1 + \eps |\om|)^{-m} \Bigl( \bigl(1 + | \ga^3 \om|^{-8} \bigr)/ \alpha \Bigr)^{k_0} |\om^{\bar k}| \exp \bigl(- c t / \alpha \bigr) \\
&\lesssim \ga^{-6 k_0} (1 + \eps |\om|)^{-m} |\om|^{|\bar k|_\sone} \exp \bigl(- c \ga^{-6} t \bigr). \nonumber
\end{align}
Then the part of the integral \eqref{eq:G-deriv}, with the domain of integration restricted to $|\om| > \ga^{-3}$, is estimated by a constant times
\begin{equation}\label{eq:integral-bound}
\ga^{-6 k_0} \exp \bigl(- c \ga^{-6} t \bigr) \int_{|\om | > \ga^{-3}} (1 + \eps |\om|)^{-m} |\om|^{|\bar k|_\sone} \d \om \lesssim \ga^{ -6 k_0} \eps^{-3-|\bar k|_\sone} \exp \bigl(- c \ga^{-6} t \bigr).
\end{equation}
The integral is finite as soon as we take $m > |\bar k|_\sone + 3$. If $t \leq \ga^4$, then this expression is bounded by $C \ga^{ -6 k_0} \eps^{-3-|\bar k|_\sone} \lesssim C \eps^{-3 - |k|_\s}$ (recall that $\eps \approx \ga^{-4}$). If $t \geq \ga^4$, then we bound $\exp (- c \ga^{-6} t ) \leq \exp (- c \ga^{-2}/2 ) \exp (- c \ga^{-6} t/2 )$ and the exponentials can be estimated by rational functions as $\exp (- c \ga^{-2}/2 ) \lesssim \ga^{(3 + |\bar k|_\sone)/2}$ and $\exp (- c \ga^{-6} t/2 ) \lesssim (\ga^{-6} t)^{-(3 + |k|_\s)/2}$. Then \eqref{eq:integral-bound} is bounded by $C t^{- (3 + |k|_\s) / 2}$.

From the preceding analysis we conclude that 
\begin{equation}\label{eq:G-bound-1}
\big| D^k G^\ga(t,x) \big| \leq C \big( |t|^{1/2} + \eps \big)^{-3-|k|_\s},
\end{equation}
and to complete the proof of \eqref{eq:G-bound} we need to bound this function with respect to $x$. For this, we need to consider $|x| \geq t^{1/2} \vee \eps$ (the required bound \eqref{eq:tildeG-bound} for $|x| \leq t^{1/2} \vee \eps$ follows from \eqref{eq:G-bound-1}). 

For the function $\ee_x : \om \mapsto e^{2 \pi i \om \cdot x}$ we have $\Delta_\om \ee_x(\om) = |2 \pi i x|^2 \ee_x(\om)$, where $\Delta_\om$ is the Laplace operator with respect to $\om$. Then the function $e^{2 \pi i \om \cdot x}$ in \eqref{eq:G-deriv} can be replaced by $|2 \pi i x|^{-2 \ell} \Delta_\om^\ell\ee_x(\om)$ for any $\ell \geq 0$. Applying a repeated integration by part we get
\begin{equation}\label{eq:tildeG-new}
D^k G^\ga(t,x) = |2 \pi i x|^{-2\ell} \int_{\R^3} \Delta_\om^\ell F^{\ga}_t(\om)\, \ee_x(\om)\, \d \om.
\end{equation}
There are no boundary terms in the integration by parts, because $F^{\ga}_t(\om)$ and its derivatives decay at infinity. The Fa\`{a} di Bruno formula allows to absolutely bound the function inside the integral by a constant multiple of
\begin{align}
&\max_{|n_1|_{\sone} + \cdots + |n_4|_{\sone} = 2\ell} \biggl| \eps^{|n_1|_\sone} D^{n_1} (\SF \phi) \bigl(\eps \om\bigr) D^{n_2}\Bigl( \bigl( \widehat{K}_\ga(\om) - 1 \bigr) \frac{\varkappa_{\ga, 3}^2}{\alpha} \Bigr)^{k_0} \label{eq:G-spatial-bound}\\
&\hspace{5cm}\times \bigl(D^{n_3}\om^{\bar k}\bigr) D^{n_4}\exp \Bigl( \varkappa_{\ga, 3}^2 \bigl( \widehat{K}_\ga(\om) - 1 \bigr) \frac{t}{\alpha} \Bigr)\biggr|, \nonumber 
\end{align}
where the maximum is over $n_1, \ldots, n_4 \in \N_0^3$ with $n_3 \leq \bar k$. As before, we need to consider two cases: $|\om| \leq \ga^{-3}$ and $|\om| > \ga^{-3}$.

In the case $|\om| \leq \ga^{-3}$ from \eqref{eqs:K-bounds} we conclude that 
\begin{equation*}
\Bigl|D^{n}\Bigl( \bigl( \widehat{K}_\ga(\om) - 1 \bigr) \frac{\varkappa_{\ga, 3}^2}{\alpha} \Bigr)\Bigr| \lesssim p_{\ga, n}(\om) \qquad \text{with} \quad 
p_{\ga, n}(\om) = 
\begin{cases}
|\om|^{2 - |n|_{\sone}} &\text{for}~ |n|_{\sone} \leq 2, \\
\ga^{3 (|n|_{\sone}-2)} &\text{for}~ |n|_{\sone} \geq 3,
\end{cases}
\end{equation*}
and Fa\`{a} di Bruno formula yields 
\begin{align*}
&\Bigl| D^{n}\Bigl( \bigl( \widehat{K}_\ga(\om) - 1 \bigr) \frac{\varkappa_{\ga, 3}^2}{\alpha} \Bigr)^{k_0} \Bigr| \\
&\qquad \lesssim \Bigl| \bigl( \widehat{K}_\ga(\om) - 1 \bigr) \frac{\varkappa_{\ga, 3}^2}{\alpha} \Bigr|^{(k_0 - |n|_{\sone}) \vee 0} \max_{r_1 + \cdots + r_{|n|_\sone} = n} \prod_{i = 1}^{|n|_\sone} \Bigl| D^{r_i} \Bigl( \bigl( \widehat{K}_\ga(\om) - 1 \bigr) \frac{\varkappa_{\ga, 3}^2}{\alpha} \Bigr) \Bigr| \\
&\qquad \lesssim |\om|^{2 (k_0 - |n|_{\sone}) \vee 0} \max_{r_1 + \cdots + r_{|n|_\sone} = n} \prod_{i = 1}^{|n|_\sone} p_{\ga, r_i}(\om).
\end{align*}
Combining this bound with Fa\`{a} di Bruno formula and \eqref{eq:K2}, we get
\begin{align*}
&\Bigl|D^{n}\exp \Bigl( \varkappa_{\ga, 3}^2 \bigl( \widehat{K}_\ga(\om) - 1 \bigr) \frac{t}{\alpha} \Bigr)\Bigr| \\
&\qquad \lesssim \exp \Bigl( \varkappa_{\ga, 3}^2 \bigl( \widehat{K}_\ga(\om) - 1 \bigr) \frac{t}{\alpha} \Bigr) \max_{\ell_1 + \cdots + \ell_{|n|_\sone} = n} \prod_{i = 1}^{|n|_\sone} \Bigl| D^{\ell_i} \Bigl( \varkappa_{\ga, 3}^2 \bigl( \widehat{K}_\ga(\om) - 1 \bigr) \frac{t}{\alpha} \Bigr) \Bigr| \\
&\qquad \lesssim t^{|n|_{\sone}} \exp \bigl(- c |\om|^2 t \bigr) \max_{\ell_1 + \cdots + \ell_{|n|_\sone} = n} \prod_{i = 1}^{|n|_\sone} p_{\ga, \ell_i}(\om).
\end{align*}
Using these bounds and $|D^{n_1}(\SF \phi) \bigl(\om\bigr) \lesssim 1$, the expression inside the maximum in \eqref{eq:G-spatial-bound} is estimated by a constant times
\begin{equation*}
\eps^{|n_1|_\sone} |\om|^{2 (k_0 - |n_2|_{\sone}) \vee 0 + |\bar k|_{\sone} - |n_3|_{\sone}} t^{|n_4|_{\sone}} F_{n_2, n_4}(\om) \exp \bigl(- c |\om|^2 t \bigr),
\end{equation*}
with 
\begin{equation*}
F_{n_2, n_4}(\om) := \left(\max_{r_1 + \cdots + r_{|n_2|_\sone} = n_2} \prod_{i = 1}^{|n_2|_\sone} p_{\ga, r_i}(\om)\right) \left(\max_{\ell_1 + \cdots + \ell_{|n_4|_\sone} = n_4} \prod_{i = 1}^{|n_4|_\sone} p_{\ga, \ell_i}(\om)\right).
\end{equation*}
Hence, the part of the integral \eqref{eq:tildeG-new}, in which the integration variables is restricted to $|\om| \leq \ga^{-3}$, is bounded by a multiple of
\begin{equation*}
|x|^{-2\ell} \eps^{|n_1|_\sone} t^{|n_4|_{\sone}} \int_{|\om| \leq \ga^{-3}} |\om|^{2 (k_0 - |n_2|_{\sone}) \vee 0 + |\bar k|_{\sone} - |n_3|_{\sone}} F_{n_2, n_4}(\om) \exp \bigl(- c |\om|^2 t \bigr) \d \om.
\end{equation*}
If $t \geq \ga^6$, then we change the variable to $u = t^{1/2} \om$ and estimate this expression by $C |x|^{-2\ell} t^{\ell - \frac{1}{2}(|k|_\s + 3)}$. If $t < \ga^6$, then we change the variable to $u = \ga^3 \om$ and estimate the preceding expression by a multiple of $|x|^{-2\ell} \eps^{2\ell - (|k|_\s + 3)}$. 

In the case $|\om| > \ga^{-3}$ we use \eqref{eq:K2} and $|D^{n_1}(\SF \phi) \bigl(\om\bigr) \lesssim (1 + |\om|)^{-m}$ to bound the function inside the maximum in \eqref{eq:G-spatial-bound} by a constant multiple of
\begin{align*}
&\eps^{|n_1|_{\sone}} \ga^{3 (|n_2|_{\sone} + |n_4|_{\sone} )} (1 + |\eps \om|)^{-m} |\om|^{|\bar k|_{\sone} - |n_3|_{\sone}} \exp \bigl(- c t / \alpha \bigr) \frac{t^{|n_4|_{\sone}}}{\alpha^{k_0 + |n_4|_{\sone}}} \\
&\qquad = \eps^{|n_1|_{\sone}} \ga^{3 ( |n_2|_{\sone} - |n_4|_{\sone} - 2 k_0)} (1 + |\eps \om|)^{-m} |\om|^{|\bar k|_{\sone} - |n_3|_{\sone}} t^{|n_4|_{\sone}} \exp \bigl(- c \ga^{-6} t \bigr).
\end{align*}
Then the part of the integral \eqref{eq:tildeG-new} for $|\om| > \ga^{-3}$ is bounded by a multiple of
\begin{equation*}
|x|^{-2\ell} \eps^{|n_1|_{\sone}} \ga^{3 ( |n_2|_{\sone} - |n_4|_{\sone} - 2 k_0 )} t^{|n_4|_{\sone}} \exp \bigl(- c \ga^{-6} t \bigr) \int_{|\om| > \ga^{-3}} (1 + |\eps \om|)^{-m} |\om|^{|\bar k|_{\sone} - |n_3|_{\sone}} \d \om.
\end{equation*}
This integral is finite if we take $m$ sufficiently large. We proceed in the same way as in \eqref{eq:integral-bound}. For $t \leq \ga^4$ this expression is bounded by $|x|^{-2\ell} \ga^{3 ( 2\ell - | k|_\s - 3)}$. For $t \geq \ga^4$ we estimate the exponential by a rational function and bound the preceding expression by $C |x|^{-2\ell} t^{\ell - \frac{1}{2}(|k|_\s + 3)}$. 

Taking $n = 2 \ell$, we have just proved that for $|x| \geq |t|^{1/2} \vee \eps$ we have 
\begin{equation*}
\big| D^k G^\ga(t,x) \big| \leq C |t|_\eps^{-3 -|k|_\s + n} |x|^{-n},
\end{equation*}
which together with \eqref{eq:G-bound-1} gives the required bound \eqref{eq:G-bound}. 

The bound \eqref{eq:tildeG-bound} can be proved in a similar way. More precisely, from \eqref{eq:G-deriv} we get
\begin{equation*}
D^k \widetilde{G}^\ga(t,x) = \int_{\R^3} \widehat{K}_\ga(\om) F^{\ga}_t(\om) e^{2 \pi i \om \cdot x} \d \om.
\end{equation*}
The rest of the proof goes in the same way as before, with the only difference that now we use the fact that $\widehat{K}_\ga(\om)$ is Schwartz and for every $m \geq 0$ it satisfies $|D^n \widehat{K}_\ga(\om)| \lesssim \emezo^{|n|_\sone} (1 + \emezo |\om|)^{-m}$. Hence, all the scalings $\eps$ should be replaced by $\emezo$.
\end{proof}

As a corollary of the previous lemma, we can obtain a bound on the periodic heat kernel $\widetilde{P}^\ga$.

\begin{lemma}\label{lem:tilde-P-bound}
In the setting of Lemma~\ref{lem:Pgt} one has the following bound uniformly in $t \geq 0$:
\begin{equation}\label{eq:tilde-P-bound}
\|\widetilde{P}^\ga_t\|_{L^\infty} \leq C |t|_\emezo^{-3}.
\end{equation}
\end{lemma}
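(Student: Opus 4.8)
The plan is to realise $\widetilde{P}^\ga_t$ as a spatial periodisation (over $2\Z^3$) of the continuously extended, rapidly decaying kernel $\widetilde{G}^\ga$ of Appendix~\ref{sec:decompositions}, and then simply quote the pointwise bound \eqref{eq:tildeG-bound} of Lemma~\ref{lem:Pgt}. First I would note that $P^\ga_t$ and the periodised kernel $x \mapsto \sum_{k \in \Z^3} G^\ga_t(x + 2k)$ both solve the ODE \eqref{eq:kernels-P} on $\Le$ with the same initial datum $\delta^{(\eps)}_{\cdot, 0}$ (the periodisation does not change $\delta^{(\eps)}_{\cdot,0}$ since it is supported at a single point of the fundamental domain), hence they coincide; this is just \eqref{eq:From-P-to-G} read pointwise. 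Combining this with the fact that $K_\ga$ is supported in $(-1,1)^3$ for all $\ga$ small enough, a change of variables in \eqref{eq:P-gamma-tilde} gives, for every $x \in \Le$,
\begin{equation*}
\widetilde{P}^\ga_t(x) = \sum_{k \in \Z^3} \widetilde{G}^\ga_t(x + 2k),
\end{equation*}
where on the right-hand side $\widetilde{G}^\ga$ is the kernel from \eqref{eq:From-P-to-G-tilde} to which Lemma~\ref{lem:Pgt} applies.

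Next I would estimate the series term by term. Writing $|t|_\emezo = |t|^{1/2} + \emezo$, the $k = 0$ term is controlled by \eqref{eq:tildeG-bound} with $|k|_\s = 0$ and $n = 0$, giving $|\widetilde{G}^\ga_t(x)| \leq C |t|_\emezo^{-3}$. For $k \neq 0$, since $x \in [-1,1]^3$ one has $\|(t, x + 2k)\|_\s \geq |x + 2k|_{\sone} \geq |k|$ (and $\geq 1$), so \eqref{eq:tildeG-bound} with $|k|_\s = 0$ and $n = 4$ yields
\begin{equation*}
\bigl| \widetilde{G}^\ga_t(x + 2k) \bigr| \leq C |t|_\emezo \,\bigl( \|(t, x + 2k)\|_\s + \emezo \bigr)^{-4} \leq C |t|_\emezo\, |k|^{-4}.
\end{equation*}
Summing over $k \neq 0$ and using $\sum_{k \neq 0} |k|^{-4} < \infty$, the tail of the series is at most $C |t|_\emezo$. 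Hence $\|\widetilde{P}^\ga_t\|_{L^\infty} \leq C(|t|_\emezo^{-3} + |t|_\emezo)$, and for $t$ in a bounded set the second term is $\lesssim 1 \lesssim |t|_\emezo^{-3}$, which gives \eqref{eq:tilde-P-bound}; this is the only regime in which the bound is invoked (Sections~\ref{sec:X-prime} and~\ref{sec:convergence-of-models}).

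There is no serious obstacle here — the statement is a routine corollary of Lemma~\ref{lem:Pgt} — and the only point requiring a little care is the harmless $O(|t|_\emezo)$ contribution of the non-zero periods. As an alternative route avoiding the periodisation identity, one could bound $\|\widetilde{P}^\ga_t\|_{L^\infty}$ directly from the Fourier series: by \eqref{eq:Fourier-series} and \eqref{eq:tildeP},
\begin{equation*}
\|\widetilde{P}^\ga_t\|_{L^\infty} \leq \frac{1}{8} \sum_{|\om|_{\sinfty} \leq N} \bigl| \widehat{P}^\ga_t(\om) \bigr|\, \bigl| \widehat{K}_\ga(\om) \bigr| = \frac{1}{8} \sum_{|\om|_{\sinfty} \leq N} \exp\Bigl( - \varkappa_{\ga,3}^2 \bigl(1 - \widehat{K}_\ga(\om)\bigr) \tfrac{t}{\alpha} \Bigr) \bigl| \widehat{K}_\ga(\om) \bigr| ,
\end{equation*}
and then split the sum at $|\om| = \ga^{-3}$, estimating $1 - \widehat{K}_\ga$ from below by \eqref{eq:K2} and $\widehat{K}_\ga$ from above by \eqref{eq:K1B} and \eqref{eq:K3}; the resulting computation is parallel to the estimate of $\tilde{\fc}_\ga$ in the proof of Lemma~\ref{lem:renorm-constants} and again reproduces the $|t|_\emezo^{-3}$ bound.
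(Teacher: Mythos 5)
Your proof is correct and follows essentially the same route as the paper's: the periodisation identity $\widetilde{P}^\ga_t(x) = \sum_{m \in 2\Z^3} \widetilde{G}^\ga_t(x+m)$ coming from \eqref{eq:From-P-to-G}, combined with the decay bound \eqref{eq:tildeG-bound} of Lemma~\ref{lem:Pgt} (the paper takes a single $n>3$ and compares the sum with an integral, while you split off the $m=0$ term and use $n=4$ for the tail — a purely cosmetic difference). Your caveat that the tail only yields the stated bound for bounded $t$ is apt: because the zero Fourier mode of $\widetilde{P}^\ga_t$ equals $1$ for all $t$, the bound cannot hold with a $t$-independent constant on all of $t\geq 0$, and indeed the lemma is only ever invoked on bounded time intervals.
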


\begin{proof}
From \eqref{eq:From-P-to-G} we have $\widetilde{P}^\ga_t(x) = \sum_{m \in 2\Z^3} \widetilde{G}^\ga_t(x + m)$. Using \eqref{eq:tildeG-bound} with any $n > 3$ and estimating the sum by an integral, we get the required bound \eqref{eq:tilde-P-bound}.
\end{proof}

\subsection{Decompositions of discrete kernels}
\label{sec:decompositions}

Lemma~B.3 in \cite{Martingales} allows to apply \cite[Lem.~5.4]{HairerMatetski} for any integer $r \geq 2$ and to write the discrete kernel as $G^\ga = \SK^\ga + \SR^\ga$, where
\begin{enumerate}
\item $\SR^\ga$ is compactly supported and anticipative, i.e. $\SR^\ga(t,x) = 0$ for $t < 0$, and $\| \SR^\ga \|_{\CC^r}$ is bounded uniformly in $\gamma \in (0, 1]$.
\item $\SK^\ga$ is anticipative and may be written as $\SK^\ga = \sum_{n = 0}^{M} K^{\ga, n}$ with $M = - \lfloor \log_2 \eps \rfloor$, where the functions $\{K^{\ga, n}\}_{0 \leq n \leq M}$ are defined on $\R^4$ and have the following properties:
	\begin{enumerate}
		\item
		the function $K^{\ga, n}(z)$ is supported on the set $\{ z :  \|z\|_\s \leq c 2^{-n}\}$ for a constant $c \geq 1$ used in the supports of all functions $\{K^{\ga, n}\}_{0 \leq n \leq M}$;
		\item
		for some $C > 0$, independent of $\ga$, one has
		      \begin{equation}\label{eq:Kn_bound}
			      |D^k K^{\ga, n}(z)| \leq C 2^{n(3 + |k|_\s)},
		      \end{equation}
		      uniformly in $z$, $k \in \N_0^4$ such that $|k|_\s \leq r$, and $0 \leq n < M$; for $n = M$ the bound \eqref{eq:Kn_bound} holds only for $k = 0$ (in particular, the function $K^{\ga, M}$ does not have to be differentiable); 
		\item
		for all $0 \leq n < M$ and $k \in \N_0^4$, such that $|k|_\s \leq r$, one has 
		\begin{equation*}
			\int_{D_\eps} z^k K^{\ga, n}(z)\, \d z = 0;
		\end{equation*}
		for $n = M$ this identity holds only for $k = 0$.
	\end{enumerate}	
\end{enumerate}
Throughout the article we use interchangeably the notations $\SK^\ga(z)$ and $\SK^\ga_t(x)$ (and respectively for other kernels) for a point $z = (t,x)$ with $t \in \R$ and $x \in \R^3$.

In the same way we can write $\widetilde{G}^\ga = \mywidetilde{\SK}^\ga + \mywidetilde{\SR}^\ga$, where the last two functions have the same properties as above, with the only difference that $\mywidetilde{\SK}^\ga$ is decomposed into a sum of  $\mywidetilde{M} = - \lfloor \log_2 \emezo \rfloor$ functions as $\mywidetilde{\SK}^\ga = \sum_{n = 0}^{\mywidetilde{M}} \widetilde{K}^{\ga, n}$.

This decomposition in particular allows to bound function convolved with a discrete heat kernel.

\begin{lemma}\label{lem:P-convolved-with-f}
For a function $f : \Le \to \R$ and for $\eta < 0$ the following bound holds uniformly in $\ga \in (0,1)$ and locally uniformly in $t \geq 0$:
\begin{equation*}
\|P^\ga_t *_\eps f\|_{L^\infty} \leq C |t|_\eps^{\eta} \| f \|^{(\emezo)}_{\CC^\eta}.
\end{equation*}
\end{lemma}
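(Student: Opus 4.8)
The plan is to reduce the bound to the definition of the seminorm $\| \bigcdot \|^{(\emezo)}_{\CC^\eta}$ in \eqref{eq:eps-norm-1} by decomposing the spatial kernel $x \mapsto G^\ga_t(x)$ into rescaled test functions at dyadic scales. First, using the identity \eqref{eq:From-P-to-G} together with the periodicity of $f$ (extended to $\Lattice$) and a reindexing of the convolution, I would write $(P^\ga_t \ae f)(x) = \eps^3 \sum_{w \in \Lattice} G^\ga_t(x - w) f(w) = (\iota_\eps f)\bigl(G^\ga_t(x - \bigcdot)\bigr)$, where $G^\ga_t$ is the spatially continuous extension from Appendix~\ref{sec:kernels}. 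Since $G^\ga_t$ has rapid decay (Lemma~\ref{lem:Pgt}) and $f$ is periodic, this is unchanged if $G^\ga_t(x-\bigcdot)$ is replaced by its periodisation, which is supported in a fundamental domain of diameter $\lesssim 1$; so it suffices to decompose a kernel supported within distance $\lesssim 1$ of $x$.

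Set $s := |t|^{\frac12} \vee \eps$, so that $s \approx |t|_\eps$. Frozen at time $t$, the bounds of Lemma~\ref{lem:Pgt} (with only spatial derivatives, and with the decay exponent chosen as large as we like) say exactly that $x \mapsto G^\ga_t(x)$ is a test function at scale $s$: for $|k|_\s \le r$ (with $r$ the smallest integer $> -\eta$, as in \eqref{eq:eps-norm-1}) one has $|D^k_x G^\ga_t(x)| \lesssim s^{-3-|k|_\s}$ for $|x| \lesssim s$ and $|D^k_x G^\ga_t(x)| \lesssim s^{-3-|k|_\s}(|x|/s)^{-n}$ for $|x| \gtrsim s$, for every $n$. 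A dyadic partition of unity then produces $G^\ga_t(x - \bigcdot) = \sum_{m=0}^{m_\star} c_m\, \psi^{(m)}_x$, where $\psi^{(m)}_x = (\varphi^{(m)})^{\lambda_m}_x$ is the rescaling by $\lambda_m := 2^m s \wedge 1$ and recentring at $x$ of some $\varphi^{(m)} \in \CB^r$, where $2^{m_\star}s \sim 1$, and where the coefficients obey $c_m \lesssim 2^{m(3+r-n)}$; choosing $n$ with $n - 3 - r > -\eta$ makes the $c_m$ summable with room to spare.

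Inserting this decomposition and applying \eqref{eq:eps-norm-1} termwise, $|(\iota_\eps f)(\psi^{(m)}_x)| \le \lambda_m^\eta \| f \|^{(\emezo)}_{\CC^\eta}$ when $\lambda_m \in [\emezo,1]$ and $|(\iota_\eps f)(\psi^{(m)}_x)| \le \emezo^\eta \| f \|^{(\emezo)}_{\CC^\eta}$ when $\lambda_m \in [\eps,\emezo)$, so that
\[
|(P^\ga_t \ae f)(x)| \;\lesssim\; \| f \|^{(\emezo)}_{\CC^\eta}\Bigl( \sum_{m:\,\lambda_m \ge \emezo} c_m (2^m s)^\eta \;+\; \emezo^\eta \sum_{m:\,\lambda_m < \emezo} c_m \Bigr) \;\lesssim\; \| f \|^{(\emezo)}_{\CC^\eta}\,( s^\eta + \emezo^\eta ),
\]
using $\sum_m c_m (2^m s)^\eta \lesssim s^\eta$ and $\sum_m c_m \lesssim 1$. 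If $s \ge \emezo$ the last sum is empty; if $s < \emezo$ then $\emezo^\eta \le s^\eta$ since $\eta < 0$. Either way the bound is $\lesssim s^\eta \| f \|^{(\emezo)}_{\CC^\eta} \approx |t|_\eps^\eta \| f \|^{(\emezo)}_{\CC^\eta}$, with a constant depending only on the bounded time interval through which $t$ ranges. (As a consistency check, in the regime $t \lesssim \eps^2$ the kernel is concentrated at the single grid scale $\eps$, and the bound collapses via \eqref{eq:uniform-bound} to $|(P^\ga_t \ae f)(x)| \lesssim \| f \|_{L^\infty(\Le)} \lesssim \emezo^\eta \| f \|^{(\emezo)}_{\CC^\eta} \le \eps^\eta \| f \|^{(\emezo)}_{\CC^\eta} \approx |t|_\eps^\eta \| f \|^{(\emezo)}_{\CC^\eta}$.)

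The only genuinely technical point --- the main obstacle --- is the multiscale decomposition of $G^\ga_t(x - \bigcdot)$ into admissible elements of $\CB^r$ with rapidly decaying coefficients, uniformly in $\ga$: this means tracking the correct powers of $s = |t|^{\frac12} \vee \eps$ in the derivative bounds, handling the periodisation so that only scales $\lesssim 1$ occur, and checking that the coarsest, $\eps$-scale piece is still a smooth (hence admissible) test function --- which it is, because the kernel was extended off the grid by convolution with a Schwartz function, so Lemma~\ref{lem:Pgt} delivers all the derivative bounds. This is the time-frozen, discrete analogue of the kernel decomposition of Appendix~\ref{sec:decompositions}, and one could alternatively invoke that decomposition directly after summing out the time variable; I prefer the argument above because it keeps the scale $|t|_\eps$ visible throughout.
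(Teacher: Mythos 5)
Your argument is correct, and it proves the stated bound; the underlying idea (test the seminorm \eqref{eq:eps-norm-1} against a multiscale decomposition of the kernel, with the coarsest relevant scale being $|t|^{1/2}\vee\eps$) is the same as in the paper, but the implementation is genuinely different. The paper's proof is much shorter: it writes $P^\ga_t \ae f = G^\ga_t \ae f$ via \eqref{eq:From-P-to-G} and then directly invokes the ready-made space-time decomposition $G^\ga = \sum_{n=0}^{M} K^{\ga,n} + \SR^\ga$ of Appendix~\ref{sec:decompositions}; since $K^{\ga,n}$ is supported in a parabolic ball of radius $c2^{-n}$, the time-localisation is automatic ($K^{\ga,n}_t \equiv 0$ unless $|t|^{1/2} \le c2^{-n}$), each surviving scale contributes $2^{-\eta n}\|f\|^{(\emezo)}_{\CC^\eta}$ by \eqref{eq:eps-norm-1}, and summing the geometric series over $2^{-n}\gtrsim |t|^{1/2}$ gives the claim. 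You instead freeze $t$ and rebuild a purely spatial dyadic decomposition of $G^\ga_t(x-\bigcdot)$ from the pointwise bounds of Lemma~\ref{lem:Pgt}, tracking the base scale $s=|t|^{1/2}\vee\eps$ explicitly; this is self-contained (no appeal to the \cite{HairerMatetski}-type decomposition) and keeps the dependence on $|t|_\eps$ visible, but it forces you to redo by hand the annular partition, the coefficient bounds $c_m\lesssim 2^{m(3+r-n)}$, the cap at scale $1$ and the periodisation/recentring at lattice points --- all routine, and correctly flagged, but exactly the bookkeeping that the Appendix decomposition packages once and for all. One small inaccuracy in your closing remark: the paper does not ``sum out the time variable'' before invoking its decomposition --- it freezes $t$ exactly as you do, the only difference being that the restriction of scales to $2^{-n}\gtrsim |t|^{1/2}$ comes for free from the parabolic supports of the $K^{\ga,n}$ rather than from derivative bounds in terms of $s$.
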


\begin{proof}
Using \eqref{eq:From-P-to-G} we write $P^\ga_t *_\eps f = G^\ga_t *_\eps f$, where $f$ is extended periodically on the right-hand side. Using the decomposition of $G^\ga$ as in the beginning of this section, we get $G^\ga_t *_\eps f = \sum_{n = 0}^{M} K^{\ga, n}_t *_\eps f + \SR^\ga_t *_\eps f$. Since $K^{\ga, n}$ is bounded in a ball of radius $c 2^{-n}$, for a fixed $t \geq 0$ we have $K^{\ga, n}_t \equiv 0$ if $|t|^{1/2} > c 2^{-n}$. Then the preceding sum can be restricted to $0 \leq n \leq M$ satisfying $|t|^{1/2} \leq c 2^{-n}$. Furthermore, the definition \eqref{eq:eps-norm-1} yields 
\begin{equation*}
\| K^{\ga, n}_t *_\eps f \|_{L^\infty} \lesssim 2^{-\eta n} \| f \|^{(\emezo)}_{\CC^\eta}, \qquad\qquad \| \SR^\ga_t *_\eps f \|_{L^\infty} \lesssim \| f \|^{(\emezo)}_{\CC^\eta}.
\end{equation*}
Then for $\eta < 0$ we have 
\begin{equation*}
\|G^\ga_t *_\eps f\|_{L^\infty} \lesssim \sum_{\substack{0 \leq n \leq M: \\ |t|^{1/2} \leq c 2^{-n}}} 2^{-\eta n} \| f \|^{(\emezo)}_{\CC^\eta} \lesssim |t|_\emezo^{\eta} \| f \|^{(\emezo)}_{\CC^\eta},
\end{equation*}
as required. 
\end{proof}

Using the function $\varrho_{\ga, \de}$ defined in \eqref{eq:rho-gamma}, we introduce new kernels $G^{\ga, \de} := G^\ga \star_\eps \varrho_{\ga, \de}$ and $\widetilde{G}^{\ga, \de} := \widetilde{G}^\ga \star_\eps \varrho_{\ga, \de}$. Then the decompositions of the kernels yield $G^{\ga, \de} = \SK^{\ga, \de} + \SR^{\ga, \de}$ and $\widetilde{G}^{\ga, \de} = \mywidetilde{\SK}^{\ga, \de} + \mywidetilde{\SR}^{\ga, \de}$, where $\SK^{\ga, \de} = \sum_{n = 0}^{M} K^{\ga, \de, n}$ and $\mywidetilde{\SK}^{\ga, \de} = \sum_{n = 0}^{\mywidetilde{M}} \widetilde{K}^{\ga, \de, n}$, and all the functions have the same properties as described above. Moreover, from the definition \eqref{eq:rho-gamma} we have the bounds 
\begin{equation*}
\bigl|D^k \bigl(K^{\ga, \de, n} - K^{\ga, n}\bigr)(z)\bigr| \leq C \de^\theta 2^{n(3 + \theta + |k|_\s)}, \qquad \bigl|D^k \bigl(\widetilde{K}^{\ga, \de, n} - \widetilde{K}^{\ga, n}\bigr)(z)\bigr| \leq C \de^\theta 2^{n(3 + \theta + |k|_\s)},
\end{equation*}
for any $\theta \in (0,1]$, as well as $\| \SR^{\ga, \de} - \SR^{\ga} \|_{\CC^{r-1}} \leq C \de^\theta$ and $\|\, \mywidetilde{\SR}^{\ga, \de} - \mywidetilde{\SR}^{\ga} \|_{\CC^{r-1}} \leq C \de^\theta$. 

\bibliographystyle{Martin}
\bibliography{bibliography}

\end{document}